\documentclass[12pt]{amsart}
\usepackage{preamble}

\usepackage[
    style=alphabetic,
    doi=false,
    isbn=false,
    url=false,
    eprint=false,
    sorting=nyt
]{biblatex}
\addbibresource{references.bib}

\title[Symplectic circle actions on 4-manifolds]{Classification of symplectic non-Hamiltonian circle actions on 4-manifolds}
\author{Rei Henigman}
\address{School of Mathematical Sciences, Tel Aviv University, Tel Aviv
69978, Israel}
\email{rei.henigman@gmail.com}

\begin{document}

\begin{abstract}
    We classify symplectic non-Hamiltonian circle actions on compact connected symplectic 4-manifolds, up to equivariant symplectomorphisms. Namely, we define a set of invariants, show that the set is complete, and determine which values are attainable by constructing a space for each valid choice. We work under the assumption that the group of periods of the one-form $\iota_X \omega$ is discrete, which allows us to define a circle-valued Hamiltonian for the action, and apply tools from Karshon-Tolman's work on the classification of complexity one spaces. This assumption is always satisfied if the symplectic form is rational, or if the quotient space has first Betti number one.

\end{abstract}
\maketitle

\tableofcontents


\section{Introduction}
Let a torus $T^m \cong {(S^1)}^m$ act effectively on a compact connected symplectic manifold $(M^{2n}, \omega)$. The action is called \textbf{symplectic} if $T^m$ acts by symplectomorphisms. Let $\mathfrak{t}$ be the Lie algebra of $T^m$, $\mathfrak{t}^*$ its dual, and denote by $\langle \cdot , \cdot \rangle:\mathfrak{t}^* \times \mathfrak{t} \rightarrow \mathbb{R}$ the natural pairing. A symplectic action is called \textbf{Hamiltonian} if it admits a \textbf{momentum map}, that is, a $T^m$-invariant smooth map $\Phi: M\rightarrow \mathfrak{t}^*$ that satisfies
\begin{equation}\label{momentum_map_equation}
    - d\langle \Phi, \xi \rangle = \iota_{\xi_M} \omega,
\end{equation}
for every $\xi \in \mathfrak{t}$, where $\xi_M \in \mathfrak{X}(M)$ is the vector field induced by $\xi$. The triple $(M, \omega, \Phi)$ is called a \textbf{Hamiltonian $T^m$-manifold}.

In this paper, we focus on actions of the circle $S^1 \cong \mathbb{R}/(2\pi\mathbb{Z})$. In this case, the momentum map is a time-independent Hamiltonian function $\Phi:M\rightarrow \mathbb{R}$ whose Hamiltonian vector field $X$, defined by
\begin{equation}\label{hamilton_equation}
    - d \Phi = \iota_{X} \omega,
\end{equation}
generates a $2\pi$-periodic flow.

Originating in the formalism of classical mechanics, Hamiltonian actions have been extensively studied in mathematics. In 1982, Atiyah~\cite{atiyah_convexity} and Guillemin--Sternberg~\cite{gs_convexity} independently proved the celebrated convexity theorem stating that the image of the momentum map is a convex polytope in $\mathfrak{t}^*$, and that the level sets of the momentum map are connected. Shortly thereafter, Delzant~\cite{delzant} proved a famous classification theorem, now known as Delzant's classification --- he showed that $2n$-dimensional compact connected Hamiltonian-$T^n$-manifolds are classified up to equivariant symplectomorphisms by their momentum polytopes.

Delzant's classification inspired additional equivariant classifications.
Karshon and Lerman~\cite{lerman_karshon} generalized Delzant's classification to non-compact $2n$-dimensional Hamiltonian $T^n$-manifolds.
In dimension four, Karshon~\cite{periodic_hamiltonians} classified compact connected Hamiltonian $S^1$-manifolds in terms of decorated graphs, following earlier works by Audin~\cite{audin_paper} and Ahara-Hattori~\cite{ahara_hattori}. See also Audin's book~\cite{audin_book}. In a series of papers, Karshon and Tolman~\cite{centered_hamiltonians}~\cite{tall_uniqueness}~\cite{tall_existence} generalized Karshon's result to $2n$-dimensional Hamiltonian $T^{n-1}$-manifolds, whose momentum maps are proper and have reduced spaces of dimension two. These spaces are also known as tall complexity one spaces. For semitoric integrable systems, see~\cite{vu_ngoc_convexity, semitoric_uniqueness, semitoric_existence}. For non-abelian compact group actions, see, e.g.,~\cite{patrick_classifiation, woodward_classification, chiang_classification, knop_classification, knop_conjecture}. For Hamiltonian circle actions on orbifolds, see~\cite{orbifold_ham_s1}, and for Hamiltonian torus actions on orbifolds, see~\cite{lerman_tolman}.

Non-Hamiltonian actions are also of great interest in both physics and mathematics, see, e.g., Novikov's classical paper~\cite{novikov_paper}. Notably, McDuff~\cite{symplectic_circle_actions} proved that symplectic non-Hamiltonian circle actions on four-dimensional manifolds do not have fixed points. A key observation in her paper is that after a small perturbation of the symplectic form, every symplectic non-Hamiltonian circle action is generated by a circle-valued Hamiltonian. In the same paper, McDuff also constructed a non-Hamiltonian circle action on a six dimensional manifold, which has fixed tori. The question of whether there exists a compact symplectic non-Hamiltonian circle action with fixed points that are all isolated, often called \textit{McDuff's Conjecture}, was answered in the affirmative by Tolman~\cite{tolman_isolated_points}. Her proof also utilizes circle-valued Hamiltonians.
In certain cases, equivariant classification results were established for non-Hamiltonian symplectic torus actions. In dimension four, Pelayo~\cite{pelayo_two_tori} extended Delzant's classification to non-Hamiltonian actions --- he classified non-Hamiltonian symplectic $T^2$ actions on compact connected four-dimensional symplectic manifolds. This was partially based on a previous work of Pelayo and Duistermaat~\cite{duistermaat_pelayo} in which they classified symplectic torus actions with coisotropic principal orbits (see also~\cite{benoist}). For more works on symplectic torus actions on symplectic manifolds, see, e.g.,~\cite{kotschick_contractible_orbits, kim_frankel_complexity_one, jang_tolman, pelayo_tolman_fixed_points, ginzburg_symplectic_actions, giacobbe}.
\\

In this paper, we study symplectic non-Hamiltonian circle actions on four-dimensional compact connected symplectic manifolds. Our motivating problem is the following:
\begin{problem}\label{the_problem}
    Classify symplectic non-Hamiltonian circle actions on four-dimensional compact connected symplectic manifolds, up to equivariant symplectomorphisms. That is, find a complete set of invariants for these actions, and determine which values of the invariants are realizable.
\end{problem}
This classification problem extends Karshon's classification to non-Hamiltonian circle actions, and it extends Pelayo's classification from $T^2$ actions to circle actions. The main result of this paper is a solution of Problem~\ref{the_problem}, under an integrability assumption.
\\

Our strategy is to choose a circle-valued Hamiltonian in the sense of McDuff (see~\cite{symplectic_circle_actions}), and apply the tools developed by Karshon and Tolman for the classification of tall complexity one spaces (see~\cite{centered_hamiltonians},~\cite{tall_uniqueness},~\cite{tall_existence}). To choose a circle-valued Hamiltonian, we assume that the group of period $P$ of the one-form $\iota_X \omega$, defined by
\begin{equation*}
    P := \{\int_A \iota_X \omega\ |\ A \in H_1(M, \mathbb{Z})\},
\end{equation*}
is a discrete subgroup of $\mathbb{R}$. We then define a circle-valued Hamiltonian $\Phi:M \rightarrow \mathbb{R}/P$, where $\mathbb{R}/P\cong S^1$ since $P$ is discrete. The assumption that $P$ is discrete is always satisfied if the class $[\omega]$ is rational, or if the first Betti number of the quotient space $M/S^1$ is $1$. 

Topologically, we exhibit the quotient space $M/S^1$ as a surface bundle over the circle $\mathbb{R}/P$, with special marked points in each fiber. The fibers are the reduced spaces, and the marked points correspond to non-free orbits, and form ``braids'' in $M/S^1$, that circle around the base of the surface bundle. With the help of this bundle, we define seven invariants for our spaces:
\begin{enumerate}
    \item The group of periods $P$ of $\iota_X \omega$.
    \item The genus of the reduced spaces.
    \item The area of the reduced spaces.
    \item The isotropy data of the level sets.
    \item A monodromy invariant which encodes both the topological type of the surface bundle $M/S^1 \rightarrow \mathbb{R}/P$, and the topological type of the braiding of the marked points.
    \item A cohomological invariant which encodes the topological type of the $S^1$-fibration $M \rightarrow M/S^1$.
    \item A cohomological invariant which determines the cohomology class of the symplectic form.
\end{enumerate}

We determine which values of these invariants can be attained by constructing a space for each valid combination of invariants (see Theorem~\ref{existence_theorem}). Moreover, we show that two spaces are equivariantly symplectomorphic if and only if their invariants agree (see Theorem~\ref{uniqueness_theorem}). Together, these two theorems give a complete classification up to equivariant symplectomorphisms, thus solving Problem~\ref{the_problem} under the assumption that the group of period $P$ of the one-form $\iota_X \omega$ is discrete.
\\

The first invariant captures the ``size'' of the image of the circle-valued Hamiltonian. The second, third, and fourth invariants determine the isomorphism type of a neighborhood of a level set of the circle-valued Hamiltonian (cf.~\cite[Theorem 1]{centered_hamiltonians}. See also Lemma~\ref{local_uniqueness_lemma}). The last two invariants encode non-trivial global information on the manifold, which was not apparent in complexity one spaces (there, the fibration $M \rightarrow M/S^1$, and the cohomology class of the symplectic form were determined by the Duistermaat-Heckman function alone). When the genus of the reduced spaces is zero, the last two invariants turn out to be trivial.

Lastly, the monodromy invariant is particularly interesting, as it captures non-trivial topological data related to the subset of non-free orbits. As described above, the non-free orbits come in families that form braids in the surface bundle $M/S^1$. This topological phenomenon is analogous to the \textit{painting invariant}, a topological invariant introduced by Karshon and Tolman in~\cite{tall_uniqueness} for tall complexity one spaces. It is known that the painting invariant is trivial in dimension four. Hence, it is surprising that for non-Hamiltonian circle actions, this kind of topological information appears even in lower-dimensional spaces.
\\

The idea of constructing a principal $S^1$-bundle over a mapping torus, where the total space of the fibration is symplectic, was used by~\cite{gray_morgan_marisa} to construct examples of non-Kähler symplectic manifolds. Ginzburg~\cite[Section 2]{ginzburg_symplectic_actions} noted that this approach might be useful for formalizing a classification theorem for free symplectic $S^1$ actions. However, he points outs that describing the gluing data is a very complex problem in general, as it involves data on the group of symplectomorphisms of the reduced spaces, which is known to be very hard in dimension four or higher.

In this work, we only focus on (non-Hamiltonian) complexity one actions, where the reduced spaces are two dimensional. This allows us to explicitly characterize the gluing data using the monodromy invariant. Furthermore, by omitting the assumption that the circle action is free, we generalize the ideas of~\cite{ginzburg_symplectic_actions},~\cite{gray_morgan_marisa}. This generalization introduces substantial difficulties to the proofs, and forces us to adapt tools from~\cite{centered_hamiltonians},~\cite{tall_uniqueness}, and~\cite{haefliger_salem}, but it also reveals the braiding phenomenon that is captured by the monodromy invariant.
\\

It seems plausible that some of the results of this paper can be generalized to higher dimensional non-Hamiltonian complexity one actions, under the assumption that the orbits of the action are isotropic (see Remark~\ref{high_complexity_remark}). On the contrary, as with the case of Hamiltonian actions, generalizing these results to actions of complexity larger than one seems significantly harder.

Finally, we hope that this work will be useful for addressing the following two problems:
\begin{itemize}
    \item Which four-dimensional symplectic $S^1$-manifolds admit an equivariant Kähler structure or more generally an equivariant complex structure?
    \item Which symplectic $S^1$ actions on four-manifolds extend to symplectic $T^2$ actions?
\end{itemize}
These problems were solved in~\cite{periodic_hamiltonians} for Hamiltonian $S^1$ actions in dimension four, based on her classification theorem. For symplectic $T^2$ actions in dimension four, Pelayo and Duistermaat~\cite{complex_pelayo_duistermaat} solved the first question, based on Pelayo's classification theorem~\cite{pelayo_two_tori}. For higher dimensional complexity one spaces, there are works in progress addressing these questions (\cite{kahler_complexity_one_spaces} for equivariant Kähler structures on complexity one spaces, and~\cite{extending_tall_complexity_one},~\cite{extending_short_complexity_one} for extending Hamiltonian $T^{n-1}$ actions to Hamiltonian $T^n$ actions). Surprisingly, these works find connections between the above problems and the painting invariant. It is therefore natural to ask if for non-Hamiltonian $S^1$ actions, these problems are related to the monodromy invariant. See Remarks~\ref{kahler_structure_remark} and~\ref{extending_to_pelayo_remark} for more information, and the paragraph thereafter for a related observation.

\subsubsection*{Acknowledgements}
First, I wish to thank my advisor, Yael Karshon, for illuminating discussions and indispensable help with this project. Additionally, I would like to thank Liat Kessler, Leonid Polterovich, Daniele Sepe, Susan Tolman, and Yoav Zimhony for useful discussions related to this paper. Special thanks goes to Joé Brendel for his excellent suggestions on how to improve this manuscript. We also thank the anonymous referee for their helpful comments, which improved the clarity of this work in general, and in particular for their invaluable suggestions regarding the formulation of the fibration and de Rham invariants.

This paper is partially based on results from my master's thesis, Invariants of symplectic non-Hamiltonian circle actions on 4-manifolds, completed at Tel Aviv University under the supervision of Professor Yael Karshon.

Some of the discussions that contributed to this work took place in the workshops ``Workshop on Lie Groups, Singular Spaces, and Higher Structures'' and ``Workshop on Hamiltonian Geometry and Quantization'', both taken place at the Fields Institute for Research in Mathematical Sciences. We thank the Fields Institute and the organizers of these great workshops.

This research was partially supported by an NSF-BSF Grant 2021730.


\section{Definitions and main results}
In this section, we introduce some preliminary definitions and lemmas, and then state the main results of the paper (Theorem~\ref{existence_theorem} and Theorem~\ref{uniqueness_theorem}). 

Let $X \in \mathfrak{X}(M)$ be a vector field generating a circle action on a four-dimensional compact connected symplectic manifold $(M, \omega)$. The action is symplectic if and only if the one-form $\iota_X \omega$ is closed, and it is Hamiltonian if and only if the one-form $\iota_X \omega$ is exact. We will assume that the action is symplectic but non-Hamiltonian, or equivalently, that the one-form $\iota_X \omega$ is closed but not exact.

The \textbf{group of periods} $P$ of $\iota_X \omega$ is the subgroup of $\mathbb{R}$ defined by 
\begin{equation*}
    P := \{\int_A \iota_X \omega\ |\ A \in H_1(M, \mathbb{Z})\}.
\end{equation*}
It is invariant under equivariant symplectomorphisms. The circle action is Hamiltonian if and only if the group of periods is trivial, hence in our case $P$ is a non-trivial subgroup. If $P$ is a discrete subgroup of $\mathbb{R}$, then $\mathbb{R}/P$ is diffeomorphic to a circle, and the circle action is generated by a circle-valued Hamiltonian $\Phi:M \rightarrow  \mathbb{R}/P$, which is unique up to an additive constant.

In the following paragraphs, we define invariants for spaces that admit a circle-valued Hamiltonian, and so we will assume that our spaces have discrete group of periods, so that every space will admit a circle-valued Hamiltonian $\Phi:M \rightarrow \mathbb{R}/P$. By Lemma~\ref{betti_one_group_of_periods_discrete}, this includes every space which satisfies $b_1(M/S^1) = 1$ (see also Equation~\eqref{equation_cohomology} for a description of $H^1(M/S^1)$ in terms of the monodromy invariant). Moreover, it includes every space with rational symplectic form $[\omega] \in H^2(M, \mathbb{Q})$.

\begin{example}\label{intro_example}
    Let $S^1$ act on $(\mathbb{T}^2 \times S^2, \omega = 2\pi dp \wedge dq + dh \wedge d\theta)$ by rotating the torus $k$ times, and the sphere $\ell$ times. Assume that $k > 0$, $\ell > 0$, and $\gcd(k, \ell) = 1$. This action is effective, and is generated by the circle-valued Hamiltonian
    \begin{equation*}
        \Phi(p, q, h, \theta) = kp + \ell h,
    \end{equation*}
    which is defined modulo $k$, since $P(\iota_X \omega) = k \mathbb{Z}$.

    When $k>1$, this action is not free - the subtori $\mathbb{T}^2 \times \{n\}$ and $\mathbb{T}^2 \times \{s\}$ are stabilized by the subgroup $\mathbb{Z}_k$ of $S^1$, where $n$ and $s$ are the north pole and south pole of the sphere, respectively. The subtori can be thought of as $\mathbb{Z}_k$-tori, analogously to the well-known $\mathbb{Z}_k$-spheres from~\cite{periodic_hamiltonians}.

    We recommend keeping this example in mind when reading the following paragraphs on the definitions of the invariants, and we will refer to it after each definition. See Subsection~\ref{sphere_torus_subsection} for a more in-depth description of this example, and in general Appendix~\ref{examples_appendix} for more examples.
\end{example}

By~\cite[Proposition 2]{symplectic_circle_actions}, the circle action has no fixed points (see Lemma~\ref{no_fixed_points}). It follows that every non-free orbit $\mathcal{O}$ of the action has stabilizer $\mathbb{Z}_n$ for some $n \ge 2$. Its \textbf{isotropy representation} is defined as the linear symplectic isomorphism class of the representation of $\mathbb{Z}_n$ on the tangent spaces at the points in $\mathcal{O}$. It can be characterized by a pair $(n,a)$ of coprime integers that satisfy $0 < a < n$, where $n$ is the order of the stabilizer, and $a$ characterizes its action on the symplectic slice (see Section~\ref{preliminaries_section}). We refer to such a pair as a \textbf{coprime residue class}, and often denote it by $C$.

For every $y\in \mathbb{R}/P$, the level set $\Phi^{-1}(y)$ of $\Phi:M \rightarrow \mathbb{R}/P$ is compact, and has finitely many non-free orbits. The unordered list of isotropy representations of the non-free orbits in the level set is called the \textbf{isotropy data} of the level set. 

In Section~\ref{section_circle_valued}, we prove the following fiber connectivity lemma:
\begin{lemma}\label{fiber_connectivity_of_circle_actions}
    Let $(M, \omega)$ be a compact connected symplectic manifold, equipped with a symplectic non-Hamiltonian $S^1$ action with a discrete group of periods $P$, and let $\Phi:M \rightarrow \mathbb{R}/P$ be a circle-valued Hamiltonian for the action. Then the level sets of $\Phi$ are connected.
\end{lemma}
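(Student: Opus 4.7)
The plan is to lift $\Phi$ to a real-valued, proper Hamiltonian on a connected cover of $M$, and apply Ehresmann's fibration theorem. Let $\pi : \tilde{M} \to M$ be the connected abelian cover corresponding to the kernel of the period homomorphism $\pi_1(M) \to P$, $[\gamma] \mapsto \int_\gamma \iota_X \omega$; by construction $\tilde{M}$ is connected and its deck transformation group is $P$. Since $\pi^* \iota_X \omega$ has vanishing periods on $\tilde{M}$, I can write $\pi^* \iota_X \omega = -d \tilde{\Phi}$ for a smooth function $\tilde{\Phi} : \tilde{M} \to \mathbb{R}$, normalized so that $\tilde{\Phi}$ descends to $\Phi$ under the quotient $q : \mathbb{R} \to \mathbb{R}/P$. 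The group $P$ then acts on $\tilde{M}$ by deck transformations that shift $\tilde{\Phi}$ by translations, so for each $y \in \mathbb{R}/P$ and each $t \in q^{-1}(y)$, the covering map restricts to a homeomorphism $\tilde{\Phi}^{-1}(t) \to \Phi^{-1}(y)$.

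The next step is to verify that $\tilde{\Phi}$ is a proper submersion. It is a submersion because critical points of $\tilde{\Phi}$ would project to zeros of $X$ on $M$, which do not exist by the fixed-point-free property of symplectic non-Hamiltonian circle actions on compact 4-manifolds (Lemma \ref{no_fixed_points}). It is proper because for any interval $[a,b] \subset \mathbb{R}$ of length less than the positive generator of $P$, the preimage $\tilde{\Phi}^{-1}([a,b])$ maps homeomorphically onto the compact subset $\Phi^{-1}(q([a,b])) \subset M$; general bounded intervals reduce to this case by a finite cover.

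By Ehresmann's fibration theorem, $\tilde{\Phi} : \tilde{M} \to \mathbb{R}$ is then a locally trivial smooth fiber bundle, hence globally trivial over the contractible base: $\tilde{M} \cong F \times \mathbb{R}$ for a model fiber $F$. Since $\tilde{M}$ is connected, $F$ is connected, and therefore $\Phi^{-1}(y) \cong F$ is connected for every $y \in \mathbb{R}/P$. I do not anticipate a serious technical obstacle; the only mildly delicate points are confirming the connectedness of the chosen cover and the properness of $\tilde{\Phi}$, both routine consequences of standard covering-space theory and the compactness of $M$.
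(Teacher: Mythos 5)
Your argument is appealing and works cleanly when the action is fixed-point-free, but it does not prove the lemma as stated. The lemma has no dimensional assumption, and the paper explicitly highlights this generality. Your proof, however, invokes Lemma~\ref{no_fixed_points} (McDuff's Proposition~2) at precisely the step where it cannot be avoided: to conclude that $\tilde\Phi$ has no critical points, so that Ehresmann's theorem applies. That lemma is a theorem about compact symplectic \emph{four}-manifolds, and its conclusion is false in higher dimensions. McDuff constructed a compact six-dimensional symplectic manifold with a non-Hamiltonian circle action having fixed tori, and Tolman later produced one with isolated fixed points. On such a space $\tilde\Phi$ has critical points, is not a submersion, Ehresmann's fibration theorem does not apply, and there is no reason for $\tilde M$ to be a product $F \times \mathbb{R}$. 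So your argument proves the lemma only in dimension four (and, more generally, whenever the action happens to have no fixed points), which is strictly less than what the lemma asserts.

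The paper's proof sidesteps this by running a Morse--Bott analysis with the circle-valued function directly on $M$. Critical submanifolds of the Hamiltonian have even index and coindex; crossing a critical level whose index and coindex are both at least two does not change the number of connected components of a level set; and index or coindex zero (local extrema) is excluded by a separate, dimension-free part of McDuff's argument. This reduces the question to a cyclic permutation of the components of a single level set, induced by the gradient flow as one goes once around $\mathbb{R}/P$, which is then killed by a period-integral computation. Your lift-to-a-$P$-cover-and-apply-Ehresmann route is a genuinely more economical argument in the fixed-point-free case and is worth remembering as such, but to prove the general statement you would need to replace the appeal to Ehresmann with a Morse--Bott analysis of $\tilde\Phi$ on $\tilde M$ that tolerates critical submanifolds of index and coindex at least two, together with a separate argument ruling out local extrema --- at which point the two proofs essentially converge.
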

We emphasize that Lemma~\ref{fiber_connectivity_of_circle_actions} serves as our main motivation for using $\mathbb{R}/P$-valued Hamiltonians throughout the paper, instead of the usual $\mathbb{R}/\mathbb{Z}$-valued Hamiltonians used by most other authors. Furthermore, we note that Lemma~\ref{fiber_connectivity_of_circle_actions} does not make any assumption on the dimension of the manifold. This lemma generalizes the connectivity part of Atiyah-Guillemin-Sternberg's convexity theorem, to circle-valued Hamiltonians. See also~\cite{benoist} and~\cite{giacobbe} for similar generalizations.

It follows by Lemma~\ref{fiber_connectivity_of_circle_actions} that the reduced space at $y$ is homeomorphic to a compact connected oriented surface of some genus $g$. In fact, $\Phi^{-1}(y)~\rightarrow~\Phi^{-1}(y)/S^1$ is a Seifert fibration with Seifert invariant $\{g;(1,b),(n_1, a_1),\dots,(n_k, a_k)\}$, where each isotropy representation is given by the coprime residue class $(n_j, a_j)$, and $b$ is some integer. The Seifert Euler number of the fibration is given by
\begin{equation}\label{intro_seifert_euler_eq}
    e := -b -\sum_{j=1}^k \frac{a_j}{n_j}.
\end{equation}
See also~\eqref{seifert_euler_equation}. By Lemma~\ref{duistermaat_heckman_constant} below, $e$ is always integer, and thus the sum $\sum_{j=1}^k -\frac{a_j}{n_j}$ is always an integer. See~\cite[Corollary 1.6]{seifert_fibrations} for our notation of Seifert invariants.

The fixed points of the action coincide with the critical points of the Hamiltonian. Because the circle action has no fixed points, the circle-valued Hamiltonian $\Phi:M \rightarrow \mathbb{R}/P$ is a proper submersion. It follows that all the reduced spaces have the same genus $g$, and all the level sets of $\Phi$ have the same isotropy data, which can be characterized by a list of coprime residue classes $C_1,\dots,C_k$  (see Lemma~\ref{same_genus_and_isotropy_data}). Both $g$ and $C_1,\dots,C_k$ are invariant under equivariant symplectomorphisms, and we call them the \textbf{genus invariant} and the \textbf{isotropy data invariant}.

For instance, Example~\ref{intro_example} has reduced spaces diffeomorphic to spheres, with two singularities at the north and south poles (if $k>1$). Its genus invariant is $0$, and its isotropy data invariant is the two coprime residue classes $(k, \ell \mod k), (k, -\ell \mod k)$ when $k > 1$, or an empty list when $k=1$.

The \textbf{Duistermaat-Heckman measure} on $\image(\Phi)$ is the pushforward of the \textbf{Liouville measure} $\frac{\omega^2}{2!}$ under the Hamiltonian $\Phi$. By~\cite{duistermaat_heckman}, it is equal to $\rho_{\DuHe} \lambda$ where $\rho_{\DuHe}$ is a piece-wise polynomial, and $\lambda$ is the Lebesgue measure on $\image(\Phi)$ (see also Theorem~\ref{duistermaat_heckman_thm}). 

In Section~\ref{section_circle_valued}, we prove the following lemma on the Duistermaat-Heckman measure of non-Hamiltonian circle actions in dimension four:
\begin{lemma}\label{duistermaat_heckman_constant}
    Let $(M^4, \omega)$ be a compact connected symplectic manifold, equipped with a symplectic non-Hamiltonian $S^1$ action with a discrete group of periods $P$, and let $\Phi:M \rightarrow \mathbb{R}/P$ be a circle-valued Hamiltonian for the action. Then the Duistermaat-Heckman measure on $\mathbb{R}/P$ is translation invariant, and the Seifert Euler number of each level set of $\Phi$ is 0.
\end{lemma}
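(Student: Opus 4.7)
The plan is to apply the Duistermaat--Heckman theorem to local real-valued lifts of $\Phi$, and then use the $P$-periodicity of $\rho_{DH}$ to collapse the resulting linear formula to a constant.

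First I would observe that by Lemma \ref{no_fixed_points} the action has no fixed points, so $\Phi$ has no critical points, and every point of $\mathbb{R}/P$ is a regular value; in particular $\Phi$ is a proper submersion. Around each $y \in \mathbb{R}/P$ I can choose an arc $U$ so small that $\Phi$ restricted to $\Phi^{-1}(U)$ lifts to a real-valued Hamiltonian $\tilde{\Phi} : \Phi^{-1}(U) \to \mathbb{R}$. Applying Theorem \ref{duistermaat_heckman_thm} to this local model, and using that $\dim M = 4$ and that there are no critical values in $U$, $\rho_{DH}$ is a polynomial of degree at most $\dim M / 2 - 1 = 1$ on $U$, with derivative
\[
\rho_{DH}'(y) = -e(y),
\]
where $e(y)$ is the Seifert Euler number of the Seifert fibration $\Phi^{-1}(y)\to \Phi^{-1}(y)/S^1$.

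Since $\rho_{DH}$ is a single globally defined function on $\mathbb{R}/P$ that is locally affine with slope $-e(y)$, its slope is locally constant, hence globally constant on the connected set $\mathbb{R}/P$. Thus $\rho_{DH}$ lifts to an affine function $t \mapsto c - e t$ on $\mathbb{R}$. Because this lift must be $P$-periodic in order to descend to $\mathbb{R}/P$, and because $P$ is a non-trivial subgroup of $\mathbb{R}$, its slope must vanish: $e = 0$. This gives simultaneously the vanishing of the Seifert Euler number at every level set and the translation-invariance of the Duistermaat--Heckman measure.

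The main obstacle is the derivative formula $\rho_{DH}' = -e$ in the presence of the finite stabilizers attached to non-free orbits; classically this formula is stated for free $S^1$-actions with the Chern number of the reduced $S^1$-bundle on the right-hand side. I expect this to be handled by Theorem \ref{duistermaat_heckman_thm}, either by working directly with the reduced symplectic form on the $2$-orbifold $\Phi^{-1}(y)/S^1$ and expressing its cohomology class via the Seifert Euler class, or by pulling back to a finite cover of $M$ along which the stabilizers become trivial, applying the classical statement there, and using multiplicativity of both sides under covers.
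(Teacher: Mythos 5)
Your proof is correct and follows essentially the same approach as the paper: apply the Duistermaat--Heckman theorem to local real-valued lifts, use the absence of fixed points (Lemma \ref{no_fixed_points}) to see the slope is locally constant, and then use periodicity over $\mathbb{R}/P$ to force the slope, and hence the Seifert Euler number, to vanish. One small imprecision: the paper's Theorem \ref{duistermaat_heckman_thm} gives the derivative as $2\pi$ times the Seifert Euler number rather than $-e(y)$, but this sign and constant-factor discrepancy has no effect on the argument.
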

It follows from Lemma~\ref{duistermaat_heckman_constant} that the Duistermaat-Heckman measure is equal to $c_{\DuHe}\lambda$ for some constant $c_{\DuHe}$. We call $c_{\DuHe}$ the \textbf{Duistermaat-Heckman constant} of the space. It is invariant under equivariant symplectomorphisms. In fact, we have the useful equality
\begin{equation}\label{dh_vol_eq}
    \vol(M, \omega) = 2\pi \tau c_{\DuHe}.
\end{equation}
For instance, the Duistermaat-Heckman constant of Example~\ref{intro_example} is $4\pi / k$.

To define the monodromy invariant, we first define the mapping class group of a surface with labeled points. Fix an oriented surface of genus $g$, and label $k$ distinct points with coprime residue classes $C_1,\dots,C_k$. We denote the labeled surface by $\Sigma_{g,C_1,\dots,C_k}$. Let $\Homeo_+(\Sigma_{g,C_1,\dots,C_k})$ be the group of orientation-preserving homeomorphisms of $\Sigma_{g,C_1,\dots,C_k}$ that send each labeled point to a point with the same label. We define the \textbf{mapping class group} of $\Sigma_{g,C_1,\dots,C_k}$ by
\begin{equation*}
    \mcg(\Sigma_{g,C_1,\dots,C_k}) := \Homeo_+(\Sigma_{g,C_1,\dots,C_k})/{\sim},
\end{equation*}
where $f \sim h$ if they are continuously isotopic relative to the labeled points.

We give a topological description of the quotient $M/S^1$ as a mapping torus whose fiber is an oriented surface with special marked points:
\begin{proposition}\label{topological_model_proposition}
     Let $(M^4, \omega)$ be a compact connected symplectic manifold, equipped with a symplectic non-Hamiltonian $S^1$ action with a discrete group of periods $P$, and let $\tau$ be the positive generator of $P$. Moreover, let $\Phi:M \rightarrow \mathbb{R}/P$ be a circle-valued Hamiltonian for the action, and let $\overline{\Phi}: M/S^1 \rightarrow \mathbb{R}/P$ be the induced map on the quotient space.
     
     Then there exists a unique integer $g \ge 0$, a unique finite unordered list of coprime residue classes $C_1,\dots,C_k$, and a homeomorphism $f$ in $\Homeo_+(\Sigma_{g,C_1,\dots,C_k})$, such that $M/S^1$ is homeomorphic to the mapping torus
     \begin{equation*}
        M_f:=\bigslant{\Sigma_{g, C_1,\dots,C_k} \times [0, \tau]}{(x,0) \sim (f(x), \tau)},
    \end{equation*}
    by an orientation-preserving homeomorphism $\Psi:M/S^1 \rightarrow M_f$ that satisfies:
    \begin{enumerate}
        \item The following diagram commutes:
            \begin{center}
                \begin{tikzcd}
                    M/S^1 \arrow[rr, "\Psi"] \arrow[dr, "\overline{\Phi}"'] & & M_f \arrow[dl, "\pi"] \\
                    & \mathbb{R}/P      
                \end{tikzcd}
            \end{center}
        \item Let $A \subset M/S^1$ be the set of non-free orbits. Then the image of $\Psi|_A$ is the set of points in $M_f$ that project to labeled points in $\Sigma_{g,C_1,\dots,C_k}$. Furthermore, the isotropy representation of each point in $A$ corresponds to the label of its image.
    \end{enumerate}    
    
    Finally, the conjugacy class of $[f]$ in $\mcg(\Sigma_{g,C_1,\dots,C_k})$ is independent of the choice of~$f$.
\end{proposition}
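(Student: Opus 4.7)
The plan is to apply an $S^1$-equivariant version of Ehresmann's fibration theorem to exhibit $\Phi$ as a locally trivial $S^1$-bundle over $\mathbb{R}/P$, and then pass to the $S^1$-quotient to obtain the mapping torus structure. Because the action has no fixed points (Lemma~\ref{no_fixed_points}), $\Phi$ has no critical points and is a proper submersion. I would first construct an $S^1$-invariant vector field $V$ on $M$ with $d\Phi(V) = 1$ (viewing $d\Phi$ as an $\mathbb{R}$-valued one-form since $T(\mathbb{R}/P) = \mathbb{R}$): pick such a $V$ locally, patch by a partition of unity, and average over $S^1$; averaging preserves the defining condition because $\Phi$ is $S^1$-invariant. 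Since $M$ is compact, the flow $\phi_t$ of $V$ is defined for all $t \in \mathbb{R}$, commutes with the $S^1$-action, and satisfies $\phi_t(\Phi^{-1}(y)) = \Phi^{-1}(y+t)$ for all $y \in \mathbb{R}/P$.

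Setting $F_0 := \Phi^{-1}(0)$ and $f := \phi_\tau|_{F_0}$ produces an $S^1$-equivariant orientation-preserving diffeomorphism of $F_0$ onto itself (well-defined since $\tau$ generates $P$). The map $(x, t) \mapsto \phi_t(x)$ from $F_0 \times [0,\tau]$ to $M$ is surjective, is injective on $F_0 \times [0,\tau)$, and identifies $(x,\tau)$ with $(f(x),0)$; by compactness it descends to an $S^1$-equivariant homeomorphism from the mapping torus of $f$ onto $M$. Passing to the $S^1$-quotient and invoking Lemmas~\ref{fiber_connectivity_of_circle_actions} and~\ref{same_genus_and_isotropy_data} to identify $F_0/S^1$, as a topological space, with a compact connected oriented surface of genus $g$ carrying $k$ marked points labeled by the coprime residue classes of the non-free orbits, I would obtain the desired homeomorphism $\Psi : M/S^1 \to M_{\bar f}$, where $\bar f$ denotes the map induced by $f$. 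Condition (1) is built into the flow construction; condition (2) together with $\bar f \in \Homeo_+(\Sigma_{g,C_1,\ldots,C_k})$ follows because $S^1$-equivariance of $f$ forces it to send non-free orbits to non-free orbits with the same stabilizer and symplectic slice representation.

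Uniqueness of $g$ and of the unordered list $C_1, \ldots, C_k$ is immediate from Lemma~\ref{same_genus_and_isotropy_data}, since these invariants are already determined by any single level set of $\Phi$. For the conjugacy class of $[f]$ in $\mcg(\Sigma_{g,C_1,\ldots,C_k})$, two $S^1$-invariant choices $V_0, V_1$ of horizontal lift can be connected by $V_s := (1-s)V_0 + sV_1$, which remains $S^1$-invariant with $d\Phi(V_s) = 1$; the corresponding monodromies $f_s : F_0 \to F_0$ form a continuous isotopy of $S^1$-equivariant diffeomorphisms, descending to an isotopy of $\bar f_s$ rel labeled points in $\Sigma_{g,C_1,\ldots,C_k}$. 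A change of base point from $y_0$ to $y_0 + s$ conjugates $f$ by $\phi_s$, which again descends to a conjugation in $\mcg(\Sigma_{g,C_1,\ldots,C_k})$. The main obstacle, I expect, lies in the careful passage from the smooth equivariant picture on $M$ to the purely topological statement on the quotient: one must verify that $\bar f$ is genuinely label-preserving (rather than merely a homeomorphism of the underlying surface), and that isotopies of equivariant diffeomorphisms of $F_0$ descend to isotopies rel the marked points --- both ultimately resting on equivariance of the flow and on the classification of symplectic $\mathbb{Z}_n$-slice representations by coprime residue classes.
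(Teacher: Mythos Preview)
Your approach is correct and takes a genuinely different route from the paper's. The paper proves this proposition only after developing the machinery of Sections~\ref{sheaves_section}--\ref{painted_surface_bundle_section}: it constructs an \emph{associated painted surface bundle} (a smooth object built via grommets and sheaf cohomology), shows by Proposition~\ref{classification_of_painted_surface_bundles} that this smooth bundle is a mapping torus, invokes Proposition~\ref{associated_painted_surface_bundle_proposition} to obtain the homeomorphism with $M/S^1$, and finally appeals to the isomorphism $\mcg^\infty \cong \mcg$ (Theorem~\ref{mcg_forgetful_isomorphism}) to pass to the topological mapping class group. You instead run a direct equivariant Ehresmann argument on $M$ itself and pass to the quotient, bypassing all of this. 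Your proof is much shorter and entirely self-contained for Proposition~\ref{topological_model_proposition} in isolation; the paper's detour is not wasted, however, since that machinery is what actually delivers Proposition~\ref{uniqueness_theorem_phi_diffeo} (classification up to $\Phi$-diffeomorphism, a smooth statement about the quotient that your topological argument does not reach), with Proposition~\ref{topological_model_proposition} then falling out as a corollary.

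One small gap in your uniqueness argument: you establish independence of the conjugacy class only for $f$'s arising from your own construction (varying the invariant lift $V$ and the base point). The proposition asserts independence over \emph{all} $f \in \Homeo_+(\Sigma_{g,C_1,\ldots,C_k})$ for which an orientation-preserving homeomorphism $\Psi: M/S^1 \to M_f$ satisfying (1)--(2) exists. The fix is the standard mapping-torus argument: given two such maps $f, f'$ with homeomorphisms $\Psi, \Psi'$, the composite $\Psi' \circ \Psi^{-1}: M_f \to M_{f'}$ is a label-preserving bundle homeomorphism over $\mathbb{R}/P$, hence restricts to a path $h_t \in \Homeo_+(\Sigma_{g,C_1,\ldots,C_k})$, $t \in [0,\tau]$, with $h_\tau \circ f = f' \circ h_0$. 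For each labeled point $p$ the path $t \mapsto h_t(p)$ lies in the finite labeled set, hence is constant; thus $h_t$ is an isotopy rel labeled points, $[h_0] = [h_\tau]$, and $[f'] = [h_0][f][h_0]^{-1}$.
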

We prove Proposition~\ref{topological_model_proposition} in Section~\ref{painted_surface_bundle_section}. The \textbf{monodromy invariant} is the conjugacy class of the element $[f]$ in $\mcg(\Sigma_{g,C_1,\dots,C_k})$, given by Proposition~\ref{topological_model_proposition}. For example, the monodromy invariant of Example~\ref{intro_example} is the conjugacy class of the identity map $\id:S^2 \rightarrow S^2$. See Definition~\ref{monodromy_invariant_definition} for another definition of the monodromy invariant, and Remark~\ref{monodromy_definitions_agree} on why these definitions agree.

We note that the first cohomology of the quotient space $M/S^1$ can be related to the monodromy invariant by:
\begin{equation}\label{equation_cohomology}
    H^1(M/S^1) \cong \mathbb{Z}^{\dim(\ker(f_1 - \id)) + 1},
\end{equation}
where $f_1:H^1(\Sigma_g) \rightarrow H^1(\Sigma_g)$ is the map induced by $f$ on the first cohomology (see also Equation~\eqref{long_exact_sequence_equation}). In particular, when $f_1$ has no eigenvalues $1$, then $b_1(M/S^1) = 1$, and the group of periods must be discrete by Lemma~\ref{betti_one_group_of_periods_discrete}. This includes for example the case where the reduced spaces are homeomorphic to spheres.
\begin{remark}
    For tall complexity one spaces, the \textit{painting invariant} defined in~\cite{tall_uniqueness} encodes the topological properties of the quotient space, and in dimension $4$ it is known to be trivial (see~\cite[Remark 2.3]{tall_uniqueness}). Surprisingly, the monodromy invariant which serves as the analogous topological invariant in our setting, turns out to be non-trivial, which means that non-trivial topological phenomena appear already in dimension $4$. This occurs since the image of the momentum map is a circle, which allows for the $\mathbb{Z}_k$-tori to form ``braids'' when circling around the momentum values. Here, $\mathbb{Z}_k$-tori refer to the connected components of the set of non-free orbits (cf. $\mathbb{Z}_k$-spheres in~\cite{periodic_hamiltonians}).
\end{remark}

In Section~\ref{painted_surface_bundle_section}, we show that the group of periods, genus invariant, isotropy data invariant, and monodromy invariant classify the spaces up to $\Phi$-diffeomorphisms between their quotients (see Definition~\ref{phi_diffeo} and Proposition~\ref{uniqueness_theorem_phi_diffeo} for a precise definition and statement). In Section~\ref{fibration_invariant_section}, we formalize the \textbf{fibration invariant}, which determines whether two spaces with $\Phi$-diffeomorphic quotients are also equivariantly diffeomorphic (or $\Phi$-$T$-diffeomorphic to be more precise, see Definition~\ref{phi_t_diffeo}). Intuitively, it encodes the topological type of the fibration $M \rightarrow M/S^1$. The construction is based on a characteristic class introduced by Fintushel in~\cite{fintushel2}, for the classification of smooth circle actions in dimension four. This class generalizes the Chern class of principal $S^1$-bundles to the context four-manifolds with locally-free actions. We note that for complexity one spaces, the fibration $M \rightarrow M/S^1$ is determined by the Duistermaat-Heckman function, and thus this invariant is special for the non-Hamiltonian case.

We give a short description of the fibration invariant. By Proposition~\ref{topological_model_proposition}, the quotient space $M/S^1$ is diffeomorphic to a mapping torus $M_f$, where $f:\Sigma_g \rightarrow \Sigma_g$ is a representative of the monodromy invariant. Therefore, identifying the quotient space with $M_f$, the characteristic class $c$ of the principal $S^1$-orbibundle $M \rightarrow M_f$, introduced by Fintushel in~\cite{fintushel2}, is an element of $H^2(M_f, \mathbb{Z})$, which is equal to the Chern class of a principle $S^1$-bundle $\tilde M \rightarrow M_f$ constructed by extracting a neighborhood of the non-free orbits, and replacing them with free orbits. By Lemma~\ref{duistermaat_heckman_constant}, the Seifert Euler class of each level set vanishes. It follows that the restriction of $c$ on each fiber of $\pi:M_f \rightarrow \mathbb{R}/P$ is equal to $b [\Sigma_g]$, where $[\Sigma_g]$ is the generator of $H^2(\Sigma_g, \mathbb{Z})$, and $b$ is the sum $-\sum_{j=1}^k \frac{a_j}{n_j}$ by Equation~\eqref{intro_seifert_euler_eq}. Using the short exact sequence
\begin{equation}\label{short_exact_sequence_introduction}
    0 \rightarrow H^1(\Sigma_g, \mathbb{Z})/\image(f_1 - \id) \rightarrow H^2(M_f, \mathbb{Z}) \overset{p}{\rightarrow} H^2(\Sigma_g, \mathbb{Z}) \rightarrow 0
\end{equation}
given by the Mayer--Vietoris sequence, the class $c$ can be regarded as an element of the affine subspace
\begin{equation}\label{affine_subspace_intro}
    \mathcal{H} := p^{-1}(b [\Sigma_g]) \subset H^2(M_f, \mathbb{Z}),
\end{equation}
which is modeled on $H^1(\Sigma_g, \mathbb{Z})/\image(f_1 - \id)$. Here, $p:H^2(M_f, \mathbb{Z}) \rightarrow H^2(\Sigma_g, \mathbb{Z})$ is the restriction map to a fiber, and $f_1:H^1(\Sigma_g, \mathbb{Z}) \rightarrow H^1(\Sigma_g, \mathbb{Z})$ is the map induced on first cohomology by $f$. The \textbf{fibration invariant} of the space $(M, \omega, \Phi)$ is represented by the element $c$ of the affine subspace $\mathcal{H}$, and two elements $c$ and $c'$ are said to be \textbf{equivalent} if there exists an orientation-preserving homeomorphism $F:M_f \rightarrow M_f$, respecting the map $M_f \rightarrow \mathbb{R}/P$, such that $F^*c' = c$. See Section~\ref{fibration_invariant_section} for more details.
\\

Next, we wish to determine whether two equivariantly diffeomorphic spaces (or more precisely, $\Phi$-$T$-diffeomorphic) are equivariantly symplectomorphic. In Section~\ref{de_rham_invariant_section}, using equivariant Moser's trick, we formalize the \textbf{de Rham invariant}, which together with the Duistermaat-Heckman constant, determines exactly that. Intuitively, the de Rham invariant describes the possible non-equivalent choices for the cohomology class of the symplectic form. As before, the cohomology class of the symplectic form of complexity one spaces is determined by the Duistermaat-Heckman function, and thus the de Rham invariant is special for the non-Hamiltonian case.

We give a short description of the de Rham invariant. Let $(M, \omega)$ and $(M', \omega'$) be $S^1$-spaces as above, that have the same six invariants. Then the spaces are equivariantly diffeomorphic with a map respecting the circle-valued Hamiltonians. For every such equivariant diffeomorphism $\Psi:M \rightarrow M'$, the form $\Psi^*\omega' - \omega$ is a basic form, which determines a cohomology class ${[\Psi^*\omega' - \omega]}_{\reduced}$ in $H^2(M/S^1, \mathbb{R})$. If there exists such an equivariant diffeomorphism $\Psi$ for which ${[\Psi^*\omega' - \omega]}_{\reduced}$ vanishes, we say that the forms $\omega$ and $\omega'$ are \textbf{equivalent}, and we can apply equivariant Moser's trick to see that $(M, \omega)$ and $(M', \omega')$ are equivariantly symplectomorphic. The \textbf{de Rham invariant} of a space is the equivalence class of its symplectic form, under these cohomological equivalences. Since both forms have the same Duistermaat-Heckman constant, the cohomology class ${[\Psi^*\omega' - \omega]}_{\reduced}$ vanishes on each fiber of $M/S^1 \rightarrow \mathbb{R}/P$. Therefore using the short exact sequence
\begin{equation}\label{real_short_exact_sequence_introduction}
    0 \rightarrow H^1(\Sigma_g, \mathbb{R})/\image(f_1 - \id) \overset{\overline{d^*}}{\rightarrow} H^2(M/S^1, \mathbb{R}) \overset{p}{\rightarrow} H^2(\Sigma_g, \mathbb{R}) \rightarrow 0
\end{equation}
as in Equation~\eqref{short_exact_sequence_introduction}, we can identify the class ${[\Psi^*\omega' - \omega]}_{\reduced}$ with an element of the vector space $H^1(\Sigma_g, \mathbb{R})/\image(f_1 - \id)$. See also Equation~\eqref{mayer_vietoris_short_exact_sequence}. This identification gives a useful parametrization for the possible values of the de Rham invariant, as elements of $H^1(\Sigma_g, \mathbb{R})/\image(f_1 - \id)$, where the equivalences are described by Equation~\eqref{vanish_equation}. This parametrization depends on the chosen space $(M, \omega)$, relative to which we measure the differences.

\begin{remark}
    The fibration invariant and the de Rham invariant are both trivial when the first Betti number of the quotient space $b_1(M/S^1)$ is $1$. For example, this applies to Example~\ref{intro_example}. See Remarks~\ref{b1_trivial_fibration_invariant_remark} and~\ref{b1_trivial_de_rham_invariant_remark}, and also Corollaries~\ref{phi_t_diffeo_and_eq_symp_betti_one} and~\ref{phi_diffeo_and_phi_t_diffeo_betti_one}. The invariants are not trivial in general though -- In Appendix~\ref{examples_appendix} we describe calculations of these invariants in some cases where $b_1(M/S^1) > 1$. In general, these invariants are harder to compute than the first five invariants introduced in the paper.
\end{remark}

We now state our two main theorems, which classify our spaces up to equivariant symplectomorphisms.

\begin{theorem}[Existence Theorem]\label{existence_theorem}
    Suppose that $P \subset \mathbb{R}$ is a non-trivial discrete subgroup, $c_{\DuHe}$ is a positive real, and $g$ is a non-negative integer. Furthermore, let $(C_1,\dots,C_k):=((n_1,a_1),\dots,(n_k,a_k))$ be an unordered list of coprime residue classes, such that the sum $\sum_{j=1}^k \frac{a_j}{n_j}$ is an integer. In addition, let $[f]$ be an element in the mapping class group $\mcg(\Sigma_{g, C_1,\dots,C_k})$, and let $A$ be an element in the affine subspace~$\mathcal{H}$, introduced in Equation~\eqref{affine_subspace_intro}.
    
    Then, there exists a four-dimensional compact connected symplectic manifold $(M, \omega)$, equipped with a symplectic non-Hamiltonian circle action, with group of periods $P$, Duistermaat-Heckman constant $c_{\DuHe}$, genus invariant $g$, isotropy data invariant $C_1,\dots,C_k$, monodromy invariant the conjugacy class of $[f]$, and fibration invariant represented by $A$.
    
    Furthermore, for every $B$ in $H^1(\Sigma_g, \mathbb{R})/\image(f_1 - \id)$, there exists an invariant symplectic form $\omega'$ on $M$, such that $(M, \omega')$ has the same six invariants, and such that
    \begin{equation*}
        \overline{d^*}(B) = {[\omega' - \omega]}_{\reduced},
    \end{equation*}
    where $\overline{d^*}:H^1(\Sigma_g, \mathbb{R})/\image(f_1 - \id) \rightarrow H^2(M/S^1, \mathbb{R})$ is given by the short exact sequence in Equation~\eqref{real_short_exact_sequence_introduction}.
\end{theorem}
\begin{proof}
    By Theorem~\ref{existence_theorem_phi_diffeo}, there exists a space $(N, \sigma)$ with group of periods $P$, Duistermaat-Heckman constant $c_{\DuHe}$, genus invariant $g$, isotropy data invariant $C_1,\dots,C_k$, monodromy invariant the conjugacy class of $[f]$. We prove Theorem~\ref{existence_theorem_phi_diffeo} in Section~\ref{construction_section}, by constructing a symplectic form on a family of Seifert manifolds, and gluing it using a strengthened version of the local uniqueness theorem for complexity one spaces proved in~\cite{centered_hamiltonians}.
    
    By Lemma~\ref{fibration_existence_lemma}, there exists a space $(M, \omega)$ with the same five invariants as $(N, \sigma)$, but fibration invariant represented by $A$. To prove Lemma~\ref{fibration_existence_lemma}, we construct non-equivalent fibrations over the quotient space of $(N, \sigma)$, and show that they admit an invariant symplectic form that generates the $S^1$-action with the same circle-valued Hamiltonian.
    
    Lastly, for every $B$ in $H^1(\Sigma_g, \mathbb{R})/\image(f_1 - \id)$, using Lemma~\ref{de_rham_invariant_existence_lemma}, we perturb $\omega$ to another invariant symplectic form $\omega'$ on $M$, which also generates the $S^1$-action with the same circle-valued Hamiltonian, such that $(M, \omega')$ has the same six invariants as $(M, \omega)$, and with $\overline{d^*}(B) = {[\omega' - \omega]}_{\reduced}$.
\end{proof}

\begin{theorem}[Uniqueness Theorem]\label{uniqueness_theorem}
    Suppose that $(M, \omega)$ and $(M', \omega')$ are four-dimensional compact connected symplectic manifolds, both equipped with symplectic non-Hamiltonian $S^1$ actions. Let $X$ and $X'$ be the corresponding generating vector fields, and assume that the groups of periods of $\iota_X \omega$ and $\iota_{X'} \omega'$ are discrete.
    
    Then, the manifolds $(M, \omega)$ and $(M', \omega')$ are equivariantly symplectomorphic if and only if they have the same group of periods, Duistermaat-Heckman constant, genus invariant, isotropy data invariant, monodromy invariant, fibration invariant, and de Rham invariant.
\end{theorem}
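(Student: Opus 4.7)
The forward direction is essentially tautological: each of the seven invariants is defined so as to be preserved under equivariant symplectomorphisms. One checks this invariant-by-invariant --- the group of periods $P$ is defined via $H_1(M,\mathbb{Z})$ and the pullback of $\iota_X\omega$; the Duistermaat-Heckman constant is read off the pushforward of the Liouville measure; the genus and isotropy data are read off any level set (any equivariant symplectomorphism intertwines circle-valued Hamiltonians up to an additive constant, which produces a translation of $\mathbb{R}/P$ and so preserves the pointwise data modulo relabeling of fibers); the monodromy invariant is a conjugacy class in the mapping class group of the fiber and an equivariant symplectomorphism descends to a $\Phi$-diffeomorphism of $M/S^1 \cong M_f$ that intertwines mapping tori; and the fibration and de Rham invariants are by construction defined as orbits of the relevant actions on cohomology, so they are insensitive to the remaining ambiguities.

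For the converse, the plan is to peel off the three layers of invariants in order, building progressively finer isomorphisms. \textbf{Step 1 (topological layer).} Assume $(M,\omega)$ and $(M',\omega')$ share the first five invariants. Then by Proposition \ref{topological_model_proposition} both quotients $M/S^1$ and $M'/S^1$ are homeomorphic to mapping tori $M_f$ and $M_{f'}$ over $\Sigma_{g,C_1,\dots,C_k}$, and $[f]$, $[f']$ are conjugate in the mapping class group. Conjugating and then using a standard smoothing / isotopy argument, one produces a $\Phi$-diffeomorphism $\bar h: M/S^1 \to M'/S^1$ intertwining the induced maps $\bar\Phi, \bar\Phi'$ to $\mathbb{R}/P$ and matching the non-free loci together with their labels. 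This is precisely Proposition \ref{uniqueness_theorem_phi_diffeo} cited in the paper.

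\textbf{Step 2 (bundle layer).} Next, assume in addition that the fibration invariants agree. By definition of the fibration invariant as an orbit in $\mathcal{O}^{\Phi}$, the equality of orbits means that after composing $\bar h$ with a $\Phi$-diffeomorphism of $M'/S^1$ lying in the relevant stabilizer, the pullback along $\bar h$ of the principal $S^1$-orbibundle $M' \to M'/S^1$ is isomorphic to $M \to M/S^1$ as $S^1$-orbibundles over $M/S^1$. Choosing such an isomorphism and composing with $\bar h$ produces an equivariant diffeomorphism $h: M \to M'$ intertwining $\Phi$ with $\Phi'$ (up to the additive constant absorbed into the choice of Hamiltonian), i.e.\ a $\Phi$-$T$-diffeomorphism in the sense of Definition \ref{phi_t_diffeo}.

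\textbf{Step 3 (symplectic layer).} Finally, assume also that the Duistermaat-Heckman constants and de Rham invariants agree. Pull back $\omega'$ via $h$ to obtain a second invariant symplectic form $\sigma := h^*\omega'$ on $M$, which generates the same circle action with the same circle-valued Hamiltonian $\Phi$. Because both forms have the same Duistermaat-Heckman constant, their restrictions to each reduced space have equal area; because the de Rham invariants agree (as orbits under the $\Phi$-$T$-diffeomorphism action on $H^2$), after further modifying $h$ within its $\Phi$-$T$-diffeomorphism class we may assume $[\omega] = [\sigma]$ in $H^2(M;\mathbb{R})$. Now apply an equivariant Moser argument: the family $\omega_t := (1-t)\omega + t\sigma$ consists of invariant closed two-forms with $[\omega_t]$ constant and with the same characterizing property (namely $-d\Phi = \iota_X \omega_t$), and one checks non-degeneracy along the path using that both forms restrict to the same area form on each reduced space and the fact that $\omega$ and $\sigma$ agree in the "$S^1$-direction" once the Hamiltonians are identified. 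Integrating the resulting time-dependent vector field (which can be chosen $S^1$-invariant and tangent to level sets of $\Phi$) yields an equivariant isotopy $\psi_t$ with $\psi_1^*\sigma = \omega$, and $h\circ\psi_1: M\to M'$ is the desired equivariant symplectomorphism.

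The main obstacle is Step 3, namely carrying out the equivariant Moser argument in the presence of the non-free orbits and the non-trivial circle-valued Hamiltonian: one must verify that the primitive of $\sigma - \omega$ used in Moser's trick can be chosen invariant and can be selected so that the resulting flow preserves the fibers of $\Phi$ (not merely the equivariance), which is what forces the joint use of the Duistermaat-Heckman equality and the de Rham invariant equality. The local model near the non-free orbits is governed by the strengthened local uniqueness lemma (Lemma \ref{local_uniqueness_lemma}) adapted from \cite{centered_hamiltonians}, and globally one patches via the surface-bundle description of Step 1.
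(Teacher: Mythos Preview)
Your proposal is correct and follows the same three-layer decomposition as the paper: Proposition~\ref{uniqueness_theorem_phi_diffeo} for the $\Phi$-diffeomorphism layer, Proposition~\ref{fibration_invariant_uniqueness} for the lift to a $\Phi$-$T$-diffeomorphism, and Proposition~\ref{de_rham_invariant_uniqueness} (equivariant Moser) for the symplectic layer.

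Two minor corrections. In Step~3 the relevant cohomology is the \emph{basic} cohomology $H^2_{\basic}(M)\cong H^2(M/S^1;\mathbb{R})$, not $H^2(M;\mathbb{R})$: vanishing of $[\sigma-\omega]_{\basic}$ is precisely what allows the primitive $\beta$ to be chosen basic, and then $d\Phi(Z_t)=\omega_t(X,Z_t)=-\iota_X\beta=0$ forces the Moser flow to preserve $\Phi$ automatically (this is Lemma~\ref{moser_with_same_cohomology}). Your final paragraph overstates the difficulty here: once $\beta$ is basic, the flow is globally defined and $\Phi$-preserving without any local analysis near non-free orbits, so Lemma~\ref{local_uniqueness_lemma} plays no role in this step --- in the paper it is used only for the \emph{existence} theorem.
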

\begin{proof}
    Fix circle-valued Hamiltonians $\Phi:M \rightarrow \mathbb{R}/P$ and $\Phi':M' \rightarrow \mathbb{R}/P$ that generate the circle actions. Suppose that the spaces have the same invariants. By Proposition~\ref{uniqueness_theorem_phi_diffeo}, since their group of periods, isotropy data invariant, genus invariant, and monodromy invariant agree, then their quotients are $\Phi$-diffeomorphic. To prove Proposition~\ref{uniqueness_theorem_phi_diffeo}, we apply tools from~\cite{tall_uniqueness} to reduce the classification up to $\Phi$-diffeomorphisms to the classification of their so called ``associated painted surface bundles'' (see Section~\ref{sheaves_section}). We then classify these painted surface bundles, formalizing the monodromy invariant in the process (see Section~\ref{painted_surface_bundle_section}).
    
    By Proposition~\ref{fibration_invariant_uniqueness}, because their fibration invariants agree, they are also $\Phi$-$T$-diffeomorphic. We prove Proposition~\ref{fibration_invariant_uniqueness} in Section~\ref{fibration_invariant_section}, following the ideas of Fintushel's~\cite{fintushel2} work on smooth circle actions, and Haefligher-Salem's~\cite{haefliger_salem} work on Čech cohomology for $T^n$-orbifolds.
    
    Lastly, by Proposition~\ref{de_rham_invariant_uniqueness}, because their Duistermaat-Heckman constant and de Rham invariants agree, they are equivariantly symplectomorphic. We prove Proposition~\ref{de_rham_invariant_uniqueness} in Section~\ref{de_rham_invariant_section} using equivariant Moser's trick.
\end{proof}

An immediate consequence of the main theorems is that every space with a non-discrete group of period is equivariantly diffeomorphic to at least one of the spaces in our classification:
\begin{corollary}\label{cor_eq_diff}
    Let $(M, \omega)$ be a four-dimensional compact connected symplectic manifold, equipped with a symplectic non-Hamiltonian $S^1$ action. Then there is an orientation-preserving equivariant diffeomorphism to at least one of the spaces constructed by Theorem~\ref{existence_theorem}.
\end{corollary}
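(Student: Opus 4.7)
The plan is to reduce the non-discrete period case to the discrete case via McDuff's perturbation theorem. As noted in the introduction, McDuff proved in \cite{symplectic_circle_actions} that given any symplectic non-Hamiltonian $S^1$-action on $(M, \omega)$, after an arbitrarily small $S^1$-invariant perturbation $\tilde\omega$ of $\omega$, the same $S^1$-action becomes symplectic non-Hamiltonian with respect to $\tilde\omega$ and is generated by a circle-valued Hamiltonian, meaning that the group of periods of $\iota_X \tilde\omega$ is a discrete subgroup of $\mathbb{R}$. Concretely, one adds a small invariant closed 2-form whose effect on the class $[\iota_X \omega] \in H^1(M, \mathbb{R})$ makes the resulting periods rationally commensurable.

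With this perturbation in hand, the manifold $(M, \tilde\omega)$ with its $S^1$-action satisfies the hypotheses of Theorems \ref{existence_theorem} and \ref{uniqueness_theorem}. I would then read off its seven invariants and, by Theorem \ref{existence_theorem}, select one of the constructed spaces, call it $(M', \omega')$, with matching invariants. Theorem \ref{uniqueness_theorem} then furnishes an equivariant symplectomorphism $\Psi: (M, \tilde\omega) \to (M', \omega')$. Forgetting the symplectic structures, $\Psi$ is an equivariant diffeomorphism between the underlying smooth manifolds with their $S^1$-actions, which is what the corollary claims.

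It remains to verify that $\Psi$ is orientation-preserving with respect to the orientation of $M$ induced by the original form $\omega$. Since $\tilde\omega$ is chosen sufficiently close to $\omega$, the linear path $\omega_t := (1-t)\omega + t\tilde\omega$ consists entirely of non-degenerate closed 2-forms on $M$, so $\omega$ and $\tilde\omega$ induce the same orientation on $M$. As a symplectomorphism from $(M, \tilde\omega)$ to $(M', \omega')$, the map $\Psi$ preserves the orientations induced by these symplectic forms, and therefore preserves the original orientation coming from $\omega$.

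The main place where we rely on external input is McDuff's perturbation result, which is not proved in this paper; everything else is a direct bookkeeping application of the two main theorems. A secondary, much smaller subtlety is that McDuff's perturbation must be carried out in the $S^1$-invariant category (which is automatic after averaging), so that the original action remains symplectic for $\tilde\omega$; but this is standard.
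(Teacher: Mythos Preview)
Your proposal is correct and follows essentially the same route as the paper: perturb $\omega$ via McDuff's result to an invariant symplectic form with discrete group of periods, then apply Theorems \ref{existence_theorem} and \ref{uniqueness_theorem} to obtain an equivariant symplectomorphism to a constructed space, which in particular is an equivariant diffeomorphism for the original $(M,\omega)$. Your explicit verification that the orientation induced by $\omega$ agrees with that of $\tilde\omega$ (via the linear path of non-degenerate forms) is a detail the paper leaves implicit but is indeed needed for the orientation-preserving claim in the statement.
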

\begin{proof}
    Following~\cite{symplectic_circle_actions}, we choose an invariant symplectic form $\omega'$ with integer cohomology class. It follows that the group of periods of $\iota_X \omega'$ is discrete. Therefore, by Theorem~\ref{uniqueness_theorem}, $(M, \omega')$ is equivariantly symplectomorphic to some space $(N, \sigma)$ constructed by Theorem~\ref{existence_theorem}. Hence, $(M, \omega)$ is equivariantly diffeomorphic to $(N, \sigma)$ by the same map.
\end{proof}
\begin{remark}
    If one drops the assumption that the group of periods of $\iota_X \omega$ is discrete, the function $\Phi: M \rightarrow \mathbb{R}/P$ can still be defined, however, the space $\mathbb{R}/P$ may no longer be a smooth manifold. The methods used throughout this paper are no longer applicable in this case. It's possible to perturb $\omega$ such that the group of periods becomes discrete, as in the proof of Corollary~\ref{cor_eq_diff} above. Then, Proposition~\ref{topological_model_proposition} gives a topological description of the quotient space, but it is unclear if the resulted genus invariant, isotropy data invariant, monodromy invariant, and fibration invariant will be independent of the perturbation. 
\end{remark}
\begin{remark}\label{high_complexity_remark}
    It seems reasonable to apply the techniques in this paper to symplectic non-Hamiltonian $T^{n-1}$ actions on $2n$-dimensional compact symplectic manifolds, under the assumptions that the orbits of the action are isotropic, and that the (generalized) group of periods of the action is a discrete subgroup of $\mathfrak{t}^* \cong \mathbb{R}^{n-1}$ (see~\cite[Section 5.2]{ratiu_ortega_book} or~\cite{cylinder_momentum_map} for more information on cylinder-valued momentum maps). This line of research will require some substantial work, and we expect the generalizations of the invariants to be much more complicated. For free actions, this direction of research was also suggested in~\cite[Section 2]{ginzburg_symplectic_actions}.
\end{remark}
\begin{remark}\label{kahler_structure_remark}
    A corollary of Karshon's work~\cite{periodic_hamiltonians} on the classification of four-dimensional Hamiltonian $S^1$-manifolds, is that every such space admits a Kähler structure. There is a conjecture that complexity one spaces admit an equivariant Kähler structure if and only if their painting is trivial (There is a work in progress~\cite{kahler_complexity_one_spaces} on this problem). It would be interesting to try and answer the same question for the spaces in this work. Inspired by the above conjecture, a natural question is whether there is a relation between the monodromy invariant and the existence of a Kähler structure. Recall that the Kodaira-Thurston manifold gives an example of a symplectic $S^1$-manifold with no Kähler structure. Using this paper, one can construct many other such examples in the spirit of~\cite{gray_morgan_marisa}. We note that for symplectic $T^2$ actions in dimension four, this problem was solved in~\cite{complex_pelayo_duistermaat}, based on Pelayo's classification theorem~\cite{pelayo_two_tori}. In the context of Kähler manifolds, it is worth mentioning Frankel's theorem~\cite{frankel}, which states that circle actions with fixed points on Kähler manifolds must be Hamiltonian.
\end{remark}
\begin{remark}\label{extending_to_pelayo_remark}
    Also in Karshon's work~\cite{periodic_hamiltonians}, she specifies conditions on when a Hamiltonian $S^1$ action can be extended to a Hamiltonian $T^2$ action. There are soon to be published works~\cite{extending_tall_complexity_one},~\cite{extending_short_complexity_one}, generalizing this result to higher dimensional complexity one spaces. They give a necessary and sufficient condition on the painting invariant, for when the action can be extended. It's an interesting question to find which non-Hamiltonian $S^1$ actions extend to non-Hamiltonian $T^2$ actions, which were classified in~\cite{pelayo_two_tori}. Specifically, the above works hint that there might be an interesting connection between the monodromy invariant and this problem.
\end{remark}

Following the last two remarks, we make an observation based on~\cite{complex_pelayo_duistermaat} and~\cite{non_complex_paper}. Consider a space $(M, \omega)$ with genus invariant $1$, trivial isotropy data, and monodromy invariant given by the gluing map $F:T^2 \rightarrow T^2$:
\begin{equation}\label{equation_mapping_torus_non_complex}
    F(p,q) = (p + q, p).
\end{equation}
Moreover, assume that the fibration invariant is non-trivial. It is known that the total space $E$ of a principal circle bundle $E \rightarrow T^2$, with Chern number one, is diffeomorphic to a mapping torus $M_F$ of $T^2$ with the gluing map $F$, and therefore by definition, $E \cong M/S^1$. Therefore $M \rightarrow M/S^1 \rightarrow T^2$ can be regarded as a non-trivial principal $S^1$-bundle over a non-trivial principal $S^1$-bundle over $T^2$, and hence, by the main theorem of~\cite{non_complex_paper}, $M$ does not admit a complex structure. However, by the main theorem of~\cite{complex_pelayo_duistermaat}, every compact symplectic four-manifold with a symplectic $T^2$ action admits an equivariant complex structure. Hence, the circle action on $M$ does not extend to a symplectic $T^2$ action. Note that spaces with these choices of invariants indeed exist by Theorem~\ref{existence_theorem} (see also Subsection~\ref{kodaira_like_subsection} for their relation to the Kodaira-Thurston manifold).
\\

In Appendix~\ref{examples_appendix}, we explicitly calculate the values of the invariants for selected symplectic non-Hamiltonian circle actions. In particular, we calculate them for diagonal circle actions on $S^2 \times S^2$ and $S^2 \times T^2$, for a symplectic non-Hamiltonian circle action on the Kodaira-Thurston manifold, as well as for other spaces that are $\Phi$-diffeomorphic to the aforementioned ones but not isomorphic to them.

\subsection{Outline}
In Section~\ref{preliminaries_section}, we introduce basic definitions and theorems that will be used throughout the paper.
In Section~\ref{section_circle_valued}, we define circle-valued Hamiltonians, and circle-valued Hamiltonian $S^1$-manifolds (These are spaces with a fixed circle-valued Hamiltonian). Moreover, we prove Lemmas~\ref{fiber_connectivity_of_circle_actions} and~\ref{duistermaat_heckman_constant}, regarding the connectivity and the Duistermaat-Heckman measure of the level sets of the circle-valued Hamiltonians.
In Section~\ref{moser_section} we define $\Phi$-$T$-diffeomorphisms (see Definition~\ref{phi_t_diffeo}), and use an equivariant version of Moser's trick to show Lemma~\ref{moser_with_same_cohomology}, which gives a condition on when a $\Phi$-$T$-diffeomorphism can be isotoped to an equivariant symplectomorphism. We also prove Corollary~\ref{classifying_circle_valued_spaces_instead}, which shows that every equivariant symplectomorphism between circle-valued Hamiltonian $S^1$-manifolds can be isotoped to respect the fixed Hamiltonians. This shows that our classification problem is equivalent to classifying circle-valued Hamiltonian $S^1$-manifolds up to isomorphism (i.e., up to equivariant symplectomorphisms that respect the Hamiltonians. See Definition~\ref{hamiltonian_s1_manifold_definition}).
In Section~\ref{phi_diffeo_section}, we define $\Phi$-diffeomorphisms (see Definition~\ref{phi_diffeo}), and use ideas from~\cite{haefliger_salem} to prove Lemma~\ref{lifting_phi_diffeo_condition}, which gives a condition on when a $\Phi$-diffeomorphism can be lifted to a $\Phi$-$T$-diffeomorphism.
In Section~\ref{sheaves_section}, we define a unique up to isomorphism ``painted surface bundle" for every space, and show that two spaces are $\Phi$-diffeomorphic if and only if their associated painted surface bundles are isomorphic.
In Section~\ref{painted_surface_bundle_section}, we classify (legal) painted surface bundles up to isomorphism, and formalize the monodromy invariant (see Definition~\ref{monodromy_invariant_definition}). We show that two spaces have $\Phi$-diffeomorphic quotients if and only if they have the same group of periods, genus invariant, isotropy data invariant, and monodromy invariant (see Proposition~\ref{uniqueness_theorem_phi_diffeo}).
In Section~\ref{construction_section}, we construct a circle-valued Hamiltonian $S^1$-manifold for every (valid) choice of values for the group of periods, genus invariant, isotropy data invariant, Duistermaat-Heckman constant, and monodromy invariant (see Theorem~\ref{existence_theorem_phi_diffeo}). Together with the previous section, this gives a full classification of our spaces up to $\Phi$-diffeomorphisms.
In Section~\ref{fibration_invariant_section}, we formalize the fibration invariant (see Definition~\ref{fibration_invariant_definition}), and show that two spaces with $\Phi$-diffeomorphic quotients are $\Phi$-$T$-diffeomorphic if and only if their fibration invariants agree (see Lemma~\ref{fibration_invariant_uniqueness}). Moreover, we construct a space for every value of the fibration invariant (see Lemma~\ref{fibration_existence_lemma}), thus giving a full classification of our spaces up to $\Phi$-$T$-diffeomorphisms.
In Section~\ref{de_rham_invariant_section}, we formalize the de Rham invariant (see Definition~\ref{de_rham_invariant_definition} and Proposition~\ref{de_rham_invariant_description}), and show that two $\Phi$-$T$-diffeomorphic spaces are equivariantly symplectomorphic if and only if their de Rham invariants agree (see Lemma~\ref{de_rham_invariant_uniqueness}). Moreover, we construct an appropriate symplectic form for every value of the de Rham invariant (see Lemma~\ref{de_rham_invariant_existence_lemma}), thus giving a full classification of our spaces up to equivariant symplectomorphisms (see Theorems~\ref{existence_theorem} and~\ref{uniqueness_theorem} and their proofs in the introduction).
In Appendix~\ref{examples_appendix}, we present some calculations of the values of the invariants for diagonal actions on product of surfaces, for a symplectic circle action on the Kodaira-Thurston manifold, and for other related spaces.
In Appendix~\ref{mcg_appendix}, we prove a classical theorem on the equivalence of different definitions of the mapping class group of an oriented surface with marked points, whose proof we have not found in the literature. This claim is needed in Section~\ref{painted_surface_bundle_section}, for showing that the monodromy invariant does not depend on the smooth structure of the painted surface bundles.


\section{Preliminaries}\label{preliminaries_section}
In this section, we remind the reader of definitions and results about symplectic actions that will be used throughout this paper.
\\

Let a torus $T^m$ act on a symplectic manifold $(M^{2n}, \omega)$, by symplectomorphisms. Let $\mathfrak{t}$ and $\mathfrak{t}^*$ be the Lie algebra and dual Lie algebra of $T^m$, and let $\langle \cdot, \cdot \rangle : \mathfrak{t}^* \times \mathfrak{t} \rightarrow \mathbb{R}$ be the natural pairing. A \textbf{momentum map} $\Phi: M\rightarrow \mathfrak{t}^*$ is a $T^m$-invariant map that satisfies
\begin{equation*}
    -d\langle \Phi, \xi \rangle = \iota_{X_\xi} \omega
\end{equation*}
for every $\xi \in \mathfrak{t}$, where $X_\xi \in \mathfrak{X}(M)$ is the vector field induced by $\xi$. A symplectic action that admits a momentum map is called a \textbf{Hamiltonian action}.
\\

Let $y \in \mathfrak{t}^*$ be a point in the image of $\Phi$. If $T^m$ acts freely on the level set $\Phi^{-1}(y)$, then the quotient space $\Phi^{-1}(y)/T^m$ is a smooth manifold of dimension $2n - 2m$. Famously, Mardsen and Weinstein showed in~\cite{mardsen_weinstein} that the quotient space $\Phi^{-1}(y)/T^m$ admits a unique symplectic structure $\omega_{\reduced}$ that satisfies:
\begin{equation*}
    \pi^* \omega_{\reduced} = i^* \omega
\end{equation*}
where $i:\Phi^{-1}(y) \rightarrow M$ is the inclusion map, and $\pi:\Phi^{-1}(y) \rightarrow \Phi^{-1}(y)/T^m$ is the quotient map. The symplectic manifold $(\Phi^{-1}(y)/T^m, \omega_{\reduced})$ is called the \textbf{reduced space} of $(M, \omega)$ at $y$. If $T^m$ acts non-freely on the level set $\Phi^{-1}(y)$, but $y$ is a regular value, then the same procedure yields a symplectic orbifold $(\Phi^{-1}(y)/T^m, \omega_{\reduced})$. The singular points of the orbifold correspond to the non-free orbits in the level set. See~\cite{lerman_tolman} for a comprehensive treatment of symplectic orbifolds and symplectic reduction.
\\

A \textbf{Hamiltonian $T$-manifold} $(M, \omega, \Phi, U)$ over an open convex set $U \subset \mathfrak{t}^*$ is a connected symplectic manifold $(M, \omega)$ with an effective $T$-action generated by a momentum map $\Phi: M \rightarrow U$. A Hamiltonian $T$-manifold is called \textbf{proper} if the Hamiltonian $\Phi$ is proper as a map to $U$. The dimension of $T$ is at most half of the dimension of $M$, and the number $\frac{1}{2}\dim M - \dim T$ is called the \textbf{complexity} of the space. A proper Hamiltonian $T$-manifold with complexity $k$ is also called a \textbf{complexity $k$ space}. For example, a four-dimensional proper Hamiltonian $S^1$-manifold is also called a four-dimensional complexity one space.
\\

In this paper, we are interested in symplectic circle actions on four-dimensional manifolds. Compact four-dimensional Hamiltonian $S^1$-manifolds were classified up to equivariant symplectomorphisms in terms of decorated graphs in~\cite{periodic_hamiltonians}. Later, this classification was extended in~\cite{centered_hamiltonians},~\cite{tall_uniqueness}, and~\cite{tall_existence} to complexity one spaces of all dimensions, under the additional assumption that the spaces are tall. A complexity one space is called \textbf{tall} if all of its reduced spaces are $2$-dimensional. Very roughly, their proof realizes the orbit space of a complexity one space as a surface bundle over the momentum image, and examine it with differential topology tools.
\\

Given a proper Hamiltonian $T$-manifold, the subset of points in $M$ that belong to non-free orbits is a union of symplectic submanifolds of codimension at least $2$. An orbit is called \textbf{exceptional} if it has a neighborhood in which every other orbit with the same momentum value has a strictly smaller stabilizer. For proper complexity one spaces, every level set of the momentum map has finitely many exceptional orbits.
\\

Let $x \in M$ be a point in an orbit $\mathcal{O}$. The \textbf{isotropy representation of $x$} is the representation of the stabilizer of $\mathcal{O}$ on the tangent space $T_x M$. This representation is a direct sum of a trivial representation and the representation of the stabilizer on the \textbf{symplectic slice} ${(T_x O)}^\omega / (T_x O \cap {(T_x O)}^\omega )$. If $y \in M$ is another point in the orbit $\mathcal{O}$, the isotropy representations of $x$ and $y$ are linearly symplectically isomorphic. Therefore, the orbit $\mathcal{O}$ have a well defined isomorphism class of representations of its stabilizer, and we call it the \textbf{isotropy representation of $\mathcal{O}$}. Given a level set of $\Phi$, the finite unordered list of isotropy representations of the exceptional orbits is called the \textbf{isotropy data} of the level set. 
\\

The Guillemin-Sternberg-Marle \textbf{local normal form theorem} gives a description of the neighborhood of an orbit, under the presence of a Hamiltonian action of a compact Lie group (see~\cite{GS_local_normal_form},~\cite{marle_local_normal_form}). More precisely, it shows that the neighborhood is completely determined by the isotropy representation of the orbit. In this paper we only need the theorem for non-fixed orbits of the action of $S^1$ on a four-dimensional manifold. A good reference for the local normal form theorem for four-dimensional Hamiltonian $S^1$-manifolds is~\cite[Appendix A]{periodic_hamiltonians}. For simplicity, we will only state the theorem for our specific case.
\\

Let $\mathbb{Z}_n \subset S^1$ be the cyclic subgroup of $S^1$ of order $n$, and let $\nu = e^{\frac{2\pi i}{n}}$ be the standard generator of the subgroup. Let $\mathbb{Z}_n$ act effectively and linearly on $\mathbb{C}$. When the $n > 1$, the action can be described by the unique integer $0 < k < n$, relatively prime to $n$, that satisfies $\nu \cdot z = e^{\frac{2\pi ki}{n}} z$. Moreover, let $T^*S^1$ be the cotangent bundle of $S^1$, with coordinates $t \mod 1$ and $h$, and let $\mathbb{Z}_n$ act on $T^*S^1$ by $\nu \cdot (t, h) = (\frac{1}{n} + t, h)$.

We define the \textbf{local model} related to the action of $\mathbb{Z}_n$ on $\mathbb{C}$ by taking the quotient of $T^*S^1 \times \mathbb{C}$ by the diagonal action of $\mathbb{Z}_n$. It is a proper Hamiltonian $S^1$-manifold, denoted as $S^1 \times_{\mathbb{Z}_n} \mathbb{C} \times \mathbb{R}$. Its points are written in the form $[t, z, h]$, where we identify $[\frac{1}{n} + t, z, h]$ with $[t, e^{\frac{2\pi ki}{n}} z, h]$. The $S^1$ action of the local model is defined by $e^{2\pi i s} \cdot [t, z, h] = [s + t, z, h]$. For any $c \in \mathbb{R}$, this action is generated by the Hamiltonian $\Phi([t, z, h]) = c + h$.
\begin{theorem}\label{simple_local_normal_form}
    Let $(M, \omega, \Phi)$ be a four-dimensional Hamiltonian $S^1$-manifold, let $\mathcal{O}$ be a non-fixed orbit of the action, and let $\mathbb{Z}_n$ be its stabilizer, where $n$ is a positive integer.
    
    Then there exists an effective linear action of $\mathbb{Z}_n$ on $\mathbb{C}$, a neighborhood $U$ of $\mathcal{O}$, a neighborhood $V$ of the orbit $[t,0,0]$ in the local model $S^1 \times_{\mathbb{Z}_n} \mathbb{C} \times \mathbb{R}$ associated with the $\mathbb{Z}_n$ action, and an equivariant symplectomorphism $\varphi:V \rightarrow U$, such that:
    \begin{enumerate}
        \item $\varphi$ sends the orbit $[t,0,0]$ to the orbit $\mathcal{O}$.
        \item $(\Phi \circ \varphi)([t, z, h]) = \Phi(\mathcal{O}) + h$.
    \end{enumerate}
    If $n > 1$, the $\mathbb{Z}_n$ action is given by a unique integer $0 < k < n$ relatively prime to $n$.
\end{theorem}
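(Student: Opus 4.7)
The proof plan is to invoke the Guillemin--Sternberg--Marle local normal form theorem for Hamiltonian actions of compact Lie groups (applied here to $G = S^1$), and then to unwind what the resulting model looks like when the orbit is one-dimensional and the symplectic slice is two-dimensional. Concretely, I would pick a point $x_0 \in \mathcal{O}$ with stabilizer $H = \mathbb{Z}_n$, and set $V = (T_{x_0}\mathcal{O})^\omega / (T_{x_0}\mathcal{O} \cap (T_{x_0}\mathcal{O})^\omega)$ to be the symplectic slice. Since $\mathcal{O}$ is non-fixed but $T_{x_0}\mathcal{O}$ is isotropic (any line is isotropic), $(T_{x_0}\mathcal{O})^\omega$ is three-dimensional and contains $T_{x_0}\mathcal{O}$, so $V$ is a two-dimensional symplectic vector space carrying a linear symplectic $\mathbb{Z}_n$-action.

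Next I would classify that slice representation. The $\mathbb{Z}_n$-action on $V$ must be faithful: if some non-trivial element acted trivially on $V$, then together with its trivial action on $\mathfrak{s}^1$ and on the tangent to the orbit it would fix a neighborhood of $\mathcal{O}$ pointwise, contradicting that the $S^1$-action on the connected $M$ is effective. Choosing a $\mathbb{Z}_n$-invariant compatible complex structure on $V$ (by averaging), we identify $V \cong \mathbb{C}$ symplectically and equivariantly, with $\mathbb{Z}_n$ acting by rotation $z \mapsto e^{2\pi i k/n} z$ for a unique $0 < k < n$ with $\gcd(k,n)=1$; this is the content of the case $n > 1$.

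With the slice representation fixed, I would invoke the Marle--Guillemin--Sternberg normal form (exactly as recorded in \cite[Appendix A]{periodic_hamiltonians} for the four-dimensional $S^1$ case): there is an equivariant symplectomorphism from a neighborhood of the zero orbit in $S^1 \times_{\mathbb{Z}_n}(\mathfrak{h}^0 \oplus V)$ onto a neighborhood of $\mathcal{O}$ in $M$. Here $\mathfrak{h}=0$ so $\mathfrak{h}^0 = \mathfrak{s}^1{}^* \cong \mathbb{R}$, and the model is precisely $S^1 \times_{\mathbb{Z}_n} \mathbb{C} \times \mathbb{R}$ described before the theorem. This map automatically sends $[t,0,0]$ to $\mathcal{O}$, giving property (1).

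For property (2), I would compute the momentum map on the local model from its definition: the $S^1$-action $e^{2\pi i s}\cdot[t,z,h] = [s+t,z,h]$ is generated, with respect to the standard reduced symplectic form on $S^1 \times_{\mathbb{Z}_n} \mathbb{C} \times \mathbb{R}$, by the Hamiltonian $[t,z,h]\mapsto h$; this is the standard calculation in $T^*S^1 \times \mathbb{C}$ descended to the $\mathbb{Z}_n$-quotient. Since two momentum maps for the same action on a connected space differ by a constant, composing with the model embedding produces $\Phi \circ \varphi = \Phi(\mathcal{O}) + h$ after an additive adjustment. The main obstacle in writing this out is really bookkeeping --- verifying faithfulness of the slice representation and matching conventions so that the sign and additive constant in the momentum map come out correctly --- while the deep input, the existence of the equivariant symplectic tubular neighborhood, is provided directly by MGS.
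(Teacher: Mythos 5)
The paper gives no proof of this theorem; it states it as a specialization of the Guillemin--Sternberg--Marle local normal form theorem and refers to Karshon's Appendix A for the four-dimensional $S^1$ case. Your proposal correctly derives the statement along exactly that route, filling in the expected bookkeeping: identifying the two-dimensional symplectic slice, verifying faithfulness of the $\mathbb{Z}_n$-representation on it (using effectiveness and connectedness), fixing the orientation via an invariant compatible complex structure to pin down the unique $k$, and computing the momentum map on the model up to the additive constant.
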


Let $(M, \omega, \Phi, U)$ be a four-dimensional proper Hamiltonian $S^1$-manifold. Let $\mathcal{O}$ be an orbit in a regular level set of $\Phi$. By the normal local form theorem, $\mathcal{O}$ is exceptional if and only if it is not free. Its isotropy representation can be described by its stabilizer $\mathbb{Z}_n$, and the integer $k$ mentioned in the statement above that describes the representation of $\mathbb{Z}_n$ on the symplectic slice. We define a \textbf{coprime residue class} to be a pair of relatively prime numbers $(n, k) \in \mathbb{N}^2$ that satisfy $n > 1$ and $0 < k < n$. Hence, the isotropy representation of $\mathcal{O}$ corresponds to a coprime residue class, and the neighborhood of the orbit is in turn determined by this class by the local normal form theorem.
\\

The following Lemma was proved by McDuff in~\cite{symplectic_circle_actions} by examining the level sets of a circle-valued Hamiltonian:
\begin{lemma}[Proposition 2 in~\cite{symplectic_circle_actions}]\label{no_fixed_points}
    Let $S^1$ act effectively on a four-dimensional compact symplectic manifold. Assume that the action is symplectic but not Hamiltonian. Then the action has no fixed points.
\end{lemma}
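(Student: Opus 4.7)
The plan is to prove the contrapositive: if a symplectic $S^1$-action on a compact connected symplectic $4$-manifold has a fixed point, then the action is Hamiltonian, contradicting the hypothesis.

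First, at a fixed point $p$, I would invoke the equivariant Darboux theorem to obtain $S^1$-invariant coordinates $(x_1,y_1,x_2,y_2)$ near $p$ with $\omega = dx_1\wedge dy_1 + dx_2\wedge dy_2$ and the action linear with integer weights $(m_1,m_2)$, not both zero by effectiveness. A direct calculation produces a local Hamiltonian $H_p = \tfrac{1}{2}\sum_j m_j(x_j^2+y_j^2)$ satisfying $-dH_p = \iota_X\omega$ on the chart. The problem then reduces to showing that the closed $1$-form $\alpha := \iota_X\omega$ is exact, equivalently, that its period group $P$ is trivial.

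A natural first attempt uses the ``$S^1$-sweep'': any loop $\gamma$ based at $p$ gives a map $\Gamma(\theta,s) = e^{i\theta}\cdot\gamma(s)$ whose boundary circles collapse to $p$ (since $p$ is fixed), descending to $\bar\Gamma : S^2 \to M$ with $\int_{S^2}\bar\Gamma^*\omega = 2\pi\int_\gamma\alpha$ by $S^1$-invariance of $\alpha$. One might hope $\bar\Gamma$ is null-homotopic, but the obvious candidate homotopy $(\theta,s)\mapsto e^{it\theta}\gamma(s)$ fails to descend to the sphere for non-integer $t$, because the $S^1$-periodicity in $\theta$ is broken. In any case, a homotopy argument that worked in all dimensions would contradict the higher-dimensional counterexamples cited in the paper (Tolman), so no such dimension-independent trick can succeed.

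The main obstacle is thus to exploit the specific four-dimensional geometry. The route I would take, following McDuff's original approach, is a perturbation: replace $\omega$ by a nearby $S^1$-invariant symplectic form $\omega'$ with rational cohomology class, so that $\iota_X\omega'$ has discrete period group $P'$ and defines a circle-valued Hamiltonian $\Phi' : M \to \mathbb{R}/P'$ for the unchanged $S^1$-action; the fixed point $p$ becomes a critical point of $\Phi'$. The hard part will be ruling out the existence of such a critical point on a compact connected four-manifold, by combining the local weight data at $p$, the structure of the fixed-point set as a disjoint union of symplectic submanifolds (isolated points or surfaces, available only because $\dim M = 4$), compactness of $M$, and the connectivity of level sets (cf.\ Lemma \ref{fiber_connectivity_of_circle_actions}) in a Morse-theoretic argument that forces $\Phi'$ to lift to $\mathbb{R}$, so that $[\iota_X\omega'] = 0$. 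Taking the perturbation arbitrarily small and using continuous dependence of the period class in $\omega$ then forces $[\alpha] = 0$, the desired contradiction with the non-Hamiltonian assumption.
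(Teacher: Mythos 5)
Your sketch starts in the right direction --- equivariant Darboux at the fixed point, perturbation of $\omega$ to an $S^1$-invariant form $\omega'$ with rational class, a circle-valued Hamiltonian $\Phi'$ with a critical point at $p$, and a continuity argument at the end to pass back to $\omega$ --- and your observation that a dimension-independent ``sweep'' argument cannot work (since it would contradict Tolman's six-dimensional example with isolated fixed points) is a correct sanity check. Note that the paper itself does not reprove this statement; it cites McDuff's Proposition~2 directly, so the relevant comparison is with McDuff's argument.

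The genuine gap is that the step you label ``the hard part'' is the actual content of the theorem, and your proposal never produces it. You assert that a Morse-theoretic argument, combining the index data at $p$, the structure of the fixed-point set, compactness, and connectivity of level sets, ``forces $\Phi'$ to lift to $\mathbb{R}$,'' but no mechanism is given. Concretely there are two cases to treat, neither of which your sketch argues: (i)~if the critical component through $p$ has index $0$ or coindex $0$ (a local extremum of $\Phi'$), one must show that a local extremum of a circle-valued Hamiltonian on a compact connected symplectic manifold forces $\Phi'$ to be non-surjective onto $\mathbb{R}/P'$, hence to lift --- this is the dimension-free part of McDuff's argument; (ii)~if every critical component is an index-$2$, coindex-$2$ saddle, one must derive a contradiction --- this is precisely where $\dim M = 4$ enters in an essential way, through the Duistermaat--Heckman/Seifert Euler-number bookkeeping on the two-dimensional reduced spaces as one circles the base, since the net change in the Euler number over a loop must vanish while each saddle crossing changes it by a nonzero contribution of a fixed sign. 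Without these two branches being argued, the proof has no substance at its core.

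There is also a dependency issue with your appeal to Lemma~\ref{fiber_connectivity_of_circle_actions}: in this paper, that lemma is proved by invoking the absence of local extrema of the circle-valued Hamiltonian, a fact the authors explicitly attribute to McDuff's proof of the very statement you are trying to establish. Treating the connectivity lemma as a black box inside a proof of Lemma~\ref{no_fixed_points} therefore reverses the logical order; you would first need to establish the no-local-extrema claim independently (case~(i) above), at which point you get the fiber connectivity, and only then handle the four-dimensional saddle case~(ii).
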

To define a circle-valued Hamiltonian, McDuff first perturbed the symplectic form so that it will have a rational cohomology class. We will discuss this in more details in Section~\ref{section_circle_valued}, and we will need the following definition.
\begin{definition}
    Let $\eta$ be a closed $k$-dimensional differential form in a compact manifold $M$. The \textbf{group of periods} of $\eta$ is the following subgroup of $\mathbb{R}$:
    \begin{equation*}
        P(\eta) := \{\int_A \eta\ |\ A \in H_k(M, \mathbb{Z})\}
    \end{equation*}
\end{definition}
The closed form $\eta$ is exact if and only if $P(\eta) = \{0\}$. Therefore, if $S^1$ acts symplectically on a symplectic manifold $(M, \omega)$ with a generating vector field $X \in \mathfrak{X}(M)$, then the action is Hamiltonian if and only if the group of periods of $\iota_X \omega$ is trivially zero. Throughout this paper, we will refer to $P(\iota_X \omega)$ as the \textbf{group of periods of the space}, and often just denote it as $P$.
\\

Let $(M, \omega, \Phi, U)$ be a proper Hamiltonian $T$-manifold. The volume form $\frac{1}{n!}\omega^n$ defines a measure on $M$, which is called the \textbf{Liouville measure}. The pushforward of the Liouville measure through the momentum map $\Phi$ is called the \textbf{Duistermaat-Heckman measure}. The Duistermaat-Heckman measure can be written as $\rho_{\DuHe}\lambda$, where $\lambda$ is the Lesbegue measure on $U$, and $\rho_{\DuHe}: U \rightarrow \mathbb{R}_{\ge 0}$ is called the \textbf{Duistermaat-Heckman function}. By the Duistermaat-Heckman Theorem (see~\cite{duistermaat_heckman}), the Duistermaat-Heckman function is a piece-wise polynomial function whose degree is at most the complexity $k$ of the action. If $y \in U$ is a regular point of the momentum map $\Phi$, the Duistermaat-Heckman function at $y$ is equal to the symplectic volume of the reduced space at $y$:
\begin{equation*}
    \rho_{\DuHe}(y) = {(2\pi)}^k \int_{\Phi^{-1}(y)/T} \frac{\omega_{\reduced}^{n-k}}{(n-k)!}
\end{equation*}
For four-dimensional proper Hamiltonian $S^1$-manifolds, the Duistermaat-Heckman function is piecewise linear, and its derivative is given by the Euler/Chern class:
\begin{theorem}[Duistermaat-Heckman Theorem]\label{duistermaat_heckman_thm}
    Let $(M, \omega, \Phi, U)$ be a four-dimensional proper Hamiltonian $S^1$-manifold. Let $y \in \mathbb{R}$ be a regular value of $\Phi$. Then in a neighborhood $V$ of $y$, the Duistermaat-Heckman function $\rho_{\DuHe}: V \rightarrow \mathbb{R}_{\ge 0}$ is a linear function with derivative equals to $2\pi$ times the Seifert Euler number of the Seifert fibration $\Phi^{-1}(y) \rightarrow \Phi^{-1}(y)/S^1$.
\end{theorem}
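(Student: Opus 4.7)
The plan is to combine the classical Duistermaat–Heckman theorem on the linear variation of the reduced symplectic form with a Chern–Weil style computation that identifies the integrated Chern class of the $S^1$-orbibundle over the reduced space with the Seifert Euler number.

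The first step is to produce a local trivialization of the family of reduced spaces. Since $y$ is a regular value of $\Phi$ and $\Phi$ is proper, there is an open interval $V \ni y$ consisting of regular values. On $\Phi^{-1}(V)$ the $S^1$-action is locally free, and the orbit map $\Phi^{-1}(V)/S^1 \to V$ is a proper orbifold submersion whose fibers $M_{y'} := \Phi^{-1}(y')/S^1$ are canonically identified, as oriented orbifolds, with $M_y$. Applying the Duistermaat–Heckman theorem in its orbifold version (as in \cite{lerman_tolman}) under this identification yields the linear cohomological variation
\begin{equation*}
    [\omega_{y'}] \;=\; [\omega_y] \,+\, (y'-y)\,c \qquad \text{in } H^2(M_y;\mathbb{R}),
\end{equation*}
where $c$ is the real Chern class of the principal $S^1$-orbibundle $\Phi^{-1}(y) \to M_y$ (with sign fixed by our momentum-map convention $-d\Phi = \iota_X\omega$).

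Next, I plan to use the general formula for the Duistermaat–Heckman function stated just before the theorem,
\begin{equation*}
    \rho_{\DuHe}(y') \;=\; 2\pi \int_{M_{y'}} \omega_{y'},
\end{equation*}
and note that the right-hand side depends only on the cohomology class of $\omega_{y'}$ on the fixed underlying orbifold $M_y$. Substituting the linear variation and differentiating at $y$ gives
\begin{equation*}
    \rho'_{\DuHe}(y) \;=\; 2\pi \int_{M_y} c.
\end{equation*}
In particular $\rho_{\DuHe}$ is affine on the connected component of $y$ in the set of regular values, hence piecewise linear on the whole image.

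The final and most substantive step is to identify $\int_{M_y} c$ with the Seifert Euler number $e = -b - \sum_j a_j/n_j$. For this I plan to choose an $S^1$-invariant connection one-form $\alpha$ on $\Phi^{-1}(y)$ away from the exceptional fibers, so that $-\tfrac{1}{2\pi} d\alpha$ descends to a representative of $c$ on the complement of the singular points of $M_y$. Removing small orbifold disk-neighborhoods around each singular point and applying Stokes's theorem, the contribution of the $j$-th singular point $(n_j,a_j)$ is $-a_j/n_j$, while the Chern integer of the restricted bundle over the smooth surface-with-holes reproduces the term $-b$. This is the standard orbibundle Chern-number computation for Seifert fibrations; its output matches the definition of $e$ used in the paper.

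I expect the only real difficulty to be bookkeeping: pinning down the sign conventions for $c$ (arising from $-d\Phi = \iota_X\omega$ together with the $2\pi$-periodicity of the flow) and the orientation of the boundary circles around singular points, so that the factors of $2\pi$ and the signs line up exactly with the Seifert Euler number $e = -b - \sum_j a_j/n_j$ rather than its negative.
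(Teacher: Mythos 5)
The paper does not actually supply a proof of this theorem: it is stated as a known preliminary result, with \cite{duistermaat_heckman} cited for the classical linear-variation statement and \cite{orbifold_ham_s1} (Remark 1.67) and \cite{orbifold_cohomology} (Proposition 4.2.1) cited for the equality of the generalized (orbifold) Chern number with the Seifert Euler number. Your proposal reconstructs exactly the argument those citations package: (i) the orbifold Duistermaat--Heckman theorem gives $[\omega_{y'}]=[\omega_y]+(y'-y)c$, (ii) the paper's formula $\rho_{\DuHe}(y')=2\pi\int_{M_{y'}}\omega_{y'}$ then yields $\rho'_{\DuHe}(y)=2\pi\int_{M_y}c$, and (iii) a connection/Stokes computation identifies $\int_{M_y}c$ with the Seifert Euler number $e=-b-\sum_j a_j/n_j$. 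This is the same two-ingredient decomposition the paper implicitly uses, just spelled out rather than cited, so there is no substantive divergence in route.

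Two small cautions. First, the phrase ``canonically identified'' for the fibers $M_{y'}$ should be softened: the identification depends on a choice of (e.g.\ gradient-flow) trivialization, though the induced cohomology class is well defined, which is all the argument needs. Second, the sign bookkeeping you flag is genuinely the delicate point, and it is not merely cosmetic: the theorem asserts slope $=+2\pi e$, not $\pm 2\pi e$, and whether the orbifold Chern number equals $e$ or $-e$ depends on simultaneous conventions for the momentum-map sign in $-d\Phi=\iota_X\omega$, the orientation of the reduced surface, and the normalization of the Seifert invariants $(n_j,a_j)$ (the paper's Equation \eqref{seifert_euler_equation} is one of several in the literature). The paper sidesteps this by citing \cite{orbifold_ham_s1} Remark 1.67, which fixes a consistent set of conventions; a self-contained proof along your lines would need to fix the same conventions and verify them once on a model example (say the local model $S^1\times_{\mathbb{Z}_n}\mathbb{C}\times\mathbb{R}$ from Theorem \ref{simple_local_normal_form}) before the Stokes computation can be trusted to produce $e$ rather than $-e$.
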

More often, this theorem is stated with the generalized Chern number instead of the Seifert Euler number. Both notions coincide, and therefore the different statements are equivalent. We choose to work with the Seifert Euler number since its standard expression (Equation~\eqref{seifert_euler_equation}) is given in terms of the isotropy representations of the exceptional orbits in the level set $\Phi^{-1}(y)$. See~\cite{seifert_fibrations} and~\cite{orbifold_seifert} for more information about Seifert fibrations, and the definition of the Seifert Euler number (Equation~\eqref{seifert_euler_equation}). For the correspondence between the Seifert Euler class and the generalized Chern class see Section 1.2.1 in~\cite{orbifold_ham_s1}, and particularly Remark 1.67. For the formula of the generalized Chern class for complex bundles over 2-dimensional orbifolds, see Proposition 4.2.1 in~\cite{orbifold_cohomology}.

Let $\{g;(1,b),(n_1, a_1),\dots,(n_k, a_k)\}$ be the Seifert invariants of the Seifert fibration $\Phi^{-1}(y) \rightarrow \Phi^{-1}(y)/S^1$. Then the Seifert Euler number is given by:
\begin{equation}\label{seifert_euler_equation}
    e(E \rightarrow F) = -b - \sum_{j=1}^k{\frac{a_j}{n_j}}.
\end{equation}
The pairs $(n_1, a_1),\dots,(n_k, a_k)$ are precisely the coprime residue classes that correspond to the isotropy data of $\Phi^{-1}(y)$. Hence, the Seifert Euler number can be calculated from the isotropy data, up to an integer (since $b$ cannot be read from the isotropy data).
\\

Lastly, we give a short reminder about basic forms. Let a compact connected Lie group $G$ act on a compact connected smooth manifold $M$, and let $\mathfrak{g}$ be the Lie algebra of $G$. A differential form $\alpha$ on $M$ is called \textbf{basic} if it is $G$-invariant and if for every $\xi \in \mathfrak{g}$, the induced vector field $\xi_M$ on $M$ satisfies $\iota_{\xi_M}\alpha = 0$. The complex of basic forms is a subcomplex of the complex of differential forms. By~\cite{koszul_basic}, its cohomology is isomorphic to the Čech/singular cohomology of $M/G$, with real coefficients. If $G = S^1$, and $X$ is the generating vector field of the action, then the form $\alpha$ is basic if and only if it satisfies the following two properties:
\begin{itemize}
    \item $\mathcal{L}_X \alpha = 0$,
    \item $\iota_X \alpha = 0$.
\end{itemize}
Moreover, if every non-free orbit of the action has a finite stabilizer, then there is a correspondence between differential forms on the orbifold $M/S^1$, and basic forms on $M$, given by the pullback map (see for instance~\cite[Corollary B.36]{ggk_book}).

\section{Tight circle-valued Hamiltonians}\label{section_circle_valued}
In this section, we define circle-valued Hamiltonians for symplectic non-Hamiltonian circle actions with a discrete group of periods $P(\iota_X \omega)$. We prove that if a circle-valued Hamiltonian is defined as a map to $\mathbb{R}/P(\iota_X \omega)$, its level sets must be connected. Moreover, we show that if the manifold is four-dimensional, the Duistermaat-Heckman measure is constant throughout the level sets of the circle-valued Hamiltonian, and the Seifert Euler number of each level set is zero.
\\

Let $(M, \omega)$ be a compact connected symplectic manifold, with an effective symplectic non-Hamiltonian $S^1$ action. For the purpose of studying these spaces, McDuff introduced circle-valued Hamiltonians in~\cite{symplectic_circle_actions}. She assumed that the class $[\omega] \in H^2(M, \mathbb{R})$ is in the image of the inclusion of $H^2(M, \mathbb{Z})$ into $H^2(M, \mathbb{R})$, and deduced that the group of periods $P$ of the one-form $\iota_X \omega$ is a subgroup of $\mathbb{Z}$. This allowed her to define an $\mathbb{R}/\mathbb{Z}$-valued Hamiltonian that satisfies $-d \Phi = \iota_X \omega$. For more information on the definition of circle-valued Hamiltonians, see the expository note~\cite{pelayo_circle_valued_ham}.

In this paper, we slightly generalize McDuff's definition of circle-valued Hamiltonians by omitting the integrability assumption on the class $[\omega]$. Instead, we assume that the group of periods $P$ of the one-form $\iota_X \omega$ is a discrete subgroup of $\mathbb{R}$. This allows us to define an $\mathbb{R}/\Gamma$-valued Hamiltonian, for any discrete group $\Gamma \subset \mathbb{R}$ that contains $P$ as a subgroup. The case $\Gamma = \mathbb{Z}$ corresponds to McDuff's definition.

An $\mathbb{R}/\Gamma$-valued Hamiltonian always factors through an $\mathbb{R}/P$-valued Hamiltonian. We will see that $\mathbb{R}/P$-valued Hamiltonians exhibit a special property, namely that their level sets are connected. We refer to such circle-valued Hamiltonians as \textbf{tight}.
\begin{definition}\label{circle_valued_hamiltonian_definition}
    Let $(M, \omega)$ be a symplectic manifold, $X \in \mathfrak{X}(M)$ a symplectic vector field, and $P\subset \mathbb{R}$ the group of periods of $\iota_X \omega$. Assume that $P$ is discrete, and let $0 \neq \Gamma \subsetneq \mathbb{R}$ be a non-trivial discrete group containing $P$. A \textbf{circle-valued Hamiltonian} is a smooth map $\Phi:M \rightarrow \mathbb{R}/\Gamma$ such that $\iota_X \omega = -d \Phi$. If $\Gamma = P$, we call $\Phi$ a \textbf{tight} circle-valued Hamiltonian.
\end{definition}
\begin{example}\label{ham_ex}
    Let $X$ be a Hamiltonian vector field on $(M, \omega)$, generated by $\Phi:M \rightarrow \mathbb{R}$. The one-form $\iota_X \omega = -d \Phi$ is exact, and therefore its group of periods $P$ is trivial. Hence, for every discrete nonzero subgroup $0 \neq \Gamma \subset \mathbb{R}$, the induced map $\tilde \Phi: M \rightarrow \mathbb{R}/\Gamma$ is a circle-valued Hamiltonian. Note that it is not tight.
\end{example}
\begin{remark}\label{tight_means_non_hamiltonian}
    It follows from Example~\ref{ham_ex} that a symplectic vector field $X$ that admits a tight circle-valued Hamiltonian must be non-Hamiltonian. The requirement that $\Gamma$ is a non-trivial subgroup together with the definition of tight ($P = \Gamma$), implies that $P$ is non-trivial, and thus that $\iota_X \omega$ is not exact, and therefore that $X$ is not Hamiltonian.
\end{remark}
\begin{example}\label{torus_ex}
    Let $X=\frac{\partial}{\partial q}$ be a vector field on the torus $(\mathbb{T}^2 \cong \mathbb{R}^2/\mathbb{Z}^2, \omega=dp \wedge dq)$. The vector field $X$ is symplectic and the group of periods of $\iota_{\frac{\partial}{\partial q}} \omega$ is $\mathbb{Z}$. Let $k \in \mathbb{N}$ be a natural number and define $\Gamma$ to be $\frac{1}{k}\mathbb{Z}$. Then the function $\Phi:\mathbb{T}^2 \rightarrow \mathbb{R}/\Gamma$ defined by $\Phi(p, q) = p \mod \frac{1}{k}$ is a circle-valued Hamiltonian for $X$. It is tight when $k=1$.
\end{example}
The following is an example of a vector field with a non-discrete group of periods, which does not admit a circle-valued Hamiltonian.
\begin{example}
    Let $X=\frac{\partial}{\partial q}$ be a vector field on $\mathbb{T}^4 \cong \mathbb{R}^4/\mathbb{Z}^4$. It is symplectic with respect to the symplectic form $\omega = dp_1 \wedge dq_1 + \sqrt{2}dq_1 \wedge dp_2 + dp_2 \wedge dq_2$. The group of periods of $\iota_{\frac{\partial}{\partial q}} \omega$ is $P = \mathbb{Z} + \sqrt{2}\mathbb{Z}$, and it is not discrete. Note that the function $\Phi:\mathbb{T}^4 \rightarrow \mathbb{R}/P$ can be still defined as $\Phi(q_1, p_1, q_2, p_2) = p_1 + \sqrt{2}p_2 \mod P$, but $P$ is not a circle anymore, and each level set of $\Phi$ is dense in $M$, and not compact.
\end{example}
When the first Betti number of the quotient space $M/S^1$ is one, the space always admits a circle-valued Hamiltonian:
\begin{lemma}\label{betti_one_group_of_periods_discrete}
    Let $(M, \omega)$ be a four-dimensional compact connected symplectic manifold, equipped with a symplectic non-Hamiltonian $S^1$ action, with generating vector field $X$. Assume that $b_1(M/S^1) = 1$. Then the group of periods $P$ of $\iota_X \omega$ is discrete.
\end{lemma}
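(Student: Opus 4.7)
The plan is to descend $\iota_X\omega$ to a closed basic $1$-form on the orbit space $M/S^1$ and then bound its periods using the finite generation and rank of $H_1(M/S^1;\mathbb{Z})$. First I would verify that $\iota_X\omega$ is basic: since the action is symplectic, $\mathcal{L}_X\omega = 0$, hence $\mathcal{L}_X(\iota_X\omega) = \iota_X\mathcal{L}_X\omega = 0$, so the form is $S^1$-invariant; also $\iota_X(\iota_X\omega) = \omega(X,X) = 0$. By Lemma \ref{no_fixed_points} every $S^1$-stabilizer is finite, so $M/S^1$ carries the structure of a compact orbifold, and Koszul's theorem (recalled at the end of Section \ref{preliminaries_section}) identifies the complex of basic forms on $M$ with the de Rham complex of $M/S^1$. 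In particular $\iota_X\omega = \pi^*\alpha$ for a closed $1$-form $\alpha$ on $M/S^1$, where $\pi: M \to M/S^1$ is the quotient projection.

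Next I would factor the period pairing through the quotient. For every integral $1$-cycle $A \in H_1(M;\mathbb{Z})$,
\[
\int_A \iota_X\omega \;=\; \int_A \pi^*\alpha \;=\; \int_{\pi_* A}\alpha \;=\; \langle [\alpha],\,\pi_* A\rangle,
\]
so $P$ is contained in the image of the evaluation homomorphism
\[
\mathrm{ev}_{[\alpha]}: H_1(M/S^1;\mathbb{Z}) \longrightarrow \mathbb{R}, \qquad B \longmapsto \langle [\alpha], B\rangle.
\]

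Finally I would invoke the hypothesis $b_1(M/S^1) = 1$ together with compactness. The space $M/S^1$ is compact, so $H_1(M/S^1;\mathbb{Z})$ is a finitely generated abelian group, and the hypothesis tells us its rank is $1$; hence $H_1(M/S^1;\mathbb{Z})/\mathrm{Tors} \cong \mathbb{Z}$. Since $\mathbb{R}$ is torsion-free, $\mathrm{ev}_{[\alpha]}$ factors through $\mathbb{Z}$ and therefore has cyclic, hence discrete, image in $\mathbb{R}$. Because $P$ is a subgroup of this discrete cyclic subgroup, $P$ itself is discrete, which is what we wanted.

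The argument is essentially formal once the Koszul identification is available. The only real subtlety is treating $M/S^1$ as a genuine orbifold rather than a manifold so that basic-forms de Rham theory applies; McDuff's absence of fixed points supplies exactly this, and the finite generation of $H_1(M/S^1;\mathbb{Z})$ follows from compactness of $M$. No analytic work on $\iota_X\omega$ is needed beyond the closedness and invariance that are already built in.
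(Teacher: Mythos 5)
Your proof is correct and follows essentially the same strategy as the paper: both arguments push the period pairing down to $H_1(M/S^1;\mathbb{Z})$ and use $b_1(M/S^1)=1$ to conclude the image is a cyclic, hence discrete, subgroup of $\mathbb{R}$. The only cosmetic difference is that you descend $\iota_X\omega$ directly via the Koszul basic-form identification, whereas the paper phrases the same descended form as the fiber integration $\beta$ of $\omega$ along the $S^1$-orbits (so $\beta = 2\pi\alpha$ in your notation); both routes yield the same closed $1$-form on the quotient.
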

\begin{proof}
    Let $\beta$ be the one-form on $M/S^1$, given by integration of $\omega$ along fibers of the $S^1$-orbibundle $\pi:M \rightarrow M/S^1$. Then for every homology class $A \in H^1(M, \mathbb{Z})$, we have:
    \begin{equation}\label{eq_in_proof_of_discrete}
        \int_A \iota_X \omega = \frac{1}{2\pi}\int_{S^1 \cdot A} \omega = \frac{1}{2\pi} \int_{\pi_* A} \beta.
    \end{equation}
    By assumption, $b_1(M/S^1) = 1$, and therefore $H^1(M/S^1, \mathbb{Z}) \cong \mathbb{Z} \times T$, where $T$ is a finite group (Actually, Proposition~\ref{topological_model_proposition} shows that $T$ is always trivial). Let $B$ be a generator of the $\mathbb{Z}$ component, and let $c:= \int_B \beta$ be its integral. Then by Equation~\eqref{eq_in_proof_of_discrete}, the integral $\int_A \iota_X \omega$ is always an integer multiple of $\frac{c}{2\pi}$, and therefore the group of periods of $\iota_X \omega$ is discrete.
\end{proof}
Next, we want to generalize Hamiltonian $S^1$-manifolds to allow for circle-valued Hamiltonians.
\begin{definition}\label{hamiltonian_s1_manifold_definition}
    Let $(M, \omega)$ be a compact connected symplectic manifold with an effective $S^1$ action generated by a circle-valued Hamiltonian $\Phi:M \rightarrow \mathbb{R}/\Gamma$. Then $(M, \omega, \Phi)$ is called a \textbf{circle-valued Hamiltonian $S^1$-manifold}. If $\Phi$ is tight, we say that the space is \textbf{tight}.
    
    We say that two circle-valued Hamiltonian $S^1$-manifolds are isomorphic if $\Gamma = \Gamma'$, and there is an equivariant symplectomorphism $\varphi:M \rightarrow M'$ such that $\Phi = \Phi' \circ \varphi$.
\end{definition}
\begin{example}\label{ham_space_ex}
    Let $(M, \omega, \Phi)$ be a compact connected Hamiltonian $S^1$-manifold. Then for every discrete nonzero subgroup $\Gamma \subset \mathbb{R}$, the induced map $\tilde \Phi: M \rightarrow \mathbb{R}/\Gamma$ makes $(M, \omega, \tilde \Phi)$ a circle-valued Hamiltonian $S^1$-manifold. Note that it is not tight.
\end{example}
\begin{example}\label{torus_space_ex}
    Following Example~\ref{torus_ex}, let $S^1$ act on $(\mathbb{T}^2, dp \wedge dq)$ with generating vector field $X = \frac{1}{2\pi}\frac{\partial}{\partial q}$, and let $k \ge 1$ be a positive integer. Then the function $\Phi(p, q) = \frac{1}{2\pi}p \mod \frac{1}{2\pi k}$ is a circle-valued Hamiltonian that generates the circle action, and $(\mathbb{T}^2, dp \wedge dq, \Phi)$ is a circle-valued Hamiltonian $S^1$-manifold. Note that each level set has $k$ connected components. Hence, the level sets of $\Phi$ are connected if and only if $k=1$ if and only if $\Phi$ is tight. Lemma~\ref{fiber_connectivity_of_circle_actions} below shows that this is always the case for non-Hamiltonian circle actions.
\end{example}
\begin{example}
   More generally, let $(M, \omega)$ be a compact connected symplectic manifold, with a symplectic non-Hamiltonian circle action. By~\cite[Lemma 1]{symplectic_circle_actions}, there exists an $S^1$-invariant symplectic form $\omega'$ such that the action admits a circle-valued Hamiltonian $\Phi:M \rightarrow \mathbb{R}/\mathbb{Z}$. Then $(M, \omega', \Phi)$ is a circle-valued Hamiltonian $S^1$-manifold. Moreover, $\omega'$ can be normalized such that $\Phi$ is tight.
\end{example}

Let $(M, \omega, \Phi)$ be a circle-valued Hamiltonian $S^1$-manifold, and let $I \subsetneq \mathbb{R}/\Gamma$ be an open interval. Then the space $\Phi^{-1}(I)$ is a proper Hamiltonian $S^1$-manifold over $i(I)$ with the Hamiltonian $i \circ \Phi$, where $i:I \rightarrow \mathbb{R}$ is a lift of $I$ from $\mathbb{R}/\Gamma$ to $\mathbb{R}$. Leveraging this idea we can use techniques developed for proper Hamiltonian actions. For example, we now apply the standard Morse-Bott arguments used for real-valued Hamiltonians to prove Lemma~\ref{fiber_connectivity_of_circle_actions}:
\begin{replemma}{fiber_connectivity_of_circle_actions}[In the language of Hamiltonian $S^1$-manifolds]
    Let $(M, \omega, \Phi)$ be a tight circle-valued Hamiltonian $S^1$-manifold. Then the level sets of $\Phi:M \rightarrow \mathbb{R}/P$ are connected.
\end{replemma}
\begin{proof}
    A Hamiltonian function generating a circle action is Morse-Bott with even indices (see~\cite[Lemma 5.5.8]{intro_to_symp_topology}). This claim extends to circle-valued Hamiltonians by carrying the same arguments after restricting to a preimage of a small interval in $\mathbb{R}/P$.
    
    Choose a Riemannian metric, and look at the normalized gradient flow through regular level sets. The number of connected components in each level set stays unchanged. When we cross a critical value, the number of components can only change if the critical point is of index zero/one, or coindex zero/one. Because the indices must be even, critical points of index one or coindex one cannot appear. Moreover, there are no local minima or maxima points by McDuff's proof of Lemma~\ref{no_fixed_points} (see~\cite[Proposition 2]{symplectic_circle_actions}), and therefore there are also no critical points of index zero or coindex zero.
    
    Let $y\in \mathbb{R}/P$ be some value of $\Phi$. Its fiber has finitely many connected components $A_1, A_2,\dots,A_k$. Flowing with the normalized gradient flow (and crossing critical points), and circling once around the image $\mathbb{R}/P$, each connected component $A_i$ is mapped to some other connected component $A_{\sigma(i)}$, where $\sigma$ is some permutation in $S_k$. By the connectivity of $M$, the permutation $\sigma$ has exactly one cycle.

    By the definition of the group of periods $P(\iota_X \omega)$, there exists a loop $\gamma:S^1 \rightarrow M$ with integral $\int_\gamma \iota_X \omega = \tau$, where $\tau$ is the positive generator of~$P$. It defines a map $\Phi \circ \gamma: S^1 \rightarrow \mathbb{R}/P$. Because $P$ is nontrivial and discrete, $\mathbb{R}/P$ is isomorphic to $S^1$, and the map $\Phi \circ \gamma$ has a well defined degree. This degree is the amount of times the image of $\gamma$ through $\Phi$ rotated around $\mathbb{R}/P$, and it satisfies the relation
    \begin{equation*}
        \int_\gamma \iota_X \omega = \int_\gamma -d \Phi = -\tau \deg(\Phi \circ \gamma).
    \end{equation*}
    
    Without loss of generality, $\gamma$ is based at $x \in A_i$. Hence, $\deg(\Phi \circ \gamma)$ must be a multiple of the order of the cycle of $A_i$ in $\sigma$. Since $\sigma$ only has one cycle, and its order is $k$, we have that
    \begin{equation*}
        \int_\gamma \iota_X \omega = mk\tau,
    \end{equation*}
    for some integer $m$. Therefore, by the choice of $\gamma$, we have $\tau = mk\tau$, and since $\tau \ne 0$, then $k$ must be equal to $1$, hence the level sets are connected.
\end{proof}
\begin{remark}
    Using the formulation of covering spaces, one can alternatively deduce Lemma~\ref{fiber_connectivity_of_circle_actions} from~\cite[Theorem 3.1]{benoist}. In fact, his proof also applies for higher-dimensional torus actions generated by cylinder-valued momentum maps. See also~\cite{giacobbe}.
\end{remark}
The following lemma states that for every four-dimensional tight circle-valued Hamiltonian $S^1$-manifold, the isotropy data of the level sets, and the genus of the reduced spaces do not vary between the values of the circle-valued Hamiltonian. This was already implicit in McDuff's paper~\cite{symplectic_circle_actions}. Moreover, the part about the genus can be deduced from~\cite[Corollary 9.7]{centered_hamiltonians}. For completeness, we give a statement and a direct proof.
\begin{lemma}\label{same_genus_and_isotropy_data}
    Let $(M, \omega, \Phi)$ be a four-dimensional tight circle-valued Hamiltonian $S^1$-manifold. Then all the level sets of $\Phi:M \rightarrow \mathbb{R}/P$ have the same isotropy data, and all the reduced spaces are homeomorphic to the same compact oriented surface.
\end{lemma}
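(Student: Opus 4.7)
My plan is to exhibit, for any two values $y, y' \in \mathbb{R}/P$, an $S^1$-equivariant orientation-preserving diffeomorphism $\Phi^{-1}(y) \to \Phi^{-1}(y')$; both conclusions of the lemma will follow instantly from the existence of such maps.

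The first step is to verify that $\Phi$ is a proper submersion. Properness is automatic because $M$ is compact. For surjectivity of $d\Phi$ at every point: by Lemma \ref{no_fixed_points} the action has no fixed points, so the generating vector field $X$ is nowhere zero, and $\iota_X \omega = -d\Phi$ together with the non-degeneracy of $\omega$ forces $d\Phi$ to vanish nowhere.

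The key step is to construct an $S^1$-equivariant complete vector field $Y$ on $M$ satisfying $d\Phi(Y) = 1$. Averaging an arbitrary Riemannian metric over $S^1$ yields an $S^1$-invariant metric $g$. Since $d\Phi$ is a globally defined $S^1$-invariant one-form (the target $\mathbb{R}/P$ is locally indistinguishable from $\mathbb{R}$) and $\Phi$ is a submersion, the vector field
\[
    Y := \frac{\nabla \Phi}{\|\nabla \Phi\|^2},
\]
with $\nabla$ taken with respect to $g$, is well-defined, $S^1$-invariant, and satisfies $d\Phi(Y) = 1$. Its flow $\phi_t$ is therefore $S^1$-equivariant and satisfies $\Phi(\phi_t(x)) = \Phi(x) + t$ in $\mathbb{R}/P$. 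Because $\Phi$ is proper and $\mathbb{R}/P$ is compact, every integral curve of $Y$ stays in a compact subset of $M$, so $Y$ is complete.

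Given $y, y' \in \mathbb{R}/P$, I would then pick $t$ with $y + t = y'$ in $\mathbb{R}/P$; the restriction of $\phi_t$ is an $S^1$-equivariant orientation-preserving diffeomorphism $\Phi^{-1}(y) \to \Phi^{-1}(y')$. Such a map sends each non-free orbit to a non-free orbit with the same stabilizer and isomorphic oriented isotropy representation, so the unordered lists of coprime residue classes coincide, proving that the isotropy data is constant. Passing to the quotient, $\phi_t$ descends to a homeomorphism between the reduced spaces, each of which is a compact connected oriented surface by Lemma \ref{fiber_connectivity_of_circle_actions}; in particular they all share the same genus. The main subtlety I anticipate is confirming that the equivariant flow preserves the \emph{oriented} coprime residue class $(n,a)$ and not merely the pair up to the ambiguity $a \leftrightarrow -a$, but this is immediate from $\phi_t$ being orientation-preserving on $M$ and on each $S^1$-orbit, which together pin down the induced orientation on the $\mathbb{Z}_n$-equivariant normal bundle to each non-free orbit.
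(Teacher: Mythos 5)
Your proof follows essentially the same route as the paper's: use the absence of fixed points (Lemma \ref{no_fixed_points}) to build an $S^1$-invariant vector field $Y$ with $d\Phi(Y)=1$ (the paper takes $Y=-JX$ for a compatible almost complex structure and normalizes; you take $\nabla\Phi/\|\nabla\Phi\|^2$ for an invariant metric, which is the same construction in different clothing), and then flow $Y$ to obtain equivariant diffeomorphisms between level sets. The extra care you take about orientation of the symplectic slice is a reasonable precaution but does not change the argument.
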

\begin{proof}
    Take an invariant compatible metric $g$ and a corresponding almost complex structure $J$. By Lemma~\ref{no_fixed_points}, the action has no fixed points, and therefore the gradient vector field $Y := -JX$ is non-zero on all of $M$. Therefore, we can normalize $Y$ such that $d \Phi(Y) = 1$, and take the flow in the direction of $Y$. This gives an equivariant diffeomorphism between each pair of level sets. Therefore, the isotropy data is the same in every level set of $\Phi$, and the reduced spaces are all homeomorphic. Since the reduced spaces are compact, and also connected by Lemma~\ref{fiber_connectivity_of_circle_actions}, then they are all homeomorphic to a compact oriented surface with the same genus.
\end{proof}
Lemma~\ref{same_genus_and_isotropy_data} introduces two invariants of four-dimensional tight circle-valued Hamiltonian $S^1$-manifolds: the \textbf{genus invariant}, and the \textbf{isotropy data invariant}. Because $\Phi$ is unique up to translation, these invariants are preserved by equivariant symplectomorphisms that do not necessarily preserve $\Phi$. (In fact, if there exists an equivariant symplectomorphism then there exists one that preserves $\Phi$. We show this in Proposition~\ref{invariance_of_translations}). It follows that they are also invariants of symplectic non-Hamiltonian circle actions on compact connected four-manifolds with discrete group of periods, up to equivariant symplectomorphisms.

Another concept that can be generalized from real-valued Hamiltonians to circle-valued Hamiltonians is the Duistermaat-Heckman theorem. The Duistermaat-Heckman measure can still be defined as the push forward of the Liouville measure, and since a preimage of a small interval in $\mathbb{R}/\Gamma$ is a proper Hamiltonian $S^1$-manifold, we can apply the Duistermaat-Heckman theorem near level sets. Previous works on the Duistermaat-Heckman measure for circle-valued Hamiltonians can be found in Section 6 of~\cite{ginzburg_symplectic_actions} and in~\cite{weitsman_dh_circle}. We now prove Lemma~\ref{duistermaat_heckman_constant}, which is closely related to the mentioned section in Ginzburg's paper, and to McDuff's proof of Lemma~\ref{no_fixed_points} (see~\cite[Proposition 2]{symplectic_circle_actions}).
\begin{replemma}{duistermaat_heckman_constant}[In the language of Hamiltonian $S^1$-manifolds]
    Let $(M, \omega, \Phi)$ be a four-dimensional tight circle-valued Hamiltonian $S^1$-manifold. The Duistermaat-Heckman function is constant, and the Seifert Euler number of each level set is zero.
\end{replemma}
\begin{proof}[Proof of Lemma~\ref{duistermaat_heckman_constant}]
    Let $y \in \mathbb{R}/P$ be a value of $\Phi$. Restricting to the preimage of a neighborhood $U$ of $y$, we get a proper Hamiltonian $S^1$-manifold over $U$. By the Duistermaat-Heckman Theorem, the Duistermaat-Heckman function is a linear function with derivative equals to $2\pi$ times the level set's Seifert Euler number, which only changes when crossing critical points (see Theorem~\ref{duistermaat_heckman_thm}). $(M, \omega, \Phi)$ is tight, and therefore the action is non-Hamiltonian (see Remark~\ref{tight_means_non_hamiltonian}). Moreover, By assumption, $M$ is four-dimensional. Hence, by Lemma~\ref{no_fixed_points}, there are no fixed points, and therefore the derivative of the Duistermaat-Heckman function is constant over $U$. But this is true for every point $y$ in $\mathbb{R}/P$, and therefore the derivative of the Duistermaat-Heckman function is constant all over $\mathbb{R}/P$, so the function is monotone. Because $\mathbb{R}/P$ is a circle, the Duistermaat-Heckman function must be constant. Its derivative is therefore zero, and so the Seifert Euler number is zero.
\end{proof}
Since the Duistermaat-Heckman function is constant, it can be described by a unique positive number. We call this number the \textbf{Duistermaat-Heckman constant} of the space and denote it by~$c_{\DuHe}$. Note that it is invariant under equivariant symplectomorphisms.


\section{Moser's trick and \texorpdfstring{$\Phi$}{Phi}-\texorpdfstring{$T$}{T}-diffeomorphisms}\label{moser_section}

In this section, we apply an adaptation of Moser's trick from~\cite{centered_hamiltonians} to isotope $\Phi$-$T$-diffeomorphisms of circle-valued Hamiltonian $S^1$-manifolds, to equivariant symplectomorphisms. In short, a $\Phi$-$T$-diffeomorphism is an orientation-preserving equivariant diffeomorphism that respects the Hamiltonians (see Definition~\ref{phi_t_diffeo}). We give a cohomological condition for when a $\Phi$-$T$-diffeomorphism can be isotoped to an isomorphism of circle-valued Hamiltonian $S^1$-manifolds (see Lemma~\ref{moser_with_same_cohomology}).

We deduce that two compact connected four-manifolds with effective symplectic non-Hamiltonian circle actions that admit circle-valued Hamiltonians, are equivariantly symplectomorphic if and only if they are isomorphic as tight circle-valued Hamiltonian $S^1$-manifolds (see Corollary~\ref{classifying_circle_valued_spaces_instead}). This means that the classification of circle-valued Hamiltonian $S^1$-manifolds up to isomorphism, is equivalent to the classification of non-Hamiltonian $S^1$ actions up to equivariant symplectomorphisms.

We also show that if $b_1(M/S^1)=1$, then two spaces are isomorphic if and only if they are $\Phi$-$T$-diffeomorphic and have the same Duistermaat-Heckman constant (see Corollary~\ref{phi_t_diffeo_and_eq_symp_betti_one}). For complexity one spaces, Karshon and Tolman proved a similar proposition, without any assumptions on the Betti numbers (see~\cite[Proposition 3.3]{centered_hamiltonians}). Surprisingly, we show that this is not true for non-Hamiltonian actions without the assumption on the Betti numbers -- We construct $\Phi$-$T$-diffeomorphic spaces with the same Duistermaat-Heckman constant, that are not equivariantly symplectomorphic (see Example~\ref{ex_different_cohomologies}). See also Section~\ref{de_rham_invariant_section}, where we formalize an invariant to determine when two $\Phi$-$T$-diffeomorphic spaces are also equivariantly symplectomorphic.
\\

Let's recall the definition of a $\Phi$-$T$-diffeomorphism (see~\cite[Definition 3.1]{centered_hamiltonians}). We will slightly generalize it, to allow for circle-valued Hamiltonians.
\begin{definition}\label{phi_t_diffeo}
    Let a torus $T$ act on oriented manifolds $M$ and $M'$ with $T$-invariant maps $\Phi:M \rightarrow C$ and $\Phi':M' \rightarrow C$, for some set $C$. A \textbf{$\Phi$-$T$-diffeomorphism} from $(M, \Phi)$ to $(M', \Phi')$ is an orientation preserving equivariant diffeomorphism $F:M \rightarrow M'$ that satisfies $F^* \Phi' = \Phi$.
\end{definition}
In the original definition in~\cite{centered_hamiltonians}, the target of the invariant map was the dual algebra $\mathfrak{t}^*$ of the torus $T$, in accordance to the definition of a momentum map. Since we are working with circle-valued Hamiltonians, whose targets are quotients of $\mathfrak{t}^*$, we generalize the definition to allow for invariant maps with general targets.
We use the name $\Phi$-$T$-diffeomorphism to stay consistent with the works of Karshon and Tolman, even though in our paper the map $\Phi$ is circle-valued (instead of $\mathfrak{t}^*$-valued), and the acting group $T$ is always $S^1$.
\\

In the setting of complexity one spaces, while not stated explicitly, the following two lemmas follow from the proofs of Lemmas 3.3 and 3.6 in~\cite{centered_hamiltonians}. For completeness, we prove them in the setting of circle-valued Hamiltonians.
\begin{lemma}\label{omega_difference_is_basic}
    Let $(M, \omega, \Phi)$ and $(M', \omega', \Phi')$ be four-dimensional tight circle-valued Hamiltonian $S^1$-manifolds that have the same group of periods $P$. Let $F: M\rightarrow M'$ be a $\Phi$-$T$-diffeomorphism. Then the two-form $F^*\omega' - \omega$ on $M$ is a basic form, and in particular $F$ gives a well-defined basic cohomology class ${[F^*\omega' - \omega]}_{\basic}$ in $H^2_{\basic}(M)$.
\end{lemma}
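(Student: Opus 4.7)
The plan is a direct verification of the two defining properties of basic forms, followed by the standard closedness observation. Let $X \in \mathfrak{X}(M)$ and $X' \in \mathfrak{X}(M')$ denote the vector fields generating the $S^1$-actions.

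First, I would check invariance. Both $\omega$ and $\omega'$ are $S^1$-invariant since the actions are symplectic, and equivariance of $F$ gives $F_*X = X'$, so $F^*\omega'$ is also $S^1$-invariant on $M$. Hence $\mathcal{L}_X(F^*\omega' - \omega) = 0$.

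Second, I would verify the horizontality $\iota_X(F^*\omega' - \omega) = 0$. Although $\Phi$ and $\Phi'$ are circle-valued, the one-form $d\Phi'$ is a well-defined real $1$-form on $M'$ (it is the pullback of the canonical translation-invariant $1$-form on $\mathbb{R}/P$), and similarly for $d\Phi$. Moreover, the relation $F^*\Phi' = \Phi$ of $\Phi$-$T$-diffeomorphisms gives $F^*(d\Phi') = d(F^*\Phi') = d\Phi$. Using $F_*X = X'$ and the defining equations $\iota_{X'}\omega' = -d\Phi'$ and $\iota_X\omega = -d\Phi$, I compute
\begin{equation*}
    \iota_X F^*\omega' \;=\; F^*(\iota_{X'}\omega') \;=\; F^*(-d\Phi') \;=\; -d\Phi \;=\; \iota_X\omega,
\end{equation*}
so $\iota_X(F^*\omega' - \omega) = 0$. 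Combined with invariance, this shows that $F^*\omega' - \omega$ is basic.

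Third, to obtain a well-defined class in $H^2_{\basic}(M)$, I would note that $F^*\omega' - \omega$ is closed: both symplectic forms are closed and pullback commutes with $d$. Therefore $F^*\omega' - \omega$ is a closed basic $2$-form, defining a class $[F^*\omega' - \omega]_{\basic} \in H^2_{\basic}(M)$. There is no real obstacle here; the only point that requires a moment's thought is the well-definedness of $d\Phi'$ as a genuine $\mathbb{R}$-valued $1$-form on $M'$, which is immediate from the quotient structure on $\mathbb{R}/P$ and does not actually use the assumption that the two groups of periods agree (that hypothesis matters instead for comparing the target circles of $\Phi$ and $\Phi'$ so that the equation $F^*\Phi' = \Phi$ even makes sense in Definition \ref{phi_t_diffeo}).
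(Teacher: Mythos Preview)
Your proof is correct and follows essentially the same approach as the paper: verify invariance via equivariance, then horizontality via $\iota_X F^*\omega' = F^*(-d\Phi') = -d\Phi = \iota_X\omega$. You supply more detail than the paper (which compresses the argument to two lines), including the closedness check and the remark on $d\Phi'$ being a genuine real $1$-form, but the core computation is identical.
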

\begin{proof}
    First, $\mathcal{L}_X(F^*\omega' - \omega) = 0$ since the forms are invariant and the map is equivariant. Second, by the assumption that $\Phi' \circ F = \Phi$:
    \begin{equation*}
            \iota_X (F^*\omega' - \omega) = d \Phi - d \Phi = 0.
    \end{equation*}
\end{proof}
\begin{lemma}\label{moser_with_same_cohomology}
    Let $(M, \omega, \Phi)$ and $(M', \omega', \Phi')$ be four-dimensional tight circle-valued Hamiltonian $S^1$-manifolds that have the same group of periods $P$.
    Let $F: M\rightarrow M'$ be a $\Phi$-$T$-diffeomorphism. Assume that the basic cohomology class ${[F^*\omega' - \omega]}_{\basic}$ vanishes. Then $F$ is smoothly isotopic to an equivariant symplectomorphism that satisfies $F^*\Phi' = \Phi$.
\end{lemma}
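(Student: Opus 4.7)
The plan is to apply an $S^1$-equivariant Moser's trick, with the basic primitive of $F^*\omega'-\omega$ driving the deformation. Since $[F^*\omega'-\omega]_{\basic}=0$, I pick a basic one-form $\alpha$ on $M$ with $d\alpha = F^*\omega'-\omega$, and form the straight-line family
\begin{equation*}
\omega_t := (1-t)\omega + t\,F^*\omega',\qquad t\in[0,1].
\end{equation*}
Each $\omega_t$ is closed and $S^1$-invariant. Since $F$ is equivariant, $F_*X = X'\circ F$, so $\iota_X F^*\omega' = F^*\iota_{X'}\omega' = -F^*d\Phi' = -d\Phi$, and combined with $\iota_X\omega=-d\Phi$ this gives $\iota_X\omega_t=-d\Phi$ for every $t$.

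The main obstacle is showing that $\omega_t$ is non-degenerate throughout the family, and this is where the low-dimensional hypothesis enters decisively. For $v\in\ker(\omega_t)_p$, the equality $\omega_t(v,X)=0$ reads $d\Phi(v)=0$, so $v$ is tangent to the level set $L:=\Phi^{-1}(\Phi(p))$. On $L$, the restriction of $\omega_t$ is basic and descends to the two-dimensional reduced (orbifold) surface as the convex combination $(1-t)\omega_{\red} + t\,\bar F^*\omega'_{\red}$, where $\bar F\colon L/S^1\to (\Phi')^{-1}(\Phi(p))/S^1$ is the induced orientation-preserving diffeomorphism of reduced surfaces. A convex combination of two positive area forms on an oriented surface is itself a positive area form, so the descended form is symplectic; hence the projection of $v$ vanishes and $v=cX_p$. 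Then $0=\omega_t(cX_p,\cdot)=-c\,d\Phi$ forces $c=0$, because $d\Phi$ is nowhere-vanishing (no fixed points by Lemma \ref{no_fixed_points}), establishing non-degeneracy.

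Given that each $\omega_t$ is symplectic, I define the time-dependent vector field $V_t$ by $\iota_{V_t}\omega_t=-\alpha$. Since $\omega_t$ and $\alpha$ are $S^1$-invariant, so is $V_t$. Basicness of $\alpha$ gives $\alpha(X)=0$, hence $\omega_t(V_t,X)=0$, which rearranges to $d\Phi(V_t)=0$; thus $V_t$ is tangent to the level sets of $\Phi$. Cartan's formula and $d\omega_t=0$ yield the Moser equation $\mathcal{L}_{V_t}\omega_t = d\iota_{V_t}\omega_t = -d\alpha = -\tfrac{d}{dt}\omega_t$.

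By compactness of $M$, the flow $\psi_t$ of $V_t$ exists on $[0,1]$; it is equivariant and preserves $\Phi$. Moser's standard computation gives $\psi_t^*\omega_t=\omega$, so $(F\circ\psi_1)^*\omega'=\psi_1^*\omega_1=\omega$. The map $G:=F\circ\psi_1$ is therefore an equivariant symplectomorphism with $G^*\Phi'=\psi_1^*\Phi=\Phi$, and $t\mapsto F\circ\psi_t$ furnishes the desired smooth isotopy from $F$ to $G$.
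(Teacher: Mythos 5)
Your proof is correct and follows the same overall strategy as the paper: both use Moser's trick with a basic primitive $\alpha$ of $F^*\omega'-\omega$, the same straight-line family $\omega_t$, the same vector field $V_t$ defined by $\iota_{V_t}\omega_t=-\alpha$, and the same verification that the Moser flow is equivariant and $\Phi$-preserving. The one place you diverge is the non-degeneracy of $\omega_t$: the paper cites \cite[Lemma~3.6]{centered_hamiltonians} (with a short remark about why the lemma, stated for real-valued Hamiltonians, applies to preimages of small intervals in $\mathbb{R}/P$), whereas you give a direct, self-contained argument — showing any kernel vector lies tangent to a level set, passing to the two-dimensional reduced orbifold where non-degeneracy of a convex combination of positive area forms is immediate, and then killing the residual multiple of $X$ using $d\Phi\neq 0$. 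This is essentially a proof of the cited lemma in the special case needed, and it is cleaner in that it avoids the real-to-circle-valued adaptation footnote. One small point you should spell out rather than leave implicit: when you write that $\bar F$ is an ``orientation-preserving diffeomorphism of reduced surfaces,'' this needs the observation that the reduced orientation is determined by the ambient orientation together with $X$ and $d\Phi$, all three of which $F$ preserves by the definition of a $\Phi$-$T$-diffeomorphism; this is exactly the ``same orientation'' hypothesis in the lemma the paper cites, and without it the convexity argument for area forms would not go through.
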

\begin{proof}
    The forms $\omega$ and $F^*\omega'$ generate the same action with the same circle-valued Hamiltonian, and induce the same orientation. Therefore, by~\cite[Lemma 3.6]{centered_hamiltonians}, the two-form 
    \begin{equation*}
        \omega_t = (1-t)\omega + t F^*\omega'
    \end{equation*}
    is non-degenerate for every $t$. Note that while the mentioned lemma assumes that the Hamiltonian is real-valued, it is also true for circle-valued Hamiltonians since it can be applied to preimages of small intervals in $\mathbb{R}/P$ which are proper Hamiltonian $S^1$-manifolds.
    
    By the assumption that the basic cohomology class ${[F^*\omega' - \omega]}_{\basic}$ vanishes, we have that $d\beta = F^*\omega' - \omega$ for some basic one-form $\beta$.

    We now use Moser's trick. By non-degeneracy, there exists a unique vector field $Z_t$ such that $\iota_{Z_t} \omega_t = -\beta$. The flow $\Psi_t$ generated by $Z_t$ is $S^1$-invariant because $\omega_t$ and $\beta$ are $S^1$-invariant, and it preserves $\Phi$ since $d \Phi(Z_t) = \omega_t(X, Z_t) = -\iota_X \beta = 0$.
    
    Taking the derivative of $\Psi_t^*\omega_t$ by $t$, we get:
    \begin{align*}
        \frac{d}{dt}(\Psi_t^*\omega_t) &= \Psi_t^*(\mathcal{L}_{Z_t}\omega_t + {\dot \omega}_t) = \\
        &= \Psi_t^*(d\iota_{Z_t}\omega_t + F^*\omega' - \omega) \\
        &= \Psi_t^*(-d\beta + F^*\omega' - \omega) = 0,
    \end{align*}
    where in the second equality we use Cartan's formula and $\omega_t$ being closed. We deduce that $\Psi_t^*\omega_t = \omega$ for every $0 \le t \le 1$, and therefore that $\omega = \Psi_1^*\omega_1 = \Psi_1^*(F^* \omega')$, so $F \circ \Psi_1$ is an equivariant symplectomorphism such that ${(F \circ \Psi_1)}^*\Phi' = \Phi$.
\end{proof}
Now, we prove the first key result of this section: the isomorphism class of a four-dimensional tight circle-valued Hamiltonian $S^1$-manifold is independent of the choice of the tight circle-valued Hamiltonian. 
\begin{proposition}\label{invariance_of_translations}
    Let $(M, \omega, \Phi)$ be a four-dimensional tight circle-valued Hamiltonian $S^1$-manifold. Let $c \in \mathbb{R}/P$ be a constant.
    Then there exists an equivariant symplectomorphism $\Psi:M \rightarrow M$ which is smoothly isotopic to the identity map, that satisfies $\Psi^* \Phi = \Phi + c$.
\end{proposition}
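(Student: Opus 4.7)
The plan is to apply the equivariant Moser trick of Lemma~\ref{moser_with_same_cohomology} to a $\Phi$-$T$-diffeomorphism which realises the desired shift. First, lift $c$ to $\tilde c \in \mathbb{R}$. Since the action is fixed-point free by Lemma~\ref{no_fixed_points}, the generating vector field $X$ is nowhere vanishing, so I can choose an $S^1$-invariant compatible almost complex structure $J$ and set $Y := -JX/g(X,X)$, where $g(\cdot,\cdot) := \omega(\cdot, J\cdot)$ is the associated $S^1$-invariant Riemannian metric. Then $Y$ is $S^1$-invariant and satisfies $d\Phi(Y) = \omega(X,Y) = 1$, so its time-$\tilde c$ flow $F := F_{\tilde c}$ is an $S^1$-equivariant, orientation-preserving diffeomorphism with $F^*\Phi = \Phi + c$. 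In particular, $F$ is a $\Phi$-$T$-diffeomorphism from the tight circle-valued Hamiltonian $S^1$-manifold $(M,\omega,\Phi+c)$ to $(M,\omega,\Phi)$, both with the same group of periods $P$.

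By Lemma~\ref{omega_difference_is_basic}, the closed two-form $F^*\omega - \omega$ is basic and so defines a class $[F^*\omega - \omega]_{\basic} \in H^2_{\basic}(M) \cong H^2(M/S^1)$. Provided this class vanishes, Lemma~\ref{moser_with_same_cohomology} isotopes $F$ to an equivariant symplectomorphism $\Psi$ with $\Psi^*\Phi = \Phi + c$. Concatenating the defining flow isotopy $s \mapsto F_s$ from the identity to $F = F_{\tilde c}$ with this Moser isotopy from $F$ to $\Psi$ then gives a smooth isotopy from the identity to $\Psi$, producing the desired map.

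The main obstacle is therefore to verify that $[F^*\omega - \omega]_{\basic} = 0$. Applying Cartan's formula along the flow of $Y$ produces the natural primitive
\begin{equation*}
    \mu := \int_0^{\tilde c} F_s^*(\iota_Y \omega)\, ds,
\end{equation*}
which is $S^1$-invariant but fails to be basic, since $\iota_X \mu = \tilde c \cdot \omega(Y,X) = -\tilde c \neq 0$. Correcting $\mu$ into a basic primitive amounts to exhibiting an $S^1$-invariant closed $1$-form $\xi$ with $\iota_X \xi = \tilde c$, so the obstruction lies in the kernel of $H^2_{\basic}(M) \to H^2(M)$, which is controlled by the Chern class of the orbifold bundle $M \to M/S^1$. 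This is where the four-dimensional tight non-Hamiltonian structure enters: Lemma~\ref{duistermaat_heckman_constant} forces the Seifert Euler number of every level set to vanish, and, combined with the tightness of $\Phi$ and the mapping torus structure of $M/S^1$ provided by Proposition~\ref{topological_model_proposition}, this constrains the relevant cohomology enough to construct the correction form $\xi$ explicitly and conclude that the obstruction vanishes.
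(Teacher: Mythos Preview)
Your strategy coincides with the paper's: construct the time-$\tilde c$ flow $F$ of a normalized $S^1$-invariant gradient-like vector field to obtain a $\Phi$-$T$-diffeomorphism realizing the shift, then invoke Lemma~\ref{moser_with_same_cohomology}. The paper dispatches the hypothesis $[F^*\omega - \omega]_{\basic} = 0$ in a single sentence, asserting it follows because $F$ is smoothly isotopic to the identity through equivariant diffeomorphisms; you instead attempt to justify this vanishing by constructing a basic primitive explicitly.

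The final step of your argument has a gap. The correction form you seek --- a closed $S^1$-invariant $1$-form $\xi$ with $\iota_X\xi = \tilde c$ --- is, after rescaling, a flat connection on the orbibundle $M \to M/S^1$, so it exists if and only if the real Euler class $e \in H^2(M/S^1;\mathbb{R})$ vanishes. But Lemma~\ref{duistermaat_heckman_constant} only says that $e$ restricts to zero on each reduced space $\Phi^{-1}(y)/S^1$; by the Mayer--Vietoris sequence (cf.\ \eqref{short_exact_sequence_fibration_invariant}) this merely places $e$ in the subgroup coming from $H^1(\Sigma_g)/\image(f_1 - \id)$, which is precisely where the fibration invariant of Section~\ref{fibration_invariant_section} lives and which can be nonzero (e.g.\ the Kodaira--Thurston variants in Appendix~\ref{examples_appendix}). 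Neither Lemma~\ref{duistermaat_heckman_constant} nor the mapping-torus structure forces $e = 0$, so your claimed construction of $\xi$ does not go through in general. Your appeal to Proposition~\ref{topological_model_proposition} is also a forward reference to Section~\ref{painted_surface_bundle_section}.
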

\begin{proof}
    We start as in the proof of Lemma~\ref{same_genus_and_isotropy_data}.
    Take an equivariant compatible metric $g$ and a corresponding almost complex structure $J$. The gradient vector field $Y := -JX$ is non zero on all of $M$ because there are no fixed points by Lemma~\ref{no_fixed_points}. Therefore, we can normalize $Y$ such that $d \Phi(Y) = 1$, and take the time-$s$ flow map $F_s$, where $s$ is in the coset $c + P \subset \mathbb{R}$. Then $\Phi \circ F_s = \Phi + c$. Moreover, $F_s$ is an equivariant orientation-preserving diffeomorphism, i.e., a $\Phi$-$T$-diffeomorphism between $(M, \omega, \Phi)$ and $(M, \omega, \Phi - c)$. Furthermore, the basic cohomology class ${[F_s^*\omega - \omega]}_{\basic}$ vanishes because $F_s$ is smoothly isotopic to the identity map through equivariant diffeomorphisms. Therefore, by Lemma~\ref{moser_with_same_cohomology}, $F$ can be isotoped to an equivariant symplectomorphism $\Psi$ that satisfy $\Psi^*(\Phi - c) = \Phi$, or equivalently $\Psi^*\Phi = \Phi + c$.
\end{proof}
\begin{corollary}\label{classifying_circle_valued_spaces_instead}
    Let $(M, \omega)$ and $(M', \omega')$ be two four-dimensional connected compact symplectic manifolds with effective symplectic non-Hamiltonians $S^1$ actions, generated by the vector fields $X$ and $X'$. Assume that the groups of periods of $\iota_X\omega$, $\iota_X'\omega'$ are discrete. Let $\Phi$ and $\Phi'$ be tight circle-valued Hamiltonians for the spaces.
    
    Then $(M, \omega)$ and $(M', \omega')$ are equivariantly symplectomorphic if and only if $(M, \omega, \Phi)$ and $(M', \omega', \Phi')$ are isomorphic as tight circle-valued Hamiltonian $S^1$-manifolds.
\end{corollary}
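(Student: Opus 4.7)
The reverse direction is immediate from the definitions: an isomorphism of tight circle-valued Hamiltonian $S^1$-manifolds is, by Definition \ref{hamiltonian_s1_manifold_definition}, an equivariant symplectomorphism. So the work is in the forward direction.

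Suppose $\varphi\colon M\to M'$ is an equivariant symplectomorphism. The first step is to check that $\varphi^*\Phi'$ is itself a tight circle-valued Hamiltonian on $(M,\omega)$ taking values in the same circle $\mathbb{R}/P$ as $\Phi$. Since $\varphi$ is equivariant, $\varphi_*X=X'$, and since $\varphi^*\omega'=\omega$, one computes $\varphi^*(\iota_{X'}\omega')=\iota_X\omega$; hence $P=P'$ (both are computed as integrals of $\iota_X\omega$ over integer $1$-cycles, and $\varphi_*$ gives an isomorphism on $H_1(\cdot,\mathbb{Z})$), and
\begin{equation*}
    -d(\varphi^*\Phi')=\varphi^*(-d\Phi')=\varphi^*(\iota_{X'}\omega')=\iota_X\omega=-d\Phi.
\end{equation*}
Thus $\Phi-\varphi^*\Phi'$ is a locally constant $\mathbb{R}/P$-valued function on the connected manifold $M$, and therefore equals some constant $c\in\mathbb{R}/P$; in other words, $\Phi'\circ\varphi=\Phi-c$.

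The second step is to absorb the constant $c$ into an equivariant symplectomorphism of $M$. By Proposition \ref{invariance_of_translations} applied with the constant $c$ (in place of $c$ in that statement), there exists an equivariant symplectomorphism $\Psi\colon M\to M$, smoothly isotopic to the identity, with $\Phi\circ\Psi=\Phi+c$. Define $\psi:=\varphi\circ\Psi\colon M\to M'$. Then $\psi$ is an equivariant symplectomorphism, and
\begin{equation*}
    \Phi'\circ\psi=\Phi'\circ\varphi\circ\Psi=(\Phi-c)\circ\Psi=(\Phi\circ\Psi)-c=\Phi,
\end{equation*}
so $\psi$ is an isomorphism of tight circle-valued Hamiltonian $S^1$-manifolds from $(M,\omega,\Phi)$ to $(M',\omega',\Phi')$.

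There is no real obstacle here, since all of the work has been done in Proposition \ref{invariance_of_translations}: the entire content of the corollary is the ability to adjust a tight circle-valued Hamiltonian by an additive constant via an equivariant symplectomorphism. The only point to be careful about is that $\Phi$ and $\varphi^*\Phi'$ a priori live in possibly different circles $\mathbb{R}/P$ and $\mathbb{R}/P'$; this is why the initial verification that $\varphi$ forces $P=P'$ is needed before comparing $\Phi$ with $\varphi^*\Phi'$.
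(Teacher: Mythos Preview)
Your proof is correct and matches the paper's approach: the corollary is stated there without an explicit proof, as an immediate consequence of Proposition \ref{invariance_of_translations}, and you have accurately filled in the details of that deduction (including the verification that $P=P'$, which is indeed needed before the two Hamiltonians can be compared).
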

\begin{remark}
Proposition~\ref{invariance_of_translations} and Corollary~\ref{classifying_circle_valued_spaces_instead} are no longer true if one omits the assumption on the dimension of the manifold. When the dimension is larger than $4$, there exist non-Hamiltonian circle actions with fixed points (see~\cite{symplectic_circle_actions} and~\cite{tolman_isolated_points}). In this case, not all level sets are equivariantly diffeomorphic, hence it is impossible to find an equivariant symplectomorphisms that shift the circle-valued Hamiltonian by arbitrary translations.
\end{remark}

Next, we introduce a technical condition for tight circle-valued Hamiltonian $S^1$-manifolds:
\begin{equation}\label{condition}
    \begin{aligned}
    & \text{The restriction map } H^2(M/S^1, \mathbb{Z}) \rightarrow H^2(\Phi^{-1}(y)/S^1, \mathbb{Z}) \text{ is } \\
    & \text{one-to-one for some regular value }y\text{ of }\Phi:M \rightarrow \mathbb{R}/P\text{.}
    \end{aligned}
\end{equation}
For complexity one spaces, there is a similar condition with $S^1$ replaced by $T$, and the circle-valued Hamiltonian $\Phi$ replaced by a momentum map. Karshon and Tolman showed that this condition is always satisfied for tall complexity one spaces. Moreover, they showed that two spaces that satisfy this condition, and have the same Duistermaat-Heckman measure, are isomorphic if and only if they are $\Phi$-$T$-diffeomorphic (see~\cite[Proposition 3.3]{centered_hamiltonians}). For tight circle-valued Hamiltonian $S^1$-manifolds, the analogous proposition is:
\begin{proposition}\label{phi_t_diffeo_and_eq_symp}
Let $(M, \omega, \Phi)$ and $(M', \omega', \Phi')$ be four-dimensional tight circle-valued Hamiltonian $S^1$-manifolds that satisfy Condition~\eqref{condition} and have the same Duistermaat-Heckman constant and the same group of periods $P$.
Then every $\Phi$-$T$-diffeomorphism from $M$ to $M'$ can be smoothly isotoped to an isomorphism of $(M, \omega, \Phi)$ and $(M', \omega', \Phi')$.
\end{proposition}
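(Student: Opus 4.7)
The plan is to apply Lemma \ref{moser_with_same_cohomology} directly. Given a $\Phi$-$T$-diffeomorphism $F : M \to M'$, Lemma \ref{omega_difference_is_basic} shows that $F^*\omega' - \omega$ defines a basic cohomology class $[F^*\omega' - \omega]_{\basic} \in H^2_{\basic}(M)$, and Lemma \ref{moser_with_same_cohomology} reduces the proposition to proving that this class vanishes. The entire task is therefore to establish this vanishing.

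The approach is to restrict to a regular level set and invoke Condition \eqref{condition}. Since the $S^1$ action has only finite stabilizers (Lemma \ref{no_fixed_points} rules out fixed points), the Koszul isomorphism identifies $H^2_{\basic}(M)$ with $H^2(M/S^1; \mathbb{R})$. Choose a regular value $y \in \mathbb{R}/P$ for which Condition \eqref{condition} holds. The assumed injectivity of the integer restriction map $H^2(M/S^1; \mathbb{Z}) \to H^2(\Phi^{-1}(y)/S^1; \mathbb{Z})$ upgrades to injectivity of the real restriction map, because $\mathbb{R}$ is flat over $\mathbb{Z}$ and the natural map $H^2(\cdot; \mathbb{Z}) \otimes \mathbb{R} \to H^2(\cdot; \mathbb{R})$ is an isomorphism on spaces with finitely generated cohomology. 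It therefore suffices to show that the pullback of $[F^*\omega' - \omega]_{\basic}$ to $H^2(\Phi^{-1}(y)/S^1; \mathbb{R})$ is zero.

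By Lemmas \ref{fiber_connectivity_of_circle_actions} and \ref{same_genus_and_isotropy_data}, the reduced space $\Phi^{-1}(y)/S^1$ is a compact connected oriented $2$-orbifold, so its second real cohomology is one-dimensional and classes are detected by integration against the fundamental class. The restriction of $F^*\omega' - \omega$ descends to $\bar F^*(\omega'_{\reduced}) - \omega_{\reduced}$, where $\bar F$ denotes the induced orientation-preserving diffeomorphism of reduced spaces. The Duistermaat-Heckman formula, together with Lemma \ref{duistermaat_heckman_constant} and the hypothesis that the two spaces share the same Duistermaat-Heckman constant $c_{\DuHe}$, yields
\begin{equation*}
    \int_{\Phi^{-1}(y)/S^1} \omega_{\reduced} \;=\; \frac{c_{\DuHe}}{2\pi} \;=\; \int_{(\Phi')^{-1}(y)/S^1} \omega'_{\reduced} \;=\; \int_{\Phi^{-1}(y)/S^1} \bar F^*(\omega'_{\reduced}),
\end{equation*}
where the last equality uses that $\bar F$ is an orientation-preserving diffeomorphism. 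The difference integrates to zero, hence represents the trivial class in $H^2(\Phi^{-1}(y)/S^1; \mathbb{R})$, and by the injectivity established above we conclude $[F^*\omega' - \omega]_{\basic} = 0$. Lemma \ref{moser_with_same_cohomology} then isotopes $F$ to the desired isomorphism.

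The main technical subtlety I anticipate is the bookkeeping passage from integer to real injectivity in Condition \eqref{condition}, which requires invoking flatness of $\mathbb{R}$ and the universal coefficient theorem to eliminate torsion contributions to $H^2$. A secondary point is justifying the Koszul isomorphism in the orbifold setting where $S^1$ acts with finite (but possibly non-trivial) stabilizers; this is cited in the preliminaries and is standard. Apart from these, the argument is a direct adaptation of the strategy of \cite[Proposition 3.3]{centered_hamiltonians} to circle-valued Hamiltonians, where the circle-valued target causes no new difficulties since all the action takes place on the preimage of a single regular value.
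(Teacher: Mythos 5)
Your argument is correct and follows the same route as the paper: the paper isolates the vanishing of $[F^*\omega'-\omega]_{\basic}$ as a separate lemma (Lemma \ref{condition_to_cohomology}) and then invokes Lemma \ref{moser_with_same_cohomology}, whereas you inline that lemma's content into the proof, but the substance — restrict to a reduced space, compare areas via the shared Duistermaat-Heckman constant, and use the injectivity from Condition \eqref{condition} to pull the vanishing back to $H^2(M/S^1;\mathbb{R})$ — is identical. Your explicit remark on upgrading integer injectivity to real injectivity via flatness of $\mathbb{R}$ over $\mathbb{Z}$ fills in a step the paper leaves implicit, and your use of $y$ being a regular value is harmless since in the four-dimensional non-Hamiltonian setting every value of $\Phi$ is regular (Lemma \ref{no_fixed_points}).
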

Before proving Proposition~\ref{phi_t_diffeo_and_eq_symp}, we will show the following lemma, which is an adaptation of Lemma 3.5 in~\cite{centered_hamiltonians}:
\begin{lemma}\label{condition_to_cohomology}
    Let $(M, \omega, \Phi)$ and $(M', \omega', \Phi')$ be four-dimensional tight circle-valued Hamiltonian $S^1$-manifolds that satisfy Condition~\eqref{condition}. Assume that they have the same group of periods $P$ and the same Duistermaat-Heckman constant.
    
    Then for every $\Phi$-$T$-diffeomorphism $F:M \rightarrow M'$, the basic cohomology class ${[F^*\omega' - \omega]}_{\basic}$ vanishes.
\end{lemma}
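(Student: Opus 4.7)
The plan is to use the basic-cohomology interpretation of $[F^*\omega' - \omega]_{\basic}$ together with Condition \eqref{condition} to reduce the vanishing of this class to a statement on a single reduced space, where it will follow from the equality of Duistermaat-Heckman constants. Set $\alpha := F^*\omega' - \omega$; by Lemma \ref{omega_difference_is_basic} this is a basic two-form, so it represents a class in $H^2_{\basic}(M)$, which by Koszul's theorem (cited in the preliminaries) is canonically isomorphic to $H^2(M/S^1, \mathbb{R})$.

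Since $\mathbb{R}$ is flat over $\mathbb{Z}$, the integral injectivity in Condition \eqref{condition} upgrades to injectivity of the real restriction map $H^2(M/S^1, \mathbb{R}) \to H^2(\Phi^{-1}(y)/S^1, \mathbb{R})$ for the regular value $y$ provided by the condition. It therefore suffices to show that $[\alpha]_{\basic}$ restricts to zero on the reduced space at $y$. Via the Koszul identifications, this restriction corresponds to the class on the reduced 2-orbifold of the form into which $i^*\alpha$ descends, where $i : \Phi^{-1}(y) \hookrightarrow M$ is the inclusion. Both $\omega|_{\Phi^{-1}(y)}$ and $(F^*\omega')|_{\Phi^{-1}(y)}$ are basic on $\Phi^{-1}(y)$ (their contractions with $X$ vanish because $\iota_X \omega = -d\Phi$ and, by equivariance of $F$ together with $F^*\Phi' = \Phi$, also $\iota_X F^*\omega' = -d\Phi$, and $\Phi$ is locally constant on the level set). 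They descend respectively to $\omega_{\reduced, y}$ and to $\bar F^* \omega'_{\reduced, y}$, where $\bar F : \Phi^{-1}(y)/S^1 \to (\Phi')^{-1}(y)/S^1$ is the orbifold diffeomorphism induced by $F$; it is orientation-preserving because $F$ is orientation-preserving, equivariant, and compatible with the circle-valued Hamiltonians. Hence it suffices to show that the class of $\bar F^* \omega'_{\reduced, y} - \omega_{\reduced, y}$ vanishes in $H^2$ of the reduced space.

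To finish, I would invoke Lemmas \ref{fiber_connectivity_of_circle_actions} and \ref{same_genus_and_isotropy_data}, by which the reduced space is a compact connected oriented 2-orbifold with finite cyclic isotropy, so its top de Rham cohomology is one-dimensional and detected by integration. Applying Theorem \ref{duistermaat_heckman_thm} on the preimage of a small open arc around $y$, which is an honest four-dimensional proper Hamiltonian $S^1$-manifold, and using the formula from the preliminaries $\rho_{\DuHe}(y) = 2\pi \int_{\Phi^{-1}(y)/S^1} \omega_{\reduced, y}$, together with the identity $\rho_{\DuHe} \equiv c_{\DuHe}$ from Lemma \ref{duistermaat_heckman_constant} and the hypothesis that both spaces share the same $c_{\DuHe}$, gives $\int \omega_{\reduced, y} = c_{\DuHe}/(2\pi) = \int \omega'_{\reduced, y}$. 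Because $\bar F$ is orientation-preserving, change of variables yields $\int \bar F^* \omega'_{\reduced, y} = \int \omega'_{\reduced, y}$, and the difference integrates to zero.

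The main obstacle I anticipate is the orbifold bookkeeping at the interface between $M$, the regular level set, and its reduction: verifying that the Koszul identification is natural enough to commute with both restriction to the regular fiber and symplectic reduction, and that integration detects the top cohomology on a 2-orbifold with finite cyclic stabilizers. Both are standard, but require care because of the cyclic isotropy. A minor additional point is to justify the Duistermaat-Heckman theorem in the circle-valued setting, which is handled by passing to the preimage of a small interval in $\mathbb{R}/P$ where the space becomes a genuine proper Hamiltonian $S^1$-manifold.
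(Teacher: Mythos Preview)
Your proposal is correct and follows essentially the same route as the paper's proof: pass from $[F^*\omega'-\omega]_{\basic}$ to a class on $M/S^1$, use Condition \eqref{condition} to reduce to the vanishing of its restriction to a single reduced space, and then detect that restriction by integration, where it becomes $c_{\DuHe}'-c_{\DuHe}=0$. The paper is terser about the coefficient change from $\mathbb{Z}$ to $\mathbb{R}$ and about the orbifold integration, but the argument is the same.
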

\begin{proof}
    Let $F:M \rightarrow M'$ be a $\Phi$-$T$-diffeomorphism. As shown in Lemma~\ref{omega_difference_is_basic}, the two-form $F^*\omega' - \omega$ is basic, and therefore it is the pullback of some two-form $\Omega$ on the orbifold $M/S^1$. Moreover, the class $[\Omega]$ in $H^2(M/S^1, \mathbb{R})$ vanishes if and only if the basic cohomology class ${[F^*\omega' - \omega]}_{\basic}$ vanishes. By Condition~\eqref{condition}, the restriction map $H^2(M/S^1, \mathbb{Z}) \rightarrow H^2(\Phi^{-1}(y)/S^1, \mathbb{Z})$ is one-to-one for some $y$. It follows that it's enough to show that the class $[\Omega|_{\Phi^{-1}(y)/S^1}]$ vanishes in $H^2(\Phi^{-1}(y)/S^1, \mathbb{R})$, to deduce that $[\Omega]$ vanishes and subsequently that ${[F^*\omega' - \omega]}_{\basic}$ vanishes.
    
    The integral of $\Omega|_{\Phi^{-1}(y)/S^1}$ over the reduced space $\Phi^{-1}(y)/S^1$ is $c_{\DuHe}'~-~c_{\DuHe}$, where $c_{\DuHe}$ and $c_{\DuHe}'$ are the Duistermaat-Heckman constants of $(M, \omega, \Phi)$ and $(M', \omega', \Phi')$, respectively. By the assumption that the Duistermaat-Heckman constants are the same, this integral is zero, hence $[\Omega|_{\Phi^{-1}(y)/S^1}]$ vanishes, and therefore the basic cohomology class ${[F^*\omega' - \omega]}_{\basic}$ vanishes.
\end{proof}

\begin{proof}[Proof of Proposition~\ref{phi_t_diffeo_and_eq_symp}]
    Let $F:M \rightarrow M'$ be a $\Phi$-$T$-diffeomorphism. By assumption, both spaces have the same Duistermaat-Heckman constant, and satisfy Condition~\eqref{condition}. Thus, by Lemma~\ref{condition_to_cohomology}, the basic cohomology class ${[F^*\omega' - \omega]}_{\basic}$ vanishes. Hence, by Lemma~\ref{moser_with_same_cohomology}, $F$ is smoothly isotopic to an equivariant symplectomorphism $\Psi$ that satisfy $\Psi^*\Phi' = \Phi$, i.e., an isomorphism of circle-valued Hamiltonians $S^1$-manifolds.
\end{proof}
An obstacle to applying Proposition~\ref{phi_t_diffeo_and_eq_symp} is that Condition~\eqref{condition} isn't always satisfied for four-dimensional tight circle-valued Hamiltonian $S^1$-manifolds. The following example shows that the Condition~\eqref{condition} is not satisfied for the circle action that rotates the first coordinate of the $4$-torus.
\begin{example}\label{ex_condition_not_satisfied}
    Let $S^1$ act on the four-dimensional torus $\mathbb{T}^4 \cong \mathbb{R}^4/\mathbb{Z}^4$, supplied with the standard symplectic form $\omega:=dp_1\wedge dq_1 + dp_2\wedge dq_2$, by rotating the coordinate $q_1$. Let $X = \frac{1}{2\pi}\frac{\partial}{\partial q_1}$ be the generating vector field of the action. The action is symplectic, and the group of periods of $\iota_X\omega = -\frac{1}{2\pi}dp_1$ is $\frac{1}{2\pi}\mathbb{Z}$. The action is generated by the tight circle-valued Hamiltonian $\Phi(p_1, q_1, p_2, q_2) = \frac{1}{2\pi}p_1$. Hence, the triple $(\mathbb{T}^4, \omega, \Phi)$ is a four-dimensional tight circle-valued Hamiltonian $S^1$-manifold. Since $H^2(\mathbb{T}^3, \mathbb{Z}) \cong \mathbb{Z}^3$ and $H^2(\mathbb{T}^3/S^1 \cong \mathbb{T}^2, \mathbb{Z}) \cong \mathbb{Z}$, the restriction map cannot be one-to-one, and Condition~\eqref{condition} is not satisfied.    
\end{example}
  The next example shows that the assumption on Condition~\eqref{condition} cannot be omitted from Proposition~\ref{phi_t_diffeo_and_eq_symp}. More precisely, the example shows that there exist $\Phi$-$T$-diffeomorphic four-dimensional tight circle-valued Hamiltonian $S^1$-manifolds, with the same Duistermaat-Heckman constants, that are not isomorphic. This is a striking difference between our spaces and tall complexity one spaces; the latter always satisfy the (analogous) condition, and so $\Phi$-$T$-diffeomorphisms of tall complexity one spaces can always be upgraded to isomorphisms, provided that they have the same Duistermaat-Heckman measure.
\begin{example}\label{ex_different_cohomologies}
     Let $\omega_{A,B}$ be a family of symplectic forms on the four-dimensional torus $\mathbb{T}^4 \cong \mathbb{R}^4/\mathbb{Z}^4$ defined by
     \begin{equation*}
         \omega_{A,B} := dp_1\wedge dq_1 + dp_2\wedge dq_2 + Adp_1\wedge dq_2 + Bdp_1\wedge dp_2.
     \end{equation*}
     Then for every $A,B \in \mathbb{R}$, the circle action that rotates the coordinate $q_1$ is symplectic with respect to $\omega_{A,B}$, and the group of periods of $\iota_{\frac{1}{2\pi}\frac{\partial}{\partial q_1}}\omega_{A,B} = -\frac{1}{2\pi}dp_1$ is $\frac{1}{2\pi}\mathbb{Z}$. Therefore, $(\mathbb{T}^4, \omega_{A,B}, \Phi = \frac{1}{2\pi}p_1)$ is a family of tight circle-valued Hamiltonian $S^1$-manifolds, parameterized by $A$ and $B$. Every two spaces in this family are $\Phi$-$T$-diffeomorphic by the identity map $\id:\mathbb{T}^4 \rightarrow \mathbb{T}^4$. Furthermore, the Duistermaat Heckman constant of these spaces is $2\pi$, regardless of $A$ and $B$.
     \\
     For every $A, B \in \mathbb{R}$, define the evaluation map $\evaluation_{A,B} : H_2(M, \mathbb{Z}) \rightarrow \mathbb{R}$ that sends each homology class $\alpha \in H_2(M, \mathbb{Z})$ to the integral $\int_\alpha \omega_{A, B}$. Then if $(\mathbb{T}^4, \omega_{A,B})$ is symplectomorphic to $(\mathbb{T}^4, \omega_{A',B'})$ then the maps $\evaluation_{A,B}$, $\evaluation_{A', B'}$ must have the same image, but this is only the case when $A\mathbb{Z} + B\mathbb{Z} + \mathbb{Z} = A'\mathbb{Z} + B'\mathbb{Z} + \mathbb{Z}$, and therefore there are infinitely many pairs of non symplectomorphic spaces in this family.
\end{example}

The following proposition shows that Condition~\eqref{condition} is satisfied precisely when the first (or second) Betti number of the quotient space is $1$:
\begin{proposition}\label{condition_for_betti_one}
    Let $(M, \omega, \Phi)$ be a four-dimensional tight circle-valued Hamiltonian $S^1$-manifold. Then Condition~\eqref{condition} is satisfied if and only if $b_1(M/S^1) = 1$ if and only if $b_2(M/S^1) = 1$.
\end{proposition}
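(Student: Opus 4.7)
My plan is to use Proposition \ref{topological_model_proposition} to realize $M/S^1$ as the mapping torus of an orientation-preserving homeomorphism $f : \Sigma_g \to \Sigma_g$, thereby obtaining a topological surface bundle $\Sigma_g \hookrightarrow M/S^1 \to \mathbb{R}/P$ whose fiber over a regular value $y$ is the underlying topological space of the reduced space $\Phi^{-1}(y)/S^1$. I would then apply the Wang long exact sequence
\begin{equation*}
    \cdots \to H^{k-1}(\Sigma_g) \xrightarrow{\id - f^*} H^{k-1}(\Sigma_g) \to H^k(M/S^1) \xrightarrow{i^*} H^k(\Sigma_g) \xrightarrow{\id - f^*} H^k(\Sigma_g) \to \cdots,
\end{equation*}
in which $i^*$ is exactly the restriction map featured in Condition \eqref{condition}.

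Since $f$ is orientation-preserving, $f^*$ acts as the identity on both $H^0(\Sigma_g;\mathbb{Z}) \cong \mathbb{Z}$ and $H^2(\Sigma_g;\mathbb{Z}) \cong \mathbb{Z}$, so the maps labeled $\id - f^*$ in degrees $0$ and $2$ vanish. The Wang sequence therefore breaks into the two short exact sequences
\begin{equation*}
    0 \to \mathbb{Z} \to H^1(M/S^1) \to \ker(\id - f^*|_{H^1(\Sigma_g)}) \to 0,
\end{equation*}
\begin{equation*}
    0 \to \operatorname{coker}(\id - f^*|_{H^1(\Sigma_g)}) \to H^2(M/S^1) \xrightarrow{i^*} \mathbb{Z} \to 0.
\end{equation*}
Setting $d := \dim_{\mathbb{Q}} \ker(\id - f^*|_{H^1(\Sigma_g;\mathbb{Q})})$, rank-nullity gives that the rational cokernel has the same dimension $d$, so taking $\mathbb{Q}$-ranks in the sequences above yields
\begin{equation*}
    b_1(M/S^1) = 1 + d = b_2(M/S^1),
\end{equation*}
which immediately establishes the equivalence $b_1(M/S^1) = 1 \iff b_2(M/S^1) = 1$.

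For the link with Condition \eqref{condition}, the second short exact sequence identifies the kernel of $i^*$ with $\operatorname{coker}(\id - f^*|_{H^1(\Sigma_g)})$. Rationally this cokernel vanishes iff $d = 0$, so over $\mathbb{Q}$ the injectivity of $i^*$ is equivalent to $b_1(M/S^1) = 1$. The main technical subtlety I anticipate is handling the integer coefficients as literally stated in Condition \eqref{condition}: a priori, $\operatorname{coker}(\id - f^*|_{H^1(\Sigma_g;\mathbb{Z})})$ could carry torsion even when $d = 0$. I would address this by invoking Poincaré duality on the closed oriented topological $3$-manifold underlying $M/S^1$, together with a universal-coefficient analysis relating torsion in $H^2(M/S^1;\mathbb{Z})$ to torsion in $H_1(M/S^1;\mathbb{Z})$, and hence back to the integer cokernel of $\id - f^*$; this should allow one to conclude that the integer-level injectivity also holds precisely when $d = 0$, completing the chain of equivalences.
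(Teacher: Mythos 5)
Your approach --- realize $M/S^1$ as the mapping torus of $f$ via Proposition \ref{topological_model_proposition} and apply the Wang exact sequence --- is the same route the paper takes, using Mayer--Vietoris to extract the identical short exact sequences; the rank computation $b_1 = b_2 = 1+d$ and the identification of $\ker(i^*)$ with $H^1(\Sigma_g;\mathbb{Z})/\image(f_1 - \id)$ both match the paper.

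The subtlety you flag is genuine, and is in fact a gap in the paper's own proof: equation \eqref{eq2_mayer} asserts $H^1(\Sigma_g)/\image(f_1 - \id) = 0 \iff \ker(f_1 - \id) = 0$ over $\mathbb{Z}$, but only the forward implication is automatic --- over $\mathbb{Z}$, injectivity of $f_1 - \id$ does not force surjectivity. However, the Poincar\'e-duality/universal-coefficients fix you sketch does not close the gap: those tools merely re-identify the torsion of $H^2(M/S^1;\mathbb{Z})$ with that of $H_1(M/S^1;\mathbb{Z})$, which is $H_1(\Sigma_g;\mathbb{Z})/\image(f_* - \id)$, without forcing it to vanish. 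Concretely, take $g = 1$, no exceptional orbits, and $f : \mathbb{T}^2 \to \mathbb{T}^2$ the quarter-turn with matrix $\begin{pmatrix} 0 & -1 \\ 1 & 0 \end{pmatrix}$. Then $\det(f_1 - \id) = 2$, so $H^1(\mathbb{T}^2;\mathbb{Z})/\image(f_1 - \id) \cong \mathbb{Z}/2$; the mapping torus is the flat quarter-turn $3$-manifold with $H_1 \cong \mathbb{Z} \oplus \mathbb{Z}/2$, hence $b_1 = b_2 = 1$, yet $\ker(i^*) \cong \mathbb{Z}/2$ on integer cohomology and Condition \eqref{condition} fails. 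This monodromy is realizable by Theorem \ref{existence_theorem_phi_diffeo}, so the integer-level statement actually equivalent to Condition \eqref{condition} is that $f_1 - \id$ be surjective over $\mathbb{Z}$ (equivalently $H^2(M/S^1;\mathbb{Z}) \cong \mathbb{Z}$), which is strictly stronger than $b_1 = 1$. Your instinct to scrutinize this step is exactly right, but the proposed resolution leaves the gap exactly where the paper leaves it.
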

\begin{proof}
    By Proposition~\ref{topological_model_proposition}, the topological bundle $\overline{\Phi}: M/S^1 \rightarrow \mathbb{R}/P$ is isomorphic to the mapping torus $\pi: M_f \rightarrow \mathbb{R}/P$, for some orientation-preserving homeomorphism $f:\Sigma_g \rightarrow \Sigma_g$. Let $U, V$ be two open intervals that cover $\mathbb{R}/P$. By applying the Mayer Vietoris sequence for the cover $\pi^{-1}(U)$ and $\pi^{-1}(V)$, we get the short exact sequence
    \begin{equation} \label{long_exact_sequence_equation}
        0 \rightarrow H^{n-1}(\Sigma_g) / \image(f_{n-1} - \id) \rightarrow H^n(M_f) \rightarrow \ker(f_n - \id) \rightarrow 0,
    \end{equation}
    where $f_k:H^k(\Sigma_g) \rightarrow H^k(\Sigma_g)$ is the induced map on cohomology. Setting $n=1$ in the short exact sequence, the group $H^0(\Sigma_g) / \image(f_0 - \id)$ is isomorphic to $\mathbb{Z}$, because $f_0$ is the identity map. Hence, we have
    \begin{equation}\label{eq1_mayer}
        H^1(M_f) \cong \mathbb{Z} \Longleftrightarrow \ker(f_1 - \id) = 0.
    \end{equation}
    Setting $n=2$, the group $\ker(f_2 - \id)$ is isomorphic to $\mathbb{Z}$, because $f_2$ is the identity map. Therefore, we have:
    \begin{equation}\label{eq2_mayer}
        H^2(M_f) \cong \mathbb{Z} \Longleftrightarrow H^1(\Sigma_g)/\image(f_1-\id) = 0 \Longleftrightarrow \ker(f_1 - \id) = 0.
    \end{equation}
    
    Now, let $y$ be in $\mathbb{R}/P$. The kernel of the restriction map $H^2(M/S^1) \rightarrow H^2(\Phi^{-1}(y))$ is isomorphic to the term $H^1(\Sigma_g)/\image(f_1 - \id)$ in the short exact sequence (for $n=2$). Therefore, the restriction map is injective if and only if $H^1(\Sigma_g)/\image(f_1 - \id)$ is trivial. By~\eqref{eq2_mayer}, this is true if and only if $b_2(M/S^1) = b_2(M_f) = 0$ if and only if $\ker(f_1 - \id) = 0$. By~\eqref{eq1_mayer}, this is true if and only if $b_1(M/S^1) = b_1(M_f) = 1$.
\end{proof}
\begin{corollary}\label{phi_t_diffeo_and_eq_symp_betti_one}
    Let $(M, \omega, \Phi)$ and $(M', \omega', \Phi')$ be four-dimensional tight circle-valued Hamiltonian $S^1$-manifolds. Assume that they have the same group of periods, the same Duistermaat-Heckman constant, and that $b_1(M/S^1) = b_1(M'/S^1) = 1$. Then they are isomorphic if and only if they are $\Phi$-$T$-diffeomorphic.
\end{corollary}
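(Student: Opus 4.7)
The plan is to deduce this corollary directly from Proposition \ref{phi_t_diffeo_and_eq_symp} combined with Proposition \ref{condition_for_betti_one}, so essentially no new work is required beyond invoking these two results in the correct order.

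For the ``only if'' direction, suppose $(M, \omega, \Phi)$ and $(M', \omega', \Phi')$ are isomorphic as tight circle-valued Hamiltonian $S^1$-manifolds, witnessed by an equivariant symplectomorphism $\varphi:M \to M'$ with $\Phi' \circ \varphi = \Phi$. Since $\varphi$ is a symplectomorphism between four-dimensional oriented symplectic manifolds (oriented by $\omega^2$ and $(\omega')^2$ respectively), it is automatically orientation-preserving. Together with the equivariance and the relation $\varphi^* \Phi' = \Phi$, this exhibits $\varphi$ as a $\Phi$-$T$-diffeomorphism in the sense of Definition \ref{phi_t_diffeo}.

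For the ``if'' direction, suppose $F : M \to M'$ is a $\Phi$-$T$-diffeomorphism. By assumption, $b_1(M/S^1) = b_1(M'/S^1) = 1$, so Proposition \ref{condition_for_betti_one} guarantees that both spaces satisfy Condition \eqref{condition}. Since the two spaces also share the same group of periods and the same Duistermaat-Heckman constant, Proposition \ref{phi_t_diffeo_and_eq_symp} applies, and we conclude that $F$ can be smoothly isotoped to an isomorphism of $(M, \omega, \Phi)$ and $(M', \omega', \Phi')$. In particular the two spaces are isomorphic.

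There is no real obstacle here: the content lives entirely in Proposition \ref{phi_t_diffeo_and_eq_symp} (which uses the equivariant Moser trick via Lemmas \ref{omega_difference_is_basic}, \ref{moser_with_same_cohomology}, and \ref{condition_to_cohomology}) and in Proposition \ref{condition_for_betti_one} (which computes the relevant cohomology of the mapping torus via Mayer--Vietoris). Once both are in hand, the corollary is a one-line chase. The only point worth being careful about is checking that ``isomorphism'' and ``$\Phi$-$T$-diffeomorphism'' are compared using the same target for $\Phi$ and $\Phi'$, which is fine because the assumption of a common group of periods $P$ means both maps have target $\mathbb{R}/P$.
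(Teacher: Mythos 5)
Your proof is correct and follows essentially the same route as the paper: the forward direction is immediate from the definitions, and the reverse direction invokes Proposition \ref{condition_for_betti_one} to establish Condition \eqref{condition} and then applies Proposition \ref{phi_t_diffeo_and_eq_symp}. The small extra justification you give for why an isomorphism is automatically a $\Phi$-$T$-diffeomorphism (orientation-preservation from $\omega^2$) is a welcome clarification but not a divergence.
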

\begin{proof}
    In one direction, every isomorphism between $(M, \omega, \Phi)$ and $(M', \omega', \Phi')$ is in particular a $\Phi$-$T$-diffeomorphism. For the other direction, let $F:M \rightarrow M'$ be a $\Phi$-$T$-diffeomorphism. By Proposition~\ref{condition_for_betti_one}, Both $(M, \omega, \Phi)$ and $(M', \omega', \Phi')$ satisfy Condition~\eqref{condition}. Therefore, by Proposition~\ref{phi_t_diffeo_and_eq_symp}, $F$ can be isotoped to an isomorphism of $(M, \omega, \Phi)$ and $(M', \omega', \Phi')$.
\end{proof}


\section{From \texorpdfstring{$\Phi$}{Phi}-\texorpdfstring{$T$}{T}-diffeomorphisms to \texorpdfstring{$\Phi$}{Phi}-diffeomorphisms} \label{phi_diffeo_section}
In this section, we describe when a diffeomorphism between the quotients of four-dimensional tight circle-valued Hamiltonian $S^1$-manifolds can be lifted to a $\Phi$-$T$-diffeomorphism. For diffeomorphisms between the quotients, we use the notion of $\Phi$-diffeomorphisms introduced in~\cite{centered_hamiltonians} (see Definition~\ref{phi_diffeo}).

Like in the previous section, we show that if $b_1(M/S^1) = 1$, then every $\Phi$-diffeomorphism can be lifted to a $\Phi$-$T$-diffeomorphism (see Corollary~\ref{phi_diffeo_and_phi_t_diffeo_betti_one}). For complexity one spaces, Karshon and Tolman proved a similar proposition, without any assumptions on the Betti numbers (see~\cite[Proposition 4.2]{centered_hamiltonians}). In our case, such a general claim is not true: in Appendix~\ref{examples_appendix}, we describe spaces which have $\Phi$-diffeomorphic quotients but are not $\Phi$-$T$-diffeomorphic. See also Section~\ref{fibration_invariant_section}, where we formalize an invariant to determine when two spaces with $\Phi$-diffeomorphic quotients are also $\Phi$-$T$-diffeomorphic.
\\

We start by introducing the definition of a $\Phi$-diffeomorphism. As in the previous section, we slightly generalize the definition to allow general targets for the $T$-invariant maps, instead of only allowing $\mathfrak{t}^*$, so we can support the more general setting of circle-valued Hamiltonians.
\begin{definition}\label{phi_diffeo}
    Let a torus $T$ act on oriented manifolds $M$ and $M'$ with $T$-invariant maps $\Phi:M \rightarrow C$ and $\Phi':M' \rightarrow C$, for some set $C$. A \textbf{$\Phi$-diffeomorphism} from $M/T$ to $M'/T$ is an orientation preserving diffeomorphism $\overline{F}:M/T\rightarrow M'/T$ such that
    \begin{enumerate}
        \item $\overline{F}$ preserves the maps to $C$, i.e., ${\overline{F}}^* \Phi' = \Phi$.
        \item Each of $\overline{F}$ and ${\overline{F}}^{-1}$ lifts to a $\Phi$-$T$-diffeomorphism in a neighborhood of each exceptional orbit.
    \end{enumerate}
\end{definition}
Note that Definition~\ref{phi_diffeo} mentions a diffeomorphism between the quotient spaces, which are not necessarily smooth manifolds. In this context, a real-valued function on $M/T$ is called smooth if its pullback to $M$ is smooth, and a map between the quotients $M/T$ and $M'/T$ is called smooth if it pull backs every smooth function to a smooth function. See the discussion before Definition 4.1 in~\cite{centered_hamiltonians} for more details.

Let $(M, \omega, \Phi)$ be a four-dimensional tight circle-valued Hamiltonian $S^1$-manifold, and let $\pi: M \rightarrow M/S^1$ be the quotient map. By~\cite[Proposition 4.2]{haefliger_salem}, the isomorphism classes of orbibundles over $M/S^1$ which are locally isomorphic to $\pi:M \rightarrow M/S^1$ are in one-to-one correspondence with $H^2(M/S^1, \mathbb{Z})$. This gives a criterion for when a $\Phi$-diffeomorphism can be lifted to a $\Phi$-$T$-diffeomorphism:
\begin{lemma}\label{lifting_phi_diffeo_condition}
    Let $(M, \omega, \Phi)$ and $(M', \omega', \Phi')$ be four-dimensional tight circle-valued Hamiltonian $S^1$-manifolds, and $\overline{F}: M/S^1 \rightarrow M'/S^1$ be a $\Phi$-diffeomorphism. Moreover, let $\alpha \in H^2(M/S^1, \mathbb{Z})$ be the cohomology class that corresponds to the pullback bundle $\overline{F}^*M' \rightarrow M/S^1$ by Haefliger-Salem. Then $\overline{F}$ can be lifted to a $\Phi$-$T$-diffeomorphism if and only if $\alpha = 0$.
\end{lemma}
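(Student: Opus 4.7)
The plan is to reinterpret lifts of $\bar F:M/S^1\to M'/S^1$ to equivariant maps $F:M\to M'$ as $S^1$-equivariant orbibundle isomorphisms $M\cong \bar F^*M'$ over $M/S^1$, and then to invoke the Haefliger--Salem classification cited immediately before the lemma. A preliminary step is to check that $\bar F^*M'\to M/S^1$ is locally isomorphic to $\pi:M\to M/S^1$, so that the Haefliger--Salem correspondence actually applies and the cohomology class $\alpha$ is well defined. Near a free orbit both bundles are locally trivial principal $S^1$-bundles, and near an exceptional orbit the second clause in the definition of a $\Phi$-diffeomorphism supplies a local $\Phi$-$T$-diffeomorphism lifting $\bar F$, which is precisely a local equivariant orbibundle isomorphism from $\pi$ to $\bar F^*M'$.

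For the forward direction, suppose $F:M\to M'$ is a $\Phi$-$T$-diffeomorphism lifting $\bar F$. Then the map $\tilde F:M\to \bar F^*M'$ given by $x\mapsto (\pi(x),F(x))$ is a well-defined $S^1$-equivariant diffeomorphism over $M/S^1$, since $F$ is equivariant and $\pi'\circ F=\bar F\circ \pi$. Thus $M$ and $\bar F^*M'$ are isomorphic as $S^1$-orbibundles over $M/S^1$, and by Haefliger--Salem the corresponding class $\alpha$ vanishes. Conversely, if $\alpha=0$, Haefliger--Salem produces an $S^1$-equivariant orbibundle isomorphism $G:M\to \bar F^*M'$ over $M/S^1$. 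Composing with the natural projection $p:\bar F^*M'\to M'$ yields $F:=p\circ G:M\to M'$, an $S^1$-equivariant diffeomorphism with $\pi'\circ F=\bar F\circ \pi$. It remains to verify that $F$ is a $\Phi$-$T$-diffeomorphism: the identity $F^*\Phi'=\Phi$ follows from $\bar F^*\Phi'=\Phi$ together with the factorization of $\Phi,\Phi'$ through the quotients, while orientation preservation follows from $F$ being equivariant and covering the orientation-preserving map $\bar F$, with the $S^1$-actions orienting the orbit directions compatibly on both sides.

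The main obstacle will be carefully verifying the local isomorphism condition required to invoke Haefliger--Salem. Without the clause that $\bar F$ and $\bar F^{-1}$ locally lift near exceptional orbits, the pullback $\bar F^*M'$ need not lie in the same local isomorphism class as $\pi:M\to M/S^1$, and the cohomology class $\alpha$ would not even be defined. Once this is granted, the remainder of the argument is a direct translation between equivariant bundle isomorphisms over $M/S^1$ and $\Phi$-$T$-diffeomorphism lifts of $\bar F$.
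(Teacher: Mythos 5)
Your proof is correct and makes explicit what the paper leaves implicit: the paper states this lemma as an immediate consequence of the Haefliger--Salem classification of orbibundles over $M/S^1$ cited in the preceding paragraph, and your argument correctly translates lifts of $\bar F$ into equivariant orbibundle isomorphisms $M \cong \bar F^* M'$, with the useful observation that the second clause in the definition of a $\Phi$-diffeomorphism furnishes the local isomorphisms near exceptional orbits needed to even define~$\alpha$. This is essentially the same route as the paper.
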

See also Lemma~\ref{lifts_by_fintushel} for an equivalent result in the language of Fintushel's papers~\cite{fintushel,fintushel2}. Karshon and Tolman used~\cite{haefliger_salem} to prove that if two complexity one spaces satisfy Condition~\eqref{condition}, and have the same Duistermaat-Heckman measure, then every $\Phi$-diffeomorphism between their quotients lift to a $\Phi$-$T$-diffeomorphism (see~\cite[Proposition 4.2]{centered_hamiltonians}). For tight circle-valued Hamiltonian $S^1$-manifolds, the analogous proposition is:
\begin{proposition}\label{phi_diffeo_and_phi_t_diffeo}
Let $(M, \omega, \Phi)$ and $(M', \omega', \Phi')$ be four-dimensional tight circle-valued Hamiltonian $S^1$-manifolds that satisfy Condition~\eqref{condition}, and have the same group of periods.
Then every $\Phi$-diffeomorphism from $M/S^1$ to $M'/S^1$ lifts to a $\Phi$-$T$-diffeomorphism from $M$ to $M'$.
\end{proposition}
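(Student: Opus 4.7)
The plan is to reduce the statement to a cohomological vanishing using Lemma \ref{lifting_phi_diffeo_condition}, then exploit Condition \eqref{condition} together with Lemma \ref{duistermaat_heckman_constant} to verify the vanishing on a single reduced space. Let $\bar F: M/S^1 \rightarrow M'/S^1$ be a $\Phi$-diffeomorphism and let $\alpha \in H^2(M/S^1, \mathbb{Z})$ be the class that classifies the orbibundle $\bar F^*M' \rightarrow M/S^1$, normalized so that the class of $\pi: M \rightarrow M/S^1$ is zero. By Lemma \ref{lifting_phi_diffeo_condition}, it suffices to prove $\alpha = 0$.

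Fix a regular value $y \in \mathbb{R}/P$ of $\Phi$ for which the restriction map $H^2(M/S^1, \mathbb{Z}) \rightarrow H^2(\Phi^{-1}(y)/S^1, \mathbb{Z})$ of Condition \eqref{condition} is injective. By injectivity, it is enough to show that the restricted class $\alpha|_{\Phi^{-1}(y)/S^1}$ vanishes; equivalently, that the $S^1$-orbibundles $M|_{\Phi^{-1}(y)/S^1}$ and $\bar F^*M'|_{\Phi^{-1}(y)/S^1}$ are isomorphic over the reduced space $\Phi^{-1}(y)/S^1$.

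Both of these restrictions are Seifert fibrations over the closed oriented orbifold $\Phi^{-1}(y)/S^1$, and so are classified by Seifert invariants of the form $\{g;(1,b),(n_1,a_1),\ldots,(n_k,a_k)\}$. Because $\bar F$ is a $\Phi$-diffeomorphism, $\bar F$ restricts to an orientation-preserving diffeomorphism $\Phi^{-1}(y)/S^1 \rightarrow (\Phi')^{-1}(y)/S^1$ that matches isotropy labels, so the tuples $(n_j,a_j)$ agree on both sides. The remaining integer $b$ is determined by the Seifert Euler number via Equation \eqref{seifert_euler_equation}, and by Lemma \ref{duistermaat_heckman_constant} this number is zero for every level set in both $M$ and $M'$. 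Hence the Seifert invariants coincide, the two Seifert fibrations are isomorphic as oriented $S^1$-orbibundles, and therefore $\alpha|_{\Phi^{-1}(y)/S^1} = 0$. Condition \eqref{condition} then gives $\alpha = 0$, completing the argument.

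The main obstacle I expect is the naturality issue hidden in the second-to-last sentence: one must justify that the Haefliger-Salem bijection between isomorphism classes of $S^1$-orbibundles over $M/S^1$ and $H^2(M/S^1, \mathbb{Z})$ is natural with respect to restriction to the suborbifold $\Phi^{-1}(y)/S^1$, and that under this restriction the classifying class corresponds to the Seifert-Euler/Chern class of a Seifert fibration over the reduced space. Once this compatibility is pinned down, either by tracing through the Čech-cohomological construction of \cite{haefliger_salem} or via the generalized Chern class of orbifold $S^1$-bundles, the matching of Seifert invariants via Lemma \ref{duistermaat_heckman_constant} finishes the proof.
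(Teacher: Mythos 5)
Your argument is correct and follows essentially the same route as the paper's proof, which delegates the details to \cite[Proposition 4.2]{centered_hamiltonians}: reduce via Lemma \ref{lifting_phi_diffeo_condition} to the vanishing of the obstruction class $\alpha$, use Condition \eqref{condition} to localize to a single reduced space, and conclude there because the Seifert Euler numbers (equivalently Chern numbers, equivalently Duistermaat--Heckman derivatives) vanish on both sides by Lemma \ref{duistermaat_heckman_constant}. The compatibility of the Haefliger--Salem correspondence with restriction to the reduced space that you flag at the end is the same implicit step the paper (and Karshon--Tolman) rely on, so you have correctly identified the one bookkeeping detail worth tracing rather than a gap in your reasoning.
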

\begin{proof}
    The proof given by Karshon and Tolman for the case of complexity one spaces (see~\cite[Proposition 4.2]{centered_hamiltonians}) can be restated without any significant modifications to prove Proposition~\ref{phi_diffeo_and_phi_t_diffeo}. In short, by Lemma~\ref{lifting_phi_diffeo_condition}, and Condition~\eqref{condition}, it is enough to show that the Chern classes of restrictions to the reduced spaces of $M$ and $M'$ agree. This follows since their Duistermaat-Heckman derivatives, that correspond to the Chern classes, are always zero by Lemma~\ref{duistermaat_heckman_constant}.
\end{proof}
Note that for complexity one spaces,~\cite[Proposition 4.2]{centered_hamiltonians} had an additional assumption that the Duistermaat-Heckman measures of the spaces agree, which is not needed here since the Duistermaat-Heckman's derivatives always agree.

\begin{corollary}\label{phi_diffeo_and_phi_t_diffeo_betti_one}
Let $(M, \omega, \Phi)$ and $(M', \omega', \Phi')$ be four-dimensional tight circle-valued Hamiltonian $S^1$-manifolds. Assume that they have the same group of periods, and that $b_1(M/S^1) = b_1(M'/S^1) = 1$. Then they are $\Phi$-$T$-diffeomorphic if and only if their quotients are $\Phi$-diffeomorphic.
\end{corollary}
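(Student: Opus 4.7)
The plan is to show this corollary as a direct consequence of Proposition \ref{phi_diffeo_and_phi_t_diffeo} combined with the equivalence established in Proposition \ref{condition_for_betti_one}, mirroring exactly how Corollary \ref{phi_t_diffeo_and_eq_symp_betti_one} was deduced from Proposition \ref{phi_t_diffeo_and_eq_symp} and Proposition \ref{condition_for_betti_one} in the previous section.

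For the forward direction, any $\Phi$-$T$-diffeomorphism $F: M \to M'$ descends to a map $\bar F: M/S^1 \to M'/S^1$ on the quotients. I would check that $\bar F$ satisfies the two requirements of Definition \ref{phi_diffeo}: first, ${\bar F}^* \Phi' = \Phi$ follows immediately from $F^* \Phi' = \Phi$ and the fact that both Hamiltonians factor through the quotient maps; second, $\bar F$ lifts to a $\Phi$-$T$-diffeomorphism in neighborhoods of exceptional orbits by simply restricting $F$ itself, and the same holds for the inverse ${\bar F}^{-1}$ using $F^{-1}$. Orientation preservation on the quotients follows from orientation preservation of $F$ together with orientation preservation of the $S^1$-action.

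For the reverse direction, I assume $\bar F: M/S^1 \to M'/S^1$ is a $\Phi$-diffeomorphism. Since $b_1(M/S^1) = b_1(M'/S^1) = 1$, Proposition \ref{condition_for_betti_one} immediately gives that both $(M, \omega, \Phi)$ and $(M', \omega', \Phi')$ satisfy Condition \eqref{condition}. Combined with the hypothesis that the two spaces share the same group of periods, Proposition \ref{phi_diffeo_and_phi_t_diffeo} applies, producing a lift of $\bar F$ to a $\Phi$-$T$-diffeomorphism $F: M \to M'$.

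There is essentially no obstacle here since all the machinery has been set up; the statement is simply the $b_1 = 1$ specialization that removes the need to assume Condition \eqref{condition} by hand. The only thing worth double-checking is that Proposition \ref{phi_diffeo_and_phi_t_diffeo}, unlike its complexity-one counterpart \cite[Proposition 4.2]{centered_hamiltonians}, does not require equality of Duistermaat-Heckman measures, which is consistent with Lemma \ref{duistermaat_heckman_constant} guaranteeing vanishing Seifert Euler number on every level set, so the Chern class obstruction identified via Lemma \ref{lifting_phi_diffeo_condition} automatically vanishes.
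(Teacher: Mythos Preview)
Your proposal is correct and follows essentially the same approach as the paper: invoke Proposition \ref{condition_for_betti_one} to verify Condition \eqref{condition}, then apply Proposition \ref{phi_diffeo_and_phi_t_diffeo}. The paper's proof is terser, leaving the forward direction implicit, but your explicit verification of it is fine and your observation about the Duistermaat--Heckman hypothesis being unnecessary matches the paper's own remark after Proposition \ref{phi_diffeo_and_phi_t_diffeo}.
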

\begin{proof}
    By Lemma~\ref{condition_for_betti_one}, Both $(M, \omega, \Phi)$ and $(M', \omega', \Phi')$ satisfy Condition~\eqref{condition}. Thus, the Corollary is immediate by Proposition~\ref{phi_diffeo_and_phi_t_diffeo}.
\end{proof}


\section{From \texorpdfstring{$\Phi$}{Phi}-diffeomorphisms to painted surface bundles} \label{sheaves_section}
In this section, we reduce the problem of determining whether two four-dimensional tight circle-valued Hamiltonian $S^1$-manifolds have $\Phi$-diffeomorphic quotients to a problem in differential topology. We define for each four-dimensional tight circle-valued Hamiltonian $S^1$-manifold a unique up to isomorphism painted surface bundle over $\mathbb{R}/P$, in the same sense as in~\cite{tall_uniqueness}. Intuitively, an associated painted surface bundle of a tight circle-valued Hamiltonian $S^1$-manifold is a ``smooth version'' of the (singular) bundle $M/S^1 \rightarrow \mathbb{R}/P$, together with ``paint'' that labels the exceptional orbits in $M/S^1$. We show that that two spaces have $\Phi$-diffeomorphic quotients if and only if their associated painted surface bundles are isomorphic.

Throughout the section, the approach that we use is based on Karshon and Tolman's proof of a similar claim for proper tall complexity one spaces, see parts II, III and IV of~\cite{tall_uniqueness} for more details. While some of the definitions in~\cite{tall_uniqueness} can be used without change, others must be adapted to the setting of circle-valued Hamiltonians.
\\

In the following paragraphs, we give a brief introduction to sheaves of groupoids. We refer the reader to~\cite[Part II]{tall_uniqueness} for a more detailed treatment of this topic.

A \textbf{presheaf of groupoids} $F$ over a topological space $X$ consists of a groupoid $F(U)$ for every open subset $U \subset X$, and a homomorphism $\iota_U^V:F(U) \rightarrow F(V)$ for every open subset $V \subset U$, such that the following two axioms are satisfied:
\begin{enumerate}
    \item For every open set $U$, the map $\iota_U^U$ is the identity map $\id_{F(U)}$.
    \item If $U, V, W$ are open sets satisfying $W \subset V \subset U$, then $\iota_U^V \circ \iota_V^W = \iota_U^W$.
\end{enumerate}
If $A$ is an object in $F(U)$, and $V$ is a subset of $U$, we denote $\iota_U^V(A)$ by $A|_V$ for simplicity. Similarly, if $f:A \rightarrow A'$ is an arrow in $F(U)$, we denote $\iota_U^V(f)$ by $f|_V$. Objects in the groupoid $F(X)$ are called \textbf{global objects}.

A presheaf of groupoids $F$ is called a \textbf{sheaf of groupoids} if for every collection $W_{\alpha}$ of open subsets of $X$, and objects $A, A'$ in the groupoid $F(\bigcup W_{\alpha})$, the following two axioms are satisfied:
\begin{enumerate}
    \item Let $f:A \rightarrow A'$ and $g:A \rightarrow A'$ be arrows that satisfy $f|_{W_\alpha} = g|_{W_\alpha}$ for every $W_{\alpha}$. Then~$f=g$.
    \item Let $f_{\alpha}:A|_{W_{\alpha}} \rightarrow A'|_{W_{\alpha}}$ be a collection of arrows that agree on intersections, i.e., that satisfy $f_{\alpha_1}|_{W_{\alpha_1} \cap W_{\alpha_2}} = f_{\alpha_2}|_{W_{\alpha_1} \cap W_{\alpha_2}}$ for every $W_{\alpha_1}$ and $W_{\alpha_2}$. Then there exists an arrow $g:A \rightarrow A'$ such that $g|_{W_{\alpha}} = f_{\alpha}$ for every $W_{\alpha}$.
\end{enumerate}
We say that a sheaf of groupoids $F$ has \textbf{gluable objects}, if for every collection $W_{\alpha}$ of open sets in $X$, objects $A_{\alpha}$ in every groupoid $F(W_{\alpha})$, and arrows $g_{\beta, \alpha}:A_{\alpha}|_{W_{\alpha} \cap W_{\beta}} \rightarrow A_{\beta}|_{W_{\alpha} \cap W_{\beta}}$ for every pair $W_{\alpha}$ and $W_{\beta}$, satisfying that $g_{\gamma, \beta} \circ g_{\beta, \alpha} = g_{\gamma, \alpha}$ on $W_{\alpha} \cap W_{\beta} \cap W_{\gamma}$ for every $\alpha, \beta, \gamma$, then there exists an object $B$ in the groupoid $F(\bigcup W_{\alpha})$, and arrows $f_{\alpha}:A_{\alpha} \rightarrow B|_{W_{\alpha}}$, such that $f_{\beta}|_{W_{\alpha} \cap W_{\beta}} \circ g_{\beta, \alpha} \circ f_{\alpha}|_{W_{\alpha} \cap W_{\beta}}^{-1}$ is the identity map $\id_{B|_{{W_{\alpha} \cap W_{\beta}}}}$ in $F({W_{\alpha} \cap W_{\beta}})$.

Let $F$ be a sheaf of groupoids over a topological space $X$, and let $\mathfrak{U}$ be an open cover of $X$. A \textbf{zero cochain} $a \in C^0(\mathfrak{U}, F)$ is a choice of arrow $a_U$ in $F(U)$ for every open set $U$ in the cover $\mathfrak{U}$. A \textbf{one cochain} $b \in C^1(\mathfrak{U}, F)$ is a choice of object $O_U$ in $F(U)$ for every open set $U$ in the cover $\mathfrak{U}$, together with a choice of arrow $b_{V,W}:O_W|_{V \cap W} \rightarrow O_V|_{V \cap W}$ in $F(V \cap W)$ for every $V$ and $W$ in the cover $\mathfrak{U}$. A one cochain $b$ is called a \textbf{one cocycle} if it is \textbf{closed}, i.e., if it satisfies the following two properties:
\begin{enumerate}
    \item For every $U$ in $\mathfrak{U}$, the arrow $b_{U,U}$ is the identity arrow $\id_{U}$ in $F(U)$.
    \item For every $W \subset V \subset U$ in $\mathfrak{U}$, the arrows $b_{V, U}$, $b_{W, V}$ and $b_{W, U}$ satisfy $b_{W, V} \circ b_{V, U} = b_{W, U}$.
\end{enumerate}
The group of zero cochains act on the set of one cocycles by conjugation of the arrows. More explicitly, if $a$ is a zero cochain, and $b$ is a one cocycle, then for every $U$ and $V$ in $\mathfrak{U}$, the arrow $(a \cdot b)_{V, U}$ is given by
\begin{equation*}
    a_V|_{U \cap V} \circ b_{V, U} \circ (a_U|_{U \cap V})^{-1}.
\end{equation*}
The \textbf{first cohomology} $H^1(F, \mathfrak{U})$ is defined as the set of equivalence classes of one cocycles under this action. The first cohomologies defined with different covers, and the refinement maps, define a direct system. The \textbf{first Čech cohomology} $\check H^1(X, F)$ is defined to be the direct limit of this system.

The following two lemmas will be useful for dealing with isomorphism classes of spaces:
\begin{lemma}[Lemma 5.4 in~\cite{tall_uniqueness}] \label{abstract_nonsense_lemma}
    Let $F$ be a sheaf of groupoids over $X$. A global object in $F$ naturally determines a class $[A] \in \check H^1(X , F)$. Two objects $A$ and $A'$ are isomorphic if and only if $[A] = [A']$. If the sheaf has gluable objects, every class in $\check H^1(X , F)$ arises in this way.
\end{lemma}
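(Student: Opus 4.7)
My plan is to prove the three assertions in turn, using only the sheaf axioms for the first two and invoking the gluable objects hypothesis only for the third. The overall strategy is to match every piece of cocycle data with a corresponding sheaf-theoretic construction and then let the axioms do the work.

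For the first assertion, given a global object $A$, for any open cover $\mathfrak{U}$ of $X$ I would attach the \emph{trivial} one-cocycle with objects $O_U := A|_U$ and transition arrows $b_{V,U} := \id_{A|_{U \cap V}}$. This is manifestly closed, and under passage to a refinement it restricts to the trivial cocycle on the refined cover, so it determines a well-defined class $[A]$ in the direct limit $\check H^1(X, F)$.

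For the second assertion, in one direction an isomorphism $\phi : A \to A'$ gives a zero-cochain $a_U := \phi|_U$ on any cover, and a direct computation shows that $a$ conjugates the trivial cocycle of $A$ to the trivial cocycle of $A'$, so $[A] = [A']$. Conversely, if $[A] = [A']$, then on some common cover there is a zero-cochain $a$ whose action sends the trivial cocycle of $A$ to the trivial cocycle of $A'$; unwinding the definition of the action, this amounts to saying that the arrows $a_U : A|_U \to A'|_U$ satisfy $a_U|_{U \cap V} = a_V|_{U \cap V}$ for every pair $U, V$. The sheaf gluing axiom for arrows then assembles them into a global arrow $A \to A'$, and applying the same construction to the inverses $a_U^{-1}$ yields a two-sided inverse, so $A \cong A'$.

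For the third assertion, let $[c] \in \check H^1(X, F)$ be represented by a one-cocycle on a cover $\{W_\alpha\}$ consisting of objects $O_\alpha \in F(W_\alpha)$ and arrows $c_{\beta, \alpha}$ satisfying the cocycle identity on triple intersections. The central observation is that this is exactly the input data demanded by the gluable objects axiom, so that axiom produces a global object $B \in F(X)$ together with arrows $f_\alpha : O_\alpha \to B|_{W_\alpha}$ such that $f_\beta|_{W_\alpha \cap W_\beta} \circ c_{\beta, \alpha} \circ f_\alpha|_{W_\alpha \cap W_\beta}^{-1} = \id_{B|_{W_\alpha \cap W_\beta}}$. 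Read in the other direction, this identity says that the zero-cochain $(f_\alpha)$ conjugates $c$ to the trivial cocycle attached to $B$, hence $[B] = [c]$.

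The only real bookkeeping, and the step I expect to require the most care, is the well-definedness of $[A]$ in the direct limit under refinement of covers and the verification that the cocycle action commutes with restriction; both are formal but need to be spelled out to make the isomorphism statement in the second part rigorous. The substantive content is in the third part, where the gluable objects axiom is precisely tailored to turn cocycle data into a global object, so no additional construction is required.
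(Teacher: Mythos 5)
The paper does not reproduce a proof of this lemma: it is quoted verbatim as \cite[Lemma 5.4]{tall_uniqueness}, and the proof is deferred to Karshon--Tolman. So there is no in-paper argument to compare against, only the statement and the surrounding definitions in Section \ref{sheaves_section}.

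That said, your proof is correct and complete, and it follows the standard route that the cited source also takes. A few remarks. In the second assertion, the key computation is exactly as you describe: with the paper's action formula $(a \cdot b)_{V,U} = a_V|_{U \cap V} \circ b_{V,U} \circ (a_U|_{U \cap V})^{-1}$, conjugating the trivial cocycle of $A$ (arrows all $\id$) by a zero-cochain with $a_U : A|_U \to A'|_U$ lands on the trivial cocycle of $A'$ precisely when $a_U|_{U \cap V} = a_V|_{U \cap V}$, and the sheaf axiom for gluing arrows then produces the global isomorphism; the inverse is handled the same way, so you correctly get a two-sided inverse. In the third assertion, your central observation --- that the one-cocycle data $(O_\alpha, c_{\beta,\alpha})$ with the triple-intersection cocycle identity is verbatim the hypothesis of the gluable objects axiom, and the conclusion $f_\beta|_{\alpha\beta} \circ c_{\beta,\alpha} \circ f_\alpha|_{\alpha\beta}^{-1} = \id$ is verbatim the statement that the zero-cochain $(f_\alpha)$ trivializes $c$ to the constant cocycle of $B$ --- is indeed the whole content; no further argument is needed. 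The one place you rightly flag as needing care, compatibility with refinement so that $[A]$ is well defined in the direct limit, is formal and goes through as you say, since restricting the trivial cocycle of $A$ to a finer cover gives the trivial cocycle on the finer cover.

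One small point worth being aware of, though it does not affect your argument: as transcribed in this paper, the ``closed'' condition on a one-cochain is stated only for nested triples $W \subset V \subset U$ in the cover, which is almost certainly a transcription slip for the usual cocycle identity on triple overlaps $U \cap V \cap W$. You (correctly) read it as the triple-overlap condition, which is also exactly what is needed for it to match the hypothesis of the gluable objects axiom in your third step.
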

\begin{lemma}[Lemma 6.4 in~\cite{tall_uniqueness}]\label{other_abstract_nonsense_lemma}
    Let $F$ and $F'$ be sheaves of groupoids over $X$, and let $i : F \rightarrow F'$ be a map of sheaves such that:
    \begin{enumerate}
        \item For any open subset $U \subset X$ and objects $A, A' \in F(U)$, the map $i : \hom_F(A, A') \rightarrow hom_{F'}(i(A), i(A'))$ is a bijection.
        \item For any open subset $U \subset X$ and object $B \in F'(U)$, every point in $U$ has a neighborhood $V \subset U$ and an object $A \in F(V)$ so that $i(A)$ is isomorphic to $B|_V$.
    \end{enumerate}
    Then $i$ induces an isomorphism $i_* : \check H^1(X , F) \rightarrow \check H^1(X , F')$.
\end{lemma}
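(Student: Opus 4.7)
The plan is to prove that $i$ induces a well-defined map on Čech cohomology and then verify injectivity and surjectivity using conditions (1) and (2) respectively. The overall structure mirrors the standard argument that a fully faithful and locally essentially surjective morphism of stacks induces an equivalence on first cohomology, with no substantial analytic input required.

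First I would define $i_*$ at the level of Čech cochains. Given a cover $\mathfrak{U}$ and a one-cocycle $b \in C^1(\mathfrak{U}, F)$ with objects $O_U$ and arrows $b_{V,U}$, applying $i$ object-by-object and arrow-by-arrow yields a collection in $C^1(\mathfrak{U}, F')$; since $i$ commutes with restriction and composition as a map of sheaves, the cocycle identities are preserved. The pushforward respects the conjugation action of zero-cochains and is compatible with refinement, so passing to the direct limit produces a well-defined map $i_* : \check H^1(X, F) \to \check H^1(X, F')$.

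For surjectivity, start with a class represented by a one-cocycle $b' \in C^1(\mathfrak{U}, F')$ with objects $O'_U$. By condition (2), each $U$ can be covered by open sets $V$ on which $O'_U|_V$ is isomorphic, via some arrow $\varphi_V$, to $i(A_V)$ for an object $A_V \in F(V)$. Refine $\mathfrak{U}$ to a cover $\mathfrak{U}'$ adapted to these decompositions and conjugate $b'|_{\mathfrak{U}'}$ by the zero-cochain $(\varphi_V)$; the resulting cocycle is cohomologous to $b'|_{\mathfrak{U}'}$ and has every object of the form $i(A_V)$. Its transition arrows, being morphisms in $F'$ between objects in the image of $i$, lift uniquely to arrows in $F$ by condition (1). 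These lifts assemble into a one-cochain in $C^1(\mathfrak{U}', F)$; closedness follows because its image under $i$ is closed and (1) is injective on hom-sets. The class of this lift then maps under $i_*$ to the class of $b'$.

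For injectivity, suppose $b, \tilde b \in C^1(\mathfrak{U}, F)$ satisfy $i_*[b] = i_*[\tilde b]$. After passing to a common refinement $\mathfrak{U}'$, there is a zero-cochain $a' \in C^0(\mathfrak{U}', F')$ with $a' \cdot i(b)|_{\mathfrak{U}'} = i(\tilde b)|_{\mathfrak{U}'}$. Each arrow of $a'$ is a morphism between objects of the form $i(O_U|_U)$ and $i(\tilde O_U|_U)$, so by condition (1) it lifts uniquely to an arrow in $F$, and these lifts assemble into a zero-cochain $a$ with $a \cdot b|_{\mathfrak{U}'} = \tilde b|_{\mathfrak{U}'}$, since both sides have the same image under $i$ and $i$ is faithful. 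Hence $[b] = [\tilde b]$. The main technical point throughout is bookkeeping: the unique lifts produced by (1) must be compatible under restriction and composition so that lifted cochains are genuinely cocycles and conjugation equations transfer. This is precisely where bijectivity, as opposed to mere fullness or faithfulness, in (1) is essential: fullness provides the required lifts while faithfulness pulls cocycle and conjugation identities back from $F'$ to $F$. No sheaf gluing of objects is needed, since we are working only with cocycles indexed by a cover.
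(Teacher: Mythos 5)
The paper does not reproduce a proof of this statement; it simply cites it as Lemma 6.4 of \cite{tall_uniqueness}. Your argument is correct and is the standard one (pushforward of cocycles is well defined by functoriality of $i$; surjectivity follows from local essential surjectivity of $i$ on objects together with fullness on arrows; injectivity follows from faithfulness, which lets conjugating zero-cochains be lifted uniquely), which matches the argument given in the cited reference.
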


Let $P$ be a non-trivial discrete subgroup of $\mathbb{R}$. We wish to define a sheaf of groupoids $\mathcal{Q}_{\mathbb{R}/P}$ over $\mathbb{R}/P$, such that the objects of each groupoid $\mathcal{Q}_{\mathbb{R}/P}(U)$ will model restrictions of four-dimensional tight circle-valued Hamiltonian $S^1$-manifolds to $U$, and the arrows are $\Phi$-diffeomorphisms between their quotients. We first introduce an intrinsic definition for the restrictions of four-dimensional tight circle-valued Hamiltonian $S^1$-manifolds to preimages of intervals in $\mathbb{R}/P$. Note that for tall proper complexity one spaces, there was no need for such a definition since the restriction of a tall proper complexity one space to a preimage of an open convex set is again a tall proper complexity one space.
\\
\begin{definition} \label{piece_definition}
    A \textbf{four-dimensional circle-valued Hamiltonian $S^1$-piece} over an open subset $U$ of $\mathbb{R}/P$ is a four-dimensional symplectic manifold $(M, \omega)$ with a symplectic circle action, effective on every connected component, that is generated by a circle-valued Hamiltonian $\Phi:M \rightarrow \mathbb{R}/P$, such that the following properties are satisfied:
\begin{enumerate}
    \item the image of $\Phi$ is $U$,
    \item $\Phi$ is proper as a map to $U$,
    \item the circle action has no fixed points,
    \item for every $y \in U$, the level set $\Phi^{-1}(y)$ is connected, and
    \item for every $y \in U$, the Seifert Euler number of the Seifert fibration $\Phi^{-1}(y) \rightarrow \Phi^{-1}(y)/S^1$ is $0$.
\end{enumerate}
\end{definition}

We now define the sheaf $\mathcal{Q}_{\mathbb{R}/P}$ over $\mathbb{R}/P$. For every $U\subset \mathbb{R}/P$, the objects of $\mathcal{Q}_{\mathbb{R}/P}(U)$ are four-dimensional circle-valued Hamiltonian $S^1$-pieces over $U$, and the arrows are $\Phi$-diffeomorphisms between their quotients.
\\
\begin{claim}\label{global_objects_of_q_claim}
    The global objects of $\mathcal{Q}_{\mathbb{R}/P}$ correspond to four-dimensional tight circle-valued Hamiltonian $S^1$-manifolds with group of periods $P$.
\end{claim}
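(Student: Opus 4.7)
My plan is to show the correspondence by checking that a four-dimensional tight circle-valued Hamiltonian $S^1$-manifold with group of periods $P$ satisfies each of the five conditions in Definition \ref{piece_definition}, and conversely that every global object of $\mathcal{Q}_{\mathbb{R}/P}$ is automatically compact, connected, and tight (so that the codomain $\mathbb{R}/P$ really is the group of periods).

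In the easier direction, most of the conditions follow from earlier results: properness is immediate from compactness of $M$, the no-fixed-points condition is Lemma \ref{no_fixed_points}, connectedness of level sets is Lemma \ref{fiber_connectivity_of_circle_actions}, and vanishing of the Seifert Euler number is Lemma \ref{duistermaat_heckman_constant}. The only subtle clause is surjectivity of $\Phi$: if $\Phi(M)$ were a proper closed subset of $\mathbb{R}/P$, connectedness of $M$ would force it to lie inside a contractible open arc, so $\Phi$ would lift to an honest $\mathbb{R}$-valued Hamiltonian, making the action Hamiltonian and contradicting tightness via Remark \ref{tight_means_non_hamiltonian}.

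For the converse, compactness of $M$ is immediate from properness of $\Phi$ together with compactness of $\mathbb{R}/P = \Phi(M)$. For connectedness, if $M = M_1 \sqcup M_2$ were disconnected, then since each level set is connected, every $\Phi^{-1}(y)$ lies in a single $M_i$, and the closed subsets $\Phi(M_1), \Phi(M_2) \subset \mathbb{R}/P$ cover $\mathbb{R}/P$; by connectedness of $\mathbb{R}/P$ they must meet, but any $y$ in the intersection gives a disconnected level set, a contradiction.

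The main obstacle is verifying tightness, i.e., that the group of periods $P'$ of $\iota_X \omega$ equals the whole group $P$ and not a proper subgroup. Since $M$ is compact with no fixed points, the action cannot be Hamiltonian (a compact Hamiltonian $S^1$-action would have extrema of the Hamiltonian, which are fixed points), hence $P' \neq 0$. As a nontrivial discrete subgroup of $P$, $P'$ has finite index $k := [P : P']$, and $\Phi$ factors through the tight Hamiltonian $\Phi' \colon M \to \mathbb{R}/P'$ via the $k$-fold covering $\pi \colon \mathbb{R}/P' \to \mathbb{R}/P$. Using the compactness and connectedness already established, $(M, \omega, \Phi')$ is itself a four-dimensional tight circle-valued Hamiltonian $S^1$-manifold, so Lemma \ref{same_genus_and_isotropy_data} makes all level sets of $\Phi'$ pairwise homeomorphic (in particular all nonempty), and Lemma \ref{fiber_connectivity_of_circle_actions} makes them connected. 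Then the decomposition $\Phi^{-1}(y) = \bigsqcup_{y' \in \pi^{-1}(y)} (\Phi')^{-1}(y')$ exhibits each level set of $\Phi$ as a disjoint union of exactly $k$ nonempty connected sets, and the piece axiom of connected level sets forces $k = 1$, i.e.\ $P' = P$.
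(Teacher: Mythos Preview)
Your proof is correct and follows essentially the same approach as the paper: both directions invoke Lemmas \ref{no_fixed_points}, \ref{fiber_connectivity_of_circle_actions}, and \ref{duistermaat_heckman_constant} for the forward implication, and for the converse both lift $\Phi$ to the tight map $\Phi':M\to\mathbb{R}/P'$ and count connected components of $\Phi^{-1}(y)$ to force $[P:P']=1$. Your presentation is slightly more careful in two places: you give an explicit argument for surjectivity of $\Phi$ (the paper simply asserts it), and you establish connectedness of $M$ by a point-set argument \emph{before} invoking Lemma \ref{fiber_connectivity_of_circle_actions} on $(M,\omega,\Phi')$, whereas the paper deduces connectedness at the end via the gradient flow, leaving the hypotheses of that lemma implicitly checked.
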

\begin{proof}
    Let $(M, \omega, \Phi)$ be a four-dimensional tight circle-valued Hamiltonian $S^1$-manifold with group of periods $P$. The map $\Phi$ is surjective and proper as a map to $\mathbb{R}/P$. By Lemma~\ref{no_fixed_points}, $(M, \omega, \Phi)$ has no fixed points. By Lemma~\ref{fiber_connectivity_of_circle_actions} its level sets are connected. By Lemma~\ref{duistermaat_heckman_constant}, the Seifert Euler number of each level set is $0$. Therefore, $(M, \omega, \Phi)$ is indeed a four-dimensional circle-valued Hamiltonian $S^1$-piece over $\mathbb{R}/P$, i.e., a global object of $\mathcal{Q}_{\mathbb{R}/P}$.
    
    Conversely, let $(M, \omega)$ be a four-dimensional circle-valued Hamiltonian $S^1$-piece over $\mathbb{R}/P$, i.e., a global object of $\mathcal{Q}_{\mathbb{R}/P}$. It is a symplectic four-manifold with an effective $S^1$ symplectic action, generated by a map $\Phi:M \rightarrow \mathbb{R}/P$. By definition, $\Phi$ is proper, hence $M$ is compact. By definition, the action has no fixed points, hence it is a non-Hamiltonian action. The group of periods $P'$ of the one-form $\iota_X \omega$ is a subgroup of $P$. We can lift the map $\Phi$ to a map $\overline{\Phi}: M \rightarrow \mathbb{R}/{P'}$ which is tight, and thus by~\ref{fiber_connectivity_of_circle_actions} the level sets of $\overline{\Phi}$ are connected. The number of connected components of the level sets of $\Phi$ is the index $[P' : P]$. By definition, the level sets of $\Phi$ are connected, hence $[P' : P] = 1$, and therefore $P'$ must be equal to $P$. Choosing a Riemannian metric, and flowing one of the level sets with the gradient flow of $\Phi$, we see that $M$ is connected. Thus, $(M, \omega, \Phi)$ is indeed a four-dimensional tight circle-valued Hamiltonian $S^1$-manifold with group of periods $P$.
\end{proof}

\begin{corollary}\label{phi_diffeomorphic_iff_cohomologic}
    Let $(M, \omega, \Phi)$ and $(M', \omega', \Phi')$ be four-dimensional tight circle-valued Hamiltonian $S^1$-manifolds with the same group of periods $P$. Then they have $\Phi$-diffeomorphic quotients if and only if they represent the same class in the first Čech cohomology $\check H^1(\mathbb{R}/P, \mathcal{Q}_{\mathbb{R}/P})$.
\end{corollary}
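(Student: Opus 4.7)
The plan is to deduce this corollary directly from the abstract machinery of Lemma \ref{abstract_nonsense_lemma}, once we have identified the correct sheaf-theoretic setup already established in the section. The two ingredients we need are (i) that both spaces $(M,\omega,\Phi)$ and $(M',\omega',\Phi')$ are global objects of $\mathcal{Q}_{\mathbb{R}/P}$, and (ii) that ``isomorphic as objects of $\mathcal{Q}_{\mathbb{R}/P}(\mathbb{R}/P)$'' means exactly ``$\Phi$-diffeomorphic quotients'' in the sense of Definition \ref{phi_diffeo}.

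First, I would invoke Claim \ref{global_objects_of_q_claim} to conclude that $(M,\omega,\Phi)$ and $(M',\omega',\Phi')$, viewed as four-dimensional tight circle-valued Hamiltonian $S^1$-manifolds with group of periods $P$, are in particular global objects of the sheaf $\mathcal{Q}_{\mathbb{R}/P}$. Second, I would note that by the very definition of $\mathcal{Q}_{\mathbb{R}/P}$, the arrows in the groupoid $\mathcal{Q}_{\mathbb{R}/P}(\mathbb{R}/P)$ between two such global objects are precisely the $\Phi$-diffeomorphisms between their quotient spaces $M/S^1$ and $M'/S^1$. Hence the two objects are isomorphic in the sense of the sheaf of groupoids if and only if $M/S^1$ and $M'/S^1$ are $\Phi$-diffeomorphic.

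Finally, applying Lemma \ref{abstract_nonsense_lemma} to the sheaf $F = \mathcal{Q}_{\mathbb{R}/P}$ and to the global objects $(M,\omega,\Phi)$ and $(M',\omega',\Phi')$ gives the equivalence: they are isomorphic (equivalently, their quotients are $\Phi$-diffeomorphic) if and only if they define the same class in $\check H^1(\mathbb{R}/P, \mathcal{Q}_{\mathbb{R}/P})$, which is exactly the statement of the corollary.

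The step that requires a bit of care, rather than being the hard part of the proof, is simply double-checking that $\mathcal{Q}_{\mathbb{R}/P}$ is genuinely a sheaf of groupoids (so that Lemma \ref{abstract_nonsense_lemma} applies): the descent axioms for $\Phi$-diffeomorphisms follow from the usual local-to-global properties of diffeomorphisms between orbifold quotients, and the presheaf axioms are immediate from the restriction of Hamiltonian $S^1$-pieces to smaller open subsets of $\mathbb{R}/P$. No gluability of objects is needed here since we only use the first (and easier) half of Lemma \ref{abstract_nonsense_lemma}, which requires no existence hypothesis on the side of Čech classes.
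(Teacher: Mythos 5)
Your proposal is correct and matches the paper's argument: the paper also deduces the corollary immediately from Lemma \ref{abstract_nonsense_lemma} together with Claim \ref{global_objects_of_q_claim}. Your additional remark that only the uniqueness half of Lemma \ref{abstract_nonsense_lemma} is needed (so gluability of objects is not required here) is a correct and worthwhile clarification.
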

\begin{proof}
    This is immediate by Lemma~\ref{abstract_nonsense_lemma} and Claim~\ref{global_objects_of_q_claim}.
\end{proof}

In Section~\ref{preliminaries_section}, we stated the local normal form theorem for non-fixed orbits in four-dimensional Hamiltonian $S^1$-manifolds (see Theorem~\ref{simple_local_normal_form}). A four-dimensional circle-valued Hamiltonian $S^1$-piece over $U\subsetneq \mathbb{R}/P$ has no fixed points, and can be regarded as a proper four-dimensional Hamiltonian $S^1$-manifold by embedding $U$ into $\mathbb{R}$. Therefore, we can apply the local normal form theorem as it's stated. We now define \textbf{grommets} for four-dimensional circle-valued Hamiltonian $S^1$-pieces. See~\cite[Section 8]{centered_hamiltonians} for the definition of grommets for complexity one spaces.
\begin{definition} \label{grommet_definition}
    Let $(M, \omega, \Phi)$ be a four-dimensional circle-valued Hamiltonian $S^1$-piece over $U \subset \mathbb{R}/P$. A \textbf{grommet} is a $\Phi$-$T$-diffeomorphism $\Psi:D \rightarrow M$ from an invariant open subset $D$ of a local
    model $Y = S^1 \times_{\mathbb{Z}_n} \mathbb{C} \times \mathbb{R}$ onto an open subset of $M$. 
\end{definition}
\begin{definition}
    Let $(M, \omega, \Phi)$ be a four-dimensional circle-valued Hamiltonian $S^1$-piece over $U \subset \mathbb{R}/P$. We say that $(M, \omega, \Phi)$ is \textbf{grommeted} if it is equipped with grommets whose images are pairwise disjoint, and cover all of the exceptional orbits.
\end{definition}

We now define the sheaf $\hat{\mathcal{Q}}_{\mathbb{R}/P}$ over $\mathbb{R}/P$. For every $U\subsetneq \mathbb{R}/P$, the objects of $\hat{\mathcal{Q}}_{\mathbb{R}/P}(U)$ are grommeted four-dimensional circle-valued Hamiltonian $S^1$-pieces over $U$, and the arrows are $\Phi$-diffeomorphisms between their quotients.
For $U = \mathbb{R}/P$, we define $\hat{\mathcal{Q}}_{\mathbb{R}/P}(\mathbb{R}/P)$ to have no objects.

Since we required that $\hat{\mathcal{Q}}_{\mathbb{R}/P}(\mathbb{R}/P)$ has no objects, we have that for every $U \subset \mathbb{R}/P$, all of the objects in the groupoid $\hat{\mathcal{Q}}_{\mathbb{R}/P}(U)$ can be regarded as complexity one spaces by embedding $U$ into $\mathbb{R}$ (which is always possible when $U \ne \mathbb{R}/P$).
\\
By Lemma~\ref{other_abstract_nonsense_lemma}, We have that:
\begin{lemma} \label{grommet_sheaves_isomorphism}
    $\check H^1(\mathbb{R}/P, \hat{\mathcal{Q}}_{\mathbb{R}/P}) \cong \check H^1(\mathbb{R}/P, \mathcal{Q}_{\mathbb{R}/P})$.
\end{lemma}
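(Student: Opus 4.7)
The plan is to apply Lemma \ref{other_abstract_nonsense_lemma} to the forgetful map of sheaves $i: \hat{\mathcal{Q}}_{\mathbb{R}/P} \to \mathcal{Q}_{\mathbb{R}/P}$ that sends a grommeted piece to the underlying piece and acts as the identity on arrows. This map is well-defined because arrows in both sheaves are $\Phi$-diffeomorphisms between quotient spaces, a notion that does not involve the grommet data at all. Once this map of sheaves is in place, the isomorphism $\check H^1(\mathbb{R}/P, \hat{\mathcal{Q}}_{\mathbb{R}/P}) \cong \check H^1(\mathbb{R}/P, \mathcal{Q}_{\mathbb{R}/P})$ will follow immediately from Lemma \ref{other_abstract_nonsense_lemma}, provided the two hypotheses of that lemma are verified.

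For condition (1) of Lemma \ref{other_abstract_nonsense_lemma}, the hom-sets in both $\hat{\mathcal{Q}}_{\mathbb{R}/P}(U)$ and $\mathcal{Q}_{\mathbb{R}/P}(U)$ consist of the same $\Phi$-diffeomorphisms between the quotient spaces of the underlying pieces, so the forgetful map $i$ is literally the identity on hom-sets, hence a bijection. This step is essentially a tautology, reflecting the fact that we chose the arrows in $\hat{\mathcal{Q}}_{\mathbb{R}/P}$ to ignore the grommet decorations.

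For condition (2), let $B = (M,\omega,\Phi) \in \mathcal{Q}_{\mathbb{R}/P}(U)$ be a four-dimensional circle-valued Hamiltonian $S^1$-piece over $U$, and let $y \in U$. We need a neighborhood $V \subset U$ of $y$ with $V \neq \mathbb{R}/P$ (so that $\hat{\mathcal{Q}}_{\mathbb{R}/P}(V)$ actually has objects) and an object $A \in \hat{\mathcal{Q}}_{\mathbb{R}/P}(V)$ with $i(A) \cong B|_V$. Choose $V \subset U$ to be a small open interval around $y$. By properness of $\Phi$, the level set $\Phi^{-1}(y)$ is compact, and since the action has no fixed points it contains only finitely many exceptional orbits $\mathcal{O}_1,\ldots,\mathcal{O}_k$. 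Regarding $B|_V$ as a proper Hamiltonian $S^1$-manifold over an interval embedded in $\mathbb{R}$, the local normal form theorem (Theorem \ref{simple_local_normal_form}) furnishes, for each $\mathcal{O}_j$, an equivariant symplectomorphism $\Psi_j : D_j \to U_j$ from an invariant open subset $D_j$ of a local model onto an invariant open neighborhood $U_j$ of $\mathcal{O}_j$ in $B|_V$, respecting the Hamiltonian. These $\Psi_j$ are grommets in the sense of Definition \ref{grommet_definition}.

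The main obstacle is to arrange the grommets to be pairwise disjoint and to cover \emph{every} exceptional orbit of $\Phi$ over $V$, not merely those in the single level $\Phi^{-1}(y)$. For this, I would use Lemma \ref{same_genus_and_isotropy_data} (or equivalently the fact that the local model parametrizes a full neighborhood by the momentum coordinate $h$): the exceptional orbits over a small interval form finitely many one-parameter tubes transverse to the level sets, and each tube through $\mathcal{O}_j$ is entirely captured by the tubular neighborhood supplied by Theorem \ref{simple_local_normal_form} at $\mathcal{O}_j$, after possibly shrinking $V$. Since the $\mathcal{O}_j$ are finitely many and pairwise disjoint in the compact level $\Phi^{-1}(y)$, shrinking the $U_j$ (by replacing $D_j$ with smaller invariant open subsets) makes them pairwise disjoint. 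The resulting grommeted piece $A$ satisfies $i(A) = B|_V$, which verifies condition (2) and completes the proof.
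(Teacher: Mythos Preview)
Your proposal is correct and follows exactly the approach the paper takes: the paper simply invokes Lemma \ref{other_abstract_nonsense_lemma} without further comment, and you have correctly identified and verified its two hypotheses for the forgetful map $i:\hat{\mathcal{Q}}_{\mathbb{R}/P}\to\mathcal{Q}_{\mathbb{R}/P}$. Your care in ensuring $V\neq\mathbb{R}/P$ (so that $\hat{\mathcal{Q}}_{\mathbb{R}/P}(V)$ is nonempty) and in shrinking $V$ so that the finitely many grommets supplied by Theorem \ref{simple_local_normal_form} cover all exceptional orbits over $V$ are precisely the details needed to make the one-line citation rigorous.
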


Let $Y = S^1 \times_{\mathbb{Z}_n} \mathbb{C} \times \mathbb{R}$ be a local model. By~\cite[Definition 5.12, Lemma 6.2]{centered_hamiltonians}, there is a map $\overline{P}:Y/S^1 \rightarrow \mathbb{C}$ called the \textbf{defining monomial}, such that the map
\begin{equation*}
    F := (\overline{\Phi}, \overline{P}): Y/S^1 \rightarrow \mathbb{R} \times \mathbb{C}
\end{equation*}
is a homeomorphism onto $\image(\Phi) \times \mathbb{C}$. The map $F$ is called the \textbf{trivializing homeomorphism}. By~\cite[Corollary 7.2]{centered_hamiltonians}, it is a diffeomorphism off the set of exceptional orbits.

\begin{definition}[cf. {\cite[Definition 9.5]{tall_uniqueness}}]
    Let $Y = S^1 \times_{\mathbb{Z}_n} \mathbb{C} \times \mathbb{R}$ be a local model, let $\Psi: D \rightarrow M$ be a grommet with $D \subset Y$, and let $F$ be the trivializing homeomorphism. Let $B \subset \image(\Phi) \times \mathbb{C}$ be the image of $D$ through $F$. Define the map $G:B \rightarrow M/S^1$ by $G := \Psi \circ F^{-1}$.
    
    Then $G$ is a homeomorphism onto an open subset of $M/S^1$. We call it the \textbf{surface bundle grommet} associated to the grommet $\Psi$.
\end{definition}
Every surface bundle grommet induces a smooth structure on its image, and it agrees with the smooth structure of $M/S^1$ outside the set of exceptional orbits. We now introduce sb-diffeomorphisms between quotients of grommeted four-dimensional circle-valued Hamiltonian $S^1$-pieces. See~\cite[Definition 11.1]{tall_uniqueness} for the definition of sb-diffeomorphisms for complexity one spaces.
\begin{definition} \label{sb_diffeo_definition}
    Let $(M, \omega, \Phi)$ and $(M', \omega', \Phi')$ be grommeted four-dimensional circle-valued Hamiltonian $S^1$-pieces. A homeomorphism $f : M/S^1 \rightarrow M'/S^1$ is called an \textbf{sb-diffeomorphism} if for every pair of associated surface bundle grommets $G:B \rightarrow M/S^1$ and $G':B' \rightarrow M'/S^1$, the map $(G')^{-1} \circ f \circ G$ is a diffeomorphism.
\end{definition}

We now define the sheaf $\hat{\mathcal{P}}_{\mathbb{R}/P}$ over $\mathbb{R}/P$. For every $U\subsetneq \mathbb{R}/P$, the objects of $\hat{\mathcal{P}}_{\mathbb{R}/P}(U)$ are grommeted four-dimensional circle-valued Hamiltonian $S^1$-pieces over $U$, and the arrows are sb-diffeomorphisms between their quotients.
For $U = \mathbb{R}/P$, we define $\hat{\mathcal{P}}_{\mathbb{R}/P}(\mathbb{R}/P)$ to have no objects.

\begin{lemma} \label{main_sheaves_isomorphism}
    $\check H^1(\mathbb{R}/P, \hat{\mathcal{P}}_{\mathbb{R}/P}) \cong \check H^1(\mathbb{R}/P, \hat{\mathcal{Q}}_{\mathbb{R}/P})$
\end{lemma}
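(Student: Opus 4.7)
The plan is to apply Lemma~\ref{other_abstract_nonsense_lemma} to the map of sheaves $i : \hat{\mathcal{Q}}_{\mathbb{R}/P} \to \hat{\mathcal{P}}_{\mathbb{R}/P}$ defined as the identity on objects. This is well-defined because both sheaves have the same objects over every open set (grommeted four-dimensional circle-valued Hamiltonian $S^1$-pieces), and, as will be shown, every $\Phi$-diffeomorphism of quotients of such pieces is automatically an sb-diffeomorphism. Condition (2) of Lemma~\ref{other_abstract_nonsense_lemma} is then immediate: given any $B \in \hat{\mathcal{P}}_{\mathbb{R}/P}(U)$, take $V = U$ and $A = B$. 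The entire content of the proof is therefore to establish condition (1), namely, that for every open $U \subsetneq \mathbb{R}/P$ and every pair of grommeted pieces $A, A' \in \hat{\mathcal{Q}}_{\mathbb{R}/P}(U)$, a map $A/S^1 \to A'/S^1$ is a $\Phi$-diffeomorphism if and only if it is an sb-diffeomorphism.

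To prove this equivalence, I exploit the fact that $\hat{\mathcal{P}}_{\mathbb{R}/P}(\mathbb{R}/P)$ and $\hat{\mathcal{Q}}_{\mathbb{R}/P}(\mathbb{R}/P)$ are empty by definition, so every object lives over some proper open subset $U \subsetneq \mathbb{R}/P$. Such a $U$ embeds into $\mathbb{R}$, and under this embedding each $A \in \hat{\mathcal{Q}}_{\mathbb{R}/P}(U)$ becomes a grommeted proper tall complexity one $S^1$-space in the sense of \cite{centered_hamiltonians, tall_uniqueness}. The notions of $\Phi$-diffeomorphism (Definition~\ref{phi_diffeo}), of grommet (Definition~\ref{grommet_definition}), of the trivializing homeomorphism given by the defining monomial, and of sb-diffeomorphism (Definition~\ref{sb_diffeo_definition}) are each defined purely in terms of the local models $S^1 \times_{\mathbb{Z}_n} \mathbb{C} \times \mathbb{R}$ together with the $\Phi$-preserving structure on the quotient; they are hence preserved by the lift. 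Consequently, the corresponding equivalence proved by Karshon and Tolman for grommeted tall complexity one spaces transfers verbatim.

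Concretely, away from the images of the grommets the smooth structure induced on $M/S^1$ by the surface bundle grommets coincides with the orbifold smooth structure, so that both conditions reduce there to orientation-preserving smoothness together with $\Phi$-preservation. The substance of the equivalence is therefore entirely local near each exceptional orbit, where it amounts to comparing $\Phi$-$T$-diffeomorphisms of local models with diffeomorphisms in the $(\bar\Phi, \bar P)$-coordinates produced by the trivializing homeomorphism. The main obstacle — and precisely the part for which I rely on the analysis of Part~III of \cite{tall_uniqueness} — is the forward direction: showing that an sb-diffeomorphism, which a priori is only smooth in the trivialized surface-bundle charts, always lifts to a $\Phi$-$T$-diffeomorphism of the local models, so that Condition (2) of Definition~\ref{phi_diffeo} is verified. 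The converse direction is then straightforward because the trivializing homeomorphism is a diffeomorphism off the set of exceptional orbits, so a $\Phi$-$T$-liftable map is automatically smooth in surface-bundle coordinates. Together, these two directions yield the required bijection of hom-sets, and Lemma~\ref{other_abstract_nonsense_lemma} then produces the desired isomorphism of Čech cohomologies.
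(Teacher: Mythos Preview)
Your approach has a genuine gap: the central claim that the hom-sets of $\hat{\mathcal{Q}}_{\mathbb{R}/P}$ and $\hat{\mathcal{P}}_{\mathbb{R}/P}$ coincide is false, so there is no map of sheaves to which Lemma~\ref{other_abstract_nonsense_lemma} applies. Near an exceptional orbit with stabilizer $\mathbb{Z}_n$ ($n>1$), the quotient smooth structure and the surface-bundle smooth structure on $M/S^1$ are genuinely different, and neither class of arrows is contained in the other. For a concrete failure of ``$\Phi$-diffeomorphism $\Rightarrow$ sb-diffeomorphism'', take $n=2$ with defining monomial $w=z^2$ and the $\mathbb{Z}_2$-equivariant diffeomorphism $g(z)=z(1+|z|^2)$; the induced map in surface-bundle coordinates is $w\mapsto w(1+|w|)^2$, which is not $C^2$ at the origin. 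For the reverse failure, take $n=3$ and the orientation-preserving diffeomorphism $f(w)=2w+\bar w$ of $\mathbb{C}$ fixing the origin; matching cubic jets shows there is no smooth $\mathbb{Z}_3$-equivariant $g$ with $g(z)^3=2z^3+\bar z^3$, so $f$ does not lift to a $\Phi$-$T$-diffeomorphism and hence is not a $\Phi$-diffeomorphism. Thus your appeal to Part~III of \cite{tall_uniqueness} for a bijection of hom-sets misreads what is proved there.

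What the paper (following \cite{tall_uniqueness}) actually does is quite different: it introduces three intermediate sheaves $\mathcal{RQ}_{\mathbb{R}/P}$, $\mathcal{E}_{\mathbb{R}/P}$, $\mathcal{RP}_{\mathbb{R}/P}$ with the same objects but successively modified classes of arrows, and invokes Lemma~\ref{sheaf_isomorphic_arrows_lemma} (Lemma~12.1 of \cite{tall_uniqueness}) four times. That lemma does \emph{not} assert that the hom-sets agree; it only requires that any arrow of one sheaf can be locally perturbed, in a controlled way, to an arrow of the other. The hard analytic content of Propositions~13.19, 14.9, 14.12 and 15.10 in \cite{tall_uniqueness} is exactly this local-perturbation statement, and it is what bridges the two incompatible smooth structures at the exceptional orbits. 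Your shortcut via Lemma~\ref{other_abstract_nonsense_lemma} bypasses this step and cannot be repaired without essentially reproducing that chain of arguments.
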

\begin{proof}
    We follow the proof of the analogous claim for proper tall complexity one spaces, given in~\cite{tall_uniqueness}. We define 3 more sheaves, each sheaf with the same objects as in $\hat{\mathcal{Q}}_{\mathbb{R}/P}$ and $\hat{\mathcal{P}}_{\mathbb{R}/P}$, but with different arrows (see sections 7-11 in~\cite{tall_uniqueness} for the definitions of these new notions of maps):
    \begin{itemize}
        \item $\mathcal{RQ}_{\mathbb{R}/P}$ whose arrows are locally rigid $\Phi$-homeomorphisms
        \item $\mathcal{E}_{\mathbb{R}/P}$ whose arrows are local stretch maps
        \item $\mathcal{RP}_{\mathbb{R}/P}$ whose arrows are locally sb-rigid $\Phi$-homeomorphisms
    \end{itemize}
    We will show the following chain of isomorphisms:
    \begin{align*}
        \check H^1(\mathbb{R}/P, \hat{\mathcal{Q}}_{\mathbb{R}/P}) &\cong \check H^1(\mathbb{R}/P, \mathcal{RQ}_{\mathbb{R}/P}) \cong \check H^1(\mathbb{R}/P, \mathcal{E}_{\mathbb{R}/P}) 
        \\ &\cong \check H^1(\mathbb{R}/P, \mathcal{RP}_{\mathbb{R}/P}) \cong \check H^1(\mathbb{R}/P, \hat{\mathcal{P}}_{\mathbb{R}/P})
    \end{align*}

    In~\cite{tall_uniqueness}, these isomorphisms are given for tall complexity one spaces in Lemmas 8.16. 9.10, 10.4 and 11.3, respectively. These lemmas were proved using the following sheaf-theoretic lemma:
    \begin{lemma}[Lemma 12.1 in~\cite{tall_uniqueness}]\label{sheaf_isomorphic_arrows_lemma}
        Let $A$ and $B$ be sheaves of groupoids on $\mathcal{T}$ with the same objects and such that the arrows in $A$ are subsets of the arrows in $B$. Suppose that for every open cover $U = \{U_i\}$ and cocycle $\beta \in \check{Z}^1(U, B)$ there exists an open cover $\{U_i'\}$ such that $U_i' \subset U_i$ for all $i$ and such that the following holds:
        \begin{quote}
            Take any $U_i$ and $U_j$ in $U$; let $N$ and $N'$ be the restriction to $Z := U_i \cap U_j$ of the objects associated to $U_i$ and $U_j$ by $\beta$. Let $X$ and $Y$ be any open sets such that $X \cap Y = \empty$, let $W = U_1' \cup ... \cup U_{p-1}'$ for any integer $p$, and let
            $f: N \rightarrow N'$ be any $B$ arrow. Then there exists a $B$-arrow $f':N \rightarrow N'$ with the following properties:
            \begin{enumerate}
                \item The restriction of $f'$ to $X \cap Z$ is in $A$.
                \item The restrictions of $f'$ and $f$ to $Y \cap Z$ coincide.
                \item If the restriction of $f$ to $W \cap Z$ lies in $A$, then the restriction of $f'$ to $W \cap Z$ also lies in $A$.
            \end{enumerate}
        \end{quote}
        Then the inclusion map $i : A \rightarrow B$ induces an isomorphism in cohomology
        \begin{equation*}
            i^* : \check H^1(\mathcal{T} , A) \cong \check H^1(\mathcal{T} , B).
        \end{equation*}
    \end{lemma}
    
    Conceptually, the lemma claims that given two sheaves of groupoids, which have the same objects and the second has fewer maps, if the arrows of the first sheaf can be locally modified to also be arrows of the second sheaf, then both sheaves have the same first cohomology.
    
    In~\cite{tall_uniqueness}, the conditions of Lemma 12.1 were shown to be satisfied for each pair of sheaves in Propositions 13.19, 14.9, 14.12 and 15.10, respectively. These propositions show that for each pair of sheaves, the arrows of one of the sheaves can be locally isotoped to also be arrows of the other sheaf. This is done in the setting of proper tall complexity one space.
    
    Since the objects of $\hat{\mathcal{Q}}_{\mathbb{R}/P}$, $\mathcal{RQ}_{\mathbb{R}/P}$, $\mathcal{E}_{\mathbb{R}/P}$, $\mathcal{RP}_{\mathbb{R}/P}$ and $\hat{\mathcal{P}}_{\mathbb{R}/P}$ can be regarded as proper tall grommeted complexity one spaces, the propositions also apply to our case. We deduce that the cohomology groups are isomorphic by Lemma~\ref{sheaf_isomorphic_arrows_lemma} above.
\end{proof}

We now introduce painted surface bundles over subsets of $\mathbb{R}/P$.
\begin{definition}
    A \textbf{painted surface bundle} over an open set $U \subset \mathbb{R}/P$ is an oriented smooth surface bundle over $U$, with a subset $A$ whose points are labeled by isotropy representations of subgroups of $S^1$. We refer to $A$ as \textbf{the painted subset}, and to each point in $A$ as a \textbf{labeled point}.

    An isomorphism between two painted surface bundles $\pi:E \rightarrow U$ and $\pi':E' \rightarrow U$ is an orientation-preserving diffeomorphism $f:E \rightarrow E'$ such that:
    \begin{enumerate}
        \item $\pi' \circ f = \pi$,
        \item both $f$ and $f^{-1}$ send each labeled point to a labeled point with the same label.
    \end{enumerate}
\end{definition}
\begin{remark}
    The monodromy invariant introduced in the next section replaces the painting invariant defined in~\cite{tall_uniqueness}. For the definition of the painting invariant, Karshon and Tolman introduced the notion of a \textit{skeleton}, that models the painted subset of a painted surface bundle. Since we don't need the notion of a skeleton in this paper, we slightly simplified the definition of painted surface bundles.
\end{remark}
\begin{definition}
     Let $(M, \omega, \Phi)$ be a grommeted four-dimensional circle-valued Hamiltonian $S^1$-piece over $U \subset \mathbb{R}/P$. \textbf{The associated painted surface bundle} of $(M, \omega, \Phi)$ is the painted surface bundle given by the following data:
     \begin{enumerate}
         \item The oriented topological manifold $M/S^1$, with the bundle map $\overline{\Phi}:M/S^1 \rightarrow U$ induced by the Hamiltonian $\Phi$.
         \item The manifold structure that is given by the following charts: Choose charts that cover the complement of the set of exceptional orbits, together with the associated surface bundles grommets for each given grommet.
         \item The painted subset that consists of the set of exceptional orbits, each labeled by its isotropy representation.
     \end{enumerate}
\end{definition}

\begin{definition}
    Let $\pi:E \rightarrow U$ be a painted surface bundle over $U \subset \mathbb{R}/P$. We say that $E$ is \textbf{legal} if there exists a cover $\{V_j\}$ of $U$ such that for every $V_j$ there exists a grommeted four-dimensional circle-valued Hamiltonian $S^1$-piece over $V_j$, whose associated painted surface bundle is isomorphic to $E|_{\pi^{-1}(V_j)}$.
\end{definition}

We define the sheaf $\mathcal{P}_{\mathbb{R}/P}$ of painted surface bundles. For every $U\subset \mathbb{R}/P$ the objects of $\mathcal{P}_{\mathbb{R}/P}(U)$ are legal painted surface bundles over $U$, and the arrows are isomorphisms of painted surface bundles.

There is a natural map $\hat{\mathcal{P}}_{\mathbb{R}/P} \rightarrow \mathcal{P}_{\mathbb{R}/P}$, taking each grommeted four-dimensional circle-valued Hamiltonian $S^1$-piece over $U$ to its associated painted surface bundle over $U$, and each sb-diffeomorphism to itself, noting that each sb-diffeomorphism is an isomorphism of the associated painted surface bundles.
By Lemma~\ref{other_abstract_nonsense_lemma}, We have that:
\begin{lemma}\label{painted_sheaves_isomorphism}
    The map $\hat{\mathcal{P}}_{\mathbb{R}/P} \rightarrow \mathcal{P}_{\mathbb{R}/P}$ induces an isomorphism 
    \begin{equation*}
        \check H^1(\mathbb{R}/P, \hat{\mathcal{P}}_{\mathbb{R}/P}) \cong \check H^1(\mathbb{R}/P, \mathcal{P}_{\mathbb{R}/P})
    \end{equation*}
\end{lemma}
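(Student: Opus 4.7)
The plan is to verify the two hypotheses of Lemma \ref{other_abstract_nonsense_lemma} for the natural map $i: \hat{\mathcal{P}}_{\mathbb{R}/P} \rightarrow \mathcal{P}_{\mathbb{R}/P}$, and then conclude the desired isomorphism on first Čech cohomology. The map sends each grommeted four-dimensional circle-valued Hamiltonian $S^1$-piece to its associated painted surface bundle, and each sb-diffeomorphism to itself viewed as an isomorphism of painted surface bundles (using the observation, noted just above the lemma statement, that an sb-diffeomorphism is automatically such an isomorphism).

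First, I would verify the bijectivity of $i$ on $\hom$-sets. Fix an open $U \subset \mathbb{R}/P$ and two objects $(M,\omega,\Phi)$ and $(M',\omega',\Phi')$ of $\hat{\mathcal{P}}_{\mathbb{R}/P}(U)$. An arrow in $\hat{\mathcal{P}}_{\mathbb{R}/P}(U)$ is an sb-diffeomorphism $f:M/S^1 \rightarrow M'/S^1$, i.e., a homeomorphism such that for every pair of associated surface bundle grommets $G:B \rightarrow M/S^1$ and $G':B'\rightarrow M'/S^1$ the composite $(G')^{-1}\circ f \circ G$ is smooth (Definition \ref{sb_diffeo_definition}). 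By construction of the associated painted surface bundles, the smooth structures are precisely those in which the surface bundle grommets are charts, and the painted subsets are the exceptional orbits labeled by isotropy representations. Hence $f$ is an arrow of $\hat{\mathcal{P}}_{\mathbb{R}/P}(U)$ if and only if it is an orientation-preserving diffeomorphism of the associated painted surface bundles commuting with projection to $U$ and preserving labels, i.e., if and only if it is an arrow of $\mathcal{P}_{\mathbb{R}/P}(U)$ between $i(M)$ and $i(M')$. This gives the bijection on $\hom$-sets.

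Second, I would verify the local essential surjectivity. Let $B$ be an object of $\mathcal{P}_{\mathbb{R}/P}(U)$, i.e., a legal painted surface bundle over $U$. By the very definition of legality, there exists an open cover $\{V_j\}$ of $U$ such that $B|_{\pi^{-1}(V_j)}$ is isomorphic, as a painted surface bundle, to the associated painted surface bundle of a grommeted four-dimensional circle-valued Hamiltonian $S^1$-piece $(M_j,\omega_j,\Phi_j)$ over $V_j$. In particular, for each point $y \in U$, there is a $V_j$ containing $y$ and an object $A = (M_j,\omega_j,\Phi_j) \in \hat{\mathcal{P}}_{\mathbb{R}/P}(V_j)$ with $i(A) \cong B|_{V_j}$, which is exactly the second hypothesis of Lemma \ref{other_abstract_nonsense_lemma}.

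With both hypotheses verified, Lemma \ref{other_abstract_nonsense_lemma} immediately yields the induced isomorphism
\begin{equation*}
    i_*: \check H^1(\mathbb{R}/P, \hat{\mathcal{P}}_{\mathbb{R}/P}) \xrightarrow{\;\cong\;} \check H^1(\mathbb{R}/P, \mathcal{P}_{\mathbb{R}/P}),
\end{equation*}
as claimed. There is no significant obstacle here: the proof is a clean bookkeeping verification made possible by having arranged earlier that sb-diffeomorphisms coincide with painted surface bundle isomorphisms (so condition (1) is automatic), and by baking the local realizability condition into the definition of \emph{legal} painted surface bundles (so condition (2) is tautological). The more substantial work has already been done in Lemmas \ref{grommet_sheaves_isomorphism} and \ref{main_sheaves_isomorphism}.
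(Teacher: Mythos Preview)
Your proposal is correct and takes essentially the same approach as the paper: the paper simply states ``By Lemma \ref{other_abstract_nonsense_lemma}, we have that'' and proceeds, relying on the observation (made just before the lemma) that sb-diffeomorphisms are isomorphisms of associated painted surface bundles, and on the definition of legality. You have spelled out the verification of the two hypotheses of Lemma \ref{other_abstract_nonsense_lemma} in more detail than the paper does, but the argument is the same.
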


Finally, we get the long-awaited isomorphism.
\begin{corollary}\label{cohomology_isomorphism}
    \begin{equation*}
        \check H^1(\mathbb{R}/P, \mathcal{Q}_{\mathbb{R}/P}) \cong \check H^1(\mathbb{R}/P, \mathcal{P}_{\mathbb{R}/P})
    \end{equation*}
\end{corollary}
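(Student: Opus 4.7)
The plan is simply to compose the three isomorphisms of Čech cohomology groups that have been established in the three preceding lemmas. Concretely, starting from $\check H^1(\mathbb{R}/P, \mathcal{Q}_{\mathbb{R}/P})$, I would first invoke Lemma \ref{grommet_sheaves_isomorphism} to pass to $\check H^1(\mathbb{R}/P, \hat{\mathcal{Q}}_{\mathbb{R}/P})$, which replaces pieces by grommeted pieces (grommets being ``auxiliary data'' that does not change the cohomology by the abstract nonsense of Lemma \ref{other_abstract_nonsense_lemma}).

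Next, I would apply Lemma \ref{main_sheaves_isomorphism} to obtain an isomorphism with $\check H^1(\mathbb{R}/P, \hat{\mathcal{P}}_{\mathbb{R}/P})$. This is the one step whose original content is nontrivial: it requires tracking four intermediate sheaves and showing that the arrow classes (locally rigid $\Phi$-homeomorphisms, local stretch maps, locally sb-rigid $\Phi$-homeomorphisms) can be interchanged via the sheaf-theoretic Lemma \ref{sheaf_isomorphic_arrows_lemma}. However, that work has already been done inside the proof of Lemma \ref{main_sheaves_isomorphism} by reducing to the corresponding claims in \cite{tall_uniqueness}, so for the purposes of this corollary I simply cite it.

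Finally, I would apply Lemma \ref{painted_sheaves_isomorphism} to pass to $\check H^1(\mathbb{R}/P, \mathcal{P}_{\mathbb{R}/P})$, completing the chain
\begin{equation*}
    \check H^1(\mathbb{R}/P, \mathcal{Q}_{\mathbb{R}/P}) \cong \check H^1(\mathbb{R}/P, \hat{\mathcal{Q}}_{\mathbb{R}/P}) \cong \check H^1(\mathbb{R}/P, \hat{\mathcal{P}}_{\mathbb{R}/P}) \cong \check H^1(\mathbb{R}/P, \mathcal{P}_{\mathbb{R}/P}).
\end{equation*}
There is essentially no obstacle here, since all the substantive work sits in the three lemmas; the only point worth double-checking is that the sequence of isomorphisms is natural enough that a global object on one end (a tight circle-valued Hamiltonian $S^1$-manifold, by Claim \ref{global_objects_of_q_claim}) is sent, up to isomorphism class, to the Čech class of its associated painted surface bundle on the other end. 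This naturality is implicit in the constructions, and will be needed in the next section where the monodromy invariant is extracted from the image class in $\check H^1(\mathbb{R}/P, \mathcal{P}_{\mathbb{R}/P})$.
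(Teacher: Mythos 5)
Your proposal is correct and matches the paper's own proof verbatim: Corollary \ref{cohomology_isomorphism} is obtained by composing the three isomorphisms of Lemmas \ref{grommet_sheaves_isomorphism}, \ref{main_sheaves_isomorphism}, and \ref{painted_sheaves_isomorphism} exactly as you describe. Your remark on naturality is a sensible forward-looking observation but is not part of the proof the paper gives.
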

\begin{proof}
    This is immediate by Lemma~\ref{grommet_sheaves_isomorphism}, Lemma~\ref{main_sheaves_isomorphism}, and Lemma~\ref{painted_sheaves_isomorphism}.
\end{proof}
\begin{definition}
    Let $(M, \omega, \Phi)$ be a four-dimensional tight circle-valued Hamiltonian $S^1$-manifold, with group of periods $P$. \textbf{An associated painted surface bundle} to $(M, \omega, \Phi)$ is a painted surface bundle over $\mathbb{R}/P$ whose cohomology class in $\check H^1(\mathbb{R}/P, \mathcal{P}_{\mathbb{R}/P})$ corresponds to the cohomology class of $(M, \omega, \Phi)$ in $\check H^1(\mathbb{R}/P, \mathcal{Q}_{\mathbb{R}/P})$ under the isomorphism of Corollary~\ref{cohomology_isomorphism}.
\end{definition}
The sheaf $\mathcal{P}_{\mathbb{R}/P}$ over $\mathbb{R}/P$ has glueable objects. Therefore, by Lemma~\ref{abstract_nonsense_lemma}, the global objects in $\check H^1(\mathbb{R}/P, \mathcal{P}_{\mathbb{R}/P})$ correspond to isomorphism classes of legal painted surface bundles over $\mathbb{R}/P$. We formalize this in the following proposition, along with the assertion that topologically, an associated painted surface bundle is isomorphic to the quotient space together with the information on the non-free orbits.

\begin{proposition}\label{associated_painted_surface_bundle_proposition}
    Let $(M, \omega, \Phi)$ be a four-dimensional tight circle-valued Hamiltonian $S^1$-manifold. Then there exists a unique up to isomorphism associated painted surface bundle $N$ over $\mathbb{R}/P$. Moreover, If $(M', \omega', \Phi')$ is another four-dimensional tight circle-valued Hamiltonian $S^1$-manifold, with the same group of periods $P$, and $N'$ is a painted surface bundle associated to $M'$, then $M/S^1$ and $M'/S^1$ are $\Phi$-diffeomorphic if and only if $N$ and $N'$ are isomorphic.

    Furthermore, there is an orientation-preserving homeomorphism $\Psi: M/S^1 \rightarrow N$ that respects the projections to $\mathbb{R}/P$, and induces a bijection between the set of non-free orbits in $M/S^1$ and the painted subset of $N$, such that every non-free orbit is sent to a point that is labeled by its isotropy representation.
\end{proposition}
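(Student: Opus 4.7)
The strategy is to derive the proposition directly from the sheaf-theoretic machinery built in this section: the cohomology isomorphism of Corollary \ref{cohomology_isomorphism} combined with Lemma \ref{abstract_nonsense_lemma}. For existence of the associated painted surface bundle, I would start from the observation in Claim \ref{global_objects_of_q_claim} that $(M,\omega,\Phi)$ is a global object of $\mathcal{Q}_{\mathbb{R}/P}$, so by Lemma \ref{abstract_nonsense_lemma} it determines a well-defined class $[M] \in \check H^1(\mathbb{R}/P, \mathcal{Q}_{\mathbb{R}/P})$. Transporting this class through the isomorphism of Corollary \ref{cohomology_isomorphism} yields a class $[N] \in \check H^1(\mathbb{R}/P, \mathcal{P}_{\mathbb{R}/P})$. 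I would then check that $\mathcal{P}_{\mathbb{R}/P}$ has gluable objects: given legal painted surface bundles over an open cover with compatible bundle isomorphisms on double overlaps, the underlying oriented surface bundles, smooth structures, painted subsets, and labels are all local data that glue in the standard way, and the legality property is preserved because it is itself a local condition. Once gluability is established, Lemma \ref{abstract_nonsense_lemma} produces a global painted surface bundle $N\to\mathbb{R}/P$ representing $[N]$, and the same lemma guarantees uniqueness up to isomorphism.

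For the second assertion, I would chain together Corollary \ref{phi_diffeomorphic_iff_cohomologic}, the isomorphism of Corollary \ref{cohomology_isomorphism}, and Lemma \ref{abstract_nonsense_lemma} applied to $\mathcal{P}_{\mathbb{R}/P}$. This yields the string of equivalences: $M/S^1$ and $M'/S^1$ are $\Phi$-diffeomorphic, iff $[M]=[M']$ in $\check H^1(\mathbb{R}/P,\mathcal{Q}_{\mathbb{R}/P})$, iff $[N]=[N']$ in $\check H^1(\mathbb{R}/P,\mathcal{P}_{\mathbb{R}/P})$, iff $N$ and $N'$ are isomorphic as painted surface bundles. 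No extra work is required here beyond invoking the already-established lemmas.

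The main obstacle is producing the explicit homeomorphism $\Psi: M/S^1 \to N$, since the sheaf machinery a priori only exhibits $N$ up to isomorphism and does not by itself furnish a map at the level of underlying spaces. To construct $\Psi$, I would pick a cover of $\mathbb{R}/P$ by two open arcs $U_1, U_2$, each a proper subset, so that the restriction $(\Phi^{-1}(U_i), \omega, \Phi)$ is a four-dimensional circle-valued Hamiltonian $S^1$-piece over $U_i$, and equip each restriction with grommets whose images are pairwise disjoint and cover all exceptional orbits in $\Phi^{-1}(U_i)$. From each such grommeted piece I would build the associated painted surface bundle $N_i \to U_i$ by using the surface bundle grommets as smooth charts near the exceptional orbits and the quotient smooth structure on $M/S^1$ elsewhere; this comes with a tautological homeomorphism $\Psi_i: \Phi^{-1}(U_i)/S^1 \to N_i$ which is the identity on underlying sets, is a diffeomorphism off the exceptional orbits, respects the projection to $U_i$, and sends every non-free orbit to a painted point carrying its isotropy label. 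On the overlap $U_1 \cap U_2$, after shrinking if necessary so that each connected component contains no exceptional orbits, $\Psi_2 \circ \Psi_1^{-1}$ is an isomorphism of painted surface bundles, so the $N_i$ glue to a global $N$ representing $[N]$ and the $\Psi_i$ glue to a global homeomorphism $\Psi: M/S^1 \to N$ with all the advertised properties. By the uniqueness proved above, this $N$ is the associated painted surface bundle up to isomorphism, and any other associated painted surface bundle $N'$ differs from $N$ by an isomorphism, which can be pre-composed with $\Psi$ to produce a homeomorphism to $N'$ with the same properties.
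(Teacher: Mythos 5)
Your overall strategy matches the paper, which simply defers to the proof of Proposition~17.5 in \cite{tall_uniqueness}; the existence and uniqueness of $N$ via Claim~\ref{global_objects_of_q_claim}, Corollary~\ref{cohomology_isomorphism}, gluability of $\mathcal{P}_{\mathbb{R}/P}$, and Lemma~\ref{abstract_nonsense_lemma} are all correct, as is the chain of equivalences for the second assertion.

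There is, however, a genuine gap in your construction of the explicit homeomorphism $\Psi$. You claim that after shrinking $U_1\cap U_2$, each connected component of the overlap contains no exceptional orbits, so that $\Psi_2\circ\Psi_1^{-1}$ is trivially an isomorphism. This cannot be arranged when the isotropy data invariant is nontrivial: by Lemma~\ref{same_genus_and_isotropy_data}, every level set $\Phi^{-1}(y)$ carries the same isotropy data, so the image of the exceptional orbits in $M/S^1$ projects \emph{onto} all of $\mathbb{R}/P$, and hence $\bar\Phi^{-1}(V)$ meets the exceptional set for every nonempty open $V\subset\mathbb{R}/P$. No shrinking of $U_1\cap U_2$ removes the painted points over the overlap, and the two grommet-induced smooth structures need not agree there a priori. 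The correct fix is either to choose the grommets for $M|_{U_1}$ and $M|_{U_2}$ so that they coincide over $\bar\Phi^{-1}(U_1\cap U_2)$ (first pick disjoint grommets covering the exceptional orbits over the overlap and then extend separately to $U_1$ and $U_2$), making $\Psi_2\circ\Psi_1^{-1}=\mathrm{id}$ tautologically; or to observe that the underlying-identity transition map is an sb-diffeomorphism (Definition~\ref{sb_diffeo_definition}) --- this is exactly what the Karshon--Tolman machinery encapsulated in Lemma~\ref{main_sheaves_isomorphism} guarantees, but it is not automatic and needs to be invoked explicitly rather than circumvented by a shrinking argument.
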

\begin{proof}
    The proof is identical to the proof of Proposition 17.5 in~\cite{tall_uniqueness}.
\end{proof}


\section{Classification of painted surface bundles} \label{painted_surface_bundle_section}
In this section, we classify legal painted surface bundles over $\mathbb{R}/P$ up to isomorphism. This completes the uniqueness part of the classification of four-dimensional tight circle-valued Hamiltonian $S^1$-manifolds up to $\Phi$-diffeomorphisms between their quotients (see Proposition~\ref{uniqueness_theorem_phi_diffeo}). 

Intuitively, we think of a legal painted surface bundle as a surface bundle over $\mathbb{R}/P$, with colored paths in the total space that loop around the base $\mathbb{R}/P$. Locally, these paths look like sections (see Lemma~\ref{trivialize_painted_surface_bundle}), but globally they might circle the base $\mathbb{R}/P$ more than once, and have interesting topology. We show that every legal painted surface bundle is isomorphic to a mapping torus constructed by some smooth gluing map (see Lemma~\ref{construct_as_mapping_torus}), and deduce that legal painted surface bundles are classified by the genus of their level sets, the list of coprime residue classes that corresponds to the painted points in each level set, and the class of the gluing map (See Proposition~\ref{classification_of_painted_surface_bundles}). Subsequently, we show that the class of the gluing map can be calculated topologically, and does not depend on the smooth structure of the painted surface bundle, and we deduce Proposition~\ref{topological_model_proposition}, and a classification theorem for our spaces, up to $\Phi$-diffeomorphisms (Proposition~\ref{uniqueness_theorem_phi_diffeo}).
\\

Let $C_1,...,C_k$ be an unordered list of coprime residue classes. Fix a smooth surface of genus $g$, and label $k$ distinct points with $C_1,...,C_k$. We denote the labeled surface by $\Sigma_{g,C_1,...,C_k}$. For every interval $I$, we denote by $\Sigma_{g, C_1,...,C_k} \times I$ the trivial painted surface bundle $\Sigma_g \times I \rightarrow I$, whose painted subset consists of $k$ constant sections, painted with the isotropy representations that correspond to the coprime residue classes $C_1,...,C_k$.

First, we show that the isomorphism class of a painted surface bundle associated to a grommeted four-dimensional circle-valued Hamiltonian $S^1$-piece is determined by the genus of the reduced spaces, and the isotropy data of any one of the level sets.
\begin{lemma}\label{trivialize_painted_surface_bundle}
    Let $P$ be a non-trivial discrete subgroup of $\mathbb{R}$.
    Let $(M, \omega, \Phi)$ be a grommeted four-dimensional circle-valued Hamiltonian $S^1$-piece over an open interval $U \subsetneq \mathbb{R}/P$. Let $g$ be the genus of one of the reduced sets, and $C_1,...,C_k$ be the coprime residue classes that correspond to the isotropy data of one of the level sets. Then the associated painted surface bundle of $(M, \omega, \Phi)$ is isomorphic to the painted surface bundle $\Sigma_{g, C_1,...,C_k} \times U$.
\end{lemma}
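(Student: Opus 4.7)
The plan is to construct an explicit trivialization using a gradient flow, then verify it intertwines the painting and the smooth structure. The key observation is that since $U \subsetneq \mathbb{R}/P$ is a proper open interval, it embeds as a (bounded) open interval in $\mathbb{R}$, so we may lift $\Phi$ to a real-valued Hamiltonian and treat $(M,\omega,\Phi)$ as a proper four-dimensional Hamiltonian $S^1$-manifold with no fixed points. Since level sets are connected (Lemma \ref{fiber_connectivity_of_circle_actions} applied to the lift), the gradient flow argument from Lemma \ref{same_genus_and_isotropy_data} extends naturally.

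First, I would construct an $S^1$-invariant Riemannian metric on $M$ which, in a neighborhood of the image of each grommet, is the pushforward via the grommet of the standard product metric on the local model $S^1 \times_{\mathbb{Z}_n} \mathbb{C} \times \mathbb{R}$. Such a metric can be built by an invariant partition of unity. The gradient $Y := \nabla \Phi$ of the lifted Hamiltonian with respect to this metric is then $S^1$-invariant and nowhere zero (since there are no fixed points), and after the normalization $d\Phi(Y) = 1$, its flow is complete in the interval direction by properness. Fix $y_0 \in U$, let $F := \Phi^{-1}(y_0)$, and let $\Theta: F \times U \to M$ be the equivariant diffeomorphism obtained by flowing along $Y$. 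By construction $\Phi \circ \Theta(x,t) = t$, and $\Theta$ descends to an orientation-preserving homeomorphism $\bar\Theta: (F/S^1) \times U \to M/S^1$ commuting with the projections to $U$.

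Next, I would verify that $\bar\Theta$ respects the painting. Identify $F/S^1$ with $\Sigma_{g,C_1,\dots,C_k}$, where $g$ is the genus of the reduced space and the labeled points are the images of the exceptional orbits in $F$, labeled by their isotropy representations. Because $\Theta$ is $S^1$-equivariant, it sends exceptional orbits in $F$ to exceptional orbits in $\Phi^{-1}(t)$ with the same stabilizer, and since the differential of $\Theta$ respects the symplectic slice representation up to deformation within isomorphism classes (which are discrete), it preserves the coprime residue class label. Thus $\bar\Theta$ identifies $\Sigma_{g,C_1,\dots,C_k} \times U$ with $M/S^1$ as painted topological bundles.

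The main obstacle, and hence the last step, is verifying that $\bar\Theta$ is a diffeomorphism for the smooth structure on $M/S^1$ induced by the surface bundle grommets (which is the relevant structure for painted surface bundles). Off the exceptional orbits the map $\bar\Theta$ is a smooth map between smooth principal $S^1$-bundle quotients, so smoothness there is automatic. Near an exceptional orbit, by our choice of metric the vector field $Y$ coincides in the grommet chart with $\partial/\partial h$ on $S^1 \times_{\mathbb{Z}_n} \mathbb{C} \times \mathbb{R}$; hence the flow of $Y$ is translation in the $h$-coordinate, which in terms of the trivializing homeomorphism $F = (\bar\Phi, \bar P): Y/S^1 \to \mathbb{R} \times \mathbb{C}$ acts as $(h,z)\mapsto (h+s,z)$. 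Since this is smooth in the smooth structure determined by the surface bundle grommet, $\bar\Theta$ is smooth there, and the same argument applies to $\bar\Theta^{-1}$. This yields the required isomorphism of painted surface bundles $\Sigma_{g,C_1,\dots,C_k}\times U \cong M/S^1$.
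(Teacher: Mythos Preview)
Your argument is essentially correct, though two points deserve tightening. The appeal to Lemma~\ref{fiber_connectivity_of_circle_actions} is both unnecessary and inapplicable (the lift is non-compact with a real-valued Hamiltonian); connectedness of level sets is already part of Definition~\ref{piece_definition}. More substantively, your smoothness check near an exceptional orbit tacitly assumes the flow segment from $y_0$ to $t_0$ stays inside a single grommet. This does hold --- since the grommet images are pairwise disjoint open sets covering the exceptional set, each connected component of the exceptional locus lies in exactly one grommet, and openness of its domain together with compactness of the segment over $[y_0,t_0]$ gives the needed tubular neighborhood --- but you should make this explicit.

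Your route differs genuinely from the paper's. The paper works entirely downstairs on the associated painted surface bundle, which is already a smooth surface bundle over the interval $U$: it notes that any such bundle is trivializable, observes that in each surface-bundle grommet the painted subset is the line $\mathbb{R}\times\{0\}$ (so globally the painted subset consists of $k$ smooth sections), and then picks an Ehresmann connection making those sections horizontal, so that parallel transport yields a trivialization with constant painted sections. This bypasses your main obstacle --- verifying smoothness at the exceptional orbits --- by never leaving the smooth category. Your approach, in exchange, produces a trivialization that visibly lifts to an equivariant diffeomorphism $F\times U \to M$ upstairs, which the paper's connection argument does not directly supply.
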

\begin{proof}
    By definition, the associated painted surface bundle is a surface bundle over $U$, whose fiber is $\Sigma_g$, with some painted subset $S$. Since $U$ is an interval, the bundle is trivializable.
    
    By the definition of a four-dimensional circle-valued Hamiltonian $S^1$-piece, it has no fixed points. Hence, by the local normal form, every grommet has a complexity one model of the form $Y := S^1 \times_{\mathbb{Z}_n} \mathbb{C} \times \mathbb{R}$, with $\mathbb{Z}_n$ corresponding to the stabilizer given by $C_j$.
    
    The image of the exceptional orbits of $Y/S^1$ under the trivializing homeomorphism $F:Y/S^1 \rightarrow \mathbb{R} \times \mathbb{C}$ is the subset $\mathbb{R} \times \{0\}$. Moreover, all of the exceptional orbits in the grommet have the isotropy representation that corresponds to $C_j$. Thus, every point $y$ in the painted subset $S$ of the associated painted surface bundle has a small neighborhood of the form $W \times U$, such that the intersection $S \cap (W \times U)$ is the image of some smooth section $s_j:U \rightarrow W \times U$. Therefore, the painted subset of the associated painted surface bundle has $k$ connected components, each one a section of $s_j:U \rightarrow \Sigma_g \times U$, painted by the isotropy representation that corresponds to $C_j$.
    
    Using a partition of unity, we choose an Ehresmann connection such that all of these sections are horizontal. Using the parallel transport of one of the fibers, we get a new trivialization of the painted surface bundle. In the new trivialization, the painted subset consists of the chosen $k$ constant sections, painted by the isotropy representations that correspond to $C_1$,..., $C_k$, respectively.
\end{proof}
\begin{corollary}\label{painted_bundle_same_isotropy_data}
    Let $E$ be a legal painted surface bundle over $\mathbb{R}/P$. Then all of its level sets have the same unordered list of isotropy representations corresponding to their painted points. We call the corresponding list of coprime residue classes the \textbf{isotropy data invariant} of $E$.
\end{corollary}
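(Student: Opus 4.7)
The plan is to deduce the statement from the local triviality provided by Lemma \ref{trivialize_painted_surface_bundle}, together with the connectedness of $\mathbb{R}/P$. The key observation is that the unordered list of labels attached to the painted points of a fiber is a locally constant function of the base point, and any locally constant function on a connected space is constant.

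More concretely, I would proceed as follows. Since $E$ is legal, there exists an open cover $\{V_j\}$ of $\mathbb{R}/P$ such that each restriction $E|_{\pi^{-1}(V_j)}$ is isomorphic, as a painted surface bundle, to the associated painted surface bundle of some grommeted four-dimensional circle-valued Hamiltonian $S^1$-piece $(M_j, \omega_j, \Phi_j)$ over $V_j$. After refining the cover if necessary, I may assume that each $V_j$ is an open interval strictly smaller than $\mathbb{R}/P$. By Lemma \ref{trivialize_painted_surface_bundle}, $E|_{\pi^{-1}(V_j)}$ is then isomorphic to the trivial painted bundle $\Sigma_{g_j, C^{\,j}_1, \ldots, C^{\,j}_{k_j}} \times V_j$ for some genus $g_j$ and some unordered list of coprime residue classes $C^{\,j}_1, \ldots, C^{\,j}_{k_j}$. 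In particular, the unordered list of labels on the painted points of each fiber $\pi^{-1}(y)$ with $y \in V_j$ equals $(C^{\,j}_1, \ldots, C^{\,j}_{k_j})$, independent of $y$.

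For each $y \in \mathbb{R}/P$, let $L(y)$ denote the unordered list of labels on the painted points of $\pi^{-1}(y)$; the previous paragraph shows that $L$ is constant on every $V_j$ in the cover. Since $\mathbb{R}/P$ is connected and the $V_j$ form an open cover, $L$ must be constant: on any overlap $V_i \cap V_j \neq \emptyset$, we get $(C^{\,i}_1, \ldots, C^{\,i}_{k_i}) = (C^{\,j}_1, \ldots, C^{\,j}_{k_j})$ as unordered lists, and a standard chaining argument along a connected cover finishes the claim. Calling this common unordered list $C_1, \ldots, C_k$ yields the isotropy data invariant of $E$.

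I do not expect a serious obstacle here; the only subtlety is making sure that the local trivializations given by Lemma \ref{trivialize_painted_surface_bundle} genuinely identify \emph{unordered} lists of labels across overlaps, which is automatic because an isomorphism of painted surface bundles preserves labels pointwise (so in particular preserves the multiset of labels on each fiber). Thus the passage from local constancy to global constancy is immediate once legality is unpacked.
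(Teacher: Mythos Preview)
Your proof is correct and follows essentially the same approach as the paper: unpack legality to get a cover, apply Lemma \ref{trivialize_painted_surface_bundle} on each piece to see the label multiset is constant there, and then use connectedness to patch across overlaps. Your version is slightly more careful in explicitly refining the cover to intervals (as required by the hypothesis of Lemma \ref{trivialize_painted_surface_bundle}) and spelling out the locally-constant-on-connected-space argument, but the substance is the same.
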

\begin{proof}
    Since $E$ is legal, we have a cover $\{V_j\}$ of $\mathbb{R}/P$, and four-dimensional circle-valued Hamiltonian $S^1$-pieces over each $V_j$, whose associated painted surface bundles are isomorphic to $E|_{\pi^{-1}(V_j)}$. By Lemma~\ref{trivialize_painted_surface_bundle}, for every $V_j$, each level set of $E|_{\pi^{-1}(V_j)}$ has the same list of isotropy representations. Because these lists agree on intersections of different $V_j$s, the unordered list of isotropy representations does not change through the level sets of $E$.

    Note that every isotropy representation corresponds to a non-free non-fixed orbit, and therefore given by a coprime residue class (see the discussion after Theorem~\ref{simple_local_normal_form}).
\end{proof}

We now give a reminder about the mapping class group of a surface with marked points, and define subgroups of the mapping class group that only permute certain marked points, according to a labeling of the points. These subgroups will be used for defining the monodromy invariant.

Let $g$ and $k$ be non-negative integers. We denote by $\Sigma_{g,k}$ the surface of genus $g$, with $k$ marked points. We denote by $\Diff_+(\Sigma_{g, k})$ the group of orientation-preserving diffeomorphisms that preserve the marked points. The \textbf{smooth mapping class group} of $\Sigma_{g,k}$ is defined by
\begin{equation*}
    \mcg^\infty(\Sigma_{g,k}) := \Diff_+(\Sigma_{g,k})/{\sim},
\end{equation*}
where $f \sim h$ if they are smoothly isotopic relative to the marked points.

Similarly, we denote by $\Homeo_+(\Sigma_{g, k})$ the group of orientation-preserving homeomorphisms that preserve the marked points. The \textbf{topological mapping class group} of $\Sigma_{g,k}$ is defined by
\begin{equation*}
    \mcg(\Sigma_{g,k}) :=\Homeo_+(\Sigma_{g,k})/{\sim},
\end{equation*}
where $f \sim h$ if they are isotopic relative to the marked points.

There is a natural forgetful homomorphism
\begin{equation*}
    \mcg^\infty(\Sigma_{g,k}) \xrightarrow{\Forgetful} \mcg(\Sigma_{g,k}).
\end{equation*}
It is a classical result that this forgetful map is an isomorphism, i.e., that the two definitions of the mapping class group coincide. We weren't able to find a reference that proves this isomorphism for surfaces with marked points, and therefore we give a proof in Appendix~\ref{mcg_appendix}, see Theorem~\ref{mcg_forgetful_isomorphism}.
\\

We denote by $\Diff_+(\Sigma_{g, C_1,...,C_k})$ the group of orientation-preserving diffeomorphisms of $\Sigma_{g, C_1,...,C_k}$ that send each labeled point to a labeled point with the same label. We define a subgroup of $\mcg^\infty(\Sigma_{g,k})$ that consists of classes of diffeomorphisms that respect the labels:
\begin{definition}
    The \textbf{(smooth) mapping class group} of $\Sigma_{g, C_1,...,C_k}$ is defined by
    \begin{equation*}
        \mcg^\infty(\Sigma_{g, C_1,...,C_k}) := \Diff_+(\Sigma_{g, C_1,...,C_k}) /{\sim},
    \end{equation*}
    where $f \sim h$ if they are smoothly isotopic relative to the labeled points.
\end{definition}

Similarly, we denote by $\Homeo_+(\Sigma_{g, C_1,...,C_k})$ the group of orientation-preserving homeomorphisms of $\Sigma_{g, C_1,...,C_k}$ that send each labeled point to a labeled point with the same label, and define a subgroup of $\mcg(\Sigma_{g,k})$ that consists of classes of homeomorphisms that respect the labels:
\begin{definition}
    The \textbf{(topological) mapping class group} of $\Sigma_{g, C_1,...,C_k}$ is defined by
    \begin{equation*}
        \mcg(\Sigma_{g, C_1,...,C_k}) := \Homeo_+(\Sigma_{g, C_1,...,C_k}) /{\sim},
    \end{equation*}
    where $f \sim h$ if they are isotopic relative to the labeled points.
\end{definition}
The $\Forgetful$ isomorphism between $\mcg^\infty(\Sigma_{g,k})$ and $\mcg(\Sigma_{g,k})$ restricts to an isomorphism between $\mcg^\infty(\Sigma_{g, C_1,...,C_k})$ and $\mcg(\Sigma_{g, C_1,...,C_k})$, hence the two definitions coincide.

For example, when all of the labels are the same, then $\mcg(\Sigma_{g, C_1,...,C_k})$ is the same group as $\mcg(\Sigma_{g, k})$ by definition. When all of the labels are pairwise different, then $\mcg(\Sigma_{g, C_1,...,C_k})$ is the subgroup of $\mcg(\Sigma_{g, k})$ that fixes each marked point individually, which is known as the pure mapping class group of $\Sigma_{g,k}$.
\\
Now, we show that every legal painted surface bundle is isomorphic to a mapping torus glued by some map in $\Diff_+(\Sigma_{g, C_1,...,C_k})$. Recall that the smooth structure on a mapping torus $M \times [0,1] / {(x,0) \sim (f(x), 1)}$ depends on a choice of collar neighborhoods for $M \times \{0\}$ and $M \times \{1\}$. Here, we use the canonical collar neighborhoods $M \times [0, \varepsilon]$, $M \times [1 - \varepsilon, 1]$.
\begin{lemma}\label{construct_as_mapping_torus}
    Let $E$ be a legal painted surface bundle over $\mathbb{R}/P$. Let $g$ be its genus, let $C_1,...,C_k$ be its isotropy data invariant, and let $\tau$ be the positive generator of $P$.
    
    Then there exists a map $f$ in $\Diff_+(\Sigma_{g, C_1,...,C_k})$ such that $E$ is isomorphic to the painted surface bundle defined by
    \begin{equation*}
        \bigslant{\Sigma_{g, C_1,...,C_k} \times [0, \tau]}{(x,0) \sim (f(x), \tau)}.
    \end{equation*}
\end{lemma}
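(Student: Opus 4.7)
The plan is to cut the circle $\mathbb{R}/P$ at a single point and exhibit $E$, after pulling back to the resulting interval, as a trivial painted surface bundle; the gluing diffeomorphism at the endpoints will then be the desired $f$. The starting point is legality: by definition there is a cover $\{V_j\}$ of $\mathbb{R}/P$ together with grommeted four-dimensional circle-valued Hamiltonian $S^1$-pieces over each $V_j$ whose associated painted surface bundles are isomorphic to $E|_{\pi^{-1}(V_j)}$. Applying Lemma \ref{trivialize_painted_surface_bundle} to each such piece, I obtain a trivialization of $E$ over every $V_j$ as $\Sigma_{g,C_1,\ldots,C_k} \times V_j$ in which the painted subset is precisely $k$ constant labeled sections. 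In particular the painted subset of $E$ is a smooth one-dimensional submanifold of the total space whose connected components are locally horizontal.

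Next I will build a smooth Ehresmann connection on $E$ whose horizontal distribution is tangent to each component of the painted subset. In each $V_j$ the trivialization above supplies a flat local connection in which the painted components are horizontal. A partition of unity subordinate to $\{V_j\}$ produces a global smooth connection, and since horizontality of the painted subset is a linear (convex) condition on horizontal complements, the partition-of-unity connection still has every painted component horizontal. Moreover, on every double overlap $V_i \cap V_j$ the labels of the painted sections agree, so parallel transport is well defined on labels as well as on points.

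Finally, fix $y_0 \in \mathbb{R}/P$ and let $q : [0,\tau] \rightarrow \mathbb{R}/P$ be the quotient identifying $0$ and $\tau$ with $y_0$. The pullback bundle $q^*E \rightarrow [0,\tau]$ inherits the pulled-back connection, and parallel transport of a chosen identification $\Sigma_{g,C_1,\ldots,C_k} \cong E_{y_0}$ along $[0,\tau]$ yields a smooth trivialization $q^*E \cong \Sigma_{g,C_1,\ldots,C_k} \times [0,\tau]$ of painted surface bundles. Comparing the identifications at $0$ and at $\tau$ of the fiber over $y_0$ produces the monodromy diffeomorphism $f \in \Diff_+(\Sigma_{g,C_1,\ldots,C_k})$, and $E$ is the quotient of $\Sigma_{g,C_1,\ldots,C_k} \times [0,\tau]$ by $(x,0)\sim (f(x),\tau)$. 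The canonical collar neighborhoods $\Sigma_{g,C_1,\ldots,C_k} \times [0,\varepsilon]$ and $\Sigma_{g,C_1,\ldots,C_k} \times [\tau-\varepsilon,\tau]$ of the mapping torus coincide with small pieces of the parallel-transport trivialization near the endpoints, so the smooth structure on the mapping torus matches that of $E$.

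The main technical obstacle I anticipate is verifying the smoothness of the trivialization at the gluing point, namely checking that the canonical collar neighborhoods used in defining the smooth structure on the mapping torus are compatible with those produced by parallel transport near $0$ and $\tau$. This is a standard but careful local computation, relying on the smoothness of the connection and the fact that parallel transport along a compact interval yields a smooth family of diffeomorphisms; it ultimately boils down to the observation that the connection near $y_0$ is smooth on both sides of the cut, so the resulting two collars agree after identification via $f$.
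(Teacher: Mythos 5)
Your argument is correct, and it rests on the same key lemma as the paper, namely Lemma~\ref{trivialize_painted_surface_bundle}, but you implement the cut-and-glue step differently. The paper covers $\mathbb{R}/P$ by just two trivializing intervals $U,V$, glues the two trivializations along one component of $U\cap V$ to obtain a bundle over $(-\varepsilon,\tau+\varepsilon)$, and then explicitly modifies the trivialization so that the remaining transition family $f_t$ is constant near the seam, after which it restricts to $[0,\tau]$ and glues by $f_0$. You instead assemble a single global Ehresmann connection on $E$ that keeps the painted subset horizontal (using the flat local connections coming from Lemma~\ref{trivialize_painted_surface_bundle} and a partition of unity; as you note, horizontality of the one-dimensional painted locus is a convex condition on the horizontal complement, so this works), and then read off $f$ as the holonomy of that connection around the base circle.

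The main thing each approach buys: the paper's version keeps everything elementary, but must explicitly justify the "make $f_t$ constant near $0$" modification so that the quotient really is the canonical mapping torus. Your version gets the compatibility of the smooth structure at the seam essentially for free: if $P_t$ denotes parallel transport from the cut point $y_0$ in the forward direction and $\tilde P_s$ in the backward direction, then $P_{\tau+s} = \tilde P_s\circ P_\tau$, and since $f$ is defined so that $P_\tau\circ\iota\circ f=\iota$, the two collar charts $(x,s)\mapsto P_s(\iota(x))$ (for $s\ge 0$) and $(x,s)\mapsto P_{\tau+s}(\iota(f(x)))$ (for $s<0$) both equal $(x,s)\mapsto\tilde P_s(\iota(x))$, a single smooth map across the seam. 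This is the content of the "technical obstacle" you flag at the end; it does close without further modification, so your last paragraph can be sharpened from a worry into a verification. Two small points worth making explicit: the cover provided by legality need not consist of intervals, so one should refine it before applying Lemma~\ref{trivialize_painted_surface_bundle}; and the reference identification $\Sigma_{g,C_1,\ldots,C_k}\cong E_{y_0}$ must be chosen to send labeled points to labeled points with matching labels (possible by Corollary~\ref{painted_bundle_same_isotropy_data}) so that the resulting $f$ genuinely lies in $\Diff_+(\Sigma_{g,C_1,\ldots,C_k})$.
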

\begin{proof}
    By Lemma~\ref{trivialize_painted_surface_bundle}, we can cover $\mathbb{R}/P$ with two intervals $U$ and $V$ such that $E|_{\pi^{-1}(U)}$ and $E|_{\pi^{-1}(V)}$ have global trivializations over $U$ and $V$. By gluing both trivializations on one of the connected components of $U \cap V$, by the transition map, we get a painted surface bundle of the form $\Sigma_{g, C_1,...,C_k} \times (-\varepsilon, \tau + \varepsilon)$.
    
    The transition map in the second connected component of $U \cap V$ induces a map $F:\Sigma_{g, C_1,...,C_k} \times (-\varepsilon, \varepsilon) \rightarrow \Sigma_{g, C_1,...,C_k} \times (\tau - \varepsilon, \tau + \varepsilon)$ that satisfies $F(x, t) = (f_t(x), t+\tau)$ for a smooth family of maps $f_t \in \Diff_+(\Sigma_{g, C_1,...,C_k})$, parameterized by $t \in (-\varepsilon, \varepsilon)$. We can change the trivialization of $\Sigma_{g, C_1,...,C_k} \times (-\varepsilon, \tau + \varepsilon)$ such that $f_t$ is constant near $t = 0$. Then, by restricting to $[0, \tau]$, we get that the map $f_0$ glues the boundaries of $\Sigma_{g, C_1,...,C_k} \times [0, \tau]$ to get a painted surface bundles that is isomorphic to $E$.
\end{proof}
\begin{lemma}\label{surface_bundles_isomorphic_if_conjugate}
    Let $f$ and $g$ be maps in $\Diff_+(\Sigma_{g, C_1,...,C_k})$. Let $M_f$ and $M_g$ be the painted surface bundles over $\mathbb{R}/P$, constructed as the mapping tori of $f$ and $g$. Then $M_f$ and $M_g$ are isomorphic if and only if $[f]$ and $[g]$ are conjugate in $\mcg^\infty(\Sigma_{g, C_1,...,C_k})$.
\end{lemma}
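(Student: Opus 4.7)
The plan is to convert both directions into statements about smooth paths in $\Diff_+(\Sigma_{g,C_1,\ldots,C_k})$ satisfying a twisted boundary condition. Concretely, an isomorphism $\Psi : M_f \to M_g$ of painted surface bundles is fiber-preserving over $\mathbb{R}/P$, so trivializing $M_f$ and $M_g$ over $[0,\tau)$ via the quotient maps from $\Sigma_{g,C_1,\ldots,C_k}\times[0,\tau]$, the isomorphism takes the form $\Psi(\pi_f(x,t))=\pi_g(h_t(x),t)$ for a smooth family $h_t$ of orientation-preserving, label-preserving diffeomorphisms. The key observation is that the gluing identifications $\pi_f(x,0)=\pi_f(f(x),\tau)$ and $\pi_g(y,0)=\pi_g(g(y),\tau)$ force the boundary condition $h_\tau = g\circ h_0\circ f^{-1}$, and this correspondence is a bijection between such paths and isomorphisms $M_f\to M_g$.

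For the ``only if'' direction I would assume an isomorphism $\Psi$ exists, extract the path $\{h_t\}_{t\in[0,\tau)}$ as above, and use continuity of $\Psi$ together with the gluing relations to show that $h_t$ extends smoothly to $[0,\tau]$ with $h_\tau = g\circ h_0 \circ f^{-1}$. The path $t\mapsto h_t\circ f$ then provides a smooth isotopy (rel.\ labeled points) from $h_0\circ f$ to $g\circ h_0$ inside $\Diff_+(\Sigma_{g,C_1,\ldots,C_k})$, so $[g]=[h_0]\,[f]\,[h_0]^{-1}$ in $\mcg^\infty(\Sigma_{g,C_1,\ldots,C_k})$.

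For the ``if'' direction I would start from a conjugation $[g]=[h]\,[f]\,[h]^{-1}$, so that $g$ is smoothly isotopic rel.\ labeled points to $h\circ f\circ h^{-1}$. Composing this isotopy on the right with $h\circ f^{-1}$ produces a smooth isotopy from $h$ to $g\circ h\circ f^{-1}$ inside $\Diff_+(\Sigma_{g,C_1,\ldots,C_k})$. I would reparametrize this isotopy so that the resulting path $h_t$, $t\in[0,\tau]$, is constant on small neighborhoods of $0$ and $\tau$, with $h_0=h$ and $h_\tau=g\circ h\circ f^{-1}$. The formula $(x,t)\mapsto(h_t(x),t)$ then descends from $\Sigma_{g,C_1,\ldots,C_k}\times[0,\tau]$ to a well-defined map $M_f\to M_g$, which is fiber-preserving, orientation-preserving, and respects the labels; the constancy near the endpoints, combined with the canonical collar neighborhoods that define the smooth structures on the mapping tori, guarantees smoothness across the gluing seam.

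The main subtlety I anticipate is the smoothness across the seam in the ``if'' direction and, dually, the smoothness of the extension $h_\tau$ in the ``only if'' direction: one must use that the smooth structure on the mapping torus is determined by the canonical collars $\Sigma\times[0,\varepsilon)$ and $\Sigma\times(\tau-\varepsilon,\tau]$, and argue that after possibly reparametrizing the isotopy, the compatibility of $h_0$ and $h_\tau$ via the relation $h_\tau=g\circ h_0\circ f^{-1}$ is precisely what is needed for the map to be smooth. The rest of the argument is a direct unpacking of the definition of mapping torus and of the smooth mapping class group, together with the fact (which holds since diffeomorphisms of a compact surface form a topological group with respect to the $C^\infty$ topology) that smooth paths in $\Diff_+$ can be freely reparametrized to be constant near their endpoints.
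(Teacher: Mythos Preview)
Your proposal is correct and follows essentially the same strategy as the paper: translate an isomorphism $M_f\to M_g$ into a smooth one-parameter family $h_t\in\Diff_+(\Sigma_{g,C_1,\ldots,C_k})$ satisfying the boundary relation $h_\tau\circ f=g\circ h_0$, and read off conjugacy from this; conversely, build such a family out of the conjugating isotopy and reparametrize it to be constant near the endpoints so that it descends smoothly to the mapping tori. The paper's ``if'' direction is written with auxiliary representatives $\tilde f,\tilde g,h,\tilde h$ and some algebraic rearrangement, whereas you obtain the needed path in one step by composing the isotopy $g\sim hfh^{-1}$ on the right with $hf^{-1}$; this is a cleaner packaging of the same argument.
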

\begin{proof}
    We follow the proof of the classical claim that surface bundles over a circle are classified by a conjugacy class in the mapping class group of the surface. Here, we also have labeled points in each fiber, hence the transition maps live in $\Diff_+(\Sigma_{g, C_1,...,C_k})$ instead of $\Diff_+(\Sigma_g)$, and therefore the appropriate mapping class group is $\mcg^\infty(\Sigma_{g, C_1,...,C_k})$.

    Let $M_f$ and $M_g$ be isomorphic. By their definition as mapping tori, an isomorphism between $M_f$ and $M_g$ defines a smooth family of maps $F_t:\Sigma_{g, C_1,...,C_k} \rightarrow \Sigma_{g, C_1,...,C_k}$ in $\Diff_+(\Sigma_{g, C_1,...,C_k})$, parameterized by $t \in [0, \tau]$, such that the following diagram commutes:
    \begin{equation*}
        \begin{tikzcd}\label{diagram_ef_eg}
            \Sigma_{g, C_1,...,C_k} \arrow{r}{f} \arrow[swap]{d}{F_0} & \Sigma_{g, C_1,...,C_k} \arrow{d}{F_\tau} \\%
            \Sigma_{g, C_1,...,C_k} \arrow{r}{g}& \Sigma_{g, C_1,...,C_k}
        \end{tikzcd}
    \end{equation*} 
    By definition, $F_\tau$ and $F_0$ are smoothly isotopic, relative to the labeled points. Hence, $f$ and $g$ are indeed in the same conjugacy class in $\mcg^\infty(\Sigma_{g, C_1,...,C_k})$.
    \\
    
    On the other, let $[f]$ and $[g]$ be two elements in the same conjugacy class in $\mcg^\infty(\Sigma_{g, C_1,...,C_k})$. Then, there exists maps $\tilde f$, $\tilde g$, $h$, and $\tilde h$ in $\Diff_+(\Sigma_{g, C_1,...,C_k})$ that satisfy:
    \begin{enumerate}
        \item $f$ and $\tilde f$ are smoothly isotopic relative to the labeled points,
        \item $g$ and $\tilde g$ are smoothly isotopic relative to the labeled points,
        \item $h$ and $\tilde h$ are smoothly isotopic relative to the labeled points, and
        \item $h^{-1} \circ \tilde g \circ \tilde h = \tilde f$.
    \end{enumerate}
    We can choose smooth isotopies $F_t$ and $G_t$, relative to the labeled points, from the identity map to $\tilde f \circ f^{-1}$ and $\tilde g \circ g^{-1}$. We then have 
    \begin{equation*}
        h^{-1} \circ G_1 \circ g \circ \tilde h = F_1 \circ f,
    \end{equation*}
    and by setting $h' := G_1^{-1} \circ h \circ F_1$, we get that
    \begin{equation*}
        h'^{-1} \circ g \circ \tilde h = f
    \end{equation*}
    where $h'$ is smoothly isotopic, relative to the labeled points, to $h$, and therefore to $\tilde h$ as well.
    
    The isotopy between $h'$ and $\tilde h$ induces a smooth family of maps $H_t:\Sigma_{g, C_1,...,C_k} \rightarrow \Sigma_{g, C_1,...,C_k}$, parameterized by $t \in [0, \tau]$. Without loss of generality we can assume that $H_t$ is constant near $t = 0$ and $t = \tau$. Then $H_t$ defines a diffeomorphism of $\Sigma_{g, C_1,...,C_k} \times [0, \tau]$ that induces an isomorphism between $M_f$ and $M_g$.
\end{proof}
\begin{proposition}\label{classification_of_painted_surface_bundles}
    Every legal painted surface bundle $E$ over $\mathbb{R}/P$ with genus $g$, and isotropy data invariant $C_1,...,C_k$, has a unique well-defined conjugacy class in $\mcg^\infty(\Sigma_{g, C_1,...,C_k})$, which we call the \textbf{surface bundle monodromy} of $E$, that corresponds to the class of a gluing map from which an isomorphic mapping torus can be constructed.
    
    Moreover, two legal painted surface bundles over $\mathbb{R}/P$ are isomorphic if and only if they have the same genus, the same isotropy data invariant, and the same surface bundle monodromy.
\end{proposition}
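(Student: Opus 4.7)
The plan is to derive the proposition as a formal consequence of the two preceding lemmas (Lemma \ref{construct_as_mapping_torus} and Lemma \ref{surface_bundles_isomorphic_if_conjugate}), together with Corollary \ref{painted_bundle_same_isotropy_data}. All the real geometric work has already been done in those results, and the remaining task is a bookkeeping argument about invariants of equivalence classes.

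First I would define the surface bundle monodromy. Given a legal painted surface bundle $E$ over $\mathbb{R}/P$, Lemma \ref{construct_as_mapping_torus} produces a diffeomorphism $f \in \Diff_+(\Sigma_{g,C_1,...,C_k})$ such that $E$ is isomorphic to the mapping torus $M_f$. I set the surface bundle monodromy of $E$ to be the conjugacy class of $[f]$ in $\mcg^\infty(\Sigma_{g,C_1,...,C_k})$. To see this is well-defined, suppose $f, g \in \Diff_+(\Sigma_{g,C_1,...,C_k})$ both yield mapping tori isomorphic to $E$. Then $M_f$ and $M_g$ are isomorphic to each other, and by Lemma \ref{surface_bundles_isomorphic_if_conjugate} the classes $[f]$ and $[g]$ are conjugate in $\mcg^\infty(\Sigma_{g,C_1,...,C_k})$, so the conjugacy class is independent of the chosen presentation.

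For the classification statement, the ``only if'' direction is immediate: an isomorphism of painted surface bundles restricts to an orientation-preserving label-preserving diffeomorphism on each fiber, so the genus and the isotropy data invariant (well-defined by Corollary \ref{painted_bundle_same_isotropy_data}) are preserved; and if $E \cong E' \cong M_{f'}$ for some gluing map $f'$, then $E \cong M_{f'}$, so by the well-definedness above the surface bundle monodromy of $E$ equals that of $E'$. Conversely, assume $E$ and $E'$ share the same genus $g$, the same isotropy data $C_1,...,C_k$, and the same surface bundle monodromy. By Lemma \ref{construct_as_mapping_torus}, fix isomorphisms $E \cong M_f$ and $E' \cong M_{f'}$ for some $f, f' \in \Diff_+(\Sigma_{g,C_1,...,C_k})$. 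Because the monodromies agree, $[f]$ and $[f']$ are conjugate in $\mcg^\infty(\Sigma_{g,C_1,...,C_k})$, and Lemma \ref{surface_bundles_isomorphic_if_conjugate} then gives $M_f \cong M_{f'}$, hence $E \cong E'$.

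There is no serious obstacle — the only point requiring mild care is ensuring that the mapping class group $\mcg^\infty(\Sigma_{g,C_1,...,C_k})$ in which the conjugacy class lives does not depend on spurious choices (e.g., an ordering of the labels or a specific representative surface). Since the list $C_1,...,C_k$ is unordered, any two choices of presentation of $\Sigma_{g,C_1,...,C_k}$ differ by an element of $\Diff_+(\Sigma_{g,C_1,...,C_k})$, whose induced action on the mapping class group is by inner automorphism and thus preserves conjugacy classes. With this remark the argument is complete.
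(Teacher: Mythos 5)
Your proof is correct and follows the same strategy as the paper's: define the surface bundle monodromy via Lemma \ref{construct_as_mapping_torus}, establish well-definedness and the classification via Lemma \ref{surface_bundles_isomorphic_if_conjugate}. Your closing remark about invariance under relabeling/choice of reference surface is a nice bit of extra care that the paper leaves implicit.
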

\begin{proof}
    Let $E$ be a painted surface bundle with genus $g$, and isotropy data invariant $C_1,...,C_k$. Then by Lemma~\ref{construct_as_mapping_torus}, it is isomorphic to the mapping torus of some map $f$ in $\Diff_+(\Sigma_{g, C_1,...,C_k})$. By Lemma~\ref{surface_bundles_isomorphic_if_conjugate}, the conjugacy class of $[f]$ in $\mcg^\infty(\Sigma_{g, C_1,...,C_k})$ is independent of the choice of $f$, and we call it the surface bundle monodromy of $E$.

    Let $E$ and $E'$ be two legal painted surface bundles over $\mathbb{R}/P$. For $E$ and $E'$ to be isomorphic, they must have the same fiber, i.e., the genus of their surfaces has to be same. Moreover, they must have the same isotropy data invariant, otherwise the bundle isomorphism cannot respect the labels of the painted subset. By Lemma~\ref{construct_as_mapping_torus}, $E$ and $E'$ are isomorphic to mapping tori $M_f$ and $M_g$ constructed by some maps $f$ and $g$ in $\Diff_+(\Sigma_{g, C_1,...,C_k})$, respectively. By Lemma~\ref{surface_bundles_isomorphic_if_conjugate}, $M_f$ and $M_g$ are isomorphic if and only the conjugacy classes of $[f]$ and $[g]$ in $\mcg^\infty(\Sigma_{g, C_1,...,C_k})$ agree. Therefore, $E$ and $E'$ are isomorphic if and only if they have the same genus, isotropy data invariant, and surface bundle monodromy.
\end{proof}
\begin{definition}\label{monodromy_invariant_definition}
    Let $(M, \omega, \Phi)$ be a four-dimensional tight circle-valued Hamiltonian $S^1$-manifold with genus invariant $g$, and isotropy data invariant $C_1,...,C_k$. Let $E$ be an associated painted surface bundle, and let the surface bundle monodromy of $E$ be the conjugacy class of $[f]$, where $[f]$ is an element in $\mcg^\infty(\Sigma_{g, C_1,...,C_k})$.
    
    We define the \textbf{monodromy invariant} of $(M, \omega, \Phi)$ to be the conjugacy class of $\Forgetful([f])$ in $\mcg(\Sigma_{g, C_1,...,C_k})$.
\end{definition}
\begin{proposition}[Uniqueness up to $\Phi$-diffeomorphisms]\label{uniqueness_theorem_phi_diffeo}
    Let $(M, \omega, \Phi)$ and $(M', \omega', \Phi')$ be four-dimensional tight circle-valued Hamiltonian $S^1$-manifolds. Assume that they have the same group of periods $P$. Then they have $\Phi$-diffeomorphic quotients if and only if they have the same genus invariant, isotropy data invariant, and monodromy invariant.
\end{proposition}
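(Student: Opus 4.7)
The plan is to chain together the two classification results already established in the excerpt. By Proposition \ref{associated_painted_surface_bundle_proposition}, the quotient spaces $M/S^1$ and $M'/S^1$ are $\Phi$-diffeomorphic if and only if their associated painted surface bundles (over $\mathbb{R}/P$) are isomorphic as painted surface bundles. Therefore, it suffices to show that two legal painted surface bundles associated to such spaces are isomorphic if and only if the triple (genus invariant, isotropy data invariant, monodromy invariant) of the two spaces coincide.

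By Proposition \ref{classification_of_painted_surface_bundles}, two legal painted surface bundles over $\mathbb{R}/P$ are isomorphic if and only if they have the same genus, the same isotropy data invariant, and the same surface bundle monodromy (an element of $\mcg^\infty(\Sigma_{g,C_1,\dots,C_k})$). I need to verify two identifications: first, that the genus and isotropy data of an associated painted surface bundle coincide with the genus invariant and isotropy data invariant of the space; second, that equality of surface bundle monodromies (living in the smooth mapping class group) is equivalent to equality of the monodromy invariants (living in the topological mapping class group, by Definition \ref{monodromy_invariant_definition}).

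For the first identification, this is essentially built into the construction: the reduced spaces of $(M,\omega,\Phi)$ are the fibers of $\bar\Phi\colon M/S^1\to \mathbb{R}/P$ and hence of any associated painted surface bundle, and Lemma \ref{trivialize_painted_surface_bundle} together with Corollary \ref{painted_bundle_same_isotropy_data} show that the painted points in each fiber are precisely the non-free orbits, labeled by their isotropy representations. Thus the genus agrees with $g$ (by Lemma \ref{same_genus_and_isotropy_data}), and the labels agree with the isotropy data invariant. For the second identification, the monodromy invariant is defined as the image of the surface bundle monodromy under the forgetful homomorphism $\mcg^\infty(\Sigma_{g,C_1,\dots,C_k})\to \mcg(\Sigma_{g,C_1,\dots,C_k})$. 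Since this forgetful map is a group isomorphism (Theorem \ref{mcg_forgetful_isomorphism}, proved in Appendix \ref{mcg_appendix}), it induces a bijection between conjugacy classes. Hence two surface bundle monodromies coincide if and only if the corresponding monodromy invariants coincide.

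Combining these steps yields the biconditional. The only genuine content beyond citing earlier results is the compatibility of the smooth and topological mapping class groups, which is precisely why Theorem \ref{mcg_forgetful_isomorphism} is invoked; this is the main technical point, ensuring that the invariant we have defined in the topological category faithfully records the smooth conjugacy class produced by the classification of painted surface bundles.
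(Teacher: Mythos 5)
Your proof is correct and follows essentially the same route as the paper's: reduce via Proposition \ref{associated_painted_surface_bundle_proposition} to an isomorphism of associated painted surface bundles, apply Proposition \ref{classification_of_painted_surface_bundles}, and use the isomorphism of Theorem \ref{mcg_forgetful_isomorphism} to pass between the smooth surface bundle monodromy and the topological monodromy invariant. The only difference is that you spell out the verification that the genus and isotropy data of the bundle agree with the corresponding invariants of the space, which the paper states more briefly.
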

\begin{proof}
    By Proposition~\ref{associated_painted_surface_bundle_proposition}, we can choose associated painted surface bundles $E$ and $E'$ over $\mathbb{R}/P$, for $(M, \omega, \Phi)$ and $(M', \omega', \Phi')$. Moreover by the same proposition, $(M, \omega, \Phi)$ and $(M', \omega', \Phi')$ have $\Phi$-diffeomorphic quotients if and only if $E$ and $E'$ are isomorphic. By Proposition~\ref{classification_of_painted_surface_bundles}, the painted surface bundles $E$ and $E'$ are isomorphic if and only if the have the same genus, isotropy data invariant, and surface bundle monodromy.
    
    The genera of the associated painted surface bundles $E$ and $E'$ correspond to the genus invariants of $(M, \omega, \Phi)$ and $(M', \omega', \Phi')$. The isotropy data invariants of $E$ and $E'$ correspond to the isotropy data invariants of $(M, \omega, \Phi)$ and $(M', \omega', \Phi')$. Lastly, since the $\Forgetful$ map is an isomorphism (see Theorem~\ref{mcg_forgetful_isomorphism}), the surface bundle monodromies of the associated painted surface bundles are the same if and only if the monodromy invariants of the $(M, \omega, \Phi)$ and $(M', \omega', \Phi')$ are the same.
\end{proof}
Given a four-dimensional tight circle-valued Hamiltonian $S^1$-manifold, while it is possible to calculate its monodromy invariant by constructing an associated painted surface bundle, it is a rather cumbersome task, as it involves splitting the space to pieces, and choosing grommets. Instead, we prove Proposition~\ref{topological_model_proposition}, which shows that the class can be calculated directly from the topology of the topological fiber bundle $M/S^1 \rightarrow \mathbb{R}/P$ and its painted subset.
\begin{proof}[Proof of Proposition~\ref{topological_model_proposition}]
    Let $E \rightarrow \mathbb{R}/P$ be an associated painted surface bundle. By Proposition~\ref{associated_painted_surface_bundle_proposition}, it is homeomorphic to the topological bundle $M/S^1 \rightarrow \mathbb{R}/P$, with an orientation-preserving homeomorphism $\Psi:M/S^1 \rightarrow E$ that respects the projections to $\mathbb{R}/P$, and induces a bijection between the set of non-free orbits in $M/S^1$ and the painted subset of $E$, such that every non-free is sent to a point that is labeled by its isotropy representation.
    
    By Proposition~\ref{classification_of_painted_surface_bundles}, $E$ is isomorphic to the mapping torus $M_f$, where $[f]$ is the surface bundle monodromy of $E$. By composing the homeomorphism $\Psi$ with this isomorphism, we get an orientation-preserving homeomorphism between $M/S^1$ and $M_f$, that respects the projections to $\mathbb{R}/P$, and induces a bijection between the set of non-free orbits in $M/S^1$ and the painted subset of $M_f$, such that every non-free is sent to a point that is labeled by its isotropy representation.

    Hence, we are left with showing that the monodromy invariant, which is defined as the conjugacy class of $\Forgetful([f])$ in $\mcg(\Sigma_{g, C_1,...,C_k})$, is independent of the choice of $f$. This holds because the conjugacy class of $[f]$ in $\mcg^\infty(\Sigma_{g, C_1,...,C_k})$ is independent of $f$ by Proposition~\ref{classification_of_painted_surface_bundles}, and because the $\Forgetful$ map is an isomorphism by Theorem~\ref{mcg_forgetful_isomorphism}.
\end{proof}
\begin{remark}\label{monodromy_definitions_agree}
    Note that in the introduction, for brevity, we define the monodromy invariant using Proposition~\ref{topological_model_proposition}. The proof of Proposition~\ref{topological_model_proposition}, given before this remark, shows that this definition agrees with Definition~\ref{monodromy_invariant_definition}.
\end{remark}

\section{Construction of spaces}\label{construction_section}
In this section, we construct a four-dimensional tight circle-valued Hamiltonian $S^1$-manifold for every choice of values for the group of periods, genus invariant, isotropy data invariant, Dusitermaat-Heckman invariant, and the monodromy invariant. Together with Proposition~\ref{uniqueness_theorem_phi_diffeo}, this gives a complete classification of four-dimensional tight circle-valued Hamiltonian $S^1$-manifolds up to $\Phi$-diffeomorphisms between their quotients.

The main theorem of this section is the following:
\begin{theorem}\label{existence_theorem_phi_diffeo}
    Suppose that $P \subset \mathbb{R}$ is a non-trivial discrete subgroup, $c_{\DuHe}$ is a positive real, and $g$ is a non-negative integer. Furthermore, let $(C_1,...,C_k):=((n_1,a_1),...,(n_k,a_k))$ be an unordered list of coprime residue classes, such that the sum $\sum_{j=1}^k \frac{a_j}{n_j}$ is an integer, and let $[f]$ be an element in the mapping class group $\mcg(\Sigma_{g, C_1,...,C_k})$.
    
    Then, there exists a four-dimensional tight circle-valued Hamiltonian $S^1$-manifold with group of periods $P$, Duistermaat-Heckman constant $c_{\DuHe}$, genus invariant $g$, isotropy data invariant $C_1,...,C_k$, and monodromy invariant the conjugacy class of $[f]$.
\end{theorem}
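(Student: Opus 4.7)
The plan is to construct the desired space piecewise by cutting $\mathbb{R}/P$ into intervals, building proper tall complexity one Hamiltonian $S^1$-manifolds over each interval, and gluing these pieces by a lift of a representative of $\alpha$. Using the isomorphism between the topological and smooth mapping class groups (Theorem \ref{mcg_forgetful_isomorphism}), fix a smooth diffeomorphism $f \in \Diff_+(\Sigma_{g, C_1,\dots,C_k})$ representing $\alpha$, and let $\tau$ be the positive generator of $P$. Cover $\mathbb{R}/P$ by two open intervals $U$ and $V$ whose intersection $U\cap V$ has two connected components $W_0$ and $W_1$, chosen so that $W_0$ lies near $0$ and $W_1$ lies near $\tau$ in the obvious lift.

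Over each of $U$ and $V$, I would construct a proper tall complexity one Hamiltonian $S^1$-manifold whose reduced spaces have genus $g$, whose isotropy data is $C_1,\dots,C_k$, and whose Duistermaat--Heckman measure is the constant $c_{\DuHe}$ times Lebesgue measure. The integrality condition $\sum_j a_j/n_j \in \mathbb{Z}$ permits choosing the Seifert invariant $\{g;(1,b),(n_1,a_1),\dots,(n_k,a_k)\}$ with $b = -\sum_j a_j/n_j$, so that the Seifert Euler number of each level set is zero; by the Duistermaat--Heckman theorem (Theorem \ref{duistermaat_heckman_thm}), this forces the Duistermaat--Heckman function to be constant. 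Concretely, one may equip the Seifert fibration $\Sigma_{g,C_1,\dots,C_k} \times I$ (viewed as an orbibundle over the mapping cylinder) with a symplectic form built fiberwise from an area form on $\Sigma_{g,C_1,\dots,C_k}$ of total area $c_{\DuHe}$, extended using a connection one-form so that the vertical vector field is Hamiltonian with real-valued Hamiltonian $\Phi(x,t)=t$. This is essentially the construction used in \cite{tall_existence}, specialized to the four-dimensional trivial case over an interval.

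To glue, over $W_0$ we use the identity between the two trivializations. Over $W_1$, we need an equivariant symplectomorphism that descends to $f$ on the reduced space. Since both pieces restricted to $W_1$ have the same genus, isotropy data, and constant Duistermaat--Heckman function, Proposition \ref{uniqueness_theorem_phi_diffeo} (proven in the $\Phi$-diffeomorphism direction over intervals using Karshon--Tolman local uniqueness \cite[Theorem 1]{centered_hamiltonians}) provides a $\Phi$-diffeomorphism descending $f$, which after applying a strengthened local uniqueness lemma (in the spirit of Lemma \ref{local_uniqueness_lemma}) can be upgraded to an equivariant symplectomorphism respecting $\Phi$. Gluing by this map yields a compact connected symplectic manifold $(M,\omega)$ with a symplectic $S^1$-action and a circle-valued Hamiltonian $\Phi:M\to\mathbb{R}/P$, and by Proposition \ref{topological_model_proposition} the resulting quotient $M/S^1$ is the mapping torus of $f$, so the monodromy invariant is the conjugacy class of $\alpha$.

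The main obstacle is the strengthened local uniqueness result required for the gluing step: the existing statement in \cite{centered_hamiltonians} is for real-valued momentum maps and Hamiltonian actions, and one must verify that a $\Phi$-diffeomorphism between grommeted neighborhoods of level sets (in the sense of Section \ref{sheaves_section}) with matching Duistermaat--Heckman constant lifts to an equivariant symplectomorphism preserving $\Phi$, without introducing nontrivial Chern class obstructions along the fibration $M \to M/S^1$. Once this is in place, verifying the invariants is immediate: the group of periods equals $P$ because the Hamiltonian wraps once around $\mathbb{R}/P$ by construction, the genus, isotropy data, and Duistermaat--Heckman constant are built into the pieces, and the monodromy invariant is read off from the mapping torus structure via Proposition \ref{topological_model_proposition}.
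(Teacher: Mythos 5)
Your overall strategy matches the paper's: build proper tall complexity one pieces over intervals via Karshon--Tolman local existence, then glue by a map whose induced action on the reduced space realizes a chosen representative $f$ of $\alpha$, and read off the monodromy invariant via Proposition \ref{topological_model_proposition}. Your gluing scheme (two intervals $U,V$ with one identity gluing over $W_0$ and one $f$-gluing over $W_1$) is a cosmetic variant of the paper's (repeatedly gluing copies of a single small piece to build one long piece over $(-\varepsilon,\tau+\varepsilon)$ and then self-gluing once); the two are equivalent.

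However there is a genuine gap at the crucial step, and you correctly flag it but do not fill it. The gluing requires an equivariant symplectomorphism between neighborhoods of level sets whose induced map on the associated painted surface bundle is isotopic (rel the labeled points) to a \emph{prescribed} $f\in\Diff_+(\Sigma_{g,C_1,\ldots,C_k})$. This is exactly the paper's Lemma \ref{local_uniqueness_lemma_with_lifting} (``Strengthened version of Theorem 1 in \cite{centered_hamiltonians}''), and it is not available off the shelf: Theorem 1 of \cite{centered_hamiltonians} only produces \emph{some} isomorphism, with no control on the induced isotopy class. The paper fills this gap by re-opening the Karshon--Tolman proof: one first chooses a rigid map $g:\Sigma\to\Sigma'$ (rotation on each grommet) and propagates it to a $\Phi$-diffeomorphism, then a $\Phi$-$T$-diffeomorphism, then an equivariant symplectomorphism; the new ingredient is Lemma \ref{milnor_linearization}, which shows that an arbitrary $f$ can be isotoped rel the labeled points to such a rigid map after shrinking the grommets. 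Your write-up instead emphasizes a worry about ``Chern class obstructions along $M\to M/S^1$,'' but over a small interval around a single value of $\Phi$ these vanish (this is handled by \cite[Propositions 3.3, 4.2]{centered_hamiltonians}, which the paper cites in the proof of Lemma \ref{local_uniqueness_lemma_with_lifting}). The real content that must be supplied is control of the isotopy class of the induced map on the reduced surface, which your proposal names but leaves unproved.

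A secondary issue: you cite Proposition \ref{uniqueness_theorem_phi_diffeo} to obtain a $\Phi$-diffeomorphism ``descending $f$'' over $W_1$, but that proposition is the global uniqueness statement over $\mathbb{R}/P$ proved later in the paper using the very existence theorem you are trying to establish, so invoking it here risks circularity. What you actually want is the local statement supplied by the grommet/trivializing-homeomorphism machinery of Section \ref{sheaves_section} together with Lemma \ref{milnor_linearization}.
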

We begin by constructing a proper Hamiltonian $S^1$-manifold for every choice of the invariants, to model the neighborhood of a level set.
\begin{lemma}[Local Existence]\label{local_existence_lemma}
    Let $c_{\DuHe} \in \mathbb{R}_{>0}$ be a positive constant, let $g \ge 0$ be a non-negative integer, and let $C_1,...,C_k$ be a finite unordered list of coprime residue classes. Assume that the sum $\sum_{j=1}^k \frac{a_j}{n_j}$ is an integer, where each $C_j$ is given by the pair $(n_j, a_j)$.
    
    Then there exists a positive number $\varepsilon > 0$, and a proper Hamiltonian $S^1$-manifold over $(-\varepsilon, \varepsilon) \subset \mathbb{R}$ such that for every $y \in (-\varepsilon, \varepsilon)$:
    \begin{enumerate}
        \item the genus of the reduced space is $g$,
        \item the value of the Duistermaat-Heckman function is $c_{\DuHe}$,
        \item the isotropy data of the level set $\Phi^{-1}(y)$ corresponds to the coprime residue classes $C_1,...,C_k$, and
        \item $y$ is a regular value.
    \end{enumerate}
\end{lemma}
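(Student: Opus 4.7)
The plan is to realize $M$ as the total space of a symplectic fattening of a principal $S^1$-orbibundle over a two-dimensional orbifold, using a minimal-coupling style construction with a flat connection. Set $b := -\sum_{j=1}^k a_j/n_j$, which is an integer by hypothesis, and let $\Sigma_g^{\orb}$ denote the genus-$g$ surface with $k$ cone points of orders $n_1,\dots,n_k$. Take $\pi:N\to\Sigma_g^{\orb}$ to be the principal $S^1$-orbibundle with Seifert invariants $\{g;(1,b),(n_1,a_1),\dots,(n_k,a_k)\}$; by \eqref{seifert_euler_equation} its Seifert Euler number is
$$e = -b - \sum_{j=1}^k \frac{a_j}{n_j} = 0.$$
The total space $N$ is a compact smooth 3-manifold, and the $S^1$-action on $N$ has exactly $k$ non-free orbits, whose isotropy representations correspond to $C_1,\dots,C_k$ under the standard dictionary between Seifert invariants and the local normal form of Theorem \ref{simple_local_normal_form}.

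Next I would install the symplectic form. Choose an orbifold area form $\omega_B$ on $\Sigma_g^{\orb}$ with orbifold area $c_{\DuHe}/(2\pi)$; this is possible on any compact 2-orbifold, and the area can be rescaled freely. Since the Seifert Euler number of $\pi$ vanishes, the orbifold first Chern class of the $S^1$-orbibundle is zero in $H^2(\Sigma_g^{\orb};\mathbb{R})$. By a Chern--Weil argument, $\pi$ therefore admits a flat connection: an $S^1$-invariant one-form $\alpha$ on $N$ with $\alpha(X)=1$ (where $X$ is the generator of the $S^1$-action, normalized to be $2\pi$-periodic) and $d\alpha=0$. On $M:=N\times(-\varepsilon,\varepsilon)$ with coordinate $h$ and the $S^1$-action extended trivially to the second factor, put
$$\omega \;:=\; \pi^*\omega_B + dh\wedge\alpha, \qquad \Phi(n,h):=h.$$

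Now I would verify the claimed properties. Closedness is immediate from $d\omega_B=0$ and $d\alpha=0$. A direct calculation gives $\iota_X\omega = -dh$, so $\Phi$ is a Hamiltonian for the action. At every point,
$$\omega\wedge\omega \;=\; 2\,\pi^*\omega_B\wedge dh\wedge\alpha \;\ne\;0,$$
because $\omega_B$ is an orbifold area form and $\alpha$ is nonzero on $X$; hence $\omega$ is symplectic on $M$ for any $\varepsilon>0$. Compactness of $N$ makes $\Phi$ proper onto $(-\varepsilon,\varepsilon)$, every $y$ is regular since $d\Phi\ne 0$ throughout, and the reduced space at $y$ is $(N/S^1,\omega_B)=(\Sigma_g^{\orb},\omega_B)$, which is connected and has genus $g$ and the required isotropy data. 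Finally, by Theorem \ref{duistermaat_heckman_thm},
$$\rho_{\DuHe}(y) \;=\; 2\pi\int_{\Sigma_g^{\orb}} \omega_B \;=\; c_{\DuHe}.$$

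The only nontrivial point I expect is the bookkeeping between Seifert invariants and isotropy representations: one must choose the parameterization so that the singular fiber labeled $(n_j,a_j)$ produces the prescribed coprime residue class $C_j=(n_j,a_j)$ under Theorem \ref{simple_local_normal_form}. This is a local computation in the model $S^1\times_{\mathbb{Z}_{n_j}}\mathbb{C}\times\mathbb{R}$ and is absorbed into the choice of generator of $\mathbb{Z}_{n_j}$; once this matching is pinned down, the integrality condition $\sum a_j/n_j\in\mathbb{Z}$ is exactly what is needed to make $b$ an integer and the Seifert Euler number vanish, which in turn is what permits the existence of a flat connection and the resulting constant Duistermaat--Heckman function.
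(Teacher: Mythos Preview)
Your argument is correct and is essentially the paper's ``alternative proof'': both take the Seifert fibration $N\to\Sigma_g^{\orb}$ with invariants $\{g;(1,b),(n_1,a_1),\dots,(n_k,a_k)\}$ and build $\omega$ on $N\times(-\varepsilon,\varepsilon)$ from the pullback of an orbifold area form together with a connection one-form. The one difference is that you exploit the vanishing Seifert Euler number to choose a \emph{flat} connection $\alpha$ and set $\omega=\pi^*\omega_B+dh\wedge\alpha$, whereas the paper takes an arbitrary connection and writes $\omega=\pi^*\omega_F+d(t\,\alpha)$, which forces $\varepsilon$ to be small to preserve nondegeneracy. Your refinement removes that smallness constraint at the cost of the extra step of producing a flat connection (which, as you note, follows from Chern--Weil once the real Chern class vanishes); either route suffices for the lemma.
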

\begin{proof}
    This is a special case of Proposition 9.10 in~\cite{tall_existence}. We set the skeleton mentioned in their statement to be a union of $k$ intervals, each one painted by the isotropy representation that corresponds to the coprime residue class $C_j$, and we set the Duistermaat-Heckman function in their statement to be the constant function $f = c_{\DuHe}$ which is compatible with the skeleton by Theorem~\ref{duistermaat_heckman_thm}.
\end{proof}
Since we only consider spaces of dimension four with no fixed points, Lemma~\ref{local_existence_lemma} can also be proved directly without quoting the general statement from~\cite{tall_existence}. We give a simple proof below, constructing the space by taking the product of a Seifert fibration and an interval. For a comprehensive review of Seifert fibrations, see~\cite{seifert_fibrations}.
\begin{proof}[Alternative Proof of Lemma~\ref{local_existence_lemma}]
    Let $c_{\DuHe}$, $g$, and $(n_j, a_j)$ be given as above. Let $b$ be given by
    \begin{equation}\label{definition_of_integer_in_seifert}
        b = - \sum_{j=1}^k \frac{a_j}{n_j}.
    \end{equation}
    By the assumption on the sum, $b$ is an integer. 
    
    Take a Seifert fibration $\pi: E \rightarrow F$ whose Seifert invariant is
    \begin{equation*}
        \{g;(1,b),(n_1, a_1),...,(n_k, a_k)\}.
    \end{equation*}
    Then by Equation~\eqref{seifert_euler_equation} and Equation~\eqref{definition_of_integer_in_seifert}, the Seifert Euler number $e(E \rightarrow F)$ is zero.
    \\
    Next, we want to construct a symplectic form on $E \times (-\varepsilon, \varepsilon)$, such that $\iota_X \omega = dt$, where $X$ is the generating vector field of the $S^1$ action of the fibration, and $t$ is the coordinate of $(-\varepsilon, \varepsilon)$. Let $\omega_F$ be a symplectic form on the orbifold $F$, with total area $\frac{1}{2\pi}c_{\DuHe}$. Choose a connection one-form $\alpha$ on the orbi-bundle $E \rightarrow F$, i.e., a one-form that satisfies $\alpha(X) = 1$ and $L_X \alpha = 0$. We define a two-form on $E \times (-\varepsilon, \varepsilon)$ by
    \begin{equation*}
        \omega := (\pi \circ \projection_E)^*\omega_F + d(t\projection_E^*\alpha),
    \end{equation*}
    where $\projection_E:E\times (-\varepsilon, \varepsilon) \rightarrow E$ is the projection map to $E$. It is a symplectic form for small enough $\varepsilon$. Furthermore, $\omega$ and $dt$ indeed generate the $S^1$ action of the fibration since $\iota_X \omega = \alpha(X)dt = dt$.
    
    Moreover, $2\pi \int_{(E \times \{0\})/S^1} \omega_{\reduced} = c_{\DuHe}$ by the choice of $\omega_F$. Since the Seifert Euler number of each level set is $0$, the Duistermaat-Heckman function has derivative $0$, and thus it must be constant with value $c_{\DuHe}$. Hence, $(E\times (-\varepsilon, \varepsilon), \omega, t)$ is a proper Hamiltonian $S^1$-manifold that satisfy the needed properties.
\end{proof}
\begin{remark}
    Another possible approach for proving Lemma~\ref{local_existence_lemma} is to realize the space as a restriction of a compact Hamiltonian $S^1$-manifold. For spaces with empty isotropy data, one can take the restriction of an appropriate ruled manifold with a Hamiltonian circle action (see~\cite{periodic_hamiltonians}, and in particular Definition 6.13 and Figure 30). Similarly, spaces with non-empty isotropy data can always be constructed as restrictions of a ruled manifold with a finite series of equivariant blowups (see Section 7 in~\cite{periodic_hamiltonians} for a description of which equivariant blowups can be performed, and how they change the decorated graph).
\end{remark}

For gluing pieces constructed by Lemma~\ref{local_existence_lemma}, we will use the local uniqueness theorem for complexity one spaces, proved in~\cite{centered_hamiltonians}. The original statement of the theorem is:
\begin{lemma}[Theorem 1 in~\cite{centered_hamiltonians}] \label{local_uniqueness_lemma}
    Let $(M, \omega, \Phi, U)$ and $(M', \omega', \Phi', U)$ be complexity one spaces. Assume that their Duistermaat-Heckman measures are the same, and that their genus
    and isotropy data over a point $y \in U$ are the same. Then there exists a neighborhood of the point $y$ over which the spaces are isomorphic.
\end{lemma}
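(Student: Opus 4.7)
The plan is to build an equivariant diffeomorphism between neighborhoods of $\Phi^{-1}(y)$ and $\Phi'^{-1}(y)$ that intertwines the momentum maps (a $\Phi$-$T$-diffeomorphism), and then use Moser's trick to upgrade it to an isomorphism of Hamiltonian $T$-manifolds. For the topological/equivariant model, I would combine two pieces of rigidity. Near each exceptional orbit, the local normal form theorem (cf.\ Theorem \ref{simple_local_normal_form} in the four-dimensional case) gives a model that is uniquely determined by the isotropy representation; since the isotropy data at $y$ coincide for $M$ and $M'$, the neighborhoods of the exceptional orbits can be matched pairwise by equivariant diffeomorphisms that respect $\Phi$. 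Away from the exceptional orbits the action is locally free and the reduced space is a smooth two-dimensional oriented orbifold whose topological type is pinned down by the genus and the orbifold points; thus the two reduced orbifolds at $y$ are orientation-preservingly diffeomorphic, and one can lift this to an equivariant identification of the principal-$T$-bundles $\Phi^{-1}(y') \to \Phi^{-1}(y')/T$ provided the Chern classes agree. The Chern class is controlled by the derivative of the Duistermaat-Heckman function (Theorem \ref{duistermaat_heckman_thm}), which agrees by hypothesis. Patching the chosen equivariant identifications on overlaps via a partition of unity yields a $\Phi$-$T$-diffeomorphism $F$ between neighborhoods of $\Phi^{-1}(y)$ and $\Phi'^{-1}(y)$.

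Next, I would run Moser's trick in the spirit of Lemma \ref{moser_with_same_cohomology}. The two-form $F^*\omega' - \omega$ is basic by the same computation as in Lemma \ref{omega_difference_is_basic}, so it descends to a closed two-form $\Omega$ on the quotient of a neighborhood of $\Phi^{-1}(y)$. Because the reduced spaces are two-dimensional, the class of $\Omega$ restricted to $\Phi^{-1}(y')/T$ is detected by integration, and that integral is (up to the standard factor) the difference of the Duistermaat-Heckman densities at $y'$, which vanishes by assumption. Shrinking the neighborhood so that it deformation-retracts onto a single reduced space gives $[F^*\omega'-\omega]_{\basic}=0$, so $F^*\omega'-\omega = d\beta$ for a basic one-form $\beta$. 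The interpolation $\omega_t := (1-t)\omega + tF^*\omega'$ is symplectic on a possibly smaller neighborhood, generates the same $T$-action with the same momentum map, and the Moser vector field $Z_t$ defined by $\iota_{Z_t}\omega_t = -\beta$ is $T$-invariant with $d\Phi(Z_t) = 0$; its time-one flow composes with $F$ to give the desired isomorphism.

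The principal obstacle is the first step, namely assembling the rigid local normal form models around the exceptional orbits with the principal-bundle model on their complement into a single coherent $\Phi$-$T$-diffeomorphism. The delicate point is to ensure that the gluing on the annular overlaps between each exceptional-orbit chart and the free-orbit region is itself equivariant, orientation-preserving, and respects $\Phi$; this is where the compatibility between the isotropy data (controlling the neighborhoods of the exceptional orbits) and the Chern class data (coming from the Duistermaat-Heckman hypothesis via Theorem \ref{duistermaat_heckman_thm}) must be reconciled. Once such an $F$ is in hand, the Moser step is essentially a localized version of Lemma \ref{moser_with_same_cohomology} already used in this paper, and requires no further input.
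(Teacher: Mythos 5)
Your high-level plan --- build a $\Phi$-$T$-diffeomorphism between neighborhoods of the level sets and then run Moser's trick --- is the same plan as the cited proof of Karshon and Tolman (see also the sketch the paper gives inside the proof of Lemma~\ref{local_uniqueness_lemma_with_lifting}), and the Moser step you describe is carried out correctly and is essentially Lemma~\ref{moser_with_same_cohomology} localized to a neighborhood of a single level set. The gap is in the construction of the $\Phi$-$T$-diffeomorphism, which is precisely the part you flag as the ``principal obstacle'': the mechanism you offer --- ``patching the chosen equivariant identifications on overlaps via a partition of unity'' --- does not work, because diffeomorphisms do not form a convex (or even additively structured) family and cannot be averaged. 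A partition of unity can patch tensors, metrics, connections, or vector fields, but not the local equivariant identifications themselves, and nothing in your proposal substitutes a valid replacement for this step.

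Resolving this is exactly where the bulk of Karshon and Tolman's work lies. Their argument factors through the orbit space: one fixes a \emph{rigid} map between the reduced spaces --- an orientation-preserving diffeomorphism that is a rotation with respect to each grommet's defining monomial, i.e.\ one compatible with the fixed local normal form charts around the exceptional orbits --- and then invokes a genuine extension theorem (their Proposition~11.1) to promote this rigid map to a $\Phi$-diffeomorphism of the full quotient spaces. Only afterwards is the $\Phi$-diffeomorphism \emph{lifted} to a $\Phi$-$T$-diffeomorphism, and this lifting is a separate step with its own obstruction, a class in $H^2$ of the quotient that, using the Duistermaat--Heckman hypothesis and Theorem~\ref{duistermaat_heckman_thm}, is shown to vanish (this is where Propositions~3.3 and 4.2 and Corollary~9.9 of their paper enter). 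Your draft conflates these two steps --- you mention the Chern class in the middle of trying to assemble the equivariant map directly --- and omits the rigid-map/grommet bookkeeping that makes the local models mutually compatible on the annular overlaps. Without something playing the role of the rigid-map extension theorem, the construction does not go through; with it, your Moser endgame finishes the proof as you describe.
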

Recall that an isomorphism of complexity one spaces is an equivariant symplectomorphism that respects the momentum maps.

The following stronger result for tall spaces follows easily from their proof:
\begin{lemma}[Strengthened version of Theorem 1 in~\cite{centered_hamiltonians}] \label{local_uniqueness_lemma_with_lifting}
    Let $(M, \omega, \Phi, U)$ and $(M', \omega', \Phi', U)$ be grommeted tall complexity one spaces. Assume that their Duistermaat-Heckman measures are the same, and that their genus and isotropy data over a point $y \in U$ are the same. Let $\Sigma$ and $\Sigma'$ be the fibers at $y$ of the associated painted surface bundles. Let $f: \Sigma \rightarrow \Sigma'$ be an orientation-preserving diffeomorphism that sends each labeled point to a labeled point with the same label.
    
    Then there exists a neighborhood $V$ of the point $y$, and an isomorphism $\varphi:M|_V \rightarrow M'|_V$, such that the induced map between $\Sigma$ and $\Sigma'$ is isotopic to $f$, relative to the labeled points.
\end{lemma}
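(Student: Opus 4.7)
The plan is to deduce this strengthened statement from the original Lemma \ref{local_uniqueness_lemma} by showing that every prescribed isotopy class of fiber diffeomorphism (preserving labels) can be realized as the induced map of some isomorphism. First apply Lemma \ref{local_uniqueness_lemma} directly to obtain an isomorphism $\varphi_0 : M|_{V_0} \to M'|_{V_0}$ over some neighborhood $V_0$ of $y$, and let $\bar\varphi_0 : \Sigma \to \Sigma'$ be the diffeomorphism it induces on the fibers at $y$ of the associated painted surface bundles. Set $h := f \circ \bar\varphi_0^{-1} \in \Diff_+(\Sigma'_{g,C_1,\dots,C_k})$. It is enough to construct a self-isomorphism $\psi$ of $(M'|_V, \omega', \Phi')$ over some smaller neighborhood $V \subset V_0$ whose induced fiber map at $y$ is smoothly isotopic to $h$ relative to the labeled points, for then $\psi \circ \varphi_0$ will be the desired isomorphism inducing a map isotopic to $f$.

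To build $\psi$, I would first lift $h$ to a $\Phi'$-$T$-diffeomorphism $\psi_0$ of $M'|_V$. After shrinking $V$, the associated painted surface bundle over $V$ is trivial (Lemma \ref{trivialize_painted_surface_bundle}), so extend $h$ to a label-preserving diffeomorphism $\bar\psi_0$ of the painted bundle by $\bar\psi_0(x,t) = (h(x), t)$. The principal $S^1$-orbibundle $M'|_V \to M'|_V / S^1$ is classified, in the sense of Haefliger--Salem, by a class in $H^2(\Sigma' \times V, \mathbb Z)$; because $h$ is orientation-preserving, sends each labeled point to one with the same isotropy representation, and acts as the identity on $H^2(\Sigma', \mathbb Z)$, the pullback orbibundle is canonically isomorphic to the original one. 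Using this isomorphism, together with the local normal form at each non-free orbit (where label-preservation gives a standard equivariant symplectic identification) and parallel transport along a connection on the regular locus, one obtains the desired equivariant lift $\psi_0$ that covers $\bar\psi_0$ and satisfies $\psi_0^*\Phi' = \Phi'$.

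Finally, to turn $\psi_0$ into a symplectomorphism, apply Moser's trick via Lemma \ref{moser_with_same_cohomology}. By Lemma \ref{omega_difference_is_basic} the two-form $\psi_0^*\omega' - \omega'$ is basic, and after shrinking $V$ the basic cohomology $H^2_{\mathrm{basic}}(M'|_V)$ is identified with $H^2(\Sigma', \mathbb R) \cong \mathbb R$ via integration over the reduced fiber. That integral equals the symplectic volume of $(\Sigma', h^*\omega'_{\mathrm{red}})$ minus that of $(\Sigma', \omega'_{\mathrm{red}})$, which vanishes because $h$ is orientation-preserving and both reduced spaces have the same Duistermaat--Heckman value at $y$. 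Hence Lemma \ref{moser_with_same_cohomology} produces an equivariant symplectomorphism $\psi$ smoothly isotopic to $\psi_0$ with $\psi^*\Phi' = \Phi'$; the induced fiber map of $\psi$ is smoothly isotopic to $h$ relative to labeled points, and $\psi \circ \varphi_0$ is the required isomorphism.

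The main obstacle will be the construction of the equivariant lift $\psi_0$ in the second step: patching the $S^1$-bundle lift obtained on the regular locus with the canonical model-level lifts at each non-free orbit requires an explicit local-to-global argument, and one must carefully verify that label-preservation together with the vanishing of the Seifert Euler derivative (Lemma \ref{duistermaat_heckman_constant}) really does trivialize the pullback bundle in the orbifold Chern sense. The Moser step and the reduction from $f$ to $h$ are then comparatively routine, since the Duistermaat--Heckman values at $y$ forces the obstructing cohomology class to vanish automatically.
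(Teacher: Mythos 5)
Your approach is genuinely different from the paper's, and it has a real gap precisely at the point you flag as "the main obstacle."

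The paper does not call Lemma \ref{local_uniqueness_lemma} as a black box and then correct by a self-isomorphism. Instead, it observes that the proof of Theorem~1 in \cite{centered_hamiltonians} already proceeds by choosing an arbitrary \emph{rigid} map $g:\Sigma\to\Sigma'$ and then extending $g$ to a $\Phi$-diffeomorphism, lifting it to a $\Phi$-$T$-diffeomorphism, and isotoping to a symplectomorphism. The only missing ingredient is to turn the prescribed $f$ into a rigid map: the paper applies Lemma \ref{milnor_linearization} at each labeled point to obtain a rigid $f'$ smoothly isotopic to $f$ rel.\ the labeled points, and then re-runs the Karshon--Tolman argument with $f'$ as the starting rigid map. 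That is the whole content of the proof; the strengthening is free once one notices the original proof's starting freedom.

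The gap in your proposal is the construction of the lift $\psi_0$ in step two. You define $\bar\psi_0(x,t)=(h(x),t)$ as a diffeomorphism of the \emph{painted surface bundle}, whose smooth structure near the painted points is given by the surface bundle grommets and deliberately differs from the natural smooth structure of the orbifold $M'|_V/S^1$ (this discrepancy is exactly what Sections \ref{sheaves_section}--\ref{painted_surface_bundle_section} are built to navigate). An sb-diffeomorphism is not automatically a $\Phi$-diffeomorphism in the sense of Definition \ref{phi_diffeo} — near an exceptional orbit it need not locally lift to a $\Phi$-$T$-diffeomorphism — so the Haefliger--Salem lifting criterion (Lemma \ref{lifting_phi_diffeo_condition}) cannot be applied to $\bar\psi_0$ as written, and the sketch of ``local normal form plus parallel transport on the regular locus'' does not resolve the patching between the two regimes. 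The missing idea is precisely Milnor linearization (Lemma \ref{milnor_linearization}): after isotoping $h$ to a rigid map that is a complex linear rotation on each grommet, the lift near each exceptional orbit is built into the machinery of \cite{centered_hamiltonians}, and the local-to-global patching problem you identify dissolves. Your Moser step and the reduction from $f$ to $h$ are fine, but without the rigidification step the core of the argument is incomplete; and once you introduce rigidification, you will find you are essentially reproving the original lemma, at which point the paper's shorter route — re-running that proof with the prescribed rigid map — is cleaner.
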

\begin{proof}
    First, we sketch the proof of Lemma~\ref{local_uniqueness_lemma} from~\cite{centered_hamiltonians}. Choose some rigid map $g: \Sigma \rightarrow \Sigma'$, that is, an orientation-preserving diffeomorphism that induces a bijection between the labeled points of $\Sigma$ and the labeled points of $\Sigma'$, and sends each labeled point to a labeled point with the same label, and such that the induced map on each grommet is a rotation of $\mathbb{C}$ (see Definition 10.2 and Lemma 10.3 in~\cite{centered_hamiltonians}). By Proposition 11.1 in~\cite{centered_hamiltonians}, the rigid map $g$ extends to a $\Phi$-diffeomorphism $G:M/S^1 \rightarrow M'/S^1$. By Corollary 9.9, Proposition 3.3, and Proposition 4.2 in~\cite{centered_hamiltonians}, the map $G$ lifts to a $\Phi$-$T$-diffeomorphism, which is smoothly isotopic to an equivariant symplectomorphism $F:M \rightarrow M'$, through $\Phi$-$T$-diffeomorphisms.

    Now, let $f: \Sigma \rightarrow \Sigma'$ be an orientation-preserving diffeomorphism that sends each labeled point to a labeled point with the same label. By applying Lemma~\ref{milnor_linearization} to each labeled point, we get a rigid map $f': \Sigma \rightarrow \Sigma'$, maybe after shrinking our grommets, and it is smoothly isotopic to $f$, relative to the labeled points. Thus, we can apply the proof above to $f'$, and get an equivariant symplectomorphism $F:M \rightarrow M'$, that induces a map between $\Sigma$ and $\Sigma'$ which is isotopic to $f'$, and therefore to $f$, as needed.
\end{proof}
We are ready to proof Theorem~\ref{existence_theorem_phi_diffeo}.
\begin{proof}[Proof of Theorem~\ref{existence_theorem_phi_diffeo}]
    Throughout the proof, whenever $f:X \rightarrow \mathbb{R}$ is a function, and $V$ is an open interval in $\mathbb{R}$, we will denote $X|_{f^{-1}(V)}$ by $X|_V$.

    Let $P$, $c_{\DuHe}$, $g$, $C_1,...,C_k$, and $[f]$ be as in the statement of the theorem. We will show that there exists a four-dimensional tight circle-valued Hamiltonian $S^1$-manifold with group of periods $P$, Duistermaat-Heckman constant $c_{\DuHe}$, genus invariant $g$, isotropy data invariant $C_1,...,C_k$, and monodromy invariant equals to the conjugacy class of $[f]$.
    
    By Lemma~\ref{local_existence_lemma}, we have a proper Hamiltonian $S^1$-manifold $M$ over the interval $I := (-\varepsilon, \varepsilon) \subset \mathbb{R}$, with genus $g$, isotropy data $C_1,...,C_k$, no fixed points, and Duistermaat-Heckman function equals to $c_{\DuHe}$ at every level set.

    Let $M'$ be a copy of $M$, whose Hamiltonian is shifted by $\frac{2}{3}\varepsilon$. It is defined over the interval $I' = (-\frac{1}{3}\varepsilon, \frac{5}{3}\varepsilon)$. By Lemma~\ref{local_uniqueness_lemma}, there exists a small neighborhood $V:=(a, b)$ of $\frac{2}{3}\varepsilon$, and an equivariant symplectomorphism $\varphi$ between $M|_V$ and $M'|_V$. By gluing $M|_{(-\varepsilon, b)}$ and $M'|_{(a, \frac{5}{3}\varepsilon)}$ with the map $\varphi$, we get a proper Hamiltonian $S^1$-manifold over the interval $(-\varepsilon, \frac{5}{3}\varepsilon)$, with the same genus, isotropy data, and Duistermaat-Heckman measure as above.

    By repeatedly gluing copies of $M$, as described in the last paragraph, we construct a proper Hamiltonian $S^1$-manifold $N$ over $J := (-\varepsilon, \tau + \varepsilon)$, where $\tau$ is the positive generator of $P$. Let $E \rightarrow J$ be the associated painted surface bundle to $N$. It follows from Lemma~\ref{trivialize_painted_surface_bundle} that $E$ is isomorphic to the painted surface bundle $\Sigma_{g, C_1,...,C_k} \times J$. Let $F:E \rightarrow \Sigma_{g, C_1,...,C_k} \times J$ be an isomorphism.
    
    Choose grommets around every non-free orbit at the level sets of $N$ at $0$ and at $\tau$. Then there exists some $\delta > 0$ such that $N|_{(-\delta, \delta)}$ and $N|_{(\tau -\delta, \tau + \delta)}$ are grommeted.
    
    Let $f \in \Diff_+(\Sigma_{g, C_1,...,C_k})$ be a diffeomorphism in the given class $[f]$.
    By Lemma~\ref{local_uniqueness_lemma_with_lifting}, there exists a small number $0 < c < \delta$, and an equivariant symplectomorphism $\Psi$ between $N|_{(-c, c)}$ and $N|_{(\tau - c, \tau + c)}$, such that the induced map
    \begin{equation*}
        \pi_1 \circ F \circ \overline{\Psi} \circ F^{-1}|_{\Sigma_{g, C_1,...,C_k} \times \{0\}} : \Sigma_{g, C_1,...,C_k} \rightarrow \Sigma_{g, C_1,...,C_k}
    \end{equation*}
    is isotopic to $f$, relative to the labeled points, where $\pi_1: \Sigma_{g, C_1,...,C_k} \times J \rightarrow \Sigma_{g, C_1,...,C_k}$ is the projection map, and $\overline{\Psi}:E|_{(-c, c)} \rightarrow E|_{(\tau - c, \tau + c)}$ is given the induced map by $\Psi$.
    
    By gluing $N|_{(-c, \tau + c)}$ to itself with $\Psi$, we get a four-dimensional tight circle-valued Hamiltonian $S^1$-manifold $N_{\Psi}$, whose group of periods is $P$. It has genus invariant $g$, isotropy data invariant $C_1,...,C_k$, and Duistermaat-Heckman constant $c_{\DuHe}$. Lastly, by the choice of $\Psi$, the monodromy invariant of $N_\Psi$ is the conjugacy class of $[f]$.
\end{proof}


\section{The fibration invariant}\label{fibration_invariant_section}
In this section, we formalize an additional invariant. For two spaces with $\Phi$-diffeomorphic quotients, the invariant determines whether they are $\Phi$-$T$-diffeomorphic. Intuitively it captures the isomorphism type of the fibration $M \rightarrow M/S^1$, and therefore we call it the \textbf{Fibration Invariant}. Our construction of the fibration invariant is based on a characteristic class introduced by Fintushel in~\cite{fintushel, fintushel2} for circle actions on four-manifolds. In the end of the section, we show which values of the fibration invariant can be attained, by constructing a space for every valid value (see Lemma~\ref{fibration_existence_lemma}). Together with the previous invariants, this gives a full classification of our spaces up to $\Phi$-$T$-diffeomorphisms.
\\

We start by a short discussion on Fintushel's characteristic class. In his paper, he allows fixed points, which results in complications that are not relevant to our paper. We will only present his construction in the special case of circle actions with no fixed points.

Let $M$ be a closed and connected smooth manifold, with a locally-free circle action, and let $\pi:M \rightarrow M/S^1$ be the quotient map. Let $L_i$ be the connected components of the set of exceptional orbits. It follows from the slice theorem that each $L_i$ is a loop of exceptional orbits, all with the same isotropy representation. Let $Q_i$ be a small invariant neighborhood of $L_i$, and let $Q_i^*$ be its image $\pi(Q_i)$ in $M/S^1$. Moreover, let $Q^*$ be the union of the images $Q_i^*$, let $N^* := \left (M/S^1\right ) \setminus \overline{Q^*}$ be the complement of its closure in $M/S^1$, and let $N$ be its preimage $\pi^{-1}(N^*)$ in $M$.

By~\cite[(9.1) and (9.2)]{fintushel2}, the principal $S^1$-bundle $\pi: \pi^{-1}(N) \rightarrow N$ is trivial, and can be extended to all of $M/S^1$ by filling each $\partial Q_i^*$ by the trivial bundle $Q_i^* \times S^1$. The isomorphism type of the resulting bundle depends on the gluing map between $N^*$ and each bundle $Q_i^* \times S^1$. However, in~\cite[(9.1)]{fintushel2}, Fintushel shows how to choose these gluing maps such that they give a principal $S^1$-bundle which is unique up to isomorphism.
To be more precise, choose a meridinal loop ${m_i}^*$ and a complementary loop ${l_i}^*$ in $\partial Q_i^* \cong T^2$. Following \cite[Section 2]{fintushel}, we can choose a lift $m_i$ of $m_i^*$ to $\partial Q_i$, satisfying some relations with respect to the Seifert invariant of the exceptional orbits in $L_i$, whose homology class is independent of the choice of lift. Furthermore, choose an arbitrary lift $l_i$ of $l_i^*$ to $\partial Q_i$.
Then Fintushel shows that if one chooses the gluing maps $\phi_i: \partial Q_i^* \times S^1 \rightarrow \partial Q_i$ such that $\phi_i({m_i}^* \times 0) \sim m_i$ and $\phi_i({l_i}^* \times 0) \sim l_i$, then the resulted principal $S^1$-bundle is independent of the chosen loops, lifts, and gluing maps.
The Chern class of the resulting principal $S^1$-bundle is a class $c$ in the cohomology group $H^2(M/S^1, \mathbb{Z})$. We call $c$ the \textbf{Fintushel class} of the $S^1$ action on $M$. Of course, when the action is free, $c$ is just the usual Chern class of the principal $S^1$-bundle $M \rightarrow M/S^1$.

The following lemma follows from the proof of Theorem 9.6 in~\cite{fintushel2}, or alternatively, in the language of Čech cohomology, from Proposition 4.2 in~\cite{haefliger_salem}. See also Lemma~\ref{lifting_phi_diffeo_condition}, for a similar statement in the formulation of~\cite{haefliger_salem}.
\begin{lemma}\label{lifts_by_fintushel}
    Let $(M, \omega, \mu)$ and $(M', \omega', \mu')$ be four-dimensional tight circle-valued Hamiltonian $S^1$-manifolds, with discrete group of periods $P$. Moreover, let $F:M/S^1 \rightarrow M'/S^1$ be a $\Phi$-diffeomorphism. Then $F$ can be lifted to a $\Phi$-$T$-diffeomorphism if and only if the Fintushel classes satisfy $F^*c' = c$.
\end{lemma}
We will now formalize the fibration invariant, using the Fintushel invariant, and Lemma~\ref{lifts_by_fintushel}.
Let $\mathcal{J}: = (P, g, C_1,\dots,C_k, [f])$ be a choice of invariants, such that the sum $\sum_{j=1}^k \frac{a_j}{n_j}$ is an integer, where $(n_j, a_j)$ are the coprime residue classes corresponding to $C_1,\dots,C_k$. Let $(M, \omega, \Phi)$ be a tight circle-valued Hamiltonian $S^1$-manifold with invariants $\mathcal{J}$, which exists by Theorem~\ref{existence_theorem_phi_diffeo}. The Fintushel class of this space is an element $c$ of $H^2(M/S^1, \mathbb{Z})$. Let $(M', \omega', \Phi')$ be some other space with invariants $\mathcal{J}$, and Fintushel invariant $c' \in H^2(M'/S^1, \mathbb{Z})$. Additionally, let $\overline{\Phi}:M/S^1 \rightarrow \mathbb{R}/P$ and $\overline{\Phi'}:M'/S^1 \rightarrow \mathbb{R}/P$ be the orbital circle-valued momentum maps. We say that the Fintushel invariants $c$ and $c'$ of the spaces $(M, \omega, \Phi)$ and $(M', \omega', \Phi')$ are \textbf{equivalent} if there exists an orientation-preserving homeomorphism $F:M/S^1 \rightarrow M'/S^1$ between their quotients, which respects the isotropy-representations $C_1,\dots,C_k$ of the orbits, and satisfy both $F^*c' = c$ and $\overline{\Phi'} \circ F = \overline{\Phi}$. Now, we define the fibration invariant.
\begin{definition}\label{fibration_invariant_definition}
    Let $(M, \omega, \Phi)$ be a four-dimensional tight circle-valued Hamiltonian $S^1$-manifold, and let $c$ be its Fintushel invariant. The \textbf{Fibration Invariant} of $(M, \omega, \Phi)$ is represented by $c$, and two spaces are said to have the same fibration invariant if their Fintushel invariants are equivalent.
\end{definition}
The following lemma consists of the uniqueness part of the fibration invariant. In other words, it shows that the fibration invariant determines whether two circle valued Hamiltonian $S^1$-manifolds with $\Phi$-diffeomorphic quotients are $\Phi$-$T$-diffeomorphic.
\begin{lemma}\label{fibration_invariant_uniqueness}
    Let $(M, \omega, \Phi)$ and $(M', \omega', \Phi')$ be four-dimensional tight circle-valued Hamiltonian $S^1$-manifolds, with invariants $\mathcal{J}: = (P, g, C_1,\dots,C_k, [f])$. Then they are $\Phi$-$T$-diffeomorphic if and only if they have the same fibration invariant. 
\end{lemma}
Note that the equivalences in the definition of the fibration invariant were defined using homeomorphisms that are not necessarily $\Phi$-diffeomorphisms, hence we cannot immediately deduce Lemma~\ref{fibration_invariant_uniqueness} from Lemma~\ref{lifts_by_fintushel}. The choice to define the equivalences using the more general notion of homeomorphisms will be useful later for reducing the definition of the fibration invariant to a topological one. To prove Lemma~\ref{fibration_invariant_uniqueness}, we need the following technical lemma, which helps us pass from sb-diffeomorphisms to $\Phi$-diffeomorphisms.
\begin{lemma}\label{isotopy_lemma}
    Let $(M, \omega, \Phi)$ and $(M', \omega', \Phi')$ be four-dimensional tight circle-valued Hamiltonian $S^1$-manifolds, and let $E \rightarrow \mathbb{R}/P$ and $E' \rightarrow \mathbb{R}/P$ be associated painted surface bundles, and let $\Psi:M/S^1 \rightarrow E$ and $\Psi':M'/S^1 \rightarrow E'$ be the homeomorphisms given by Proposition~\ref{associated_painted_surface_bundle_proposition}. Furthermore, let $f:E \rightarrow E'$ be an isomorphism of painted surface bundles. Then there is an isotopy $F_t:E \rightarrow E'$ such that
    \begin{enumerate}
        \item $F_0 = f$.
        \item $\Psi'^{-1} \circ F_1 \circ \Psi$ is a $\Phi$-diffeomorphism.
        \item For every $t \in [0,1]$, the map $F_t:E \rightarrow E'$ is a homeomorphism preserving the projection to $\mathbb{R}/P$, and respecting the paint.
    \end{enumerate}
\end{lemma}
We first prove Lemma~\ref{fibration_invariant_uniqueness} using the technical Lemma~\ref{isotopy_lemma}.
\begin{proof}[Proof of Lemma~\ref{fibration_invariant_uniqueness}]
    For the first direction, let $G:M \rightarrow M'$ be a $\Phi$-$T$-diffeomorphism. Then by Lemma~\ref{lifts_by_fintushel}, the induced $\Phi$-diffeomorphism $\overline{G}:M/S^1 \rightarrow M'/S^1$ satisfies $F^*c' = c$, so the Fintushel invariants of the spaces are equivalent, and thus by definition their fibration invariants agree.

    For the other direction, assume that the spaces $(M, \omega, \Phi)$ and $(M', \omega', \Phi')$ have the same fibration invariant. Then by definition, there exists an orientation-preserving homeomorphism $F:M/S^1 \rightarrow M'/S^1$ between their quotients, which respects the isotropy-representations of the orbits, and satisfies both $F^*c' = c$ and $\overline{\Phi'} \circ F = \overline{\Phi}$, where $c$ and $c'$ are the Fintushel invariants, and $\overline{\Phi}$ and $\overline{\Phi'}$ are the orbital momentum maps.
    
    Let $\pi:E \rightarrow \mathbb{R}/P$ and $\pi':E' \rightarrow \mathbb{R}/P$ be associated painted surface bundles, and let $\Psi:M/S^1 \rightarrow E$ and $\Psi':M'/S^1 \rightarrow E'$ be the homeomorphisms given by Proposition~\ref{associated_painted_surface_bundle_proposition}. Then the map $H:E \rightarrow E'$, which completes the diagram
    \begin{equation*}
        \begin{tikzcd}
            M/S^1 \arrow[r, "F"] \arrow[d, "\Psi"'] & M'/S^1 \arrow[d, "\Psi'"] \\
            E \arrow[r, "H"] & E' ,
        \end{tikzcd}
    \end{equation*}
    is an orientation-preserving homeomorphism, which respects the paint, and satisfies both $H^*({(\Psi'^{-1})}^*c') = {(\Psi^{-1})}^*c$ and $\pi' \circ H = \pi$.

    The map $H$ can be identified with a family of paint-respecting homeomorphisms of $\Sigma_g$, parametrized by $\mathbb{R}/P$, by looking at its restrictions to each fiber. This defines an element in the fundamental group $\pi_1(\Homeo(\Sigma_{g,C_1,\dots,C_k}), \id)$ of the space of color-preserving orientation-preserving homeomorphisms. It is unique up to conjugacy. By Theorem~\ref{mcg_forgetful_isomorphism}, the forgetful map $\Diff(\Sigma_{g,C_1,\dots,C_k}) \rightarrow \Homeo(\Sigma_{g,C_1,\dots,C_k})$ induces an isomorphism of mapping class groups. Hence, if we show that the forgetful map $\Diff(\Sigma_{g,C_1,\dots,C_k}) \rightarrow \Homeo(\Sigma_{g,C_1,\dots,C_k})$ induces an isomorphism of fundamental groups when restricted to the identity component, we will deduce that it induces an isomorphism of fundamental groups. For surfaces with no marked points, this follows from~\cite{hamstrom, earle_eels, luke_mason}, where they prove that the forgetful map $\Diff(\Sigma_g) \rightarrow \Homeo(\Sigma_g)$ restricted on the identity component is a homotopy equivalence, and thus induces an isomorphism of fundamental groups. See the remark at the end of~\cite{luke_mason}. We explain here how to use their results to prove by induction that indeed the forgetful map $\Diff(\Sigma_{g,C_1,\dots,C_k}) \rightarrow \Homeo(\Sigma_{g,C_1,\dots,C_k})$ induces an isomorphism between the fundamental groups. We ignore the colors, and only consider the case where all of the special points are fixed, for simplicity. At each step of the induction, we add one fixed point. The above discussion gives the base case, where there are no fixed points.  Let $\Sigma$ be a surface with $x_1,\dots,x_n$ as fixed points, and let $\overline{\Sigma} := \Sigma\setminus\{x_1,\dots,x_n\}$ be the punctured surface. Assume by induction that the forgetful map $\Diff_{x_1,\dots,x_n}(\Sigma_g) \rightarrow \Homeo_{x_1,\dots,x_n}(\Sigma_g)$ induces an isomorphism of fundamental groups, where $\Diff_{x_1,\dots,x_n}(\Sigma_g)$ and $\Homeo_{x_1,\dots,x_n}(\Sigma_g)$ are the groups of diffeomorphisms and homeomorphisms preserving the points $x_1,\dots,x_n$. Let $x \in \overline{\Sigma}$ be an additional point. We have the following Serre fibrations, with the forgetful maps as vertical arrows
    \[
    \begin{tikzcd}
        \Diff_{x, x_1,\dots,x_n}(\Sigma) \arrow[r, hook] \arrow[d]
        & \Diff_{x_1,\dots,x_n}(\Sigma) \arrow[r] \arrow[d]
        & \overline{\Sigma} \arrow[d] \\
        \Homeo_{x, x_1,\dots,x_n}(\Sigma) \arrow[r, hook]
        & \Homeo_{x_1,\dots,x_n}(\Sigma) \arrow[r]
        & \overline{\Sigma},
    \end{tikzcd}
    \]
    where the maps $\Diff_{x_1,\dots,x_n}(\Sigma) \rightarrow \overline{\Sigma}$ and $\Homeo_{x_1,\dots,x_n}(\Sigma) \rightarrow \overline{\Sigma}$ are the evaluation maps at $x$. When the punctured surface $\overline{\Sigma}$ is aspherical, the long exact sequences of homotopy groups gives the following commutative diagram 
    \[
    \begin{tikzcd}
        1 \arrow[r] 
        & \pi_1(\Diff_{x, x_1,\dots,x_n}(\Sigma), \id) \arrow[r] \arrow[d]
        & \pi_1(\Diff_{x_1,\dots,x_n}(\Sigma), \id) \arrow[r] \arrow[d]
        & \pi_1(\overline{\Sigma}, x) \arrow[r] \arrow[d]
        & 1\\
        1 \arrow[r] 
        & \pi_1(\Homeo_{x, x_1,\dots,x_n}(\Sigma), \id) \arrow[r]
        & \pi_1(\Homeo_{x_1,\dots,x_n}(\Sigma), \id) \arrow[r]
        & \pi_1(\overline{\Sigma}, x) \arrow[r]
        & 1,
    \end{tikzcd}
    \]
    and since by induction the middle vertical arrow is an isomorphism, it follows that the vertical arrow $\pi_1(\Diff_{x, x_1,\dots,x_n}(\Sigma), \id) \rightarrow \pi_1(\Homeo_{x, x_1,\dots,x_n}(\Sigma), \id)$ is also an isomorphism as needed. In the case $\overline{\Sigma}$ is not aspherical, i.e., if $\overline{\Sigma}$ is a sphere with no punctures, because $\Diff(S^2) \rightarrow \Homeo(S^2)$ is a homotopy equivalence, then the maps $\pi_2(\Homeo(S^2), \id) \rightarrow \pi_2(S^2, x)$ and $\pi_2(\Diff(S^2), \id) \rightarrow \pi_2(S^2, x)$ have the same images, and it follows again by the long exact sequence of homotopy groups that the map $\Diff_x(S^2) \rightarrow \Homeo_x(S^2)$ induces an isomorphism of fundamental groups. This finishes the induction argument.

    Now, since the forgetful map $\Diff(\Sigma_{g,C_1,\dots,C_k}) \rightarrow \Homeo(\Sigma_{g,C_1,\dots,C_k})$ induces an isomorphism of fundamental groups, we deduce that there exists a family of paint-respecting diffeomorphisms of $\Sigma_g$, parametrized by $\mathbb{R}/P$, whose corresponding class in the fundamental group agrees with the class corresponding to the family of $H$. If chosen smoothly with respect to the parameter, this family of diffeomorphisms defines a diffeomorphism $\tilde H:E \rightarrow E'$, which respects the paint, is homotopic to $H$, and such that $\pi' \circ \tilde H = \pi$. Hence, $\tilde H$ is an isomorphism of painted surface bundles, so by Lemma~\ref{isotopy_lemma} it can be isotoped through paint-preserving homeomorphisms, which respects the projections $\pi$ and $\pi'$, to a homeomorphism $\tilde H':E \rightarrow E'$ such that the map $\overline{G} :M/S^1 \rightarrow M'/S^1$, which completes the diagram
    \begin{equation*}
        \begin{tikzcd}
            M/S^1 \arrow[r, "\overline{G}"] \arrow[d, "\Psi"'] & M'/S^1 \arrow[d, "\Psi'"] \\
            E \arrow[r, "\tilde H'"] & E' ,
        \end{tikzcd}
    \end{equation*}
    is a $\Phi$-diffeomorphism. Since $\tilde H'$ is isotopic to $\tilde H$, which is homotopic to $H$, it follows that $\overline{G}$ satisfies $\overline{G}^*c' = c$. Thus, by Lemma~\ref{lifts_by_fintushel}, $\overline{G}$ can be lifted to a $\Phi$-$T$-diffeomorphism $G:M \rightarrow M'$, and the lemma follows.
\end{proof}
We now prove Lemma~\ref{isotopy_lemma}, which follows directly from technical results in~\cite{tall_uniqueness}.
\begin{proof}[Proof of Lemma~\ref{isotopy_lemma}]
    We choose a finite cover $\{U_j\}$ of $\mathbb{R}/P$ with grommets in each $\Phi^{-1}(U_j)$. Then, it's enough to show that for every open subsets $\overline{V_j} \subset U_j$ and $W_j \subset \overline{V_j}$, we can isotope $f$ to be a $\Phi$-diffeomorphism in $\Phi^{-1}(W_j)$, without changing it outside of $\Phi^{-1}(V_j)$. By~\cite[Proposition 15.10]{tall_uniqueness}, there exists an isotopy $H_t:\Phi^{-1}(U_j) \rightarrow \Phi^{-1}(U_j)$ such that
    \begin{itemize}
        \item $H_0 = f$.
        \item $H_1$ is locally sb-rigid in $\Phi^{-1}(U_j)$ (see~\cite[Definition 10.1]{tall_uniqueness}).
        \item If $f$ is locally sb-rigid in a neighborhood of $\Phi^{-1}(y)$, for $y \in V_j$, then so is $H_t$ for every $t \in [0,1]$.
    \end{itemize}
    Now, choose a smooth bump function $\rho:U_j \rightarrow \mathbb{R}$ that is $1$ on $W_j$ and $0$ outside of $V_j$. Then the isotopy given by
    \begin{equation*}
        \tilde H_t(x) = H_{\rho(\Phi(x))t}(x)
    \end{equation*}
    is supported in $\Phi^{-1}(V_j)$, and satisfies the following
    \begin{itemize}
        \item $\tilde{H}_0 = f$.
        \item $\tilde{H}_1$ is locally sb-rigid in $\Phi^{-1}(W_j)$.
        \item If $f$ is locally sb-rigid in a neighborhood of $\Phi^{-1}(y)$, for $y \in V_j$, then so is $\tilde{H}_t$ for every $t \in [0,1]$.
    \end{itemize}
    Similarly, by~\cite[Proposition 14.9]{tall_uniqueness}, there exists an isotopy $G_t:\Phi^{-1}(U_j) \rightarrow \Phi^{-1}(U_j)$ such that
    \begin{itemize}
        \item $G_0 = f$.
        \item $G_1$ is locally rigid in $\Phi^{-1}(U_j)$ (see~\cite[Definition 8.12]{tall_uniqueness}).
        \item If $f$ is locally rigid in a neighborhood of $\Phi^{-1}(y)$, for $y \in V_j$, then so is $G_t$ for every $t \in [0,1]$.
    \end{itemize}
    Thus, using the same trick with the bump function, and composing $\tilde{H}_t$ with $\tilde{G}_t$, we get an isotopy $F_t:\Phi^{-1}(U_j) \rightarrow \Phi^{-1}(U_j)$ that satisfies
    \begin{itemize}
        \item $F_0 = f$.
        \item $F_1$ is locally rigid in $\Phi^{-1}(W_j)$.
        \item If $f$ is locally rigid in a neighborhood of $\Phi^{-1}(y)$, for $y \in V_j$, then so is $F_t$ for every $t \in [0,1]$.
    \end{itemize}
    Since every locally rigid map is also a $\Phi$-diffeomorphism, this finishes the proof by composing such isotopies for each $U_j$.
\end{proof}

Next, we wish to determine which values are possible for the fibration invariant. Let $(M, \omega, \Phi)$ be a space with invariants $\mathcal{J}: = (P, g, C_1,\dots,C_k, [f])$. By Proposition~\ref{topological_model_proposition}, the quotient space $M/S^1$ is homeomorphic to the mapping torus
\begin{equation}
    M_f := \bigslant{\Sigma_{g, C_1,\dots,C_k} \times [0, \tau]}{(x,0) \sim (f(x), \tau)},
\end{equation}
where $[f]$ is the monodromy invariant and $\tau$ is the positive generator of the group of periods $P$. We can use such a homeomorphism $\Psi:M_f \rightarrow M/S^1$ to pullback the Fintushel class $c$ to $H^2(M_f, \mathbb{Z})$. The resulted element in $H^2(M_f, \mathbb{Z})$ depends on $\Psi$, but it is unique up to equivalence, where as before, $A$ and $B$ in $H^2(M_f, \mathbb{Z})$ are \textbf{equivalent} if there exists an orientation-preserving homeomorphism $F:M_f \rightarrow M_f$, which respects the paint, and satisfy both $F^*B = A$ and $\pi \circ F = \pi$, where $\pi:M_f \rightarrow \mathbb{R}/P$ is the bundle map. Hence, we get the following description for the fibration invariant, as an equivalence class of elements in $H^2(M_f, \mathbb{Z})$.
\begin{lemma}
    For every space with invariants $\mathcal{J}$, its Fintushel class can be identified with a class in $H^2(M_f, \mathbb{Z})$, which is unique up to equivalence. Furthermore, two spaces with invariants $\mathcal{J}$ have the same fibration invariant if and only if their corresponding classes in $H^2(M_f, \mathbb{Z})$ are equivalent.
\end{lemma}
We wish to determine which elements of $H^2(M_f, \mathbb{Z})$ can represent the fibration invariant of a tight circle-valued Hamiltonian $S^1$-manifold with invariants $\mathcal{J}$. Let $(M, \omega, \Phi)$ be a space with invariants $\mathcal{J}$, and let $c$ be its corresponding element in $H^2(M_f, \mathbb{Z})$. By assumption, the sum $\sum_{j=1}^k \frac{a_j}{n_j}$ is an integer, and by Lemma~\ref{duistermaat_heckman_constant}, the Seifert Euler number of each level set is zero. The Seifert Euler number is given by $e = -b - \sum_{j=1}^k \frac{a_j}{n_j}$, see Equation~\eqref{seifert_euler_equation}, hence we have
\begin{equation}\label{sum_euler_equation}
    b = -\sum_{j=1}^k \frac{a_j}{n_j},
\end{equation}
where $b$ is the integer of the Seifert invariants $\{g;(1,b),(n_1, a_1),\dots,(n_k, a_k)\}$ of the level set.

Let $f_1: H^1(\Sigma_g, \mathbb{Z}) \rightarrow H^1(\Sigma_g, \mathbb{Z})$ be the map induced on cohomology by $f$. By the Mayer--Vietoris sequence, we have the following short exact sequence (see also Equation~\eqref{long_exact_sequence_equation})
\begin{equation}\label{short_exact_sequence_fibration_invariant}
    0 \rightarrow H^1(\Sigma_g, \mathbb{Z})/\image(f_1 - \id) \overset{p_1}{\rightarrow} H^2(M_f, \mathbb{Z}) \overset{p_2}{\rightarrow} H^2(\Sigma_g, \mathbb{Z}) \rightarrow 0,
\end{equation}
where $p_2$ is given by restriction to a fiber. Using Equation~\eqref{sum_euler_equation}, it follows from the construction of the Fintushel class that the restriction $p_2(c)$ of the class $c$ to $H^2(\Sigma_g, \mathbb{Z})$ must satisfy
\begin{equation}
    p_2(c) = b [\Sigma_g],
\end{equation}
where $[\Sigma_g]$ is the fundamental class of $\Sigma_g$ in $H^2(\Sigma_g, \mathbb{Z})$. Hence, if $c$ represents the fibtraion invariant of a tight circle-valued Hamiltonian $S^1$-manifold with invariants $\mathcal{J}$, then $c$ is in the affine subspace
\begin{equation}\label{eq_definition_affine_subspace}
    \mathcal{H}_{\mathcal{J}} := p_2^{-1}(b [\Sigma_g]) \subset H^2(M_f, \mathbb{Z}),
\end{equation}
which is modeled on $H^1(\Sigma_g, \mathbb{Z})/\image(f_1 - \id)$ by Equation~\eqref{short_exact_sequence_fibration_invariant}. Note that if the action is free, i.e., if the isotropy data invariant is trivial, then $b = 0$, and the affine subspace $\mathcal{H}_{\mathcal{J}}$ can be canonically identified with $H^1(\Sigma_g, \mathbb{Z})/\image(f_1 - \id)$. The following lemma shows that every class in $\mathcal{H}_{\mathcal{J}}$ represents the fibration invariant of some tight circle-valued Hamiltonian $S^1$-manifold.
\begin{lemma}\label{fibration_existence_lemma}
    Let $A$ be a cohomology class in $\mathcal{H}_{\mathcal{J}}$. Then there exists a four-dimensional tight circle-valued Hamiltonian $S^1$-manifold with invariants $\mathcal{J}$, whose fibration invariant is represented by $A$.
\end{lemma}
\begin{proof}
    Let $A$ be an element of the subspace $\mathcal{H}_{\mathcal{J}} \subset H^2(M_f, \mathbb{Z})$. By Theorem~\ref{existence_theorem_phi_diffeo}, there exists a circle-valued Hamiltonian $S^1$-manifold $(M, \omega, \Phi)$ whose invariants are $\mathcal{J}$. Using a homeomorphism $\Psi:M/S^1 \rightarrow M_f$ given by Proposition~\ref{topological_model_proposition}, we can identify $A$ with a class $c$ in $H^2(M/S^1, \mathbb{Z})$. Moreover, let $c'$ in $H^2(M/S^1, \mathbb{Z})$ be the Fintushel invariant of $(M, \omega, \Phi)$, and let $\Delta := c - c'$ be the difference between $c$ and $c'$ in $H^2(M/S^1, \mathbb{Z})$. As in Haefliger-Salem~\cite[Proposition 4.2]{haefliger_salem}, we will construct a fibration over $M/S^1$, which is locally isomorphic to $M \rightarrow M/S^1$, and such that the difference between its Fintushel class and the Fintushel class of $M \rightarrow M/S^1$ is exactly $\Delta$. Then we will show that it admits an invariant symplectic form that generates the action with the same circle-valued Hamiltonian, and deduce that it is indeed a tight circle-valued Hamiltonian $S^1$-manifold with fibration invariant represented by $A$.

    Choose local trivializations over a cover $\mathcal{U} = {\{U_i\}}_{i \in I}$ of $M/S^1$, and transition maps $f_{i,j}$ representing the class $c$.
    Let $W \subset \mathbb{R}/P$ be a small open interval, and let $y \in W$ be its middle point. We may choose the cover $\mathcal{U}$ such that there is a subcover $\mathcal{V} = {\{V_j\}}_{j \in J}$ whose union $\bigcup_{j \in J} V_j$ is exactly $\overline{\Phi}^{-1}(W)$. By possibly shrinking some open sets in $\mathcal{U}\setminus \mathcal{V}$, we can assume that every set in $\mathcal{U}\setminus \mathcal{V}$ is disjoint from $\overline{\Phi}^{-1}(y)$. We denote by $W_+$ and $W_-$ the two parts of $W$, below and above $y$. We may also assume that every set in $\mathcal{U} \setminus \mathcal{V}$ is either disjoint from the subcover $\mathcal{V}$, intersects the subcover only in $W_+$, or intersects the subcover only in $W_+$.

    Take the cover $\mathcal{U}$, and add to it an additional copy of every open set in the subcover $\mathcal{V}$, which we will denote as $V'$s in $\mathcal{V}'$. We get a larger cover, and choose the transition map $f_{i, j}$ for every one of the pairs $U_i$ and $U_j$, $U_i$ and $V'_j$, or $V'_i$ and $V'_j$. These transition maps satisfy the cocycle condition, and represent the class $c$.

    We glue by all the transition maps, except for the following ones:
    \begin{itemize}
        \item $f_{i, j}$ between $U_i \in \mathcal{U}\setminus \mathcal{V}$ and $V_j \in \mathcal{V}$, if they have non-trivial intersection in $W_+$.
        \item $f_{i, j}$ between $U_i \in \mathcal{U}\setminus \mathcal{V}$ and $V'_j \in \mathcal{V}'$, if they have non-trivial intersection in $W_-$.
        \item $f_{i, j}$ between $V_i \in \mathcal{V}$ and $V'_j \in \mathcal{V}'$.
    \end{itemize}
    The map $\overline{\Phi}$ from the glued space to $\mathbb{R}/P$ can be regarded as a map to $(-\frac{\varepsilon}{2}, \tau + \frac{\varepsilon}{2}) \subset \mathbb{R}$, where $\tau$ is the positive generator of the group of periods $P$, and $\varepsilon$ is the length of $W$. Moreover, every level set has a neighborhood which is $\Phi$-$T$-diffeomorphic to a circle-valued Hamiltonian $S^1$-piece. Following the proof of Theorem~\ref{existence_theorem_phi_diffeo}, the glued space admits a symplectic form such that $\Phi$ generates the circle action, and such that the Duistermaat-Heckman measure is constant. By gluing the sets in $\mathcal{V}$ to the sets in $\mathcal{V}'$ with the identity maps, we get a fibration over $M/S^1$ which is locally isomorphic to $M \rightarrow M/S^1$, with Fintushel class $c$. However, the $\id$ map does not respect the symplectic structure, so we want to modify it. Since the $\id$ map is a $\Phi$-$T$-diffeomorphism between $\Phi^{-1}(-\frac{\varepsilon}{2}, \frac{\varepsilon}{2})$ and $\Phi^{-1}(\tau - \frac{\varepsilon}{2}, \tau + \frac{\varepsilon}{2})$, we can use~\cite[Proposition 3.3]{centered_hamiltonians} to isotope it to an equivariant symplectomorphism (while not explicitly stated in the proposition, their proof constructs an isotopy). By gluing with the equivariant symplectomorphism instead of the $\id$ map, we get a tight circle-valued Hamiltonian $S^1$-manifold, and its Fintushel invariant is unaffected by the isotopy and thus it is $c$, and in particular the fibration invariant of this space is represented by $c$ in $H^2(M/S^1, \mathbb{Z})$, and thus by $A$ in $H^2(M_f, \mathbb{Z})$.
\end{proof}
Lemma~\ref{fibration_existence_lemma}, together with Lemma~\ref{fibration_invariant_uniqueness}, finishes the classification of spaces with $\Phi$-diffeomorphic quotients up to $\Phi$-$T$-diffeomorphisms.
\\

Determining whether two spaces have the same fibration invariant, involves determining whether their corresponding classes in  $\mathcal{H}_{\mathcal{J}}$ are equivalent. The characterization of these equivalence classes corresponds to the study of an action of the infinite dimensional group of isomorphisms of the mapping torus $M_f$ on $\mathcal{H}_{\mathcal{J}} \subset H^2(M_f, \mathbb{Z})$. Fortunately, one does not need to study this group directly for explicitly describing the orbits of the action. This is because the orbits of the action only depend on the induced maps on cohomology, which can be described as finite dimensional matrices. However, in most cases, this calculation becomes very complicated, and we did not find a nice combinatorial description for the fibration invariant from this point of view.

Nevertheless, we will prove that for free actions, whose monodromy invariant induces the identity map on first cohomology, the fibration invariant is determined by a single natural number which corresponds to the smallest positive integral of the Chern class $c_1$ of the principal circle bundle $M \rightarrow M/S^1$. To formalize this, if $(M, \omega, \Phi)$ is a space with a free action, then we define $K_M \in \mathbb{N}$ by
\begin{equation}\label{min_chern_eq}
    K_M := \min \left\{\int_A c_1\ |\ A \in H_2(M/S^1, \mathbb{Z}), \int_A c_1 > 0 \right\},
\end{equation}
where $c_1$ is the Chern class of the bundle $M \rightarrow M/S^1$. If $c_1$ is trivial, we define $K_M$ to be $0$.
\begin{lemma}\label{lemma_fibration_invariant_for_free_trivial_monodromy}
    Let $\mathcal{J}: = (P, g, \{\}, [f])$ be a choice of invariants with trivial isotropy data invariant, and such that $f:\Sigma_g \rightarrow \Sigma_g$ induces the identity map on $H^1(\Sigma_g, \mathbb{Z})$. Moreover, let $(M, \omega, \Phi)$ and $(M', \omega', \Phi')$ be four-dimensional tight circle-valued Hamiltonian $S^1$-manifolds, with invariants $\mathcal{J}$. Then their fibration invariants are the same if and only if $K_M = K_{M'}$.
\end{lemma}
\begin{proof}
    Since the isotropy data invariant is trivial, the number $b$ in Equation~\eqref{sum_euler_equation} is zero, and therefore $\mathcal{H}_{\mathcal{J}}$ can be identified with $H^1(\Sigma_g, \mathbb{Z})/\image(f_1 - \id)$, where $f_1:H^1(\Sigma_g, \mathbb{Z}) \rightarrow H^1(\Sigma_g, \mathbb{Z})$ is the map induced by $f$. By assumption, $f_1 = \id$, and thus $\image(f_1 - \id) = 0$, and $\mathcal{H}_{\mathcal{J}}$ can be identified with $H^1(\Sigma_g, \mathbb{Z}) \cong \mathbb{Z}^{2g}$. Hence, to describe the pullback of the Fintushel class $c$ by an orientation-preserving homeomorphism $\Psi$ of $M_f$ which preserve the projection to $\mathbb{R}/P$, it is enough to describe the pullback of the corresponding element in $\mathbb{Z}^{2g}$ by the restriction of $\Psi$ on a fiber $\Sigma_g$. This restriction gives an element of the mapping class group $\mcg(\Sigma_g)$. Elements of the mapping class group can be described as symplectic matrices in $\Sp_{2n}(\mathbb{Z})$, where the symplectic structure on $\mathbb{Z}^{2g}$ is defined with algebraic intersection numbers, and it is well known that the map $\mcg(\Sigma_g) \rightarrow \Sp_{2n}(\mathbb{Z})$ is surjective. For example, see~\cite[Chapter six]{farb_margalit}.
    
    It follows that two elements of $H^1(\Sigma_g, \mathbb{Z}) \cong \mathbb{Z}^{2g}$ are related by an orientation-preserving homeomorphism which preserves the projection to $\mathbb{R}/P$, if and only if, they are in the same orbit of the action of $\Sp_{2n}(\mathbb{Z})$ on $\mathbb{Z}^{2g}$. These orbits are easily described as follows: two elements $u, v$ of $\mathbb{Z}^{2g}$ are in the same orbit of the action of $\Sp_{2n}(\mathbb{Z})$ if and only if their greatest common divisors $\gcd(u_1,\dots,u_{2g})$ and $\gcd(v_1,\dots,v_{2g})$ agree. See for example Lemma 1 in Section 5 of~\cite{benoist_symplectic_group}. In particular, these integers correspond precisely to the minimum positive integral of the Chern class of the fibration, and so the lemma follows. 
\end{proof}
The following natural question follows from the proof of Lemma~\ref{lemma_fibration_invariant_for_free_trivial_monodromy}. Is the number $K_M$ given by Equation~\eqref{min_chern_eq} equivalent to the fibration invariant, even when the monodromy invariant is cohomologically non-trivial? In Appendix~\ref{examples_appendix}, we show that the answer to this question is negative, and so in general the description of the fibration invariant is more complicated then just the ``minimal positive integral of the Chern class''. In particular, we show in Subsection~\ref{kodaira_like_subsection}, that for genus invariant $g = 1$, if the monodromy invariant induces the matrix 
\begin{equation*}
    \begin{bmatrix}
    1 & k \\
    0 & 1
  \end{bmatrix}
\end{equation*}
on the first cohomology of $\Sigma_1 \cong T^2$, and the minimal positive integral of the Chern class is $m$, then there are $\gcd(m, k)$ different possible values for the fibration invariant. Hence, for every choice of $k$ and $m$ such that $\gcd(m, k) > 1$, there exist $\Phi$-diffeomorphic spaces with minimal positive integral of the Chern class equal to $m$, that are not $\Phi$-$T$-diffeomorphic. For more information see Subsection~\ref{kodaira_like_subsection} in Appendix~\ref{examples_appendix}.
\begin{remark}\label{b1_trivial_fibration_invariant_remark}
    When $b_1(M/S^1) = 1$, the fibration invariant is trivial (i.e., it only has one possible value). This follows since $\mathcal{H}_{\mathcal{J}}$ is an affine space modeled on $H^1(\Sigma_g, \mathbb{Z})/\image(f_1 - \id)$, and $H^1(\Sigma_g, \mathbb{Z})/\image(f_1 - \id)$ is trivial whenever $b_1(M/S^1) = 1$ (see the proof of Proposition~\ref{condition_for_betti_one}). See also Remark~\ref{b1_trivial_de_rham_invariant_remark}.
\end{remark}


\section{The de Rham invariant}\label{de_rham_invariant_section}
In this section, we formalize the last invariant in this work, the de Rham invariant, which determines whether two $\Phi$-$T$-diffeomorphic spaces are also isomorphic as circle-valued Hamiltonian $S^1$-manifolds. Intuitively, it encodes information on the cohomology class of the symplectic form. We describe the different values of the invariant (see Proposition~\ref{de_rham_invariant_description}), and show which are attainable (see Lemma~\ref{de_rham_invariant_existence_lemma}), by constructing an appropriate symplectic form for every valid choice of value for the de Rham invariant. Together with the previous invariants, this gives a full classification of our spaces up to isomorphism (see Theorem~\ref{existence_theorem} and Theorem~\ref{uniqueness_theorem}).
\\

Let $\mathcal{J}: = (P, g, C_1,\dots,C_k, [f])$ be a choice of invariants as in the previous section, let $c$ be an element in the affine subspace $\mathcal{H}_{\mathcal{J}} \subset H^2(M_f, \mathbb{Z})$ given in Equation~\eqref{eq_definition_affine_subspace}, and let $c_{\DuHe}$ be a positive real number. We denote this choice of invariants by $\mathcal{K} := (P, g, c_{\DuHe}, C_1,\dots,C_k, [f], c)$. Let $(M, \omega, \Phi)$ and $(M', \omega', \Phi')$ be four-dimensional tight circle-valued Hamiltonian $S^1$-manifolds, with invariants $\mathcal{K}$. For every $\Phi$-$T$-diffeomorphism $\Psi:M \rightarrow M'$, the two-form $\Psi^*\omega' - \omega$ is a basic form, which defines a basic cohomology class ${[\Psi^*\omega' - \omega]}_{\basic}$ in $H_{\basic}^2(M, \mathbb{R})$, which induces an element ${[\Psi^*\omega' - \omega]}_{\reduced}$ in $H^2(M/S^1, \mathbb{R})$ (see Lemma~\ref{omega_difference_is_basic}). We say that the symplectic forms of $(M, \omega, \Phi)$ and $(M', \omega', \Phi')$ are \textbf{equivalent} if there exists a $\Phi$-$T$-diffeomorphism $\Psi:M \rightarrow M'$ satisfying ${[\Psi^*\omega' - \omega]}_{\reduced} = 0$ in $H^2(M/S^1, \mathbb{R})$.
\begin{definition}\label{de_rham_invariant_definition}
    The \textbf{de Rham invariant} of a space with invariants $\mathcal{K}$ is the equivalence class of its symplectic form, under the above equivalences.
\end{definition}
\begin{lemma}\label{de_rham_invariant_uniqueness}
    Let $(M, \omega, \Phi)$ and $(M', \omega', \Phi')$ be four-dimensional tight circle-valued Hamiltonian $S^1$-manifolds, with invariants $\mathcal{K}$. Then they are isomorphic if and only if their de Rham invariants agree.
\end{lemma}
\begin{proof}
    If they are isomorphic, then an isomorphism $\Psi:(M, \omega, \Phi) \rightarrow (M', \omega', \Phi')$ is in particular a $\Phi$-$T$-diffeomorphism satisfying $\Psi^*\omega' = \omega$, and thus ${[\Psi^*\omega' - \omega]}_{\reduced} = 0$.

    On the other hand, let $\Psi:M \rightarrow M'$ be a $\Phi$-$T$-diffeomorphism, satisfying
    \begin{equation*}
        {[\Psi^*\omega' - \omega]}_{\reduced} = 0.
    \end{equation*}
    Then by Lemma~\ref{moser_with_same_cohomology} it can be isotoped to an isomorphism.
\end{proof}
Next, we describe the values of the de Rham invariant explicitly. By Theorem~\ref{existence_theorem_phi_diffeo} and Lemma~\ref{fibration_existence_lemma}, there exists a circle-valued Hamiltonian $S^1$-manifold $(M_{\mathcal{K}}, \omega_{\mathcal{K}}, \Phi_{\mathcal{K}})$ whose invariants are $\mathcal{K}$. We will use this space to parameterize the possible equivalence classes in a simple way. This parametrization will depend on the chosen space $(M_{\mathcal{K}}, \omega_{\mathcal{K}}, \Phi_{\mathcal{K}})$. We do not know how to formalize this description without this choice.

Let $\omega$ be an invariant two-form on $M_{\mathcal{K}}$, such that $\Phi_{\mathcal{K}}$ generates the same circle action, and such that the space $(M_{\mathcal{K}}, \omega, \Phi_{\mathcal{K}})$ also has invariants $\mathcal{K}$. Then as mentioned above, $\omega - \omega_{\mathcal{K}}$ is a basic form, and it defines an element ${[\omega - \omega_{\mathcal{K}}]}_{\reduced}$ in $H^2(M_{\mathcal{K}}/S^1, \mathbb{R})$. Let $\overline{\Phi}:M_{\mathcal{K}}/S^1 \rightarrow \mathbb{R}/P$ be the induced map by $\Phi$ on the quotient. Then we have the Mayer--Vietoris sequence associated to the cover ${\overline{\Phi}}^{-1}(U), {\overline{\Phi}}^{-1}(V)$, where $U$ and $V$ cover $\mathbb{R}/P$:
\begin{equation}\label{mayer_vietoris_raw}
    \begin{aligned}
        \dots\ & \overset{\Delta}{\rightarrow}H^1({\overline{\Phi}}^{-1}(U) \cap {\overline{\Phi}}^{-1}(V), \mathbb{R}) \overset{d^*}{\rightarrow}
        H^2(M_{\mathcal{K}}/S^1, \mathbb{R}) \overset{\rho}{\rightarrow} \\
        & \overset{\rho}{\rightarrow} H^2({\overline{\Phi}}^{-1}(U), \mathbb{R}) \oplus H^2({\overline{\Phi}}^{-1}(V), \mathbb{R}) \overset{\Delta}{\rightarrow}\dots
    \end{aligned}
\end{equation}
We can then extract the following short exact sequence from it (see also Equation~\eqref{long_exact_sequence_equation}):
\begin{equation}\label{mayer_vietoris_short_exact_sequence}
    0 \rightarrow H^1(\Sigma_g, \mathbb{R})/{\image(f_1-\id)} \overset{\overline{d^*}}{\rightarrow} H^2(M_{\mathcal{K}}/S^1, \mathbb{R}) \overset{\rho}{\rightarrow} H^2(\Sigma_g, \mathbb{R}) \rightarrow 0,
\end{equation}
where $\overline{d^*}$ is induced by $d^*$. Because both spaces $(M_{\mathcal{K}}, \omega_{\mathcal{K}}, \Phi_{\mathcal{K}})$ and $(M_{\mathcal{K}}, \omega, \Phi_{\mathcal{K}})$ have the same Duistermaat-Heckman constant, the class ${[\omega - \omega_{\mathcal{K}}]}_{\reduced}$ restricts to zero on the fibers, and therefore it projects to zero through the map $\rho$. Hence, the cohomology class ${[\omega - \omega_{\mathcal{K}}]}_{\reduced}$ can be regarded as a class in the subgroup $\ker(\rho) \cong \image(\overline{d^*})$, which is isomorphic to $H^1(\Sigma_g, \mathbb{R})/{\image(f_1-\id)}$.

It follows from the above discussion that every invariant symplectic form $\omega$ on $M_{\mathcal{K}}$, such that $(M_{\mathcal{K}}, \omega, \Phi_{\mathcal{K}})$ has the same six invariants, corresponds to an element in the vector space $H^1(\Sigma_g, \mathbb{R})/{\image(f_1-\id)}$. Note that every two symplectic forms that correspond to the same element in $H^1(\Sigma_g, \mathbb{R})/{\image(f_1-\id)}$ are equivalent, by the identity map. Thus, the values of the de Rham invariant can be identified with equivalence classes of elements in the vector space $H^1(\Sigma_g, \mathbb{R})/{\image(f_1-\id)}$. Here, two classes $A$ and $B$ in $H^1(\Sigma_g, \mathbb{R})/{\image(f_1-\id)}$ are $\textbf{equivalent}$ if and only if there exists a $\Phi$-$T$-diffeomorphism $\Psi:M_{\mathcal{K}} \rightarrow M_{\mathcal{K}}$, such that
\begin{equation}\label{vanish_equation}
    {[\Psi^*\omega_{\mathcal{K}} - \omega_{\mathcal{K}}]}_{\reduced} + \overline{\Psi}^*(\overline{d^*}(B)) - \overline{d^*}(A) = 0
\end{equation}
in $H^2(M_{\mathcal{K}}/S^1, \mathbb{R})$, where $\overline{\Psi}:M_{\mathcal{K}}/S^1 \rightarrow M_{\mathcal{K}}/S^1$ is the induced $\Phi$-diffeomorphism on the quotient by $\Psi$. To summarize the above, we deduce the following description of the de Rham invariant, which depends on the choice of $(M_{\mathcal{K}}, \omega_{\mathcal{K}}, \Phi_{\mathcal{K}})$.
\begin{proposition}\label{de_rham_invariant_description}
    Let $(M ,\omega, \Phi)$ be a space with invariants $\mathcal{K}$. Then there exists an element $A$ of $H^1(\Sigma_g, \mathbb{R})/{\image(f_1-\id)}$, and a $\Phi$-$T$-diffeomorphism $\Psi:M_{\mathcal{K}} \rightarrow M$, such that $\Psi^*\omega - \omega_{\mathcal{K}} = \overline{d^*}(A)$. The element $A$ is unique up to equivalence, where $A$ and $B$ in $H^1(\Sigma_g, \mathbb{R})/{\image(f_1-\id)}$ are equivalent if there exists a $\Phi$-$T$-diffeomorphism $\Psi:M_{\mathcal{K}} \rightarrow M_{\mathcal{K}}$ satisfying Equation~\eqref{vanish_equation}.

    Furthermore, two spaces have the same de Rham invariant if and only if their corresponding elements are equivalent.
\end{proposition}
\begin{proof}
    Let $(M, \omega, \Phi)$ be a space with invariants $\mathcal{K}$. By Lemma~\ref{fibration_invariant_uniqueness}, there exists a $\Phi$-$T$-diffeomorphism $\Psi:M_{\mathcal{K}} \rightarrow M$, and we have ${[\Psi^*\omega - \omega_{\mathcal{K}}]}_{\reduced} = \overline{d^*}(A)$ for some element $A$ in the vector space $H^1(\Sigma_g, \mathbb{R})/{\image(f_1-\id)}$. Let $\Psi':M_{\mathcal{K}} \rightarrow M$ be another $\Phi$-$T$-diffeomorphism, then ${[{\Psi'}^*\omega - \omega_{\mathcal{K}}]}_{\reduced}$ defines another element $A'$ in $H^1(\Sigma_g, \mathbb{R})/{\image(f_1-\id)}$. The elements $A$ and $A'$ are equivalent by the $\Phi$-$T$-diffeomorphism ${(\Psi')}^{-1} \circ \Psi:M_{\mathcal{K}} \rightarrow M_{\mathcal{K}}$, and therefore the element $A$ defined by $(M, \omega, \Phi)$ is unique up to equivalence.

    Furthermore, let $(M', \omega', \Phi')$ be another space with corresponding element $B$ in $H^1(\Sigma_g, \mathbb{R})/{\image(f_1-\id)}$. By definition, $(M, \omega, \Phi)$ and $(M', \omega', \Phi')$ have the same de Rham invariant if and only if there exists a $\Phi$-$T$-diffeomorphism $\Psi:M \rightarrow M'$ satisfying ${[\Psi^*\omega' - \omega]}_{\reduced} = 0$. Using $\Phi$-$T$-diffeomorphisms to identify $M$ and $M'$ with $M_{\mathcal{K}}$, this happens if and only if there exists a $\Phi$-$T$-diffeomorphism $\Psi:M \rightarrow M$ satisfying Equation~\eqref{vanish_equation}.
\end{proof}
Lastly, we show that every element of $H^1(\Sigma_g, \mathbb{R})/{\image(f_1-\id)}$ can represent the de Rham invariant of a space with invariants $\mathcal{K}$. This consists of the existence part of the de Rham invariant.
\begin{lemma}\label{de_rham_invariant_existence_lemma}
    Let $A$ be a class in $H^1(\Sigma_g, \mathbb{R})/{\image(f_1-\id)}$. Then there exists an invariant symplectic form $\omega$ on $M_{\mathcal{K}}$, such that
    \begin{equation*}
        {[\omega - \omega_{\mathcal{K}}]}_{\reduced} = \overline{d^*}(A),
    \end{equation*}
    and such that $(M_{\mathcal{K}}, \omega', \Phi_{\mathcal{K}})$ has the same invariants $\mathcal{K}$.
\end{lemma}
\begin{proof}
    Let $A$ be an element in the vector space $H^1(\Sigma_g, \mathbb{R})/{\image(f_1-\id)}$. We wish to show that there exists a basic closed two-form $\beta \in H^2(M_{\mathcal{K}}, \mathbb{R})$ such that
    \begin{enumerate}
        \item The induced form $\beta_{\reduced}$ on the quotient is in the cohomology class $\overline{d^*}(A)$,
        \item the form $\omega_{\mathcal{K}} + \beta$ is symplectic, and
        \item the space $(M_{\mathcal{K}}, \omega_{\mathcal{K}} + \beta, \Phi_{\mathcal{K}})$ has the same invariants $\mathcal{K}$.
    \end{enumerate}
    Since $H^1(\Sigma_g, \mathbb{R})/{\image(f_1-\id)} \cong \ker(\rho) \cong \image(\overline{d^*})$, we can choose an element $B$ in $H^1(\overline{\Phi}^{-1}(U) \cap \overline{\Phi}^{-1}(V), \mathbb{R})$, which projects to $\overline{d^*}(A)$ through the map $d^*$ given by the Mayer-Vietoris sequence~\eqref{mayer_vietoris_short_exact_sequence}.
    Let $\tau$ be a representative for the class $B$, and let $f_U:U \rightarrow \mathbb{R}, f_V:V \rightarrow \mathbb{R}$ be a partition of unity subordinate to the cover $U, V$ of $\mathbb{R}/P$. We have
    \begin{equation}\label{tilde_tau_def}
        \begin{aligned}
        &\overline{d^*}(A) = d^*(B) = d^*([\tau]) = [\tilde \tau], \\
        &\text{where }\tilde \tau := \begin{cases}
            d(f_U\tau) \text{ on $U$}\\
            -d(f_V\tau) \text{ on $V$}.
        \end{cases}
        \end{aligned}
    \end{equation}
    Because $\tau$ is closed, the form $\tilde \tau$ can be written as
    \begin{equation*}
        \tilde \tau = (\frac{df_U}{dx} - \frac{df_V}{dx})d \Phi\wedge \tau,
    \end{equation*}
    where $x$ is the coordinate of $\mathbb{R}/P$. It follows that the restriction of $\tilde \tau$ to each fiber vanishes.
    
    We now verify that the basic form $\pi^*\tilde \tau$ satisfies the three requirements given in the beginning of this proof.
    First, by~\eqref{tilde_tau_def}, $\tilde \tau$ is indeed in the cohomology class $\overline{d^*}(A)$. Second, we wish to show that $\tilde \omega := \omega_{\mathcal{K}} + \pi^*\tilde \tau$ is symplectic. It is a closed form, so we only need to check that it is non-degenerate. Let $p\in M_{\mathcal{K}}$ be a point in the manifold, and let $y := \Phi(p)$ be its image in $\mathbb{R}/P$.
    Because there are no fixed points we can choose two vectors $v_1, v_2$ in $T_p M_{\mathcal{K}}$ that satisfy:
    \begin{equation*}
        \begin{aligned}
            & v_1 := X(p), \\
            & d \Phi(v_2) = 1.
        \end{aligned}
    \end{equation*}
    Let $\pi:M_{\mathcal{K}} \rightarrow M_{\mathcal{K}}/S^1$ be the quotient map, and $\omega_{\reduced}$ be the reduced form on the reduced space $\Phi^{-1}(y)/S^1$. Since $\omega_{\reduced}$ is non-degenerate, we can choose two vectors $v_3, v_4$ in $T_p M_{\mathcal{K}}$ that satisfy:
    \begin{equation*}
        \begin{aligned}
            & d \Phi(v_3) = d \Phi(v_4) = 0, \\
            & \omega_{\reduced}({\pi}_*(v_3), {\pi}_*(v_4)) = 1.
        \end{aligned}
    \end{equation*}
    The vectors $v_1,v_2,v_3,v_4$ form a basis for $T_p M_{\mathcal{K}}$.
    
    As mentioned before, $\tilde \tau$ vanishes on each fiber, hence we have
    \begin{equation*}
        \pi^*\tilde \tau(v_3, v_4) = 0.
    \end{equation*}
    Furthermore, $\iota_X(\pi^*\tilde \tau) = 0$ since $\pi^*\tilde \tau$ is a basic form, and therefore
    \begin{equation*}
        \pi^*\tilde \tau(v_1, v_3) = \pi^*\tilde \tau(v_1, v_4).
    \end{equation*}
    It follows that the form $\tilde \omega$ satisfies $\iota_X \tilde \omega = -d \Phi$, i.e., it generates the action with $\Phi:M_{\mathcal{K}} \rightarrow \mathbb{R}/P$.
    In matrix form, ${\tilde \omega}_p$ can be written as:
    \begin{equation*}
        \begin{bmatrix}
            0  & 1    & 0    & 0 \\
            -1 & 0    & c_1  & c_2 \\
            0  & -c_1  & 0    & 1 \\
            0  & -c_2  & -1   & 0
        \end{bmatrix},
    \end{equation*}
    for some constants $c_1, c_2$, and this is an invertible matrix. Therefore, $\tilde \omega$ is indeed non-degenerate.
    
    Lastly, we wish to show that $(M_{\mathcal{K}}, \tilde \omega, \Phi_{\mathcal{K}})$ has invariants $\mathcal{K}$. Since it is $\Phi$-$T$-diffeomorphic to $(M_{\mathcal{K}}, \omega_{\mathcal{K}}, \Phi_{\mathcal{K}})$, the only invariant that we need to check is the Duistermaat-Heckman constant. Because $\tilde \tau$ vanishes on the fibers, it immediately follows that the reduced spaces have the same areas with respect to $\omega_{\mathcal{K}}$ and $\tilde \omega$, and the proof is done.
\end{proof}
Together, Proposition~\ref{de_rham_invariant_description} and Lemma~\ref{de_rham_invariant_existence_lemma} finishes the classification of four dimensional tight circle-valued Hamiltonian $S^1$-spaces, that are $\Phi$-$T$-diffeomorphic, up to isomorphism (of circle-valued Hamiltonian $S^1$-spaces). In general, it seems complicated to give a general description for the de Rham invariant in terms of ``combinatorial'' invariants. Nevertheless, see Appendix~\ref{examples_appendix} for explicit calculations of the de Rham invariant for some cases.
\begin{remark}\label{b1_trivial_de_rham_invariant_remark}
    When $b_1(M/S^1) = 1$, the de Rham invariant is trivial (i.e., it only has one possible value). This follows because $H^1(\Sigma_g, \mathbb{Z})/\image(f_1 - \id)$ is trivial (see the proof of Proposition~\ref{condition_for_betti_one}). See also Remark~\ref{b1_trivial_fibration_invariant_remark}.
\end{remark}
\begin{remark}\label{basic_vs_de_rham_remark}
    It's important to note that the basic cohomology in the definition of the de Rham invariant cannot be replaced with standard de Rham cohomology. To be more precise, even if $[\omega] = [\omega']$ in de Rham cohomology, it doesn't necessarily follow that ${[\omega - \omega']}_{\reduced}$ vanishes. For example, assume that the $S^1$ action is free, and the Chern class $c_1$ of the principal $S^1$-bundle $M \rightarrow M/S^1$ is non-zero. Then by the Gysin long exact sequence of cohomology, we have that the kernel of the map $\pi^*:H^2(M/S^1, \mathbb{R}) \rightarrow H^2(M, \mathbb{R})$ consists of the multiples of $c_1$. If $\beta$ is a two-form on $M/S^1$ such that $[\beta] = rc_1$ for some $0 \neq r \in \mathbb{R}$, and such that $\omega' := \omega + \pi^*\beta$ is symplectic, then $\omega$ and $\omega'$ have the same de Rham cohomology class, but their difference induces $\beta$ on the quotient, which is not an exact form since $c_1 \neq 0$. Such $\beta$s exist by Lemma~\ref{de_rham_invariant_existence_lemma}. For generic $r$, we then have that $(M, \omega, \Phi)$ and $(M, \omega', \Phi)$ are non-isomorphic. Nevertheless, if the $S^1$ action is free, and the fibration invariant vanishes, i.e., the Chern class $c_1$ is zero, then we do have that $[\omega] = [\omega']$ implies that ${[\omega - \omega']}_{\reduced} = 0$, since the map $\pi^*$ is injective.
\end{remark}

\appendix

\section{Examples}\label{examples_appendix}
In this appendix, we examine some examples of symplectic non-Hamiltonian $S^1$ actions on connected compact four-dimensional symplectic manifolds. For each action we define a tight circle-valued Hamiltonian, and present calculations of the values of the invariants.

First, we will focus on diagonal circle actions on products of symplectic surfaces. There are only two effective symplectic circle actions on compact connected surfaces of fixed area, up to equivariant symplectomorphism. Namely, the circle action that rotates the sphere, and the circle action that rotates one of the coordinates of the torus. It follows that the diagonal actions on $S^2 \times \Sigma_g$, where $g > 1$ are precisely the rotations of $S^2$, which are Hamiltonian and therefore not of our interest (They correspond to Karshon graphs with just two (fat) vertices, see~\cite{periodic_hamiltonians}). Similarly, all diagonal actions on $S^2 \times S^2$ are Hamiltonian and not of our interest.

Therefore, we will consider diagonal actions on $\mathbb{T}^2 \times S^2$ and $\mathbb{T}^2 \times \mathbb{T}^2$. We skip the case of diagonal actions on $\mathbb{T}^2 \times \Sigma_g$, where $g > 1$. Its analysis is similar to the case of a diagonal action on $\mathbb{T}^2 \times S^2$ which acts trivially on the sphere (see Subsection~\ref{sphere_torus_subsection}, with $k=1$ and $l=0$), except for the fibration and de Rham invariants which requires some calculations as in the case of a diagonal action on $\mathbb{T}^2 \times \mathbb{T}^2$.

Later in the section, we will describe the Kodaira-Thurston manifold with a specific free symplectic non-Hamiltonian circle action, and some more related spaces, such as ones whose quotients are $\Phi$-diffeomorphic to Kodaira-Thurston's quotient, but that have different values of the fibration invariant.

\subsection{Diagonal actions on \texorpdfstring{$\mathbb{T}^2 \times S^2$}{T2xS2}}\label{sphere_torus_subsection}
\hfill\\
Let $A>0$ and $B>0$ be positive real numbers. Let $(\mathbb{T}^2 \cong \mathbb{R}^2/\mathbb{Z}^2, Adp \wedge dq)$ be a symplectic torus with area $A$, and let $(S^2, \frac{B}{4\pi}dh \wedge d\theta)$ be a symplectic sphere with area $B$. Here, we denote the coordinates on the torus by $p \in \mathbb{R}/\mathbb{Z}$ and $q \in \mathbb{R}/\mathbb{Z}$, and the cylindrical polar coordinates on the sphere (without its poles) by $\theta \in \mathbb{R}/{2\pi \mathbb{Z}}$ and $h \in (-1, 1)$.

Let $S^1 \cong \mathbb{R}/{2\pi \mathbb{Z}}$ act symplectically and diagonally on the product space $(\mathbb{T}^2 \times S^2, \omega = Adp \wedge dq + \frac{B}{4\pi}dh \wedge d\theta)$. Without loss of generality, it acts by rotating the $q$-coordinate of the torus with velocity $\frac{k}{2\pi}$, and the $\theta$-coordinate of the sphere with velocity $\ell$, for some integers $k$ and $\ell$. More precisely, the action is defined by
\begin{equation*}
    t \cdot (p, q, h, \theta) = (p, q + \frac{k}{2\pi}t, h, \theta + \ell t),
\end{equation*}
for every $t \in S^1$. This is a subgroup of the torus action described in~\cite[Example 5.11]{pelayo_review_paper}.
We assume that $\gcd(k, \ell) = 1$ so that the action will be effective, and that $k \ne 0$ so that the action will be non-Hamiltonian. The maps $(p, q, h, \theta) \mapsto (-p, -q, h, \theta)$ and $(p, q, h, \theta) \mapsto (p, q, -h, -\theta)$ are equivariant symplectomorphisms to the same manifold but with the actions whose velocities are $-\frac{k}{2\pi}, \ell$ and $\frac{k}{2\pi}, -\ell$, respectively. Hence, we can assume without loss of generality that $k > 0$ and $\ell \ge 0$.

The generating vector field $X \in \mathfrak{X}(\mathbb{T}^2 \times S^2)$ of the action is 
\begin{equation*}
    X = \frac{k}{2\pi} \frac{\partial}{\partial q} + \ell \frac{\partial}{\partial \theta}.
\end{equation*}
The one-form $\iota_X \omega$ is equal to
\begin{equation*}
    \iota_X \omega = - \frac{Ak}{2\pi}dp - \frac{B\ell}{4\pi}dh,
\end{equation*}
and its group of periods $P$ is $\frac{Ak}{2\pi} \mathbb{Z}$. By the assumption that $k \ne 0$, the group of periods is non-trivial. Because $P$ is a non trivial discrete subgroup of $\mathbb{R}$, we can define a tight circle-valued Hamiltonian by 
\begin{equation*}
    \Phi(p, q, h, \theta) = \frac{Ak}{2\pi}p + \frac{B\ell}{4\pi}h \mod \frac{Ak}{2\pi}.
\end{equation*}
This makes $(M, \omega, \Phi)$ a four-dimensional tight circle-valued Hamiltonian $S^1$-manifold. Fix some $y \in \mathbb{R}/P$. The level set of $\Phi$ at $y$ is of the form 
\begin{equation*}
    \Phi^{-1}(y) = \{(p, q, h, \theta)| \frac{Ak}{2\pi}p + \frac{B\ell}{4\pi}h = y \mod \frac{Ak}{2\pi}\},
\end{equation*}
and it's diffeomorphic to $S^1 \times S^2$, which is a connected space as expected by Lemma~\ref{fiber_connectivity_of_circle_actions}. The reduced space with respect to $\Phi$, at any value of the circle-valued Hamiltonian, is homeomorphic to $S^2$. Hence, the genus invariant of the space is $0$.

The isotropy data invariant of the space can be calculated from the isotropy representations of the non-free orbits in an arbitrary level set. When $k = 1$, the level sets only have free orbits, and the isotropy data invariant is trivial. Otherwise, every level set has two non-free orbits $\mathcal{O}_1$ and $\mathcal{O}_2$, which project on the sphere to the north and south pole. The stabilizer of these orbits is $\mathbb{Z}_k$, and their isotropy weights are $\ell \mod k$ and $-\ell \mod k$. Let $m$ be the unique integer such that $0 < m < k$, and $m = \ell \mod k$. Then the isotropy data invariant is given by the list $(k, m), (k, k - m)$.

The Duistermaat-Heckman function is constant, as expected from Lemma~\ref{duistermaat_heckman_constant}, and the Duistermaat-Heckman constant is $B/k$. Note that we have
\begin{equation*}
    \vol(M, \omega) = 2\pi\tau c_{\DuHe} = 2\pi \cdot \frac{Ak}{2\pi}\cdot \frac{B}{k} = AB
\end{equation*}
as expected from Equation~\eqref{dh_vol_eq}.

The topological mapping torus given by Proposition~\ref{topological_model_proposition} is a trivial sphere bundle over $\mathbb{R}/P$, with monodromy invariant equals to the conjugacy class of the element $[\id]$ in $\mcg(\Sigma_{0, (k, m), (k, k - m)})$.

Since $b_1(M/S^1)=1$, the fibration and de Rham invariants are trivial (see Remarks~\ref{b1_trivial_fibration_invariant_remark} and~\ref{b1_trivial_de_rham_invariant_remark}).

Using this description, let us characterize which pairs of diagonal actions on $\mathbb{T}^2 \times S^2$ are equivariantly symplectomorphic. Let $A'>0$, $B'>0$ be positive real numbers, and let $k' > 0$ and $\ell' \ge 0$ be integers that satisfy $\gcd(k', \ell') = 1$. Let $S^1$ act on $(\mathbb{T}^2 \times S^2, A'dp \wedge dq + \frac{B'}{4\pi}dh \wedge d\theta)$, by rotating the $q$-coordinate of the torus with velocity $\frac{k'}{2\pi}$, and the $\theta$-coordinate of the sphere with velocity $\ell'$. Then by Theorem~\ref{uniqueness_theorem}, the two actions are equivariantly symplectomorphic if and only if the following properties hold:
\begin{enumerate}
    \item $k = k'$
    \item $\ell \mod k = \pm \ell' \mod k$
    \item $A = A'$
    \item $B = B'$.
\end{enumerate}

\subsection{Diagonal actions on \texorpdfstring{$\mathbb{T}^2 \times \mathbb{T}^2$}{T2xT2}}
\hfill\\
Let $A>0$ and $B>0$ be positive real numbers. Let $\mathbb{T}^2 \times \mathbb{T}^2$ be a product of two tori, and denote its coordinates by $p_1$, $q_1$,$p_2$, and $q_2$ in $\mathbb{R}/\mathbb{Z}$.

Let $S^1$ act symplectically and diagonally on $(\mathbb{T}^2 \times \mathbb{T}^2, \omega = Adp_1 \wedge dq_1 + Bdp_2 \wedge dq_2)$. Without loss of generality, it acts by rotating the $q_1$-coordinate of the left torus with velocity $\frac{k}{2\pi}$, and the $q_2$-coordinate of the right torus with velocity $\frac{\ell}{2\pi}$, for some integers $k$ and $\ell$. The action is defined by
\begin{equation*}
    t \cdot (p_1, q_1, p_2, q_2) = (p_1, q_1 + \frac{k}{2\pi}t, p_2, q_2 + \frac{\ell}{2\pi}t).
\end{equation*}
This is a subgroup of the torus action described in~\cite[Example 5.9]{pelayo_review_paper}.
We assume that $\gcd(k, \ell) = 1$ so that the action will be effective. The generating vector field $X \in \mathfrak{X}(\mathbb{T}^2 \times \mathbb{T}^2)$ of the action is 
\begin{equation*}
    X = \frac{k}{2\pi} \frac{\partial}{\partial q_1} + \frac{\ell}{2\pi} \frac{\partial}{\partial q_2}.
\end{equation*}
The one-form $\iota_X \omega$ is equal to
\begin{equation*}
    \iota_X \omega = -\frac{Ak}{2\pi}dp_1 - \frac{B\ell}{2\pi}dp_2,
\end{equation*}
and its group of periods $P$ is $\frac{Ak}{2\pi} \mathbb{Z} + \frac{B\ell}{2\pi} \mathbb{Z}$. Note that $P$ is a discrete subgroup of $\mathbb{R}$ if and only if $Ak$ and $B\ell$ are rationally dependent. Assume that $P$ is a discrete subgroup of $\mathbb{R}$, then there exists some $0 < \tau \in \mathbb{R}$ such that $P = \tau \mathbb{Z}$. Then, we can define a tight circle-valued Hamiltonian by 
\begin{equation*}
    \Phi(p_1, q_1, p_2, q_2) = \frac{Ak}{2\pi}p_1 + \frac{B\ell}{2\pi}p_2 \mod \tau.
\end{equation*}
This makes $(M, \omega, \Phi)$ a four-dimensional tight circle-valued Hamiltonian $S^1$-manifold. Fix some $y \in \mathbb{R}/P$. The level set of $\Phi$ at $y$ is of the form 
\begin{equation*}
    \Phi^{-1}(y) = \{(p_1, q_1, p_2, q_2)| \frac{Ak}{2\pi}p_1 + \frac{B\ell}{2\pi}p_2 = y \mod \tau\},
\end{equation*}
and it's diffeomorphic to $\mathbb{T}^3$. As in the previous subsection, the level sets are connected, as expected by Lemma~\ref{fiber_connectivity_of_circle_actions}. The reduced spaces with respect to $\Phi$ are homeomorphic to $\mathbb{T}^2$. Hence, the genus invariant of the space is $1$.

The action is free, hence there are no non-free orbits and the isotropy data invariant is trivial. The Duistermaat-Heckman function is constant, as expected from Lemma~\ref{duistermaat_heckman_constant}, and the Duistermaat-Heckman constant is equal to $\frac{AB}{2\pi\tau}$ by Equation~\eqref{dh_vol_eq}. Moreover, since $\Phi^{-1}(y) \rightarrow \Phi^{-1}(y) / S^1$ is a principal $S^1$-bundle, its Seifert Euler number is just the regular Chern number, which is zero since the bundle is trivial, as expected from Lemma~\ref{duistermaat_heckman_constant}.

The topological mapping torus given by Proposition~\ref{topological_model_proposition} is a trivial torus bundle over $\mathbb{R}/P$, with no labeled points. Therefore, the monodromy invariant is the conjugacy class of the element $[\id]$ in $\mcg(\Sigma_1)$.

Since the monodromy invariant is trivial, it follows by Lemma~\ref{lemma_fibration_invariant_for_free_trivial_monodromy} that the fibration invariant can be identified with the minimal positive integral $K_M$ of the Chern class of the principal $S^1$-bundle $M \rightarrow M/S^1$, or zero if the Chern class is trivial. In this case, the Chern class is indeed trivial, and so the fibration invariant is identified with the value $K_M = 0$. See also the next subsection for a more in-depth description of the values of the fibration invariant in this case.

Recall that by the description of the de Rham invariant in Proposition~\ref{de_rham_invariant_description}, it is measured relatively to some fixed space. Thus, it is pointless to describe the value of this invariant if we don't compare it with other spaces. See the next subsection for the description of the values of the de Rham invariant for this case.

\subsection{Free actions with genus invariant \texorpdfstring{$1$}{1} and trivial monodromy}
\hfill\\
In this subsection, we present explicit calculations of the values of the fibration and de Rham invariants, for spaces with genus invariant $1$, trivial isotropy data invariant, and trivial monodromy invariant.

Let $(M, \omega, \Phi)$ be a circle-valued Hamiltonian $S^1$-manifold. Assume that its group of periods is $\tau \mathbb{Z}$, its Duistermaat-Heckman constant is $c_{\DuHe}$, its genus invariant is $1$, and that its isotropy data invariant and monodromy invariant are trivial.

We wish to calculate the possible values of the fibration invariant, for spaces with these invariants. These corresponds to the $\Phi$-$T$-diffeomorphism types of spaces whose quotients are $\Phi$-diffeomorphic to the quotient of the space from the previous subsection. As mentioned in the previous subsection, by Lemma~\ref{lemma_fibration_invariant_for_free_trivial_monodromy}, the fibration invariant be identified with the minimal positive integral $K_M$ of the Chern class, and therefore the values of the fibration invariant can be identified with $\mathbb{N}$ in this case. Nevertheless, for completeness, we wish to describe the values of the fibration invariants directly from definition as well.

The fibration invariant takes values in the affine subspace $\mathcal{H}_{\mathcal{J}} \subset H^2(M_f, \mathbb{Z})$, which is canonically isomorphic to $H^1(\Sigma_g, \mathbb{Z})/\image(f_1 - \id)$ since the action is free (see Equation~\eqref{eq_definition_affine_subspace}). Here, $H^1(\Sigma_g, \mathbb{Z})/\image(f_1 - \id) \cong \mathbb{Z}^2$, hence the values of the fibration invariant are represented by vectors in $\mathbb{Z}^2$. The mapping torus $M_f$ is homeomorphic to $T^2 \times \mathbb{R}/P$, with coordinates $p, q, t$ in $\halfopen{0}{1}$. If the fibration invariant is represented by the vector $(n_1, n_2)$ in $\mathbb{Z}^2$, then the space is diffeomorphic to the principal $S^1$-bundle over $M_f$ with Chern class $n_1 dp \wedge dt + n_2 dq \wedge dt$.

Two vectors $(n_1, n_2)$ and $(n_3, n_4)$ in $\mathbb{Z}^2$ are equivalent if and only if there exists an orientation-preserving homeomorphism of $T^2 \times \mathbb{R}/P$, which respects the projection $T^2 \times \mathbb{R}/P \rightarrow \mathbb{R}/P$, and such that it pullbacks the form $n_1 dp \wedge dt + n_2 dq \wedge dt$ to the form $n_3 dp \wedge dt + n_4 dq \wedge dt$. In particular, for every matrix in $\SL_2(\mathbb{Z})$, we can find a homeomorphism that pullbacks $n_1 dp \wedge dt + n_2 dq \wedge dt$ to $m_1 dp \wedge dt + m_2 dq \wedge dt$, where
\begin{equation*}
    (m_1, m_2) = A \cdot (n_1, n_2),
\end{equation*}
and therefore $(n_1, n_2)$ and $(n_3, n_4)$ are equivalent if and only if they are in the same orbit of $\SL_2(\mathbb{Z})$. By Bezout Theorem, This happens if and only if $\gcd(n_1, n_2) = \gcd(n_3, n_4)$ (and this happens if and only if the minimal positive integrals of their Chern classes agree).
\\

Next, we describe explicit calculations of the de Rham invariant. For simplicity, we assume that the minimal positive integral $K_M$ of the Chern class vanishes, i.e., that the fibration invariant is trivial. To describe the different values of the de Rham invariant, using Proposition~\ref{de_rham_invariant_description}, we fix the space $N:=(\mathbb{T}^2 \times \mathbb{T}^2, \omega = 2\pi \tau dp_1 \wedge dq_1 + \frac{c_{\DuHe}}{2\pi}dp_2 \wedge dq_2)$ with the $S^1$ action that rotates the $q_1$ coordinate. This space has trivial isotropy data invariant, monodromy invariant, and fibration invariant, has genus invariant $1$, group of periods $\tau \mathbb{Z}$, and Duistermaat-Heckman constant $c_{\DuHe}$. By Proposition~\ref{de_rham_invariant_description}, for every other space with these invariants, its de Rham invariant can be represented by a class $A$ in $H^1(\mathbb{T}^2, \mathbb{R})/\image(f_1-\id) \cong H^1(\mathbb{T}^2, \mathbb{R})$. This class is unique up to equivalence, where $A$ and $B$ in $H^1(\mathbb{T}^2, \mathbb{R})$ are equivalent if there exists a $\Phi$-$T$-diffeomorphism $\Psi:N \rightarrow N$ such that ${[\Psi^*\omega - \omega]}_{\reduced} + \Psi^*\overline{d^*}B - \overline{d^*}A$ vanishes in $H^2(N/S^1, \mathbb{R})$ (see also Equation~\eqref{vanish_equation}).

Since the action is free and the fibration invariant vanishes, it follows by Remark~\ref{basic_vs_de_rham_remark} that two symplectic forms satisfy $[\omega] = [\omega']$ if and only if they satisfy ${[\omega' - \omega]}_{\reduced} = 0$. Hence, $A$ and $B$ are equivalent if and only if there exists a $\Phi$-$T$-diffeomorphism $\Psi:N \rightarrow N$ such that $[\Psi^*(\omega + \overline{d^*}B)] = [\omega + \overline{d^*}A]$. Note that $\overline{d^*}A = A \wedge dp_1$ and $\overline{d^*}B = B \wedge dp_1$. The pullback map $F^*:H^1(N, \mathbb{R}) \rightarrow H^1(N, \mathbb{R})$ induced on first cohomology by $\Psi$ can be described with respect to the basis $dq_1, dp_1, dp_2, dq_2$ by the following matrix:
\begin{equation}\label{matrix_eq}
\begin{pmatrix}
1 & 0 & 0 & 0\\
x & 1 & e & f\\
y & 0 & a & b\\
z & 0 & c & d
\end{pmatrix},
\end{equation}
where $\begin{pmatrix}
a & b\\
c & d
\end{pmatrix}$ is a matrix in $SL_2(\mathbb{Z})$ and $x, y, z, e, f$ are some integers. Note that the second column is $\begin{pmatrix}
0 \\
1 \\
0 \\
0
\end{pmatrix}$ since $\Psi^*d \Phi = d \Phi$, and that the first row is $\begin{pmatrix}
1 & 0 & 0 & 0
\end{pmatrix}$ since $\Psi$ is equivariant.

We want to describe equivalence classes of vectors in $H^1(\mathbb{T}^2, \mathbb{R}) \cong \mathbb{R}^2$, under the equivalences by $\Phi$-$T$-diffeomorphisms. Let $A := (s, t)$ be such a vector, and let $\omega'$ be the corresponding symplectic form given by $\omega + \overline{d^*}A = \omega + (sdp_2 + tdq_2) \wedge dp_1$. Then $\Psi^* \omega'$ is given by $\omega + (s'dp_2 + t'dq_2) \wedge dp_1$ where
\begin{equation}\label{equivalence_de_rham_eq}
    \begin{aligned}
    s' &= sa + tc + c_{\DuHe}(ec - af) + y\tau \\
    t' &= sb + td + c_{\DuHe}(ed - bf) + z\tau,
    \end{aligned}
\end{equation}
and we have that $(s, t)$ is equivalent to $(s', t')$. Hence, the possible values for the de Rham invariant are the equivalence classes under the equivalences of Equation~\eqref{equivalence_de_rham_eq}, for all matrices as in Equation~\eqref{matrix_eq}. We note that if $r \in \mathbb{R}$ is an irrational number, which is not in the subgroup $\mathbb{Q} + c_{\DuHe} \mathbb{Q} + \tau \mathbb{Q}$, then the vectors $(1, r)$ and $(r, 1)$ are not equivalent. It follows that two spaces might be $\Phi$-$T$-diffeomorphic, with symplectic forms having the same values of integrals (i.e., such that the groups of periods $P(\omega)$ and $P(\omega')$ are the same), but still not be equivariantly symplectomorphic since they have different de Rham invariants.

\subsection{The Kodaira-Thurston manifold}\label{kodaira_like_subsection}
\hfill\\
In this subsection, we describe the Kodaira-Thurston manifold, together with a particular symplectic circle action, and present calculations of the values of the invariants. We also discuss other related spaces. The Kodaira-Thurston manifold was first studied by Kodaira~\cite{kodaira_manifold}, and was later rediscovered by Thurston~\cite{thurston_manifold}, where it was given as the first known example of a symplectic manifold that does not admit a Kähler form. 

Most commonly, the description of the Kodaira-Thurston manifold is given by taking a quotient of $(\mathbb{R}^4, dp_1 \wedge dq_1 + dp_2 \wedge dq_2)$ by an action of a discrete group that preserves the symplectic form. There is a very similar construction that is based on a gluing map. We will describe the second construction, since it can be easily related to the mapping torus given by Proposition~\ref{topological_model_proposition}, and consequently to the monodromy invariant. See~\cite[Example 3.1.17]{intro_to_symp_topology} for a description of both constructions.

Let $M := (\mathbb{R}/\mathbb{Z})\times \mathbb{R} \times {(\mathbb{R}/\mathbb{Z})}^2$ be a manifold with coordinates $p_1, q_1, p_2, q_2$, and a symplectic form $\omega = dp_1 \wedge dq_1 + dp_2 \wedge dq_2$. The map $f:M \rightarrow M$ defined by:
\begin{equation*}
    F(p_1, q_1, p_2, q_2) = (p_1, q_1 + 1, p_2 + q_2, q_2)
\end{equation*}
preserves the symplectic form. Hence, the quotient manifold $N:= M/{\sim}$, where $x \sim F(x)$, is a compact symplectic manifold. It admits a symplectic circle action whose generating vector field $X \in \mathfrak{X}(N)$ is:
\begin{equation*}
    X = \frac{1}{2\pi} \frac{\partial}{\partial q_1}.
\end{equation*}
This is a subcircle action of the torus action described in~\cite[Example 5.10]{pelayo_review_paper}. The one-form $\iota_X \omega$ is equal to
\begin{equation*}
    \iota_X \omega = -\frac{1}{2\pi} dp_1,
\end{equation*}
and its group of periods $P$ is $\frac{1}{2\pi} \mathbb{Z}$. The action is generated by the following tight circle-valued Hamiltonian
\begin{equation*}
    \Phi(p_1, q_1, p_2, q_2) = p_1 \mod \frac{1}{2\pi}, 
\end{equation*}
whose level sets are diffeomorphic to $\mathbb{T}^3$. The reduced spaces are homeomorphic to $\mathbb{T}^2$, and therefore the genus invariant is $1$. 

The action is free, and therefore there are no exceptional orbits, meaning that the isotropy data invariant is trivial. The Duistermaat-Heckman function is constant, as expected from Lemma~\ref{duistermaat_heckman_constant}, and by Equation~\eqref{dh_vol_eq} the Duistermaat-Heckman constant is equal to $1$. Moreover, just like in the previous example, the Seifert fibration is a principal $S^1$-bundle, hence its Seifert Euler number is just the regular Chern number, which is zero (see Lemma~\ref{duistermaat_heckman_constant}).

The monodromy invariant is the conjugacy class of the element $[f]$ in $\mcg(\Sigma_1)$, where the map $f:\mathbb{T}^2 \rightarrow \mathbb{T}^2$ is given by restricting the map $F$ to the last 2 coordinates $p_2$ and $q_2$:
\begin{equation}\label{equation_gluing_kt}
    f(p, q) = (p + q, q).
\end{equation}

Since the action is free, the affine subspace $\mathcal{H}_{\mathcal{J}}$ is canonically isomorphic to the space $H^1(\mathbb{T}^2, \mathbb{Z})/\image(f_1-\id) \cong \mathbb{Z}$, and the fibration invariant is represented by integers in this space. Since the principal $S^1$-bundle $M \rightarrow M/S^1$ is trivial, its Chern class vanishes, and therefore so does the Fintushel class. It follows that the fibration invariant of our space is represented by the zero class in $H^1(\mathbb{T}^2, \mathbb{Z})/\image(f_1-\id) \cong \mathbb{Z}$.

The other possible values for the fibration invariant correspond to other spaces that are $\Phi$-diffeomorphic to $M$, but that are not $\Phi$-$T$-diffeomorphic to it. These are given by equivalence classes of elements in $H^1(\mathbb{T}^2, \mathbb{Z})/\image(f_1-\id) \cong \mathbb{Z}$. Let $\alpha$ be a generator of $H^1(\mathbb{T}^2, \mathbb{Z})/\image(f_1-\id) \cong \mathbb{Z}$, and let $\beta$ be its image in $H^2(M_f, \mathbb{Z})$ by the map
\begin{equation}\label{embed_map_cohomology_eq}
    H^1(\mathbb{T}^2, \mathbb{Z})/\image(f_1-\id) \overset{j}{\rightarrow} H^2(M_f, \mathbb{Z})
\end{equation}
given by the short exact sequence in Equation~\ref{short_exact_sequence_fibration_invariant}, where $M_f$ is the mapping torus of $f$. Then for every integer $m$ with $|m| > 1$, the classes $\alpha$ and $m\alpha$ cannot be equivalent, since the integrals of $\beta$ and $m\beta$ over $H_2(M/S^1, \mathbb{Z})$ are different. On the other hand, the function $(p, q) \mapsto (-p, -q)$ commutes with the gluing map given in Equation~\eqref{equation_gluing_kt}, and is orientation preserving. Hence, applying it fiber-wise defines an orientation-preserving homeomorphism, which respects the projections to $\mathbb{R}/P$, and that sends $\alpha$ to $-\alpha$. Hence $\alpha$ and $-\alpha$ are equivalent. It follows that the values of the fibration invariant can be identified with natural numbers $\mathbb{N}$, where every class is identified with the minimal positive integral of the Chern class $K_M$. In this case, we get a similar description as in Proposition~\eqref{lemma_fibration_invariant_for_free_trivial_monodromy}, even though the monodromy invariant is non-trivial.
\\

Lastly, we discuss a related space, for which the fibration invariant does not correspond to the minimal positive integral of the Chern class. Let $(M, \omega, \Phi)$ be a space with genus invariant $1$, trivial isotropy data invariant, and monodromy invariant represented by the map
\begin{equation}\label{equation_gluing_other}
    f(p, q) = (p + kq, q),
\end{equation}
for some $k > 1$. Then the fibration invariant is represented by values in the space $\mathcal{H}_{\mathcal{J}} \subset H^2(M_f, \mathbb{Z})$, which is isomorphic to $H^1(\mathbb{T}^2, \mathbb{Z})/\image(f_1-\id) \cong \mathbb{Z}_k \times \mathbb{Z}$. The quotient space $M/S^1$ is diffeomorphic to $M_f$, and for each element $A$ of $\mathcal{H}_{\mathcal{J}} \cong \mathbb{Z}_k \times \mathbb{Z}$, by Lemma~\ref{fibration_existence_lemma}, there exists a circle-valued Hamiltonian $S^1$-space $M_A$ modeled over $M/S^1$ such that its Chern class corresponds under the diffeomorphism $M/S^1 \cong M_f$ to the class $A$. The values of the fibration invariant are equivalence classes of elements in $\mathcal{H}_{\mathcal{J}} \cong \mathbb{Z}_k \times \mathbb{Z}$. Two classes $A$ and $B$ are equivalent if there exists an orientation-preserving homeomorphism $\Psi:M_f \rightarrow M_f$, which respects the projections to $\mathbb{R}/P$, and such that $\Psi^*B = A$.

To be more explicit, for a vector $(a, b)$ in $\mathbb{Z}_k \times \mathbb{Z}$, the corresponding class in $\mathcal{H}_{\mathcal{J}} \subset H^2(M_f, \mathbb{Z})$ is given by $\alpha \wedge [d\Phi]$, where $\alpha$ is a class in $H^1(\mathbb{T}^2, \mathbb{Z})$ whose corresponding vector in $\mathbb{Z}^2$ is the vector $(\tilde a, b)$, for some $\tilde a$ such that $\tilde a \mod k = a$. Since $\Psi^*d\Phi = d\Phi$, we only care about how $\Psi$ pullbacks $\alpha$. This can be calculated by restricting $\Psi$ on a fiber, and the induced map on $H^1(T^2, \mathbb{Z})$ corresponds to a symplectic matrix which is invariant under conjugation by $f_1:H^1(T^2, \mathbb{Z}) \rightarrow H^1(T^2, \mathbb{Z})$. These correspond to matrices of the form
\begin{equation}\label{matrix_conj_eq}
    \begin{pmatrix}
        1 & x\\
        0 & 1
    \end{pmatrix},
\end{equation}
where $x \in \mathbb{Z}$. One can then check that $(a, b)$ and $(a', b')$ in $\mathbb{Z}_k \times \mathbb{Z}$ are equivalent, i.e., can be related by a matrix of the form in Equation~\eqref{matrix_conj_eq}, if and only if $b = b'$ and $a = a' \mod \gcd(b, k)$. Therefore, if we choose for example the vectors $(0, k)$ and $(1, k)$, then they are not equivalent, even though the minimal positive integral of the Chern class for the spaces with fibration invariants represented by $(0, k)$ and $(1, k)$ is equal to $k$.

\section{Smooth and topological mapping class groups} \label{mcg_appendix}
In this appendix, we prove a classical theorem that the forgetful map from the smooth mapping class group to the topological mapping class group is an isomorphism. While this theorem is regarded as known in the field, we weren't able to find a full proof for the case of surfaces with marked points. If one doesn't allow for marked points, but possibly allows for boundary, the surjectivity of the map can be seen from Hatcher's~\cite[Theorem B]{hatcher2013kirby}, and the injectivity can be seen from results of Earle-Eels~\cite[Corollary 1E]{earle_eels} and Earle-Schatz~\cite[Theorem 1D]{earle_schatz} (together with Lemma~\ref{smoothing_paths_lemma}). We show that Hatcher's proof of the surjectivity can be easily adapted to surfaces with marked points, and that the injectivity for surfaces with marked points can be deduced from the injectivity for surfaces with boundary.
\\

We denote by $\Sigma^b_{g,n}$, a compact connected orientable 2-dimensional smooth manifold of genus $g$, with $b$ boundary components, and $n$ marked points. When the surface has no marked points ($n = 0$), we will write $\Sigma^b_{g}$ instead of $\Sigma^b_{g,0}$. We denote by $\Diff_+(\Sigma^b_{g,n})$ the group of orientation-preserving diffeomorphisms that fix the marked points and the boundary, with the $C^\infty$-topology. We denote by $\Homeo_+(\Sigma^b_{g,n})$ the group of orientation-preserving homeomorphisms that fix the marked points and the boundary, with the compact--open topology.

The topological mapping class group is
\begin{equation*}
    \mcg(\Sigma^b_{g,n}) := \Homeo_+(\Sigma^b_{g,n}) /{\sim},
\end{equation*}
where $f \sim h$ if they are isotopic relative to the marked points and the boundary (that is, homotopic through homeomorphisms that fix the marked points and the boundary).

Similarly, we define the smooth mapping class group by
\begin{equation*}
    \mcg^\infty(\Sigma^b_{g,n}) := \Diff_+(\Sigma^b_{g,n}) /{\sim},
\end{equation*}
where $f \sim h$ if they are smoothly isotopic relative to the marked points and the boundary (that is, smoothly homotopic through diffeomorphisms that fix the marked points and the boundary).

We wish to prove the following classical theorem:
\begin{theorem}\label{mcg_forgetful_isomorphism}
    The forgetful map $\mcg^\infty(\Sigma^b_{g,n}) \rightarrow \mcg(\Sigma^b_{g,n})$ is an isomorphism.
\end{theorem}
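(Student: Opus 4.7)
The plan is to handle surjectivity and injectivity of $\Forgetful$ separately, in each case reducing to the corresponding statement for compact surfaces with nonempty boundary and no marked points, which follows respectively from Hatcher's Theorem B and from the results of Earle--Eells together with Earle--Schatz cited in the preamble. The bridge between the two settings is the classical observation that a marked point may be traded for a boundary component by excising a small invariant disk, provided one first normalizes the map to be standard near the marked point.

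For surjectivity, given $f \in \Homeo_+(\Sigma^b_{g,n})$, I would first modify $f$ within its isotopy class so that it coincides with a diffeomorphism on a small disk neighborhood of each marked point $p_i$. This can be arranged by choosing smooth charts near $p_i$ and applying the Alexander trick in those charts: since the Alexander trick gives an explicit topological isotopy from any homeomorphism of a disk fixing the origin to one that is the identity on a smaller concentric disk, inserting this in a collar produces an isotopy of $f$ to a representative $f'$ that is locally the identity near each $p_i$. After excising open disks around each $p_i$ we obtain a surface $\Sigma^{b+n}_g$ on which $f'$ restricts to a homeomorphism fixing the boundary, and Hatcher's Theorem B supplies a topological isotopy rel boundary from $f'|_{\Sigma^{b+n}_g}$ to a diffeomorphism; regluing the disks produces the desired global smooth representative.

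For injectivity, suppose $f_0, f_1 \in \Diff_+(\Sigma^b_{g,n})$ are connected by a topological isotopy $H_t$ fixing the marked points and the boundary. The plan is to run the above trick in a parametrized fashion: using a continuous family of Alexander isotopies varying with $t$, I would modify $H_t$ inside small disjoint disks $D_i$ around each $p_i$ so that the new isotopy $\tilde H_t$ is the identity near each $p_i$ for every $t$, while $\tilde H_0 = f_0$ and $\tilde H_1 = f_1$ are untouched outside these disks. Excising the $D_i$ then yields a topological isotopy between $f_0|_{\Sigma^{b+n}_g}$ and $f_1|_{\Sigma^{b+n}_g}$ on the surface with $b+n$ boundary components and no marked points, and the injectivity of $\Forgetful$ in that setting (Earle--Eells and Earle--Schatz, together with the smoothing result of Lemma \ref{smoothing_paths_lemma}) upgrades it to a smooth isotopy, which extends across the $D_i$ by the identity.

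The main obstacle is the local normalization step near the marked points: the Alexander trick is purely topological and does not by itself produce smooth isotopies, so in the injectivity argument one must apply it in a two-parameter form that preserves the smooth endpoints $f_0$ and $f_1$ without introducing singularities in the time direction. This is precisely the role of Lemma \ref{smoothing_paths_lemma}, which converts the topological path of homeomorphisms produced after the Alexander normalization into an honest smooth path of diffeomorphisms. Once this technical point is in place, the remainder of the argument is a direct reduction to the known boundary case.
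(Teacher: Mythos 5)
Your approach (excise once-marked disks around the $p_i$ and reduce both directions to the boundary case) is a genuinely different route from the paper's, which instead proves surjectivity by triangulating with the marked points at vertices and running Hatcher's handle smoothing in place, and proves injectivity by induction on $n$ through the capping exact sequence. Your injectivity step, however, has a gap exactly where the capping sequence (Lemma~\ref{farb_lemma}) is doing work in the paper's argument. The parametrized normalization that makes $\tilde H_t$ the identity near each $p_i$ for all $t$ cannot always be carried out rel both endpoints: the restriction of $t \mapsto H_t$ to a small disk around $p_i$ carries a winding number in $\pi_1$ of the relevant local embedding space, which is $\mathbb{Z}$ (detected by $\pi_1(SO(2))$). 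When this winding number is nonzero, the corrected family necessarily ends not at the normalization $f_1'$ of $f_1$ but at $T_{\beta_i}^{m_i} \circ f_1'$ for some nonzero power of the Dehn twist about $\partial D_i$. After excision and the boundary-case injectivity you therefore obtain a smooth isotopy from $f_0'$ to $T_{\beta_i}^{m_i} f_1'$, not to $f_1'$, and the phrase ``extends across the $D_i$ by the identity'' silently assumes this twist is absent. To close the argument you must still observe that a Dehn twist about a curve bounding a once-marked disk is \emph{smoothly} null-isotopic rel the marked point (which is true, but must be proved); this is the same bookkeeping the paper carries out through Lemma~\ref{farb_lemma}.

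Two smaller issues. Lemma~\ref{smoothing_paths_lemma} does not ``convert the topological path of homeomorphisms produced after the Alexander normalization into an honest smooth path of diffeomorphisms'': it upgrades a continuous path of \emph{diffeomorphisms} (in the $C^\infty$ topology) to a smooth isotopy, and cannot turn homeomorphisms into diffeomorphisms. To normalize the smooth endpoints $f_0, f_1$ near the $p_i$ you should use Lemma~\ref{milnor_linearization}, which already produces a smooth isotopy; the topological Alexander move would leave you needing to show $f_0$ and $f_0'$ are smoothly isotopic, which is an instance of the theorem being proved. And for surjectivity, the Alexander trick is stated for a self-homeomorphism of a disk fixing the boundary, whereas your $f$ need not preserve any disk around $p_i$; isotoping a homeomorphism to the identity near a fixed point requires a Schoenflies or annulus-theorem style normalization rather than the Alexander trick verbatim. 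Both of these smaller points are repairable, but the Dehn twist obstruction above is a genuine missing idea.
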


This theorem can be split into two parts - showing the surjectivity and the injectivity of the forgetful map.
\begin{proposition}[Surjectivity] \label{surjectivity}
    Let $\Phi: \Sigma^b_{g,n} \rightarrow \Sigma^b_{g,n}$ be an orientation-preserving homeomorphism that fixes the marked points and the boundary. Then it is isotopic, relative to the marked points and the boundary, to a diffeomorphism that fixes the marked points and the boundary.
\end{proposition}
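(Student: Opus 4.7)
The plan is to reduce the problem to Hatcher's theorem on surfaces with boundary (Theorem~B of \cite{hatcher2013kirby}), which asserts that the inclusion $\Diff_{\partial}(\Sigma) \hookrightarrow \Homeo_{\partial}(\Sigma)$ is a weak homotopy equivalence for any compact surface $\Sigma$ with boundary; in particular, every orientation-preserving homeomorphism fixing the boundary pointwise is isotopic, rel boundary, to a diffeomorphism. The strategy is to first isotope $\Phi$, rel marked points and boundary, to a homeomorphism that coincides with the identity on an open neighborhood of each marked point, and then apply Hatcher's result to the complement of small disks around these points.

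For the local reduction I would treat one marked point at a time. Given $p_i$, I would choose nested closed disks $D_i \subset D_i^+ \subset D_i'$ centered at $p_i$, with $D_i'$ disjoint from the other marked points, from $\partial \Sigma^b_{g,n}$, and from the neighborhoods $D_j^+$ already constructed for $j < i$, and small enough that $\Phi(D_i^+) \subset \mathrm{int}(D_i')$. Since $\Phi(D_i^+)$ and $D_i^+$ are both embedded closed disks in $\mathrm{int}(D_i')$ containing $p_i$ in their interiors, the Brown--Schoenflies theorem in dimension two produces a homeomorphism $\alpha \colon D_i' \to D_i'$ fixing $\partial D_i' \cup \{p_i\}$ pointwise with $\alpha|_{\Phi(D_i^+)} = (\Phi|_{D_i^+})^{-1}$. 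The classical Alexander trick $h_t(x) := t\, h(x/t)$ (extended by the identity for $|x| > t$) contracts the group of orientation-preserving self-homeomorphisms of $D_i'$ fixing $\partial D_i'$ pointwise, and it automatically fixes the center of $D_i'$ throughout; hence the further subgroup fixing $\{p_i\}$ as well is also contractible, and so supplies an isotopy from the identity to $\alpha$ inside it. Extending by the identity outside $D_i'$ and composing with $\Phi$ yields an ambient isotopy of $\Phi$, rel marked points and boundary, whose endpoint equals the identity on $D_i^+$; taking $U_i := \mathrm{int}(D_i^+)$ gives the desired open neighborhood.

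Having arranged $\Phi|_{U_i} = \id$ for each $i$, I would excise the open disks $\mathrm{int}(D_i)$ to obtain a compact surface $\Sigma' := \Sigma^{b+n}_g$ with boundary and no marked points. The restriction $\Phi|_{\Sigma'}$ is then an orientation-preserving homeomorphism fixing $\partial \Sigma'$ pointwise, and it equals the identity on the collar $U_i \cap \Sigma'$ of each new boundary circle $\partial D_i$. Hatcher's Theorem~B supplies an isotopy rel $\partial \Sigma'$ from $\Phi|_{\Sigma'}$ to a diffeomorphism $\Psi'$ of $\Sigma'$; a standard smooth collar straightening, rel $\partial \Sigma'$, further isotopes $\Psi'$ to one that coincides with the identity on a smooth collar of $\partial \Sigma'$. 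Gluing $\Psi'$ by the identity across each $D_i$ then produces a diffeomorphism $\Psi$ of $\Sigma^b_{g,n}$ fixing marked points and boundary, and concatenating the isotopies (extended by the identity across each $D_i$) delivers the required isotopy from $\Phi$ to $\Psi$.

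The main obstacle is the local reduction near each marked point. It is genuinely two-dimensional, relying on the Brown--Schoenflies theorem (to bring $\Phi(D_i^+)$ into standard position inside $D_i'$) and the pointed Alexander trick (to produce the required ambient isotopy within $D_i'$ rel $\{p_i\}$). Both are classical facts with no direct higher-dimensional analogue; once they are in place, the remainder of the argument is a routine packaging together of Hatcher's theorem with standard boundary-collar manipulations.
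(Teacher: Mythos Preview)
Your proof is correct, but it takes a genuinely different route from the paper's. The paper does not reduce to the boundary case; instead it reopens Hatcher's handle-smoothing argument and observes that the $0$-handle smoothing step already fixes the origin of the embedded $\mathbb{R}^2$. Hence, choosing a triangulation of $\Sigma^b_{g,n}$ whose vertex set contains the marked points, the entire chain of handle smoothings (vertices, then edges, then faces) is automatically an isotopy rel marked points and boundary. Your approach, by contrast, treats Hatcher's Theorem~B as a black box: you first use Schoenflies and the pointed Alexander trick to force $\Phi$ to be the identity near each marked point, then excise disks to land in the pure boundary setting, invoke Hatcher, and finally do a collar straightening to glue back smoothly. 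The paper's argument is more economical---one observation about Hatcher's proof suffices, with no Schoenflies, no separate Alexander trick, and no collar manipulation---while yours has the advantage of being modular and not requiring the reader to inspect the internals of the handle-smoothing proof. Both ultimately rest on the same two-dimensional ingredients.
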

\begin{proposition}[Injectivity] \label{injectivity}
    Let $\Phi: \Sigma^b_{g,n} \rightarrow \Sigma^b_{g,n}$ be an orientation-preserving diffeomorphism that fixes the marked points and the boundary. Assume that it is isotopic to the identity map, relative to the marked points and the boundary. Then, it is smoothly isotopic to the identity map, relative to the marked points and the boundary.
\end{proposition}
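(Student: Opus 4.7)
The plan is to reduce Proposition \ref{injectivity} to the known case of surfaces with boundary and no marked points, treated in Earle-Eels, Earle-Schatz, and Lemma \ref{smoothing_paths_lemma}. The reduction proceeds by ``blowing up'' each marked point into a boundary circle, arguing on the resulting surface, and then closing things back up.

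First, I would normalize $\Phi$ near each marked point. Since $GL_2^+(\mathbb{R})$ is path-connected and $\Phi$ is orientation-preserving, the differential $d\Phi_{p_i}$ can be connected to the identity by a smooth path in $GL_2^+$; using a smooth bump function supported in a small disk $D_i$ around $p_i$ (the $D_i$ chosen pairwise disjoint and disjoint from $\partial \Sigma^b_{g,n}$), one constructs a smooth isotopy, relative to the marked points and boundary, from $\Phi$ to a diffeomorphism whose $1$-jet at each $p_i$ is trivial. A further smooth isotopy supported in smaller disks $D_i' \Subset D_i$, again relative to marked points and boundary, produces a diffeomorphism $\Phi_1$ that equals the identity map on each $D_i'$. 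Since a smooth isotopy is in particular a topological one, $\Phi_1$ remains topologically isotopic to the identity rel marked points and boundary.

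Second, cut out the interiors of the disks $D_i'$ to obtain a compact surface $\tilde \Sigma$ of genus $g$ with $b+n$ boundary components; the restriction $\tilde \Phi := \Phi_1|_{\tilde \Sigma}$ is a smooth orientation-preserving diffeomorphism fixing $\partial \tilde \Sigma$ pointwise. The \emph{main obstacle} is to upgrade the topological nullisotopy of $\Phi_1$ rel marked points to a topological nullisotopy of $\tilde \Phi$ rel $\partial \tilde \Sigma$. For this I would use topological isotopy extension (valid in dimension two by Edwards-Kirby) applied to the family of disks $\{H_t(D_i')\}_{t \in [0,1]}$, where $H_t$ is the given topological nullisotopy; this lets us arrange $H_t$ to preserve each $D_i'$ setwise for every $t$. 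Inside each $D_i'$, the resulting loop of self-homeomorphisms fixing the center can then be contracted by the parametrized Alexander trick, and the potential Dehn-twist winding about $\partial D_i'$ must vanish because $\Phi_1$ is literally the identity on $D_i'$, so the loop begins at $\mathrm{id}_{D_i'}$ with trivial winding number. Restricting the resulting topological isotopy to $\tilde \Sigma$ yields the desired topological nullisotopy of $\tilde \Phi$ rel $\partial \tilde \Sigma$.

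Finally, Earle-Eels, Earle-Schatz, and Lemma \ref{smoothing_paths_lemma} applied to $\tilde \Sigma \cong \Sigma^{b+n}_g$ produce a smooth isotopy from $\tilde \Phi$ to the identity rel $\partial \tilde \Sigma$. Extending this isotopy by the identity on each $D_i'$ gives a smooth isotopy from $\Phi_1$ to $\mathrm{id}$ rel marked points and boundary; concatenating with the smooth isotopy from $\Phi$ to $\Phi_1$ of the first step completes the proof. The essential difficulty, as noted, is the second step: promoting the topological nullisotopy from rel marked points to rel boundary circles, where an isotopy extension argument in the topological category together with a careful bookkeeping of the Dehn-twist ambiguity must be invoked.
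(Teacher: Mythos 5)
Your overall strategy — linearize $\Phi$ near the marked points, cut out disks to reduce to $\Sigma^{b+n}_g$ (no marked points), apply Earle--Eels/Earle--Schatz/Lemma \ref{smoothing_paths_lemma}, and glue back — is the same in spirit as the paper's, which instead removes the marked disks one at a time by induction. You also correctly identify the one genuine difficulty: when passing from a topological nullisotopy rel marked points to a topological nullisotopy rel the new boundary circles $\partial D_i'$, there is a Dehn-twist ambiguity around each $\partial D_i'$. Unfortunately, your argument for resolving this ambiguity is wrong, and this is the crux of the proof.

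You claim that ``the potential Dehn-twist winding about $\partial D_i'$ must vanish because $\Phi_1$ is literally the identity on $D_i'$, so the loop begins at $\mathrm{id}_{D_i'}$ with trivial winding number.'' This does not follow. After arranging (via isotopy extension) that $H_t(D_i') = D_i'$ for all $t$, the restrictions $H_t|_{D_i'}$ form a \emph{loop} in $\Homeo_+(D_i', p_i)$ based at the identity. The fact that the loop is based at $\mathrm{id}$ is just what makes it a loop; it says nothing about its homotopy class. The space $\Homeo_+(D^2, 0)$ of orientation-preserving homeomorphisms fixing the center retracts, via restriction to $\partial D^2$ and the Alexander trick, onto $\Homeo_+(S^1) \simeq S^1$, so $\pi_1(\Homeo_+(D^2,0)) \cong \mathbb{Z}$, and the winding number of your loop can be any integer $m_i$. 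When $m_i \ne 0$, the restriction $\tilde\Phi = \Phi_1|_{\tilde\Sigma}$ is topologically isotopic rel $\partial\tilde\Sigma$ not to the identity but to $T_{\partial D_i'}^{m_i}$, which is a \emph{nontrivial} element of $\mcg(\tilde\Sigma)$ (this is precisely what the Farb--Margalit capping sequence, Lemma \ref{farb_lemma}, records: the kernel of capping is the infinite cyclic group generated by the boundary twist). If your claim were correct, the capping map would be injective, contradicting that lemma. So as written the argument fails, and Earle--Schatz cannot then be applied to conclude nullisotopy of $\tilde\Phi$ rel $\partial\tilde\Sigma$.

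The fix is exactly what the paper does: measure the winding $m_i$ explicitly (via Lemma \ref{farb_lemma}), and then \emph{correct} $\Phi_1$ by precomposing with a smooth isotopy supported near $D_i'$ that rotates the small disk $m_i$ full turns (a bump function times an infinitesimal rotation vector field). This rotation is a smooth isotopy of $\Sigma^b_{g,n}$ rel the marked points and the boundary, it is the identity on $D_i'$ at time $1$, and its restriction to $\tilde\Sigma$ multiplies the mapping class of $\tilde\Phi$ by $T_{\partial D_i'}^{-m_i}$. Only after this correction can you invoke the $b+n$ boundary-component case. Separately, there is a second, more minor gap: once you have a smooth isotopy $G_t$ of $\tilde\Sigma$ to $\mathrm{id}$ rel $\partial\tilde\Sigma$, extending it by the identity on the $D_i'$ need not produce a smooth map of $\Sigma^b_{g,n}$ — $G_t$ fixes $\partial\tilde\Sigma$ pointwise but not necessarily a collar. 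The paper handles this via Lemma \ref{smooth_neighborhood_of_boundary} (upgrading the isotopy to be the identity on a collar, using Cerf's result on embeddings of collars); you need an analogous step before gluing.
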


\subsection{Surjectivity}
When the surface has no marked points, Proposition~\ref{surjectivity} follows from the handle smoothing theorem (see~\cite[Section I.3]{kirby_sib}). More recently, Hatcher~\cite{hatcher2013kirby} has written an expository paper on this topic, proving the handle smoothing theorem, and deducing the proposition for surfaces with boundary but no marked points. We will show that it is straight-forward to adapt his proof to allow for marked points. First, we give Hatcher's statement for the handle smoothing theorem, with a slight modification.
\begin{theorem}[Handle Smoothing Theorem]
    Let $\Sigma^b_g$ be a smooth surface.
    \begin{enumerate}
        \setcounter{enumi}{-1}
        \item A topological embedding $\mathbb{R}^2 \rightarrow \Sigma^b_g$ can be isotoped to be a smooth embedding in a neighborhood of the origin, staying fixed outside a larger neighborhood of the origin, and at the origin.
        \item A topological embedding $D^1 \times \mathbb{R} \rightarrow \Sigma^b_g$ which is a smooth embedding near $\partial D^1 \times \mathbb{R}$ can be isotoped to be a smooth embedding in a neighborhood of $D^1 \times \{0\}$, staying fixed outside a larger neighborhood of $D^1 \times \{0\}$ and near $\partial D^1 \times \mathbb{R}$.
        \item A topological embedding $D^2 \rightarrow \Sigma^b_g$ which is a smooth embedding near $\partial D^2$ can be isotoped to be a smooth embedding on all of $D^2$, staying fixed near $\partial D^2$.
    \end{enumerate}
\end{theorem}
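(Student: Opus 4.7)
The plan is to prove each of the three parts by reducing to a purely local problem about topological embeddings in $\mathbb{R}^2$ and then invoking classical planar theorems---Schoenflies, smooth Schoenflies, and the Alexander trick. The underlying principle is that in dimension two every topologically embedded disk, arc, or point in a smooth surface can be smoothed via an ambient isotopy, relative to any region where the embedding is already smooth.

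For Part~(0), I would first shrink the domain so that the image lies in a single smooth chart of $\Sigma^b_g$ around $f(0)$. This reduces to the following statement in the plane: given a topological embedding $\phi$ of a neighborhood of $0 \in \mathbb{R}^2$ with $\phi(0) = 0$, there is a compactly supported ambient isotopy of $\mathbb{R}^2$, starting at the identity and fixing $0$, after which $\phi$ becomes smooth on a smaller neighborhood of $0$. The image of a small round circle under $\phi$ is a Jordan curve bounding a topological disk by planar Schoenflies; a smooth approximation of this curve together with the Alexander trick on the resulting boundary-fixing homeomorphism of the disk produces the required smoothing, while the isotopy-extension theorem transports the construction back to $\Sigma^b_g$.

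For Part~(1), I would cover $D^1 \times \{0\}$ by finitely many short subarcs whose images lie in smooth charts, and apply Part~(0) inductively along these subarcs to smooth the embedding on a neighborhood of $D^1 \times \{0\}$, while preserving the already-smooth behavior near $\partial D^1 \times \mathbb{R}$. Once the core arc is a smooth arc $\alpha \subset \Sigma^b_g$, choose a smooth tubular neighborhood of $\alpha$ that agrees with the given smooth embedding near the ends. In these tubular coordinates, $\phi$ becomes a topological self-embedding of $D^1 \times \mathbb{R}$ which is a diffeomorphism on $D^1 \times \{0\}$ and near $\partial D^1 \times \mathbb{R}$; a fiberwise (parameter-uniform) application of Part~(0) smooths it transversally to $\alpha$. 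For Part~(2), the essential input is the smooth Schoenflies theorem in the plane: every smooth Jordan curve bounds a smooth disk and admits an ambient diffeomorphism onto the standard disk extending any prescribed smooth boundary parametrization. Since $\phi$ is smooth near $\partial D^2$, its image of $\partial D^2$ is a smooth Jordan curve in $\Sigma^b_g$; working in a smooth chart of a tubular neighborhood of this curve, smooth Schoenflies yields a diffeomorphism of the enclosed region onto a standard smooth disk. The Alexander trick, applied to the composition of this diffeomorphism with the original $\phi$, then provides the required isotopy, supported in a neighborhood of $\phi(D^2)$ and trivial near $\partial D^2$.

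The main obstacle will be the bookkeeping in Part~(1): one must concatenate the local smoothings from Part~(0) along the arc via ambient isotopies whose supports nest correctly, so that each inductive step preserves the smoothness already established and does not disturb the regions near $\partial D^1 \times \mathbb{R}$. A ``stop--go'' device, in which the local smoothing is introduced gradually as one moves along the arc via a suitable cutoff in the isotopy parameter, is well-suited to this; this is essentially Hatcher's strategy in \cite{hatcher2013kirby} adapted to our setting, and the only modification needed for surfaces with marked points or boundary is to arrange that each isotopy fixes a prescribed finite set and a prescribed collar, which comes for free from the ``compactly supported away from'' clauses in each Part.
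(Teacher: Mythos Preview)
The paper does not prove the Handle Smoothing Theorem; it quotes the statement from Hatcher's expository note \cite{hatcher2013kirby} and only remarks that Hatcher's proof of Part~(0) already fixes the origin. So there is no ``paper's proof'' to compare against beyond that reference. It is worth noting, however, that Hatcher's actual argument for Part~(0) is the Kirby torus trick: one immerses a punctured torus into the surface through the $0$-handle, pulls back the smooth structure, passes to the universal cover, and uses Schoenflies together with a boundedness argument at infinity to conclude that the pulled-back structure is standard; the resulting diffeomorphism is then pushed back down to produce the local smoothing. Your closing claim that your outline ``is essentially Hatcher's strategy'' is therefore not accurate.

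Your direct Schoenflies/Alexander approach has a genuine gap in Part~(0). If you replace $\phi|_{D_r}$ by a map agreeing with $\phi$ on $\partial D_r$, the new map must still land in the \emph{topological} disk $\phi(D_r)$, whose boundary is in general a wild Jordan curve, so the replacement cannot be a smooth embedding near that boundary. Conversely, if you replace $\phi|_{D_{r/2}}$ by a smooth map into a \emph{smooth} disk, it no longer matches $\phi$ on the boundary, and you have given no mechanism to interpolate across the annulus $D_r\setminus D_{r/2}$ while remaining an embedding. That interpolation problem is exactly what the torus trick solves. A related issue affects your Part~(2): smooth Schoenflies is a statement about curves in $\mathbb{R}^2$, but $\phi(D^2)$ need not lie in any single chart of $\Sigma^b_g$; what you actually need is that the smooth structure $\phi(D^2)$ inherits from $\Sigma^b_g$ is the standard one, and in Hatcher's development this is deduced \emph{from} Parts~(0) and~(1) rather than taken as independent input.
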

The three different parts in the statement of the handle smoothing theorem correspond to $0$-handle smoothing, $1$-handle smoothing, and $2$-handle smoothing, respectively. The difference from Hatcher's statement is that we require that the isotopy of the $0$-handle fixes the origin of $\mathbb{R}^2$. In Hatcher's paper, he did not explicitly require this, but his proof shows this, and he even mentions this at the end of the proof. 
\begin{proof}[Proof of Proposition~\ref{surjectivity}]
    We follow Hatcher's proof whose steps are as follows:
    \begin{enumerate}
        \item Choose a triangulation of $\Sigma_{g, n}^b$.
        \item Isotope the homeomorphism such that it is a smooth embedding in a neighborhood of each vertex of the triangulation, using the $0$-handle smoothing theorem.
        \item Isotope the resulted homeomorphism such that it is a smooth embedding in a neighborhood of each edge of the triangulation, using the $1$-handle smoothing theorem.
        \item Isotope the resulted homeomorphism such that it is a smooth embedding in each face, using the $2$-handle smoothing theorem.
    \end{enumerate}
    The isotopies that smooth the edges and faces of the triangulation are supported away from a neighborhood of the vertices of the triangulation. Moreover, as we noted above, the isotopy given by the $0$-handle smoothing theorem fixes the origin of $\mathbb{R}^2$, and thus the isotopy that smooths the vertices also fixes the vertices. Therefore, the vertices of the triangulation are fixed by the final isotopy constructed in the proof. Hence, by choosing a triangulation such that the marked points lie on the vertices, we can construct the wanted isotopy in the exact same way as in the original proof.
\end{proof}

\subsection{Injectivity}
First, we introduce the theorems of~\cite{earle_eels} and~\cite{earle_schatz}. Let $D(\Sigma^b_{g, n})$ be the subgroup of $\Diff_+(\Sigma^b_{g, n})$ consisting of diffeomorphisms that are homotopic to the identity, relative to the marked points and the boundary.
\begin{theorem}[Corollary 1E in~\cite{earle_eels}] \label{EE67Thm}
    Let $\Sigma_g$ be a closed oriented surface of genus $g$.
    \begin{enumerate}
        \item If $g=0$, meaning that $\Sigma_g$ is the sphere $S^2$, then $D(S^2)$ has $SO(3)$ as strong deformation retract.
        \item If $g=1$, meaning that $\Sigma_g$ is the torus $\mathbb{T}^2$, then $D(\mathbb{T}^2)$ has $\mathbb{T}^2$ as strong deformation retract.
        \item If $g \ge 2$, then $D(\Sigma_g)$ is contractible.
    \end{enumerate}
\end{theorem}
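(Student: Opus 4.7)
The plan is to reduce each case to the topology of an auxiliary space of geometric structures on $\Sigma_g$, on which $\Diff_+(\Sigma_g)$ acts in a controlled way, and then extract the homotopy type of $D(\Sigma_g)$ from a fibration argument. The three cases use three different geometric structures (spherical, flat, hyperbolic) corresponding to the trichotomy $g = 0$, $g = 1$, $g \ge 2$.

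For $g=0$, the plan is simply to appeal to Smale's theorem (1959), which asserts that the inclusion $O(3) \hookrightarrow \Diff(S^2)$ is a homotopy equivalence. Restricting to the orientation-preserving subgroup yields a homotopy equivalence $SO(3) \simeq \Diff_+(S^2)$, and since $SO(3)$ is path-connected this equals $D(S^2)$; a strong deformation retraction can be arranged by Moser-type straightening of a smooth metric toward a round one.

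For $g=1$, I would pass to the evaluation fibration $\ev_{x_0} \colon D(\mathbb{T}^2) \to \mathbb{T}^2$, $f \mapsto f(x_0)$, which is a locally trivial fiber bundle (via a Cerf-type construction using isotopy extension and a local slice at $x_0$). The subgroup of translations embeds as a section, so the fibration splits up to homotopy, giving $D(\mathbb{T}^2) \simeq \mathbb{T}^2 \times D_*(\mathbb{T}^2)$, where $D_*$ denotes the fiber over $x_0$. It then suffices to show $D_*(\mathbb{T}^2)$ is contractible. For this I would lift to the universal cover $\mathbb{R}^2$: every element of $D_*(\mathbb{T}^2)$ lifts uniquely to a $\mathbb{Z}^2$-equivariant diffeomorphism of $\mathbb{R}^2$ fixing the origin and isotopic to the identity, and the straight-line homotopy $f_t(x) = (1-t)\tilde{f}(x) + tx$ is $\mathbb{Z}^2$-equivariant and (for small enough deformation, then by averaging with Alexander's trick) through diffeomorphisms, yielding the contraction.

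For $g \ge 2$, the plan is to use hyperbolic geometry and Teichm\"uller theory. Let $\mathcal{H}(\Sigma_g)$ be the space of smooth Riemannian metrics of constant curvature $-1$; it is a (contractible) Fr\'echet submanifold of the space of all metrics, which can be seen either by uniformization combined with Ebin's slice theorem, or by identifying $\mathcal{H}(\Sigma_g)$ modulo rescaling with an open subset of a Fr\'echet space of conformal structures. The group $\Diff_+(\Sigma_g)$ acts on $\mathcal{H}(\Sigma_g)$ by pullback, and the restricted action of $D(\Sigma_g)$ is free: if $\varphi \in D(\Sigma_g)$ preserves a hyperbolic metric $g$ then $\varphi$ is an isometry; isometries of a closed hyperbolic surface form a finite group, and an isometry isotopic to the identity lifts to the universal cover $\mathbb{H}^2$ to a deck-transformation-commuting isometry that acts trivially on $\pi_1$, which forces $\varphi = \id$. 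Ebin's slice theorem provides local slices, so the quotient map $\mathcal{H}(\Sigma_g) \to \mathcal{H}(\Sigma_g)/D(\Sigma_g) = \mathcal{T}(\Sigma_g)$ is a principal $D(\Sigma_g)$-bundle, with contractible total space and contractible base $\mathcal{T}(\Sigma_g) \cong \mathbb{R}^{6g-6}$. The long exact sequence of this fibration gives $\pi_k(D(\Sigma_g)) = 0$ for all $k$, and since $D(\Sigma_g)$ is a Fr\'echet manifold (hence has the homotopy type of a CW complex by Palais), it is contractible.

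The main obstacles I would expect are analytic rather than topological: rigorously establishing local slices and the fiber bundle structure in each case (especially proving that $\ev_{x_0}$ and the orbit map on $\mathcal{H}(\Sigma_g)$ are locally trivial in the Fr\'echet setting), and verifying contractibility of $\mathcal{H}(\Sigma_g)$. Once these analytic points are in place, the rest is a straightforward long-exact-sequence computation.
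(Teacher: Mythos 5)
The paper does not prove this statement; it is Corollary~1E of Earle--Eells (1969) and is quoted as an external result. Your sketch is therefore doing strictly more than the paper, so I will evaluate it on its own terms rather than against a nonexistent internal proof. Your overall strategy --- Smale's theorem for $g=0$, an evaluation-fibration reduction for $g=1$, and a slice/Teichm\"uller fibration for $g\ge 2$ --- is the standard route, and for $g=0$ and $g\ge 2$ your outline is sound modulo the Fr\'echet analytic technicalities you explicitly flag (local triviality, contractibility of the space of hyperbolic metrics via the identification with conformal structures, and the CW homotopy type via Palais).

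There is a genuine gap in the $g=1$ case. After reducing to the contractibility of $D_*(\mathbb{T}^2)$ and lifting $f$ to a $\mathbb{Z}^2$-equivariant diffeomorphism $\tilde f$ of $\mathbb{R}^2$ fixing the origin, you propose the straight-line homotopy $f_t(x) = (1-t)\tilde f(x) + tx$. This homotopy is indeed $\mathbb{Z}^2$-equivariant, but it is \emph{not} through diffeomorphisms in general: if at some point $p$ one has $D\tilde f_p = -I$ (which is perfectly possible for a diffeomorphism of $\mathbb{T}^2$ isotopic to the identity, e.g.\ one that rotates a small disk by $\pi$ and is the identity outside a larger disk), then at $t = 1/2$ the Jacobian $(1-t)D\tilde f_p + tI$ vanishes. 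The parenthetical ``for small enough deformation, then by averaging with Alexander's trick'' does not repair this: contracting only a small $C^1$-neighborhood of the identity is insufficient for a deformation retraction of the whole group, and Alexander's trick applies to homeomorphisms of the disk fixing the boundary, not to equivariant diffeomorphisms of $\mathbb{R}^2$. The correct treatment of the torus case in Earle--Eells runs parallel to the higher-genus case: one uses the action of $D_*(\mathbb{T}^2)$ on the (contractible) space of flat conformal structures and the identification of the quotient with the Teichm\"uller space $\mathbb{H}$, not a linear interpolation of maps. Your $g\ge 2$ argument already contains all the machinery needed; the cleanest fix is to run that same slice/fibration argument with flat metrics (or conformal structures) in place of hyperbolic ones, taking care that the isometry group is now a positive-dimensional torus rather than finite, which is precisely what produces the residual $\mathbb{T}^2$ in the answer.
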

\begin{theorem}[Theorem 1D in~\cite{earle_schatz}] \label{ES70Thm}
    Let $\Sigma^b_g$ be an oriented surface of genus $g$. If $b > 0$, then $D(\Sigma^b_g)$ is contractible.
\end{theorem}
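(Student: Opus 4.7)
The plan is to deduce Theorem \ref{ES70Thm} from the closed-surface case (Theorem \ref{EE67Thm}) by a sequence of evaluation fibrations, taking the Alexander trick as the base case.

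The base case $(g, b) = (0, 1)$ is the disk. The Alexander trick $(f, t) \mapsto t\,f(\cdot/t)$ for $t > 0$ (and the identity for $t = 0$) gives an explicit deformation retraction of $\operatorname{Diff}(D^2, \partial D^2)$ onto the identity, so $D(D^2, \partial D^2) \simeq \ast$. With this in hand, one proves the general fact that for any closed surface $\Sigma$ and any point $p \in \Sigma$ equipped with an oriented tangent frame $F$, the group of diffeomorphisms of $\Sigma$ fixing a chosen disk neighborhood of $p$ pointwise is homotopy equivalent to the group fixing only the one-jet $(p, F)$; indeed, the fiber of the restriction map between them is a torsor for $\operatorname{Diff}(D^2, \partial D^2)$, hence contractible. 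Capping off each boundary component of $\Sigma^b_g$ by a disk with marked center $p_i$ and frame $F_i$ therefore yields a weak equivalence
\begin{equation*}
    D(\Sigma^b_g) \;\simeq\; D\bigl(\Sigma_g;\, p_1, F_1, \ldots, p_b, F_b\bigr),
\end{equation*}
reducing the problem to a framed diffeomorphism group of the closed surface.

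Next, I would peel off framed points one at a time via the evaluation fibration
\begin{equation*}
    \operatorname{Diff}(\Sigma_g;\, p_1, F_1, \ldots, p_i, F_i) \longrightarrow \operatorname{Diff}(\Sigma_g;\, p_1, F_1, \ldots, p_{i-1}, F_{i-1}) \longrightarrow \mathrm{Fr}^+\!\bigl(\Sigma_g \setminus \{p_1, \ldots, p_{i-1}\}\bigr),
\end{equation*}
whose base is the oriented frame bundle of the current punctured surface. This base is aspherical whenever $g \geq 1$, and also for $g = 0$ once at least three punctures are present; in the remaining small genus-zero cases it is homotopy equivalent to a circle or a wedge of circles. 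Starting from Theorem \ref{EE67Thm}, which supplies the homotopy type of $D(\Sigma_g)$, the long exact sequence in homotopy applied to identity components at each stage inductively forces $\pi_k$ of the total space to vanish for all $k \geq 1$. Since $D(\Sigma^b_g)$ has the homotopy type of a CW complex, Whitehead's theorem then upgrades weak contractibility to contractibility.

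The main obstacle will be tracking connected components carefully. The fibrations above operate on the full $\operatorname{Diff}$ groups while the theorem concerns only the identity component $D$, so at each peel-off step one must appeal to the Birman exact sequence to identify the kernel of the induced $\pi_0$-map with point-pushing loops in $\pi_1$ of the base, and then verify that the identity component of the total space maps onto the identity component of the base with the expected fiber. The genus-zero cases with few boundary components require separate verification: $(g, b) = (0, 1)$ is the disk handled by Alexander, $(g, b) = (0, 2)$ is the annulus and can be treated by a short direct argument (or by one further fibration step, noting that $D(S^2) \simeq SO(3)$ contributes the expected $\pi_1$ and $\pi_3$ that must be cancelled), and genus-zero cases with $b \geq 3$ are absorbed into the main induction once enough punctures of $S^2$ have accumulated to guarantee asphericity of the base.
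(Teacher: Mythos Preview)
The paper does not give its own proof of this statement; it is cited as an external result from Earle--Schatz, whose methods are complex-analytic (Teichm\"uller theory). So there is no in-paper argument to compare against, and your proposal is an independent attempt at a topological proof.

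There is a genuine gap at the very first step. The Alexander trick $(f,t)\mapsto t\,f(\cdot/t)$ contracts $\operatorname{Homeo}(D^2,\partial D^2)$, not $\operatorname{Diff}(D^2,\partial D^2)$: the coned isotopy is not smooth across the shrinking sphere $|x|=t$, and the family is not continuous into $\operatorname{Diff}$ with the $C^\infty$ topology at $t=0$. The contractibility of $\operatorname{Diff}(D^2,\partial D^2)$ is Smale's theorem, proved by an entirely different argument (straightening vector fields via Poincar\'e--Bendixson); it is not a formality. Since your whole induction rests on this base case, the argument as written does not stand. If you swap in Smale's theorem as a black box, the fibration strategy you outline is essentially Gramain's approach and can be made to work, but you should be aware that the low-genus bookkeeping (the annulus, and tracking $\pi_0$ through the Birman sequence) requires more than the one-line dismissals you give it.
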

For smoothing paths in the diffeomorphisms group, we will need the following lemma:
\begin{lemma}\label{smoothing_paths_lemma}
    Let $\Phi:M \rightarrow M$ be a diffeomorphism that fixes a subset $A$ of $M$ that contains the boundary $\partial M$. Assume that there is a continuous path of diffeomorphisms $F_t$ in $\Diff_+(M)$, connecting $\Phi$ to the identity map, such that each $F_t$ fixes $A$. Then $\Phi$ is smoothly isotopic to the identity map, relative to $A$.
\end{lemma}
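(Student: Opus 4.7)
The plan is to upgrade the continuous path $F_t$ to a smooth family of diffeomorphisms fixing $A$ by a standard mollification argument in the time variable, carried out locally in the diffeomorphism group via the exponential map of a fixed background metric.

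First, I would reparametrize the path. Choose a smooth function $\rho:[0,1]\to[0,1]$ that is identically $0$ near $t=0$ and identically $1$ near $t=1$, and replace $F_t$ by $F_{\rho(t)}$. The new continuous path still fixes $A$ pointwise, still connects $\mathrm{id}$ to $\Phi$, and is now \emph{constant} (equal to $\mathrm{id}$, resp.\ $\Phi$) on small neighborhoods of the endpoints. This reduction means it will suffice to produce a smooth approximation on $[0,1]\times M$ that agrees with the original path near $t=0$ and $t=1$ and on $A$; such an approximation is automatically a smooth isotopy relative to $A$ from $\mathrm{id}$ to $\Phi$.

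Next, fix an equivariant-free choice: take a Riemannian metric $g$ on $M$ for which $A$ is totally geodesic in a neighborhood, and let $\exp$ denote its exponential map. Cover the compact interval $[0,1]$ by finitely many small intervals $I_j=[t_j,t_{j+1}]$ chosen so fine that, for every $t\in I_j$, the composition $F_t\circ F_{t_j}^{-1}$ lies in a $C^0$-small neighborhood of the identity on which one can write
\begin{equation*}
F_t\circ F_{t_j}^{-1}(x)=\exp_x\bigl(V_{j,t}(x)\bigr)
\end{equation*}
for a uniquely determined continuously $t$-dependent vector field $V_{j,t}$ with $V_{j,t_j}\equiv 0$. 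Because each $F_t$ fixes $A$ and $A$ is totally geodesic, each $V_{j,t}$ vanishes on $A$. Now mollify $V_{j,t}$ in the $t$ variable by a compactly supported smooth kernel of small width, obtaining a smooth family $\widetilde V_{j,t}$ that still vanishes on $A$ (mollification of a function that vanishes on $A$ still vanishes on $A$, since it is an integral of such functions evaluated pointwise) and is $C^\infty$-close to $V_{j,t}$ in $x$ for every fixed $t$; by the openness of $\Diff^\infty(M)$ in suitable $C^1$ neighborhoods of the identity, $\widetilde F_{j,t}(x):=\exp_x(\widetilde V_{j,t}(x))$ is a smooth family of diffeomorphisms for the chosen width.

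Finally, I would patch. On each interval $I_j$ define the smoothed path by $\widetilde F_t:=\widetilde F_{j,t}\circ F_{t_j}$, and interpolate between consecutive intervals using a smooth partition of unity in $t$, applied to the vector fields (which live in a linear space) rather than to the maps themselves. Near the endpoints $0$ and $1$ the reparametrization has already made the path constant, so no smoothing is needed there, and the glued family agrees with $\mathrm{id}$ near $t=0$ and $\Phi$ near $t=1$. The resulting $\widetilde F_t$ is smooth in $(t,x)$, a diffeomorphism for every $t$, fixes $A$ pointwise, equals $\mathrm{id}$ at $t=0$ and $\Phi$ at $t=1$, and is therefore the required smooth isotopy relative to $A$.

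The main technical obstacle is the patching step: one must ensure that the smoothed vector fields on overlapping intervals combine to give a family that is simultaneously smooth in $t$, still $C^1$-close to the identity to preserve the diffeomorphism property, and still identically zero on $A$. This is handled by performing all interpolations at the level of vector fields (a linear operation that preserves vanishing on $A$) before exponentiating, and by choosing the mollification width small enough compared with the $C^1$-neighborhood on which $\exp$ is a chart.
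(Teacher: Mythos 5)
Your starting point matches the paper's: cover $[0,1]$ by finitely many intervals $I_j=[t_j,t_{j+1}]$ fine enough that each transition $G_j:=F_{t_{j+1}}\circ F_{t_j}^{-1}$ is $C^\infty$-close to the identity and fixes $A$. The paper then invokes a classical result (cited to Kupers) that each such $G_j$ is smoothly isotopic to $\mathrm{id}$ relative to $A$, and concatenates. Your proposal instead tries to smooth the entire path at once by mollifying in $t$ and then gluing, and it is in this gluing step that a genuine gap appears.

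The vector fields $\widetilde V_{j,t}$ and $\widetilde V_{j+1,t}$ are defined relative to different base points $F_{t_j}$ and $F_{t_{j+1}}$, so they cannot be combined by a partition of unity in $t$: there is no shared linear chart in which both live. Before mollification the piecewise paths $\exp(V_{j,t})\circ F_{t_j}$ and $\exp(V_{j+1,t})\circ F_{t_{j+1}}$ agree wherever both are defined (both equal $F_t$), but after independent time-mollification they no longer agree at or near $t_{j+1}$, so gluing them does not produce a continuous --- let alone smooth --- family. Your remark about interpolating ``at the level of vector fields'' is the right instinct, but the writeup does not make the two families commensurable, and the argument as stated does not close.

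There is a simpler route within your own framework that sidesteps both the patching and the mollification entirely: once $G_j=\exp(V_j)$ with $V_j$ small and vanishing on $A$ (the ``totally geodesic'' hypothesis is superfluous --- $\exp_x$ is a local diffeomorphism at $0\in T_xM$, so any small vector field $V_j$ with $\exp_x(V_j(x))=x$ on $A$ automatically vanishes on $A$), the linear isotopy $s\mapsto\exp(sV_j)$ is already smooth, fixes $A$, and joins $\mathrm{id}$ to $G_j$. Reparametrize each of the $k$ pieces so that it is constant near its endpoints and concatenate; the result is a smooth isotopy from $\mathrm{id}$ to $\Phi$ relative to $A$. This is, in effect, what the classical lemma that the paper cites furnishes, applied piece by piece --- which is exactly the paper's strategy.
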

\begin{proof}
    By writing $\Phi$ as a composition of finitely many diffeomorphisms that fix $A$ which are very close to the identity map in the $C^\infty$-topology, we are left with showing the following claim: If a diffeomorphism $F$, that fixes some set $A$, is very close to the identity map in the $C^\infty$ topology, then it is smoothly isotopic to the identity map, relative to $A$. This is a classical result, see for example the proof of~\cite[Proposition 5.3.2]{kupers2019lectures}.
\end{proof}
\begin{corollary}\label{homotopic_means_isotopic_without_marked_points}
    Let $\Phi: \Sigma^b_g \rightarrow \Sigma^b_g$ be an orientation-preserving diffeomorphism that fixes the boundary. Assume that it is homotopic to the identity map, relative to the boundary. Then, it is smoothly isotopic to the identity map, relative to the boundary.
\end{corollary}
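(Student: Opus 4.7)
The plan is to deduce the corollary directly from the hypothesis together with Theorems \ref{EE67Thm} and \ref{ES70Thm}, via Lemma \ref{smoothing_paths_lemma}. The key observation is that the hypothesis that $\Phi$ is homotopic to $\id$ relative to the boundary places $\Phi$ inside the subgroup $D(\Sigma_g^b) \subset \Diff_+(\Sigma_g^b)$. So the corollary reduces to showing that $D(\Sigma_g^b)$ is path-connected; once this is known, a continuous path from $\Phi$ to $\id$ through boundary-fixing diffeomorphisms can be smoothed to a genuine smooth isotopy relative to the boundary by Lemma \ref{smoothing_paths_lemma}, which is exactly what the corollary demands.

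To establish that $D(\Sigma_g^b)$ is path-connected, I would split into cases according to the two theorems at hand. If $b > 0$, then Theorem \ref{ES70Thm} states that $D(\Sigma_g^b)$ is contractible, hence path-connected. If $b = 0$, then by Theorem \ref{EE67Thm} the space $D(\Sigma_g)$ admits $SO(3)$, $\mathbb{T}^2$, or a point as a strong deformation retract, depending on whether $g = 0$, $g = 1$, or $g \ge 2$; each of these retracts is path-connected, and since a strong deformation retract is in particular a homotopy equivalence, this forces $D(\Sigma_g)$ itself to be path-connected. In every case, we therefore obtain a continuous path $t \mapsto F_t$ in $D(\Sigma_g^b)$ joining $\Phi$ to $\id$, and every $F_t$ lies in $\Diff_+(\Sigma_g^b)$ and hence fixes the boundary.

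Finally, applying Lemma \ref{smoothing_paths_lemma} with $M = \Sigma_g^b$ and $A = \partial \Sigma_g^b$, we convert this continuous path into a smooth isotopy from $\Phi$ to $\id$ relative to $\partial \Sigma_g^b$, completing the proof. I do not anticipate any real obstacles here: the bulk of the work is absorbed by the two cited theorems on the homotopy type of diffeomorphism groups of surfaces, and the smoothing step is precisely the content of Lemma \ref{smoothing_paths_lemma}. The only mild point worth double-checking is that the inclusion $D(\Sigma_g^b) \hookrightarrow \Diff_+(\Sigma_g^b)$ is by definition through boundary-fixing maps, so the hypothesis of Lemma \ref{smoothing_paths_lemma} that each $F_t$ fixes $A$ is automatically satisfied along the path produced above.
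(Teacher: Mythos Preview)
Your proof is correct and takes essentially the same approach as the paper, which simply states that the corollary is immediate from Theorem \ref{EE67Thm}, Theorem \ref{ES70Thm}, and Lemma \ref{smoothing_paths_lemma}. You have spelled out the details: $\Phi \in D(\Sigma_g^b)$ by hypothesis, $D(\Sigma_g^b)$ is path-connected by the cited theorems, and the resulting continuous path of boundary-fixing diffeomorphisms is smoothed via Lemma \ref{smoothing_paths_lemma}.
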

\begin{proof}
    Immediate by Theorem~\ref{EE67Thm}, Theorem~\ref{ES70Thm}, and Lemma~\ref{smoothing_paths_lemma}.
\end{proof}
We will also need the following lemma:
\begin{lemma}\label{smooth_neighborhood_of_boundary}
    Let $\Sigma^b_{g, n}$ be a surface with $b>0$.
    Let $S$ be one of the boundary components in $\Sigma^b_{g, n}$, and let $U \cong S \times [0,1]$ be a collar neighborhood of $S$, disjoint from the marked points. Let $\Phi:\Sigma^b_{g, n} \rightarrow \Sigma^b_{g, n}$ be an orientation-preserving diffeomorphism that fixes the marked points, the boundary, and $U$. Assume that $\Phi$ is smoothly isotopic to the identity map, relative to the marked points and the boundary.
    Then $\Phi$ is smoothly isotopic to the identity map, relative to the marked points, the boundary, and $U$.
\end{lemma}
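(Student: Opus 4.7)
The plan is to promote the given smooth isotopy $F_t$ from $\mathrm{id}$ to $\Phi$ to one that additionally fixes the collar $U$ pointwise throughout, by contracting the loop traced out by $F_t|_U$ in a suitable space of collar embeddings.

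I would proceed as follows. First, consider the restriction map
\[
r : \Diff_+(\Sigma^b_{g,n};\, \partial\Sigma,\, \text{marked}) \longrightarrow \mathrm{Emb}\bigl(U,\, \Sigma^b_{g,n}\setminus \text{marked};\, S\bigr),\qquad \psi \longmapsto \psi|_U,
\]
where the codomain is the space of smooth embeddings of $U$ into $\Sigma^b_{g,n}$ that coincide with the inclusion on $S \subset \partial\Sigma$ and miss the marked points. Observe that $F_t|_U$ genuinely lands in this codomain: since $F_t$ fixes every marked point pointwise and $U$ itself contains no marked points, $F_t(U)$ is disjoint from the marked set for every $t$. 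By the Palais--Cerf isotopy extension theorem for compact submanifolds, the map $r$ is a Serre fibration, and its fiber over the inclusion $\iota_U$ is precisely the subgroup $\Diff_+(\Sigma^b_{g,n};\, \partial\Sigma,\, U,\, \text{marked})$ of diffeomorphisms fixing $U$ pointwise. Because $F_0 = \mathrm{id}$ and $F_1 = \Phi$ both restrict to $\iota_U$, the projection $r \circ F_t$ is a loop based at $\iota_U$ in the target. If this loop is null-homotopic rel endpoints, the homotopy lifting property of $r$ produces a smooth isotopy $\tilde F_t$ from $\mathrm{id}$ to $\Phi$ whose restriction to $U$ is constantly $\iota_U$, which is exactly the conclusion of the lemma.

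To contract the loop, I would invoke the parametric form of the collar neighborhood theorem: the space of smooth embeddings $U \hookrightarrow \Sigma^b_{g,n}$ extending the inclusion on $S$ is contractible, as it deformation retracts onto the space of germs of collars at $S$, which is itself the convex cone of inward-pointing normal vector fields along $S$. Since small collars automatically stay inside arbitrarily small neighborhoods of $S \subset \partial\Sigma$, and these neighborhoods are disjoint from the finite marked set, the contraction respects the open condition of avoiding the marked points. The main obstacle is justifying these two classical but nontrivial inputs: the Serre fibration property of $r$ in the presence of the corner $\partial U \cap \partial\Sigma = S$ and the pointwise-fix conditions on $\partial\Sigma$ and on the marked points (which follows from Palais's local triviality theorem with mild modifications, both conditions being compatible with the local product structure), and the parametric contractibility of the collar embedding space, where the naive rescaling $(\theta,r) \mapsto f(\theta,sr)$ degenerates at $s=0$ and must be combined with a tubular-neighborhood straightening against a fixed reference collar to yield an honest deformation retraction onto $\iota_U$. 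Once these standard ingredients are in place, the null-homotopy and its lift supply the desired isotopy.
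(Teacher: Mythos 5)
Your proposal follows essentially the same route as the paper's proof: both rest on the restriction map to the space of collar embeddings of $U$ being a fibration whose base is contractible (the paper cites Cerf for this), from which the path from $\id$ to $\Phi$ can be pushed into the fiber of diffeomorphisms fixing $U$ pointwise. The paper packages this as a weak homotopy equivalence between $D(\Sigma^b_{g,n})$ and the subgroup $D_U(\Sigma^b_{g,n})$ and reads off a $\pi_0$ isomorphism; you instead lift a null-homotopy of the projected loop, which is the same computation done by hand. The one step you gloss over is the final smoothness: the Serre homotopy lifting property yields only a continuous path in the fiber, so as written the claim that the lift ``produces a smooth isotopy'' is unjustified. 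You would need to invoke the smooth local triviality of the restriction map rather than just its Serre-fibration shadow, or, more simply, smooth the resulting continuous path via Lemma~\ref{smoothing_paths_lemma} as the paper does.
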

\begin{proof}
    Let $D_U(\Sigma^b_{g, n})$ be the group of diffeomorphisms of $\Sigma^b_{g, n}$ which are homotopic to the identity map, relative to the marked points, the boundary, and $U$.
    Let $\Emb(U, \Sigma^b_{g, n})$ be the space of smooth embeddings of $U$ into $\Sigma^b_{g, n}$, that fix $S$. By~\cite{cerf_dissertation}, the space $\Emb(U, \Sigma^b_g)$ is contractible, and the fibration
    \begin{equation*}
        D_U(\Sigma^b_{g, n}) \rightarrow D(\Sigma^b_{g, n}) \rightarrow \Emb(U, \Sigma^b_{g, n}),
    \end{equation*}
    implies that $D(\Sigma^b_{g, n})$ and $D_U(\Sigma^b_{g, n})$ have the same homotopy--type. By assumption, $\Phi$ is in the connected component of $\id$ in $D(\Sigma^b_{g, n})$, and therefore it is also in the connected component of $\id$ in $D_U(\Sigma^b_{g, n})$. The result then follows from Lemma~\ref{smoothing_paths_lemma}.
\end{proof}

We will need the following lemma from Frab and Margalit's book to relate between marked points and boundary components, through Dehn twists:
\begin{lemma}[Proposition 3.19 in~\cite{farb_margalit}] \label{farb_lemma}
    Let $\Sigma^{b-1}_{g,n+1}$ be the surface obtained from a surface $\Sigma^b_{g,n}$ by capping the boundary component $\beta$ with a once-marked disk. We denote by $p$ the marked point in this disk. Let $\capmap : \mcg(\Sigma^{b}_{g,n}) \rightarrow \mcg(\Sigma^{b-1}_{g,n+1})$ be the homomorphism induced by the inclusion of $\Sigma^{b}_{g,n}$ into $\Sigma^{b-1}_{g,n+1}$. Then the following sequence is exact:
    \begin{equation*}
        1 \rightarrow \langle T_\beta \rangle \rightarrow \mcg(\Sigma^{b}_{g,n}) \xrightarrow{\capmap} \mcg(\Sigma^{b-1}_{g,n+1}) \rightarrow 1,
    \end{equation*}
    where $T_\beta$ is a Dehn twist around $\beta$.
\end{lemma}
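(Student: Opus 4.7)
The plan is to realize the sequence as the tail of the long exact sequence in homotopy of a fibration of diffeomorphism groups, and identify the connecting homomorphism explicitly with $T_\beta$. By Theorem \ref{mcg_forgetful_isomorphism} I may work in the smooth category throughout.

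First I would establish surjectivity of $\capmap$. Given $[\phi] \in \mcg(\Sigma^{b-1}_{g,n+1})$, lift $\phi$ to an orientation-preserving diffeomorphism fixing the marked points and boundary, and use an orientation-preserving linearization at the marked point $p$ to isotope $\phi$, relative to the marked points and boundary, so that it fixes a small closed disk $D$ around $p$ pointwise. Restricting to $\Sigma^{b-1}_{g,n+1} \setminus \mathrm{int}(D)$ then produces a diffeomorphism of a surface diffeomorphic to $\Sigma^b_{g,n}$ fixing the extra boundary circle (identified with $\beta$) pointwise, whose mapping class maps to $[\phi]$ under $\capmap$. Next, I would verify that $T_\beta$ lies in the kernel: after capping, $\beta$ bounds a disk $D'$ containing only $p$, so $T_\beta$ extends over $D'$ to a diffeomorphism that rotates $D'$ by $2\pi$ about $p$; the family of partial rotations is a smooth isotopy to the identity fixing $p$ throughout.

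The main step is showing $\ker(\capmap) \subset \langle T_\beta \rangle$. Consider the evaluation map
\begin{equation*}
    \mathrm{ev} : \Diff_+(\Sigma^{b-1}_{g,n+1}, p) \longrightarrow \mathrm{Fr}_p^+ \cong S^1,
\end{equation*}
where the domain consists of orientation-preserving diffeomorphisms fixing every marked point and every boundary component pointwise, $\mathrm{Fr}_p^+$ is the circle of positively oriented frames at $p$, and $\mathrm{ev}(f) = df_p$. This is a locally trivial fibration (using local trivializations of the frame bundle together with a standard cut-off), and its fiber is homotopy equivalent, via the exponential chart and disk-excision, to $\Diff_+(\Sigma^b_{g,n})$ (diffeomorphisms fixing all boundary pointwise). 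The tail of the long exact sequence in homotopy therefore reads
\begin{equation*}
    \pi_1(\mathrm{Fr}_p^+) \overset{\delta}{\longrightarrow} \mcg(\Sigma^b_{g,n}) \overset{\capmap}{\longrightarrow} \mcg(\Sigma^{b-1}_{g,n+1}) \longrightarrow \pi_0(\mathrm{Fr}_p^+) = 0.
\end{equation*}
I would then identify $\delta$: the generating loop $\theta \mapsto R_\theta$ of rotations in $\mathrm{Fr}_p^+$ lifts to a path of diffeomorphisms supported in a disk around $p$, each rotating by angle $\theta$ (damped to the identity outside the disk), and the endpoint of the lift, regarded on $\Sigma^b_{g,n}$, is precisely a Dehn twist around $\beta$. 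This yields $\mathrm{image}(\delta) = \langle T_\beta \rangle$ and finishes exactness.

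The main obstacle is making the evaluation fibration rigorous and identifying the connecting homomorphism with $T_\beta$. The required smoothness inputs—that $\Diff_+(\Sigma^b_{g,n})$ has the homotopy type one expects and that its $\pi_0$ agrees with the topological mapping class group—are provided by Theorems \ref{EE67Thm}, \ref{ES70Thm}, and \ref{mcg_forgetful_isomorphism}. The identification of $\delta$ with $T_\beta$ is then a local computation in a disk around $p$, performed by comparing a rotation of the disk with the standard model for a Dehn twist in an annular collar of $\beta$.
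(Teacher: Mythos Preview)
The paper does not prove this lemma at all: it is quoted verbatim as Proposition~3.19 of Farb--Margalit and used as a black box in the proof of Proposition~\ref{injectivity}. So there is no ``paper's own proof'' to compare against; your proposal is simply an independent proof of the cited result, and the fibration/long-exact-sequence approach you outline is one of the standard ways to establish it.

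That said, there is a genuine logical gap in your write-up \emph{as placed in this paper}. You open by invoking Theorem~\ref{mcg_forgetful_isomorphism} in order to pass to the smooth category. But in this paper Theorem~\ref{mcg_forgetful_isomorphism} is proved by combining Propositions~\ref{surjectivity} and~\ref{injectivity}, and the proof of Proposition~\ref{injectivity} \emph{uses} Lemma~\ref{farb_lemma}. So your argument is circular as written. The fix is easy: do not appeal to Theorem~\ref{mcg_forgetful_isomorphism}. Either run the entire fibration argument in the topological category (the evaluation-at-a-frame fibration for $\Homeo_+$ works just as well, using the annulus theorem/local contractibility of homeomorphism groups in dimension two), or cite Farb--Margalit's own argument, which is carried out directly for $\mcg$ without first passing through $\mcg^\infty$. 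Once that dependence is removed, the rest of your outline (surjectivity via linearization at $p$, $T_\beta\in\ker(\capmap)$ via the disk rotation, and identification of the connecting map $\delta$ with the Dehn twist) is correct.
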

The last ingredient that we need for proving the proposition is the following Lemma:
\begin{lemma}\label{milnor_linearization}
    Let $M$ be an orientable manifold, and let $f:M \rightarrow M$ be an orientation-preserving diffeomorphism that fixes a point $p$ in $M$. Then there exists a smooth isotopy $F_t$ of $M$ that satisfies the following properties:
    \begin{enumerate}
        \item $F_0$ = f.
        \item $F_t$ fixes $p$ for every $t$.
        \item There exists a small neighborhood $U$ of $p$, such that the map $F_t = f$ outside of $U$, for every $t$.
        \item $F_1$ is the identity map in a smaller neighborhood of $p$.
    \end{enumerate}
\end{lemma}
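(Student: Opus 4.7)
The approach is the classical Alexander--Milnor trick for straightening a diffeomorphism near a fixed point, combined with a cutoff localization. First, I would pass to local coordinates via an orientation-preserving chart $\phi : B_R(0) \to V \subset M$ with $\phi(0) = p$, and set $g := \phi^{-1} \circ f \circ \phi$, a local diffeomorphism of $\mathbb{R}^n$ fixing $0$. Let $A := dg_0$; since both $f$ and $\phi$ preserve orientation, $A$ lies in $GL_+(n, \mathbb{R})$. The Milnor rescaling $g_s(x) := s^{-1} g(sx)$ for $s \in (0,1]$, together with $g_0 := A$ at $s = 0$, gives a smooth family on any small ball $B_\rho$; smoothness at $s = 0$ follows from Taylor's theorem, which yields $g(sx) = sAx + s^2 r(s,x)$ for a smooth remainder $r$. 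Each $g_s$ is a local diffeomorphism fixing $0$, and the family is a smooth isotopy from $g_1 = g$ to $g_0 = A$.

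Next, since $GL_+(n, \mathbb{R})$ is the identity component of $GL(n, \mathbb{R})$ and hence path-connected, I would choose a smooth path $A_t$ in $GL_+(n, \mathbb{R})$ from $A$ to $I$. Concatenating this linear deformation with the (reparameterized) Milnor isotopy yields, on some small ball $B_\rho$, a smooth isotopy $h_t$ through local diffeomorphisms fixing $0$, with $h_0 = g|_{B_\rho}$ and $h_1 = I|_{B_\rho}$.

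The final step is to glue this local isotopy to the fixed map $f$ outside a small neighborhood of $p$. The main obstacle is to ensure that the interpolating maps remain diffeomorphisms throughout the isotopy: a naive cutoff applied to $h_t$ could disturb invertibility of the derivative in the transition annulus. The standard remedy is to shrink $\rho$ so that $g$ is sufficiently $C^1$-close to its linearization $A$ on $B_\rho$, which one arranges by the mean value theorem applied to $g - A$. Then, using a smooth cutoff $\chi : \mathbb{R}^n \to [0,1]$ supported in $B_\rho$ and equal to $1$ on a smaller ball $B_{\rho'}$, one constructs a global isotopy by convex-combining $g$ with $h_t$ in the transition annulus; $C^1$-closeness to the invertible $A$ keeps all intermediate derivatives invertible by continuity. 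Transporting the resulting family back via $\phi$ and extending by $f$ outside $\phi(B_R)$ yields the required global smooth isotopy $F_t$, satisfying $F_0 = f$, $F_t(p) = p$ for all $t$, $F_t = f$ outside the open set $U := \phi(B_\rho) \subset M$, and $F_1 = \id$ on $\phi(B_{\rho''})$ for some $\rho'' < \rho'$.
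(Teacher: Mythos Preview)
Your overall strategy matches the paper's: both use the Milnor rescaling $x \mapsto s^{-1}g(sx)$ together with the path-connectedness of $GL_+(n,\mathbb{R})$, and both localize near $p$ via a cutoff. The paper happens to perform the two steps in the opposite order (first compose with a compactly supported isotopy making $d\tilde g_0 = I$, then apply the Milnor trick to $\tilde g$), but that is immaterial.

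There is, however, a genuine gap in your localization step. You propose to glue by convex-combining $g$ with $h_t$ through a cutoff $\chi$, and you justify invertibility by saying everything stays $C^1$-close to $A$. But during the linear phase $h_t = A_t$ travels from $A$ to $I$ inside $GL_+(n,\mathbb{R})$ and need not remain close to $A$ at all; the derivative of $\chi h_t + (1-\chi)g$ in the transition annulus is, up to $O(\rho)$, the convex combination $\chi A_t + (1-\chi)A$ plus a rank-one term of size $\sim |A_t - A|$ coming from $d\chi \otimes (h_t - g)$. Neither of these is controlled by shrinking $\rho$, and a convex combination of two matrices in $GL_+$ can be singular. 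The same issue arises, less dramatically, in the Milnor phase.

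The paper avoids this by never convex-combining maps. Instead, for each of the two isotopies it passes to the generating time-dependent vector field, multiplies \emph{that} by a bump function supported near the origin, and integrates. The flow of a compactly supported smooth vector field is automatically a compactly supported diffeotopy, so no invertibility check is needed; near the origin (where the bump equals $1$) the flow reproduces the original isotopy, and away from the support it is the identity. Replacing your convex-combination step with this vector-field cutoff makes the argument go through.
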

\begin{proof}
    Choose a chart $\varphi: U \rightarrow \mathbb{R}^n$ around the point $p$, such that $p$ is sent to the origin. There exists some positive number $r > 0$ such that the image of the disc $D^n_r \subset \mathbb{R}^n$ through the map $f \circ \varphi^{-1}:\mathbb{R} \rightarrow M$ is contained in $U$. Hence, we get the following smooth map
    \begin{equation*}
        g:=\varphi \circ f \circ \varphi^{-1}:D^n_r \rightarrow \mathbb{R}^n,
    \end{equation*}
    which is a diffeomorphism onto its image.
    
    The differential $dg_0$ is a matrix in $\GL^+_n(\mathbb{R})$, so we can choose a smooth family of matrices $A_t$ such that $A_0 = I$ and $A_1 = dg_0^{-1}$. Applying this family of matrices to $\mathbb{R}^n$ defines a smooth time-dependent vector field $X_t$ on $\mathbb{R}^n$. Multiplying $X_t$ by a small bump function which equals to $1$ around the origin, and integrating the new vector field, we get a smooth isotopy relative to the origin, from $g:D^n_r\rightarrow \mathbb{R}^n$ to a map $\tilde g:D^n_r\rightarrow \mathbb{R}^n$ that satisfies $d\tilde g_0 = I$, $\tilde g(0) = 0$. By its construction, $\tilde{g}$ coincides with $g$ outside of $\varphi(U')$, where  the closure of $U'$ in $M$ is contained in $U$.
    
    Now, we use a construction from Milnor's book (see the proofs of Lemma 1 and 2 in page 34 of~\cite{milnor_book}). We define a family of diffeomorphisms by
    \begin{equation*}
    G_t(x) = \begin{cases}
			\frac{\tilde g(xt)}{t}, & \text{for } 0 < t \le 1\\
            x, & \text{for } t = 0
		 \end{cases}.
    \end{equation*}
    By Milnor, $G_t$ is a smooth isotopy from $\tilde g$ to $\id$. There exists a small neighborhood $V$ of the point $p$ such that $V$ is contained in $\subset G_t(U)$ for every $t$. We define a smooth time-dependent vector field on $V$ by
    \begin{equation*}
        Y_t(x) = \frac{\partial}{\partial s}|_{s=t}G_{s}(G_t^{-1}(x)).
    \end{equation*}
    We multiply $Y_t$ by a bump function which equals to $1$ around the origin, and whose support is the closure of a neighborhood $W$ of the origin, such that the closure of $W$ is contained in $V \cap \varphi(U')$. We get a vector field $Z_t$ which is supported in the closure of $W$, and that coincides with $Y_t$ in a small neighborhood $O$ of the origin. Integrating $Z_t$ to a flow, we get a smooth isotopy relative to the origin, from $\tilde g$ to a new map that coincides with $g$ outside of $\varphi(U')$, and coincides with the identity map in $O$.
    
    Applying the smooth isotopies that we constructed inside of the neighborhood $U$ to the map $f$, we get the wanted isotopy.    
\end{proof}

\begin{proof}[Proof of Proposition~\ref{injectivity}]
    If there are no marked points, the proposition follows from Corollary~\ref{homotopic_means_isotopic_without_marked_points}.
    
    Assume by induction that we've shown the proposition for all surfaces $\Sigma^{b}_{g,k}$ with $k < n$. Let $f$ be a diffeomorphism in $\Diff_+(\Sigma^{b}_{g,n})$. Assume that $[f] = [\id] \in \mcg(\Sigma^{b}_{g,n})$, i.e., that $f$ is isotopic to the identity map, relative to the boundary and the marked points. We wish to show that $f$ is smoothly isotopic to the identity map, relative to the boundary and the marked points.
    
    Let $p$ be a marked point of $\Sigma^{b}_{g,n}$. By Lemma~\ref{milnor_linearization}, There exists a small neighborhood $U$ of $p$, a slightly larger neighborhood $U \subset V$, and a smooth isotopy relative to $p$ and the complement of $V$ from $f$ to a map $\tilde f$, such that $\tilde f$ is the identity map in $U$.
    
    We remove a once-marked disc around $p$ whose closure is contained in $U$, to get the subsurface $\Sigma^{b+1}_{g,n-1}$. Let $A \subset \Sigma^{b+1}_{g,n-1}$ be the union of the set of marked points, the boundary, and  $U \cap \Sigma^{b+1}_{g,n-1}$. Restricting $\tilde f$ to the subsurface $\Sigma^{b+1}_{g,n-1}$, we get a diffeomorphism $g:\Sigma^{b+1}_{g,n-1} \rightarrow \Sigma^{b+1}_{g,n-1}$ that fixes $A$.

    We wish to show that it is possible to choose $\tilde f$ such that the resulting $g$ is isotopic to the identity map, relative to $A$. Then, by induction, and Corollary~\ref{smooth_neighborhood_of_boundary}, we will get a smooth isotopy relative to $A$, from $g$ to the identity map on $\Sigma^{b+1}_{g,n-1}$. This smooth isotopy will extend to a smooth isotopy from $\tilde f$ to the identity on $\Sigma^{b}_{g,n}$, relative to the boundary and the marked points. Together with the previous smooth isotopy from $f$ to $\tilde f$, this will finish the proof, by showing that $f$ is indeed smoothly isotopic to the identity, relative to the boundary and the marked points.

    To show that the map $\tilde f$ can be chosen such that the induced map $g$ is isotopic to the identity relative to the marked points and boundary, we will use the short exact sequence from Lemma~\ref{farb_lemma}:
    \begin{equation*}
        1 \rightarrow \langle T_\beta \rangle \rightarrow \mcg(\Sigma^{b+1}_{g,n-1}) \xrightarrow{\capmap} \mcg(\Sigma^{b}_{g,n}) \rightarrow 1,
    \end{equation*}
    where $\beta$ is the boundary component that we added when we removed the once-marked disc.
    
    The map $\capmap$ sends $[g]$ to the class $[\tilde f]$, which is trivial since $\tilde f$ is smoothly isotopic to $f$, and $[f]$ is trivial by assumption. Hence, $[g]$ is in the kernel of $\capmap$, meaning that there exists some $m \in \mathbb{Z}$ such that $[g] = [T_\beta^m]$.
    
    We can smoothly isotope the map $\tilde f$ to a map $\tilde f'$ that is still the identity on the neighborhood of the once marked disc, by integrating the product of a bump function with a smooth vector field that rotates a neighborhood of the once marked disc around the marked point $-m$ times. Then, its restriction to $\Sigma^{b+1}_{g,n-1}$ will give a map $g':\Sigma^{b+1}_{g,n-1} \rightarrow \Sigma^{b+1}_{g,n-1}$ whose class is $[g'] = [T^{-m}_\beta g] = [\id] \in \mcg(\Sigma^{b+1}_{g,n-1})$.
    
    If we use $\tilde f'$ and $g'$ instead of $\tilde f$ and $g$, we get that $g'$ is isotopic to the identity map relative to the boundary and marked points in $\Sigma^{b+1}_{g,n-1}$, and this finishes the proof.
\end{proof}

\printbibliography{}

\end{document}